\documentclass[11pt]{amsart}
\usepackage{latexsym}
\setlength{\textheight}{7.5in} \setlength{\textwidth}{5.2in}
\flushbottom
\parindent=0pt
\parskip=4pt plus2pt minus2pt
\usepackage{amssymb,amsmath,amsthm,amscd, amssymb,graphicx, enumerate, verbatim, url}
\usepackage[all]{xy}

\numberwithin{equation}{section}
\newtheorem{cor}[equation]{Corollary}
\newtheorem{lem}[equation]{Lemma}
\newtheorem{prop}[equation]{Proposition}
\newtheorem{thm}[equation]{Theorem}
\newtheorem{prob}[equation]{Problem}
\newtheorem{quest}[equation]{Question}

\newtheorem{Example}[equation]{Example}
\newenvironment{ex}{\begin{Example}\rm}{\end{Example}}
\newtheorem{remark}[equation]{Remark}
\newenvironment{rmk}{\begin{remark}\rm}{\end{remark}}

\def\co{\colon\thinspace}

\newcommand{\Int}{\mbox{Int}}

\newcommand{\vol}{\mbox{Vol}}

\newcommand{\Ric}{\mbox{Ric}}
\newcommand{\Iso}{\mbox{Iso}}

\newcommand{\cd}{\mbox{cd}}
\newcommand{\ab}{\mathrm{ab}}
\newcommand{\e}{\varepsilon}

\def\a{\alpha}
\def\G{\Gamma}
\def\g{\gamma}

\def\d{\partial}

\def\S1{\bf S^1}

\mathsurround=1pt

\begin{document}

\abovedisplayskip=6pt plus3pt minus3pt
\belowdisplayskip=6pt plus3pt minus3pt

\title[Topology of open nonpositively curved manifolds]
{\bf Topology of open nonpositively curved manifolds}
\thanks{\it 2010 Mathematics Subject classification.\rm\ 
Primary 53C20.
\it\ Keywords:\rm\ nonpositive curvature, discrete group, open manifold, ends,
negative curvature, finite volume, rank one.}\rm

\author{Igor Belegradek}

\address{Igor Belegradek\\School of Mathematics\\ Georgia Institute of
Technology\\ Atlanta, GA 30332-0160}\email{ib@math.gatech.edu}


\date{}
\begin{abstract} 
This is a survey of topological properties of open, 
complete nonpositively curved manifolds which may have
infinite volume. Topics include topology of ends, 
restrictions on the fundamental group, 
as well as a review of known examples. 
\end{abstract}
\maketitle

Among many monographs and surveys on aspects of nonpositive curvature,
none deals with topology of open complete nonpositively curved manifolds, and 
this paper aims to fill the void. 
Most of the material discussed here is not widely-known. 
A number of questions is posed, ranging from naive to hopelessly difficult.
Proofs are supplied when there is no explicit reference, and
as always, the author is solely responsible for mistakes.

This survey has a narrow focus and does not discuss 
topological properties of 
\begin{itemize} 
\item
hyperbolic $3$-manifolds~\cite{Thu-3d-notes, Kap-book, CanMcC}, 
\item
open negatively pinched manifolds~\cite{Bel-invent, BK-acta},
\item
non-Riemannian nonpositively
curved manifolds~\cite{Dav-book}, 
\item
compact nonpositively curved 
manifolds~\cite{FJO-survey, Lue-surv-asph}, 
\item
higher rank locally symmetric spaces~\cite{Mar-book, Ebe-book, BesFei02, Gel-vol-rk}
and their compactifications~\cite{BorJi},
\end{itemize}
which are covered in the above references.
We choose to work in the Riemannian setting which leads to some simplifications,
even though many results hold in a far greater generality, 
and the references usually point to the strongest results available.
Special attention is given to rank one manifolds, such as manifolds of negative
curvature. Nonpositively curved manifolds are aspherical so 
we focus on groups of finite cohomological 
dimension (i.e. fundamental groups of aspherical manifolds),
or better yet, groups of type {\it F\,} (i.e. the fundamental groups 
of compact aspherical manifolds with boundary). 

{\bf Conventions:} unless stated otherwise, manifolds are smooth, 
metrics are Riemannian, and
sectional curvature is denoted by $K$. 

{\bf Acknowledgments:} The author is grateful for NSF support (DMS-1105045).
Thanks are due to Jim Davis, Denis Osin, Yunhui Wu, and the referee for correcting misstatements 
in the earlier version.

\tableofcontents

\section{Flavors of negative curvature}
\label{sec: flavors of neg curv}

A {\it Hadamard manifold\,} is a connected simply-connected complete
manifold of $K\le 0$. By the Cartan-Hadamard theorem,
any Hadamard manifold is diffeomorphic to a Euclidean space. 
Thus any complete manifold of $K\le 0$
is the quotient of a Hadamard manifold by a discrete torsion-free
isometry group (torsion-freeness can be seen geometrically: any 
finite isometry group of a Hadamard manifold fixes
the circumcenter of its orbit, or topologically: a nontrivial finite group 
has infinite cohomological dimension so it
cannot act freely on a contractible manifold).

A Hadamard manifold is a {\it visibility} manifold if any two points at infinity
can be joined by a geodesic. A {\it flat\,} in $X$
is a convex subset isometric to a Euclidean space.
A {\it flat half plane} in $X$ is a convex subset isometric to a Euclidean half plane.
An {\it infinitesimal flat\,} of a geodesic is the space of parallel Jacobi fields along
a geodesic. A geodesic is called {\it rank one\,} if its infinitesimal flat is one-dimensional. 
A complete manifold of $K\le 0$ {\it has rank one\,} if it contain a complete  (i.e. defined for all times)
geodesic of rank one. 

The following conditions on a Hadamard manifold $X$ represent
various manifestations of negative curvature:
\begin{itemize}
\label{form: neg curv conditions}
\item[(1)]  $K$ is bounded above by a negative constant,
\item[(2)]  $X$ is Gromov hyperbolic,
\item[(3)]  $X$ is a visibility manifold,
\item[(4)]  $K<0$,
\item[(5)]  $X$ contains no flat half plane,
\item[(6)] $X$ contains a complete geodesic 
that does not bound a flat half plane.
\end{itemize}

The implications
$(1)\Rightarrow (4)\Rightarrow (5)\Rightarrow (6)$ and
$(1)\Rightarrow (2)\Rightarrow (3)\Rightarrow (5)\Rightarrow (6)$ are 
immediate from definitions except for 
$(2)\Rightarrow (3)$ which 
can be found in~\cite[Lemma VIII.3.2]{BriHae}.

\begin{prop}
\label{prop: curv flavors}
Every other implication fails in dimension two.
\end{prop}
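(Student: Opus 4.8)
The plan is to reduce the proposition to a short list of two-dimensional Hadamard manifolds (complete simply connected surfaces with $K\le 0$) that separate the six conditions, and then to build those surfaces by hand.

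First I would record the transitive closure of the implications already granted: $(1)$ implies $(2)$--$(6)$; $(2)\Rightarrow(3)\Rightarrow(5)\Rightarrow(6)$; and $(4)\Rightarrow(5)\Rightarrow(6)$. The implications left to refute are $(2)\not\Rightarrow(1),(4)$; $(3)\not\Rightarrow(1),(2),(4)$; $(4)\not\Rightarrow(1),(2),(3)$; $(5)\not\Rightarrow(1),(2),(3),(4)$; and $(6)\not\Rightarrow(1),(2),(3),(4),(5)$ --- seventeen in all. Since the conditions weaken along the arrows, one surface satisfying a strong condition while failing a weak one disposes of many of these at once, and I expect four examples to suffice. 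The bookkeeping here is routine once the examples are in hand.

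The two principal examples are rotationally symmetric metrics $dr^2+f(r)^2\,d\theta^2$ on $\R^2$, for which $K=-f''/f$ (so $K<0$ everywhere once $f''>0$ for $r>0$) and which, by the classical integral criterion for surfaces of revolution (Eberlein's visibility work), are visibility manifolds precisely when $\int_1^\infty f(r)^{-1}\,dr<\infty$. Choosing $f$ with $f(r)=\sinh r$ near $0$ and $f(r)\sim r\log r$ for $r$ large gives a surface $C$ with $K<0$ everywhere --- hence satisfying $(4)$, $(5)$, $(6)$ --- but with $\int f^{-1}=\infty$, so $C$ is not a visibility manifold and a fortiori not Gromov hyperbolic, while $K\to 0$ rules out $(1)$; thus $C$ kills $(4)\not\Rightarrow(1),(2),(3)$ and, via $K<0\Rightarrow(5)$, most non-implications out of $(5)$. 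Choosing instead $f$ with $f(r)\sim r^2$ gives a surface $D$ that is a visibility manifold, hence satisfies $(3),(4),(5),(6)$, but is not Gromov hyperbolic: since the curvature is of order $-r^{-2}$, $D$ contains metric balls $B(p,R_p)$ with $R_p\to\infty$ yet $R_p^2\sup_{B(p,R_p)}|K|\to 0$, hence balls close to Euclidean ones, hence near-Euclidean equilateral triangles of unbounded diameter, which cannot be uniformly thin; so $D$ refutes $(3)\not\Rightarrow(2)$.

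Two further surfaces complete the list. For a Gromov hyperbolic surface with zero curvature somewhere, let $B$ be rotationally symmetric with $f(r)=r$ on $r\le 1$ (a flat unit disk, where $K\equiv 0$) and $f''/f\ge c>0$ for $r$ large, with $f$ convex throughout; since $K$ is bounded above by a negative constant outside a compact ball, $B$ is Gromov hyperbolic, hence a visibility manifold, hence contains no flat half plane (a flat half plane is unbounded, and its relative interior is an open subset of the surface carrying a flat metric, forcing $K\equiv 0$ there --- impossible far out). So $B$ satisfies $(2),(3),(5),(6)$ but not $(1)$ or $(4)$, which dispatches $(2),(3),(5)\not\Rightarrow(1),(4)$. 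Finally, for $(6)\not\Rightarrow(5)$ let $A=(\R^2,\,dx^2+\phi(x)^2\,dy^2)$ with $\phi\equiv 1$ on $x\le 0$ and $\phi$ convex with $\phi''>0$ on $x>0$: the half-space $\{x\le 0\}$ is convex and flat, a flat half plane, so $(5)$ fails, whereas the geodesic $\{y=0\}$ passes through points where $K<0$ and therefore lies in no flat half plane (same curvature argument), so $(6)$ holds; as $A$ also fails $(1)$--$(4)$, it kills every non-implication out of $(6)$.

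The \textbf{main obstacle} is the analysis of the rotationally symmetric models $C$ and $D$: one must invoke (or reprove) the integral criterion separating visibility from non-visibility, and --- less routinely --- one must argue carefully that $D$ is genuinely not Gromov hyperbolic, which rests on a quantitative comparison statement to the effect that balls on which $|K|$ is small relative to the square of the radius are close to flat (Rauch/Toponogov). Care is also needed because the naive divergence heuristic for $D$ fails: two radial geodesics from a basepoint $o$ in $D$ actually have bounded Gromov product (the connecting geodesic dips back toward $o$), so the non-hyperbolicity must be extracted from triangles placed far out in the asymptotically flat region, not from triangles based at $o$. The remaining ingredients --- the curvature formula $K=-f''/f$, the implication $K<0\Rightarrow$ no flat half plane, and ``pinched negative curvature outside a compact set $\Rightarrow$ Gromov hyperbolic'' --- are standard but should be cited or sketched explicitly.
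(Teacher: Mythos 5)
Your overall strategy --- reducing the seventeen remaining non-implications to a handful of rotationally symmetric (or warped-product) surfaces and then doing the formal bookkeeping --- is exactly the paper's, and three of your four examples are sound: $A$ handles $(6)\nRightarrow(5)$ just as the paper's punctured torus with a cylindrical end does (your version has the advantage of being simply connected from the start); $B$ is a legitimate substitute for the paper's doubled surface-with-boundary for $(2)\nRightarrow(4)$, granted the standard fact that $K\le -c<0$ outside a compact set forces Gromov hyperbolicity (a Gauss--Bonnet area bound on geodesic triangles does it); and $D$ plays the role of the paper's visibility-but-not-hyperbolic surface, where the paper instead uses annuli of constant curvature $-\frac{1}{i}$ and width $i$, which makes the fat-triangle step cleaner than your rescaled almost-flat balls.

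The gap is in example $C$: the integral criterion you invoke is not the visibility criterion. For $dr^2+f(r)^2d\theta^2$ with $K=-f''/f\le 0$, visibility holds if and only if the total curvature is infinite, i.e. $\int_0^\infty f''\,dr=\infty$, i.e. $f'(r)\to\infty$; this is precisely the sector criterion the paper quotes from \cite[page 57]{BGS}, since the total curvature of a sector of angle $\alpha$ at the origin is $\alpha\int_0^\infty Kf\,dr=-\alpha\bigl(\lim_{r\to\infty}f'(r)-f'(0)\bigr)$. (One can also see it from Clairaut: the complete geodesic with closest approach $r_0$ subtends the angle $2\int_1^\infty\bigl(f'(r(u))\,u\sqrt{u^2-1}\bigr)^{-1}du$ at infinity, which tends to $\pi/\lim f'$ as $r_0\to\infty$.) The condition $\int_1^\infty f(r)^{-1}dr<\infty$ is the classical criterion for the surface of revolution to be \emph{conformally} hyperbolic; it is sufficient for visibility (it forces $f'\to\infty$) but not necessary. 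Your $f\sim r\log r$ has $f'\sim\log r\to\infty$, so $C$ is in fact a visibility manifold, and $(4)\nRightarrow(3)$ --- and with it $(5)\nRightarrow(3)$ --- is not established by any of your four surfaces. The fix is exactly what the paper does, following Eberlein--O'Neill: take $f$ with $f''>0$ everywhere but $\int_0^\infty f''\,dr<\infty$, so that $f$ is asymptotically linear and the total curvature is finite; then $K<0$ yet the surface is not visibility.
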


\begin{rmk}
There are of course analogs of (5) or (6) such as
``no flat strip'' or ``no flat plane'', see also
various axioms on~\cite{EbeOni},
but the above list is what comes up most often. 
\end{rmk}

\begin{rmk}
Kleiner~\cite{Kle-inf-flat} 
proved that a Hadamard manifold has rank one if and only if it contains a complete geodesic
that does not lie in a two-dimensional flat 
(see~\cite[Proposition IV.4.4]{Bal-book} for the case when 
$\Iso(X)$ satisfies the duality condition,
e.g. contains a lattice). 
Since (6) is intermediate between the two conditions, it
is equivalent to them.
\end{rmk}

\begin{proof}[Proof of Proposition~\ref{prop: curv flavors}]
That $(2)\nRightarrow (4)$ follows by doubling along the boundary 
any nonpositively curved compact surface of negative Euler characteristic 
whose metric is cylindrical near the boundary; the universal cover is hyperbolic
because hyperbolicity is a quasiisometry invariant, but it contains
a flat strip. 

To show that $(3)\nRightarrow (2)$ recall a consequence of 
the Gauss-Bonnet theorem, mentioned on~\cite[page 57]{BGS}, that
a $2$-dimensional Hadamard manifold $X$ is visibility if and only if 
for some $p\in X$ 
the total curvature of every sector bounded by two rays that start at $p$
is infinite. 
Any smooth nonpositive function $K$ 
on $[0,\infty)$ can be realized as the curvature of a rotationally
symmetric metric $dr^2+f(r)^2d\theta$ on $\mathbb R^2$, namely,  
$f$ is a unique solution of $f^{\prime\prime}+Kf=0$, 
$f(0)=1$, $f^\prime(0)=1$. Note that $f(r)\ge r$ by Sturm comparison
for ordinary differential equations, and 
the total curvature equals $2\pi\int_0^\infty Kfdr$.
Let ${I_i}$ be a sequence of disjoint compact subintervals 
of $(0,\infty)$ such that $I_i$ has length $i$, and let $K$
be a smooth negative function on $[0,\infty)$  that
equals $-\frac{1}{i}$ on $I_i$. The associated
rotationally symmetric metric has infinite total curvature on every
sector from the origin, 
so it is visibility, and it contains arbitrary large regions of
curvature $-\frac{1}{i}$, which contain triangles violating 
$\delta$-hyperbolicity for any $\delta$. 

That $(4)\nRightarrow (3)$ is also shown  
in~\cite[Example 5.10]{EbeOni}: modifying the
argument of the previous paragraph
yields a rotationally symmetric metric with $K<0$ and finite total curvature.

To see that $(6)\nRightarrow (5)$ start from a finite volume
complete hyperbolic metric on a punctured torus, and modify the metric
near the end to a complete metric of $K\le 0$ that is a cylinder outside
a compact set. The cylinder lifts to a flat half plane so (5) fails,
but any geodesic through a point of $K<0$ satisfies (6).

The other non-implications are formal consequence of the
above implications and non-implications. 
\end{proof}

\begin{rmk}
The non-implications in the above proof 
are justified by two dimensional
examples, and it seems that similar examples in higher dimensions
can be produced via iterated warping with $\cosh r$, i.e. replacing 
$X$ with $\mathbb R\times_{\cosh r} X$, which contains 
$X$ as a totally geodesic submanifold. For some non-implications we need
to insert the warped product as a convex subset in 
a closed manifold of $K\le 0$ following Ontaneda, see~\cite{Pha-hypersurf}. 
The warping clearly preserves conditions (1), (4), (5), (6).
\end{rmk}

\begin{quest}
Does the warping with $\cosh r$ preserve \textup{(2)} and \textup{(3)}?
\end{quest}

One wants to understand the relations among 
(1)--(6) when $\Iso(X)$ is large.

\begin{ex}
If $\Iso(X)$ is cocompact, then clearly $(1)\Leftrightarrow (4)$,
and also $(2)\Leftrightarrow (3)\Leftrightarrow (5)$; in fact, 
Gromov hyperbolicity 
is equivalent to uniform visibility for proper 
CAT($0$) spaces~\cite[Proposition III.3.1]{BriHae}, 
and visibility is equivalent to the non-existence of a 
$2$-flat for proper CAT($0$) spaces with cocompact 
isometry groups~\cite[Theorem II.9.33]{BriHae}.
For $2$-dimensional Hadamard manifolds 
with cocompact isometry group $(5)\Leftrightarrow (6)$
because if $X$ contains a flat half plane and has cocompact isometry group,
then $X$ is isometric to $\mathbb R^2$.
On the other hand, $(6)\nRightarrow (5)$ in dimensions $>2$
with examples given by the {\it Heintze manifolds\,}, obtained by
chopping of cusps of a finite volume
complete hyperbolic manifold, changing the metric near the cusp 
a metric of $K\le 0$ with totally geodesic flat boundary, and doubling
along the boundary; the boundary lifts to a flat of dimension $>1$,
while any geodesic through a point of $K<0$ has rank one.
\end{ex}

\begin{ex}
If $\dim(X)=2$ and $\Iso(X)$ contains a lattice,
then either $X$ is isometric to $\mathbb R^2$ or $X$ is 
visibility~\cite[Proposition 2.5]{Ebe-MemAMS-1979}
so that $(6)\Leftrightarrow (3)$.
\end{ex}

\begin{ex} If $\dim(X)>2$, then $(6)\nRightarrow (3)$; 
indeed Nguyen Phan~\cite{Pha-neg} and Wu~\cite{Wu-transl-length} show
that the universal covers of the finite volume manifolds of $K<0$
constructed in~\cite{Fuj-warp} are not visibility, 
so $(4)\nRightarrow (3)$.
\end{ex}

\begin{ex} Eberlein showed~\cite{Ebe-lattices-annals} 
that the ends of a finite volume complete manifold with bounded nonpositive curvature
and visibility universal cover are $\pi_1$-injectively embedded. This
is not the case for Buyalo's example~\cite{Buy} of a finite volume
complete $4$-manifold of $-1<K<0$; thus its universal cover is not visibility. 
\end{ex}

\begin{rmk} Let us compare $(3)$ and $(4)$ in higher dimensions. 
Non-visibility of $X$ can be checked by 
studying its two-dimensional totally geodesic submanifolds, 
cf.~\cite{Pha-neg, Wu-transl-length},
while proving visibility of $X$ gets much harder,
and indeed, it is a strong restriction on $X$ with
a variety of consequences for the geometry of horoballs, 
Tits boundary, and isometry groups. By contrast,
the condition ``$K<0$'' is easy to verify 
but has few implications.
Either condition implies that every complete geodesic in $X$
has rank one.  
\end{rmk}

\section{Manifolds of dimensions two and three}
 
A classification of open connected $2$-manifolds goes back to
Ker{\'e}kj{\'a}rt{\' o}~\cite{Ker-2-mfld}; 
see~\cite{Ric-2-mfld, Gol-2-mfld, Sie-2-mfld} for more recent accounts.

Richards~\cite[Theorem 3]{Ric-2-mfld} proved that every 
open surface is obtained from $S^2$ by removing a closed
totally disconnected subset, and then
removing a finite or countable family of disjoint closed 
disks and attaching handles or M\"obius bands along boundaries of the disks. 

\begin{rmk}
Any closed totally disconnected subset $T\subset S^2$ can 
be moved by an ambient homeomorphism to a subset
of the standard Cantor set. (Like any compact totally
disconnected metric space, $T$ is homeomorphic to a subset $Q$ of the Cantor set
in $S^2$. The homeomorphism type of a
planar surface is determined by the 
homeomorphism type of its
the space of ends~\cite[Theorem 1]{Ric-2-mfld}, 
which for $S^2\setminus T$, $S^2\setminus Q$ are identified with $T$, $Q$, respectively. 
Hence there is a homeomorphism $S^2\setminus T\to S^2\setminus Q$, which
extends to a homeomorphism of the end compactifications 
$S^2\to S^2$ mapping $T$ to $Q$).
\end{rmk}

The uniformization theorem equips any connected surface with a constant
curvature metric, which can be chosen hyperbolic for open surfaces:

\begin{thm}
Any open connected $2$-manifold admits a complete metric of constant negative curvature.
\end{thm}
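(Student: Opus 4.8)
The plan is to use the classification of open surfaces recalled just above (Richards' theorem) together with the uniformization theorem. The key point is that on a noncompact surface, uniformization produces a complete metric of constant curvature, and we must rule out the flat and spherical cases. First I would recall that every open surface is \emph{not} compact by definition, and that the only closed surfaces carrying a complete constant-curvature metric of curvature $\ge 0$ are $S^2$, $\mathbb{RP}^2$ (spherical) and the torus and Klein bottle (flat). An open surface is a proper open subset of no closed surface in any essential way, but more to the point: a complete simply connected flat or spherical surface is $\mathbb R^2$, $S^2$, so a complete constant nonnegative curvature surface is a quotient of one of these by a discrete group of isometries acting freely. The spherical quotients are compact (finite fundamental group, compact universal cover), hence excluded. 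For the flat case, a complete flat surface is $\mathbb R^2/\Lambda$ with $\Lambda$ a discrete group of isometries of $\mathbb R^2$ acting freely; the noncompact such quotients are the cylinder $S^1\times\mathbb R$, the open M\"obius band, and $\mathbb R^2$ itself. So I would need to handle exactly these three surfaces separately, exhibiting a complete hyperbolic metric on each: the cylinder admits one as a quotient of a hyperbolic funnel or as $\mathbb H^2/\langle\text{hyperbolic translation}\rangle$, the open M\"obius band as a quotient of the cylinder by an orientation-reversing involution (equivalently $\mathbb H^2/\langle\text{glide reflection}\rangle$), and $\mathbb R^2=\mathbb H^2$ carries the hyperbolic metric itself.

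With those three exceptional cases disposed of, the general argument is: let $S$ be an open connected surface. Pick \emph{any} Riemannian metric on $S$ (partitions of unity), and apply the uniformization theorem in the form that every Riemannian surface is conformally equivalent to a complete surface of constant curvature $+1$, $0$, or $-1$. If the curvature is $+1$ the surface is compact, contradiction. If it is $0$, then $S$ is one of the three flat noncompact surfaces above, and we replace the metric by the hyperbolic one constructed by hand. Otherwise the uniformizing metric is already complete hyperbolic and we are done.

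Alternatively — and this is cleaner — I would invoke Richards' structural description directly: $S$ is $S^2$ minus a nonempty closed totally disconnected set, further punctured along finitely or countably many disks with handles or M\"obius bands attached. Since $S$ is open, at least one point (an end) is removed, so the Euler characteristic is "negative" in the appropriate sense, i.e. $S$ is neither $S^2$, $\mathbb{RP}^2$, nor a closed torus or Klein bottle, and — after excising the three genuinely flat noncompact cases — $S$ admits a hyperbolic structure by the standard existence theory for hyperbolic surfaces (one can build it from ideal polygons, or cite that every surface other than $S^2,\mathbb{RP}^2,T^2,K,$ the cylinder, the M\"obius band, and the disk admits a complete finite-or-infinite-area hyperbolic structure; the cylinder and M\"obius band do too as noted, and the open disk is $\mathbb H^2$).

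The main obstacle is purely bookkeeping: making sure the list of excluded cases is exactly right and that each of the few flat noncompact surfaces genuinely carries a \emph{complete} hyperbolic metric (completeness is the only subtlety, since incomplete hyperbolic metrics on the cylinder abound). I expect the cleanest writeup to be the uniformization argument: conformally deform an arbitrary metric to a complete constant-curvature one, observe the surface is noncompact so the curvature cannot be positive, and in the finitely many flat cases substitute an explicit complete hyperbolic metric obtained as $\mathbb H^2$ modulo a hyperbolic isometry, a glide reflection, or the trivial group.
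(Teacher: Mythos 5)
Your proposal is correct and follows essentially the same route as the paper: uniformize an arbitrary complete metric, rule out positive curvature by noncompactness, and treat the three exceptional flat open surfaces ($\mathbb R^2$, the annulus/cylinder, and the open M\"obius band) by exhibiting explicit complete hyperbolic metrics. The only cosmetic difference is that the paper works on the universal cover and checks by hand that a free conformal deck action on $\mathbb C$ must be by Euclidean isometries, whereas you cite the packaged form of uniformization for non-simply-connected surfaces; both arguments are standard and equivalent.
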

\begin{proof}
Given an open manifold $M$, we equip it with a complete Riemannian metric, 
and pull the metric 
back to the universal cover $\widetilde M$, where 
$\pi_1(M)$ acts isometrically, and hence by
preserving the conformal class of the metric.
By the uniformization theorem
$\widetilde M$ is conformal to the hyperbolic plane
$\mathbf H^2$ or to $\mathbb C$. A self-diffeomorphism of $\mathbf H^2$ 
that preserve the conformal class of the hyperbolic metric
is an isometry of $\mathbf H^2$. 
A self-diffeomorphism of $\mathbb C$ that
preserves the conformal class of the Euclidean metric is
of the form $z\to az+b$ or $z\to a\bar z+b$, and hence it 
either is an isometry (i.e. $a\bar a=1$),
or its square has a fixed point 
(in fact, $z\to az+b$ fixes $\frac{b}{1-a}$
and similarly for the square of $z\to a\bar z+b$). The deck-transformation
$\pi_1(M)$-action is free, so it preserves the hyperbolic or the Euclidean 
metric. Thus $M$ admits a complete metric of constant curvature $-1$ or $0$. 
Finally, a flat open $2$-manifold is $\mathbb R^2$, an annulus, or a M\"obius band,
so it also admits a complete hyperbolic metric.
\end{proof}

Most of what is known on the geometrization of an
open $3$-manifold requires it to have finitely generated
fundamental group, or better yet to be the interior of a compact manifold. 

\begin{thm}
\label{thm: 3-mflds npc}
The interior of any compact aspherical $3$-manifold with nonempty boundary
admits a complete metric 
of $-1\le K\le 0$.
\end{thm}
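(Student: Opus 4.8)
The plan is to combine Thurston's geometrization of Haken manifolds with a gluing argument in the spirit of Leeb. First I would dispose of degenerate cases. Since $M$ is aspherical, $\pi_1(M)$ has finite cohomological dimension and hence is torsion-free; if in addition it is finite it is trivial, so $M$ is contractible and, by the Poincar\'e conjecture, diffeomorphic to $D^3$, whose interior $\mathbb R^3$ carries the flat metric; so assume henceforth that $\pi_1(M)$ is infinite. Passing to the orientation double cover I may assume $M$ is orientable, provided the metric I build can be chosen invariant under the deck involution and so descends --- which will hold because the geometric decomposition and the hyperbolic pieces below are canonical and the remaining choices can be made equivariantly. Asphericity gives $\pi_2(M)=\pi_2(\widetilde M)=0$, so the sphere theorem makes $M$ irreducible; combined with $\partial M\neq\emptyset$ this forces every boundary component to have positive genus (a sphere component would bound a ball, giving $M=D^3$), and a compact orientable irreducible $3$-manifold whose nonempty boundary contains no sphere is Haken.

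Next I would invoke the JSJ decomposition together with Thurston's hyperbolization theorem: there is a finite, possibly empty, collection $\mathcal T$ of disjoint incompressible tori cutting $M$ into pieces $M_1,\dots,M_r$, each either Seifert fibered or atoroidal, and the interior of each atoroidal piece carries a complete metric of constant curvature $-1$, with cusps along its torus boundary components and complete flaring ends --- modeled on $\cosh^2(r)\,g_\Sigma+dr^2$, $g_\Sigma$ hyperbolic, when the boundary component is a totally geodesic surface $\Sigma$ --- along its boundary components of positive genus, which can only arise from $\partial M$. The cases where $\mathcal T$ is empty (so $M$ is itself Seifert fibered or atoroidal) and the finitely many exceptional pieces modeled on $T^2\times[0,1]$ or on the twisted $I$-bundle over the Klein bottle are handled directly by flat or hyperbolic metrics.

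Now I would put on the interior of each piece a metric with $-1\le K\le0$ that is standard near every torus boundary component. Along the boundary components of $M$ keep the complete $K\equiv-1$ ends just described. On a Seifert piece take a metric with $-1\le K\le0$ that near each boundary torus is the product of a flat collar with a flat fibre circle, so that the bounding torus is totally geodesic and of freely prescribed shape (for a product piece $\Sigma\times S^1$ this is a hyperbolic metric on $\Sigma$ that is flat on collars of the boundary, times a flat circle; the general case is standard). On a hyperbolic piece keep Thurston's metric on a large compact core, so that past the core it is an exact cusp near every torus of $\mathcal T$. When a torus of $\mathcal T$ bounds Seifert pieces on both sides, both sides are flexible, so I match the induced flat structures and interpolate across $T\times[0,L]$ by a warped product $\phi(t)^2 g_T+dt^2$ with $\phi>0$ convex, $\phi''\le\phi$, $|\phi'|\le\phi$, and $\phi'$ vanishing where the two sides meet, which stays in $-1\le K\le0$ and makes that torus totally geodesic.

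The main obstacle is a torus $T\in\mathcal T$ adjacent to a hyperbolic piece. An exact hyperbolic cusp is rigid within the pinched category: one checks that the hyperbolic metric near such a cusp cannot be deformed off it by any warped product over a path of flat tori while keeping $-1\le K\le0$ --- starting from shape operator $-\mathrm{Id}$, the upper bound on the mixed curvatures forces each principal curvature to stay $\le-1$, whence the curvature of the torus slices falls below $-1$ unless the cusp simply continues --- and if the other side of $T$ is a second hyperbolic piece the two cusp shapes, pinned down by Mostow rigidity, generically disagree (as one already sees by gluing two copies of a cusped hyperbolic manifold along a homeomorphism of $T$ that moves the cusp modulus). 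One is therefore forced into a genuinely three-dimensional interpolation deep inside a neighbourhood of the cusp(s), which is the heart of Leeb's construction of $-1\le K\le0$ metrics on Haken manifolds containing a hyperbolic piece, and is the step I expect to be hardest. Once the pieces are made to agree to infinite order (or be totally geodesic) along each torus of $\mathcal T$, the glued metric is smooth and hence has $-1\le K\le0$ everywhere, these being local conditions, and it is complete since every end coming from $\partial M$ was built complete.
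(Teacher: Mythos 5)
Your proof follows essentially the same route as the paper: geometric (JSJ) decomposition into Seifert and atoroidal pieces, Thurston's hyperbolization of the atoroidal pieces as pared manifolds with rank-two cusps at the decomposition tori, Leeb's nonpositively curved metrics with flat totally geodesic boundary on the Seifert pieces, and Leeb's cusp-gluing procedure for the one genuinely hard step, which you correctly isolate and attribute. The one place you genuinely diverge is the non-orientable case: the paper works directly with the Bonahon--Siebenmann decomposition along incompressible tori \emph{and Klein bottles} (and rules out essential annuli and M\"obius bands by passing to the orientation cover only for that verification), whereas you pass to the orientation double cover of the whole manifold and promise to descend a deck-invariant metric. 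That promise is the only soft spot in your argument: one cannot average two metrics of $K\le 0$, so equivariance must be built into every choice --- the hyperbolic structures are canonical by Mostow, but the Seifert metrics, the warped-product interpolations, and the cusp gluings each have to be made involution-invariant by hand --- and the paper's route avoids this issue entirely. A further minor inaccuracy: your totally geodesic flaring model $\cosh^2(r)\,g_\Sigma+dr^2$ for the higher-genus ends is only available when the pared piece is acylindrical; in general the geometrically finite ends flare but the convex core boundary is merely a pleated surface. This is harmless, since those ends require no gluing and the complete geometrically finite hyperbolic metric already has $K\equiv -1$ there.
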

\begin{proof}
We refer to~\cite{Kap-book} for the terminology used in this proof.
Let $N$ be a compact aspherical $3$-manifold. 
Asphericity implies that $N$ contains
no essential $2$-sphere or projective plane.
There is a decomposition of $N$ along incompressible $2$-sided
tori and Klein bottles into Seifert and atoroidal pieces,
see~\cite{BonSie1987}, which extends previous proofs by 
Johannson and Jaco-Shalen to the non-orientable case.
Each piece is $\pi_1$-injectively embedded and hence 
aspherical.

Atoroidal pieces contain no essential annuli or M\"obius
bands whose boundary circles lie on the components of
$\d N$ of zero Euler characteristic
(for the orientable case see e.g.~\cite[Lemma 1.16]{Hat-3mfld-notes}, and the
non-orientable case follows by passing to the orientation cover for
an essential annulus or M\"obius band stays essential in the orientation cover,
and a virtually Seifert piece is Seifert).

Define the pared manifold structure on every atoroidal piece N by letting
its parabolic locus $P$ be the union of boundary components of zero Euler
characteristic. Thurston's hyperbolization theorem~\cite[Theorem 1.43]{Kap-book} 
gives $(N,P)$ a geometrically finite hyperbolic structure whose parabolic subgroups are of
rank $2$ and bijectively corresponding to components of $P$. 
Every Seifert piece has a nonpositively curved metric
with totally geodesic (flat) boundary by Leeb~\cite{Lee-3mnf-npc}.
In the same paper Leeb gives a gluing procedure for identifying 
rank $2$ cusps and flat boundary components of Seifert pieces.
(Leeb assumes that hyperbolic pieces have boundary of zero Euler characteristic
but this does not matter since his gluing runs in the cusps, 
which in our case have rank $2$). The result is a complete nonpositively curved
metric on the interior of $N$.
\end{proof}

\begin{rmk}
A variation of the above proof identifies $N$ with a compact
locally convex $C^1$ manifold of $K\le 0$ with boundary, namely,
replace geometrically finite pieces with the $\e$-neighborhoods of their convex cores, 
where the cusps are chopped off, and the
metric at cusps are modified so that their boundaries are flat and
totally geodesic.
Thus the group $\pi_1(N)$ is CAT($0$), which was first noted 
in~\cite[Theorem 4.3]{Bri-3mfd-npc} with a slightly 
different proof.
\end{rmk}

\begin{rmk} 
More information on $3$-manifold groups can be found
in the survey~\cite{AFW-3mflds-grps}; in particular,
if $M$ is open aspherical $3$-manifold with finitely generated 
fundamental group, then the advances on the virtual Haken conjecture
imply that $\pi_1(M)$ virtually embeds into a right angled Artin 
group, which has a number of group-theoretic consequences, e.g. $\pi_1(M)$
is linear over $\mathbb Z$.
\end{rmk}

By Scott/Shalen compact core theorem 
any open aspherical $3$-manifold $M$ with finitely generated 
deformation retracts to a compact codimension zero submanifold
(to which Theorem~\ref{thm: 3-mflds npc} applies) but the topology
of $M$ is still a mystery.

A open $3$-manifold is called {\it tame\,} if it is homeomorphic
to the interior of a compact manifold. Marden's Tameness Conjecture
predicted tameness of every open complete $3$-manifold with $K\equiv -1$
and finitely generated fundamental group, and it was proved
by Agol~\cite{Ago-tame} and Gabai-Calegari~\cite{CalGab-tame}.
The following question is still open:

\begin{quest} Let $M$ be an open $3$-manifold  
with a complete metric of $K\le 0$ and
finitely generated fundamental group. Is $M$ tame? 
\end{quest}

The same question is open for manifolds of $K\le -1$ or 
$-1\le K\le 0$.
The answer is yes under any of the following assumptions: 
\begin{itemize}
\item 
$M$ admits a complete negatively pinched metric (due to 
Bowditch \cite{Bow-tame} who built on proofs 
of Marden's Tameness Conjecture by Agol, Calegari-Gabai, 
and Soma).
\item 
$M$ is a cover of the interior of a compact manifold
(as proved by Long-Reid in~\cite{Can-tamness-2008} that 
combines results of Simon with the proof of Tameness Conjecture;
here the assumption that $M$ has $K\le 0$ is not needed). 
\item $M$ 
has a complete metric of $-1\le K\le 0$ and
$\mathrm{Inj}\,\mathrm{Rad}\to 0$ (as proved by Schroeder~\cite[Appendix 2]{BGS}).
\end{itemize}
By contrast, there exits non-tame, open $3$-manifolds
with universal cover diffeomorphic to $\mathbb R^3$ and  
fundamental groups isomorphic to $\mathbb Z$~\cite{ScoTuc}
and $\mathbb Z\ast\mathbb Z$ \cite{FreGab}. 

\begin{ex}
The non-tame fake open solid torus in~\cite{ScoTuc} does not
admit a complete metric of $K\le 0$ because
$\mathbb R^2$-bundles over $S^1$ are the only open, complete 
$3$-manifolds of $K\le 0$ with infinite cyclic fundamental group.
(The isometric $\mathbb Z$-action in the 
universal cover must stabilize a geodesic or a horoball,
in the former case the nearest point projection to the geodesic
descends to an $\mathbb R^2$-bundle over $S^1$, while
in the latter case the quotient is the product 
of $\mathbb R$ and an open surface homotopy equivalent 
to a circle, which is an $\mathbb R$-bundle over $S^1$).
\end{ex}

\section{Towards a rough classification of discrete isometry groups}
\label{sec: rough}

In this section we sketch a higher-dimensional analog of 
the classification of open complete constant
curvature surfaces; details 
appear in Sections~\ref{sec: non-parabolic}--\ref{sec: anchored}.
We refer to~\cite{BGS, Ebe-book, BriHae} and Section~\ref{sec: flavors of neg curv} 
for background on Hadamard manifolds, adopt the following notations:
\begin{itemize}
\item
$X$ is a Hadamard manifold with ideal boundary $X(\infty)$, 
\vspace{1pt}
\item
$\Iso(X)$ is the isometry group of $X$,
\vspace{1pt}
\item
$\G$ is a subgroup of $\Iso(X)$,
\end{itemize}
and consider the following three classes of
complete manifolds of $K\le 0$ of the form $X/G$
where $\G\le\Iso(X)$ is discrete and torsion-free:
\begin{enumerate}
\item
$\G$ contains no parabolic elements.
\vspace{1pt}
\item
$\G$ contains a rank one element.
\vspace{1pt}
\item 
$\G$ fixes a point $\xi\in X(\infty)$, and the associated $\G$-action
on  the space $L_\xi$ of lines asymptotic to $\xi$ is
free and properly discontinuous.
\end{enumerate}
There are severe algebraic restrictions on $\G$ in the cases (1)-(2), and in the case
(3) the manifold $X/\G$ is diffeomorphic to the product of $\mathbb R$ and $L_\xi/\G$,
where $L_\xi$ is diffeomorphic to a Euclidean space. 

\begin{ex}
$\G$ satisfies (3) if it stabilizes a horoball. 
(Indeed, $L_\xi$ is equivariantly diffeomorphic to a horosphere, and
the discreteness of $\G$ implies that its action on 
$L_\xi$ is properly discontinuous).
\end{ex}

\begin{quest}
Suppose $X$ has rank one. Does every discrete torsion-free isometry
group $\G$ of $X$ satisfy one of the conditions \textup{(1), (2), (3)\,} above?
\end{quest}

The answer is yes if $X$ is visibility (i.e. any two points of $X(\infty)$
are endpoints of a geodesic, which happens e.g. if $K\le -1$), 
see Corollary~\ref{cor: visib horoball}.

Without any assumption on $X$ the answer is no, e.g. when $X/\G$ is
the product of two finite volume, complete, open, hyperbolic surfaces.

\begin{rmk} 
(a) The classes (1), (2), (3) are clearly not disjoint, e.g.
a the cyclic group generated by a translation in $\mathbb R^n$
lies in (1) and (3). On the other hand, any (discrete) subgroup 
satisfying (2) and (3) is virtually cyclic, see 
Proposition~\ref{prop: fixed pt discrete rk1 implies virt-Z}.\vspace{3pt}
\newline (b) 
A given group may be isomorphic to three different isometry groups 
satisfying (1), (2), (3) respectively.\vspace{3pt}
\newline (c)
If $\G\le\Iso(X)$ is a discrete subgroup, then 
the requirement ``$\G$ is isomorphic to a discrete isometry group 
of a Hadamard manifold that satisfies (3)''
does not restrict $\G$ because the isometric $\G$-action
on the warped product Hadamard manifold $\mathbb R\times_{e^r}\! X$
stabilizes a horoball. On the other hand, if we fix the
dimension, this does become a nontrivial restriction as follows.\vspace{3pt}
\newline (d) 
Define the {\it Euclidean action dimension\,}
as the smallest dimension of a Euclidean space
on which $\G$ acts smoothly and properly discontinuously,
and if the Euclidean action dimension of $\G$ equals $\dim(X)$,
then $\G$-action on $X$ cannot satisfy (3) because $L_\xi$ is
diffeomorphic to the Euclidean space of lower dimension.
See~\cite{BKK02, BesFei02, Yoo04, Des06} for computations
of a related invariant called the {\it action dimension\,} which
usually equals the Euclidean action dimension.
\end{rmk}

\section{Groups of non-parabolic isometries}
\label{sec: non-parabolic}

In this section we discuss groups in the class (1) of 
Section~\ref{sec: rough}.
An isometry of $X$  is {\it elliptic, axial\,}, or 
{\it parabolic\,} if the minimum of its displacement function is 
zero, positive, or not attained, respectively. 
If $\g$ is a non-parabolic isometry, then the set $\mathrm{Min}(\g)$
of points where the displacement functions attains a minimum 
splits as $C_\g\times \mathbb R^k$,
where $C_\g$ is a closed convex subset with
$\g$ acting as the product of the trivial action on $C_\g$
and a translation on $\mathbb R^k$~\cite[Theorem II.7.1]{BriHae}. 
If $\g$ is axial, its axes are precisely the lines $\{x\}\times\mathbb R$, $x\in C_\g$.

Theorems~\ref{thm: abelian semisimple}, \ref{thm: normalizer of abelian semisimple},
\ref{thm: semisimple nonembed}(2ab) are part of the 
flat torus theorem ``package'' discovered 
by Gromoll-Wolf~\cite{GroWol-flat-tor} and Lawson-Yau~\cite{LawYau-flat-tor}, 
and generalized in~\cite{BriHae}. 

\begin{thm}
\label{thm: abelian semisimple}
Let $A\le\Iso(X)$ be an abelian discrete subgroup 
that consists of non-parabolic isometries. Set $\mathrm{Min}(A):=\cap_{a\in A}\mathrm{Min}(a)$.
Then 
\newline 
\textup{(1)} $\mathrm{Min}(A)$ is a nonempty,
closed, convex $A$-invariant subset that splits 
as \newline\phantom{\textup{(1)}}  $C_A\times\mathbb R^m$ where $A$ acts trivially on $C_A$
and by translations on $\mathbb R^m$;\newline
\textup{(2)} $A$ is finitely generated of rank $\le m\le\dim(X)$;\newline
\textup{(3)}
$A$ has finite intersection with each conjugacy class in $\mathrm{Iso}(X)$.
\end{thm}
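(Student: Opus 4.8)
The plan is to deduce (1) from the structure of a single non-parabolic isometry together with a commutation lemma, and then read off (2) and (3) as short consequences of (1).

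\textbf{Step 1: the commutation lemma.} Recall from \cite[Theorem~II.7.1]{BriHae} that for one non-parabolic $a$ the set $\mathrm{Min}(a)$ is nonempty, closed, convex, $\langle a\rangle$-invariant, and splits isometrically as $C_a\times\mathbb R^{k_a}$ with $a$ trivial on $C_a$ and a translation on $\mathbb R^{k_a}$. The crucial point is that if $b\in\mathrm{Iso}(X)$ commutes with $a$ and is itself non-parabolic, then $b(\mathrm{Min}(a))=\mathrm{Min}(bab^{-1})=\mathrm{Min}(a)$, $b$ respects the product decomposition of $\mathrm{Min}(a)$, inducing a non-parabolic isometry $b'$ of $C_a$ and a translation on $\mathbb R^{k_a}$, and $\mathrm{Min}(a)\cap\mathrm{Min}(b)=\mathrm{Min}_{C_a}(b')\times\mathbb R^{k_a}$, which is again nonempty, closed, convex, and carries a splitting of the same type for $\langle a,b\rangle$; this is carried out in \cite[Ch.~II.7]{BriHae}. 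Iterating the lemma gives, for any finitely generated subgroup $A'=\langle a_1,\dots,a_r\rangle\le A$, that $\mathrm{Min}(A')=\bigcap_i\mathrm{Min}(a_i)$ is nonempty, closed, convex, $A'$-invariant, and splits as $C_{A'}\times\mathbb R^{m'}$ with $A'$ trivial on $C_{A'}$ and translating on $\mathbb R^{m'}$; since $\mathbb R^{m'}$ is an isometrically embedded totally geodesic submanifold of $X$, $m'\le\dim(X)$.

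\textbf{Step 2: the general case of (1).} Choose a finitely generated $A_0\le A$ whose Euclidean factor $\mathbb R^m$ in $\mathrm{Min}(A_0)=C_{A_0}\times\mathbb R^m$ has $m$ maximal, which exists since $m\le\dim(X)$. Every $a\in A$ commutes with $A_0$, hence preserves $\mathrm{Min}(A_0)$ and its splitting; the Euclidean factor of $\mathrm{Min}(\langle A_0,a\rangle)$ inside $\mathrm{Min}(A_0)$ is the product of $\mathbb R^m$ with the Euclidean factor of the minimal set of $a|_{C_{A_0}}$, so maximality of $m$ forces $a|_{C_{A_0}}$ to be elliptic. Thus $\{a|_{C_{A_0}}:a\in A\}$ is an abelian group of elliptic isometries of the Hadamard manifold $C_{A_0}$, and, using that the discrete group $A$ acts properly discontinuously, one shows it has a nonempty closed convex common fixed set $C_A\subseteq C_{A_0}$, whence $\mathrm{Min}(A)=\bigcap_{a\in A}\mathrm{Min}(a)=C_A\times\mathbb R^m$ with $A$ trivial on $C_A$ and translating on $\mathbb R^m$. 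The nonemptiness of $\mathrm{Min}(A)$ for $A$ \emph{not} assumed finitely generated (equivalently the existence of this common fixed set) is the main obstacle, and is part of the flat torus theorem package of \cite[Ch.~II.7]{BriHae}.

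\textbf{Step 3: deducing (2) and (3).} Let $\tau\colon A\to\mathbb R^m$ send $a$ to its translation vector on the $\mathbb R^m$-factor of $\mathrm{Min}(A)$. Elements of $\ker\tau$ act trivially on all of $\mathrm{Min}(A)$, hence lie in the stabilizer $\mathrm{Iso}_p(X)$ of a chosen $p\in\mathrm{Min}(A)$, which is compact; as $A$ is discrete, $\ker\tau$ is finite. Any $p\in\mathrm{Min}(A)$ is moved by $a$ exactly distance $\|\tau(a)\|$, so proper discontinuity of the $A$-action makes $\{a\in A:\|\tau(a)\|\le R\}$ finite for each $R$; hence $\tau(A)$ is discrete in $\mathbb R^m$, so $\tau(A)\cong\mathbb Z^k$ with $k\le m$. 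Therefore $A$ is an extension of $\mathbb Z^k$ by the finite group $\ker\tau$, so it is finitely generated of rank $k\le m\le\dim(X)$, which is (2). For (3), the displacement function of $a$ is constantly $\|\tau(a)\|$ on $\mathrm{Min}(A)\subseteq\mathrm{Min}(a)$, so the minimal displacement satisfies $|a|=\|\tau(a)\|$; since conjugate isometries have equal minimal displacement, any conjugacy class meeting $A$ in an element $a_0$ meets it inside $\{a\in A:\|\tau(a)\|=|a_0|\}\subseteq\{a\in A:\|\tau(a)\|\le|a_0|\}$, which is finite by the bound just established, giving (3).

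I expect Step 2 to be where essentially all the work lies: the single-isometry structure theorem and the commutation lemma are standard, and (2)--(3) are bookkeeping, but passing from finitely generated subgroups to all of $A$ — producing a common minimal set without a priori finite generation — is the delicate part and is the content borrowed from \cite{BriHae}.
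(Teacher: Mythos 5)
Your proposal is correct and follows essentially the same route as the paper, which simply cites \cite[Lemma 7.1(1)]{BGS} for (1), gives exactly your "proper discontinuity of the translation action on $\mathbb R^m$" argument for (2), and cites \cite[Lemma II.7.17(2)]{BriHae} for (3) — whose content is precisely your translation-length computation. The one imprecision is in Step 2: the common fixed point for the abelian group of elliptic parts acting on $C_{A_0}$ does not come from proper discontinuity (the image of $A$ in $\Iso(C_{A_0})$ need not be discrete); in the Riemannian setting the mechanism is that the descending family of totally geodesic fixed-point sets of finite subsets must stabilize for dimension reasons — but since you explicitly defer this step to the flat torus theorem package, this is cosmetic rather than a gap.
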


\begin{rmk}
Any periodic abelian discrete subgroup of $\Iso(X)$ is finite
(because it is countable, hence locally finite, and so is a union
of finite subgroups whose fixes points set is a descending family
of totally geodesics submanifolds of $X$, which has to stabilize by
dimension reasons).
Thus abelian discrete groups of 
non-parabolic isometries are finitely generated, which is 
a key feature of the Riemannian setting.
By contrast $\mathbb Q$ can act properly
by non-parabolic isometries on a proper CAT($0$) space, which
is the product of a simplicial tree and 
a line~\cite[Example II.7.13]{BriHae}. 
\end{rmk}

\begin{thm}
\label{thm: normalizer of abelian semisimple}
Let $A\le\Iso(X)$ be an abelian discrete subgroup 
that consists of axial isometries, and let
$N\le\Iso(X)$ is a subgroup that normalizes $A$. Then\newline
\textup{(4)} $N$ stabilizes 
$\mathrm{Min}(A)$ 
and preserves its product decomposition;\newline
\textup{(5)} $A$ is centralized by a finite index subgroup of $N$;\newline
\textup{(6)} $A$ is a virtual direct factor of $N$ if $N$ is finitely generated and $A\le N$.
\end{thm}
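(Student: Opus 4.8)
The plan is to exploit the canonical nature of $\mathrm{Min}(A)$ and its splitting from Theorem~\ref{thm: abelian semisimple}, and then to control the $N$-action on the Euclidean factor by passing to its rotational part. Throughout I normalize as follows: since $A$ consists of axial isometries it is torsion free, so $A\cong\mathbb Z^m$ with $m=\mathrm{rank}(A)$; replacing the Euclidean factor of $\mathrm{Min}(A)$ by the linear span of the translation vectors $\{t_a:a\in A\}$ and absorbing its orthogonal complement into the $C_A$-factor, I may assume that $A$ acts on a factor $\mathbb R^m$ as a \emph{full-rank} lattice $\Lambda$ of translations and trivially on $C_A$, so that $a\mapsto t_a$ defines an isomorphism $A\cong\Lambda$.

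For (4): the displacement function of $nan^{-1}$ at $x$ equals that of $a$ at $n^{-1}x$, so $\mathrm{Min}(nan^{-1})=n\cdot\mathrm{Min}(a)$; intersecting over $a\in A$ and using that conjugation by $n$ permutes $A$ gives $n\cdot\mathrm{Min}(A)=\mathrm{Min}(A)$. The product decomposition is intrinsic to the pair $(\mathrm{Min}(A),A)$: for instance $\{p\}\times\mathbb R^m$ is the convex hull of the union of the axes of the nontrivial elements of $A$ that pass through $p$, and the slices $C_A\times\{v\}$, $v\in\mathbb R^m$, form the orthogonal foliation. Since $n$ carries the axis of $a$ through $p$ to the axis of $nan^{-1}\in A$ through $np$, it preserves both foliations; an isometry of a metric product preserving both factor foliations is a product isometry (see~\cite{BriHae}), so $n$ acts on $\mathrm{Min}(A)=C_A\times\mathbb R^m$ as $\phi_n\times\psi_n$ with $\phi_n\in\Iso(C_A)$ and $\psi_n\in\Iso(\mathbb R^m)$. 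Let $\rho\colon N\to\Iso(\mathbb R^m)=\mathbb R^m\rtimes O(m)$ be the resulting homomorphism and $\lambda\colon N\to O(m)$ its rotational part.

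For (5): a direct computation shows the translation vector of $nan^{-1}$ is $\lambda(n)\,t_a$, so $\lambda(n)\Lambda=\Lambda$; hence $\lambda(N)$ lies in the stabilizer of $\Lambda$ in $O(m)$, which is finite (it is closed in $O(m)$, hence compact, and discrete since $\Lambda$ is discrete). Thus $\ker\lambda$ is a finite-index subgroup of $N$, and if $n\in\ker\lambda$ then $nan^{-1}$ and $a$ have the same translation vector, hence are equal by injectivity of $a\mapsto t_a$; so $\ker\lambda$ centralizes $A$, which is (5). For (6), assume $N$ is finitely generated and $A\le N$, and set $C:=C_N(A)$; this is a finitely generated subgroup of finite index in $N$ (it contains $\ker\lambda$), and $A\le Z(C)$. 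Since $C$ centralizes $\Lambda$, the rotational part of $\rho|_C$ is trivial, so $\rho|_C\colon C\to\mathbb R^m$ is a homomorphism into an abelian group whose image is finitely generated, torsion free, and contains the full-rank lattice $\rho(A)=\Lambda$; hence $\rho(C)\cong\mathbb Z^m$ with $[\rho(C):\Lambda]<\infty$. Put $K:=\ker(\rho|_C)$. Then $K\cap A=\{a:t_a=0\}=\{e\}$ and $K\trianglelefteq C$, so $AK$ is a subgroup of index $[\rho(C):\Lambda]<\infty$ in $C$ (hence in $N$), and since $A$ is central in $C$ and meets $K$ trivially, $AK=A\times K$. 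Thus $A$ is a direct factor of the finite-index subgroup $AK\le N$, i.e.\ a virtual direct factor.

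The most delicate point is the assertion in (4) that $N$ preserves the \emph{product structure} of $\mathrm{Min}(A)$ and not merely the underlying set: this rests on an intrinsic description of the Euclidean factor together with the product-isometry fact above, and is where the machinery of~\cite{BriHae} is genuinely needed. In (6) the finite generation of $N$ is essential --- otherwise $\rho(C)$ could be a non-discrete subgroup of $\mathbb R^m$ in which $\Lambda$ has infinite index, and the splitting $AK=A\times K$ would fail to have finite index in $N$.
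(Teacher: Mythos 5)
Your arguments for (4) and (5) are correct and essentially reconstruct the relevant pieces of the flat torus theorem, which is all the paper does here (it simply cites \cite{BriHae}). The genuine gap is in (6), at the step ``$\rho(C)$ is finitely generated, torsion free, and contains the full-rank lattice $\rho(A)=\Lambda$; hence $\rho(C)\cong\mathbb{Z}^m$ with $[\rho(C):\Lambda]<\infty$.'' That inference is false: a finitely generated subgroup of $\mathbb{R}^m$ containing a full-rank lattice can have rank strictly greater than $m$, in which case it is non-discrete and contains $\Lambda$ with \emph{infinite} index. Your closing remark shows you saw that non-discreteness of $\rho(C)$ is the enemy, but finite generation of $N$ does not exclude it. Concretely, take $X=\mathbb{R}^2$, let $A$ be generated by the translation by $(1,0)$ and $N\cong\mathbb{Z}^2$ by the translations by $(1,0)$ and $(\sqrt2,1)$. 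Then $C=N$, $m=1$, $\rho(C)=\mathbb{Z}+\sqrt2\,\mathbb{Z}$ has rank $2$ and contains $\Lambda=\mathbb{Z}$ with infinite index, and $K=\ker(\rho|_C)$ is trivial; so $AK=A$ has infinite index in $N$, and your candidate splitting fails even though $A$ is visibly a direct factor of $N$.

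The repair is standard and stays entirely within your framework: do not try to complement $A$ by $\ker(\rho|_C)$. Instead use the purely algebraic fact that any subgroup $B\cong\mathbb{Z}^m$ of a finitely generated free abelian group $T$ is a virtual direct factor of $T$: pass to the saturation $B'=\{x\in T: kx\in B \text{ for some } k\neq 0\}$, which is a genuine direct factor $T=B'\oplus W$ with $[B':B]<\infty$, so that $B\oplus W$ has finite index in $T$ and retracts onto $B$. Apply this with $T=\rho(C)$ and $B=\Lambda$, set $C'=(\rho|_C)^{-1}(\Lambda\oplus W)$ (finite index in $C$, hence in $N$), and let $\phi\colon C'\to\Lambda\cong A$ be $\rho$ followed by the retraction. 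Then $\phi$ restricts to the identity on $A$, and since $A$ is central in $C'$ the map $c\mapsto(\phi(c),\,\phi(c)^{-1}c)$ gives $C'=A\times\ker\phi$, which is the desired virtual direct factor decomposition.
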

\begin{proof}[Proof of Theorems~\ref{thm: abelian semisimple},~\ref{thm: normalizer of abelian semisimple}]
(1) is proved in~\cite[Lemma 7.1(1)]{BGS}. Discreteness of $A$ and (1)
implies that $A$-action on $\mathbb R^m$ is properly discontinuous,
so $A$ has rank $\le m$, which gives (2).
Claim (3) follows from~\cite[Lemma II.7.17(2)]{BriHae}, 
and (4), (5), (6) are parts of the flat torus 
theorem~\cite[Theorem II.7.1]{BriHae}. 
\end{proof}

\begin{thm} 
\label{thm: semisimple nonembed}
Let $\G\le\mathrm{Iso}(X)$ be a subgroup without parabolic elements.
\newline
\textup{(1)} If $H$ is commensurable to $\G$, then $H$ is isomorphic
to a discrete group of \phantom{\textup{(3)}} non-parabolic isometries of some Hadamard manifold.
\newline
\textup{(2)} The following groups do not embed into $\G$:\newline
\phantom{\textup{(2)}} \textup{(a)}
 any solvable group that is not virtually abelian;
\newline
\phantom{\textup{(2)}} \textup{(b)}
the Baumslag-Solitar group $\langle x,y\,|\, xy^mx^{-1}=y^l\rangle$ with $m\neq\pm l$;
\newline
\phantom{\textup{(2)}} \textup{(c)}\! $\pi_1(L)$, where $L$ is a closed aspherical $3$-manifold
that admits no metric \newline \phantom{\textup{(2)} \textup{(c)}}
of $K\le 0$. 
\end{thm}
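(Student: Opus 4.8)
The plan is to establish the three assertions in turn, deducing (2)(c) from (2)(a).

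\textbf{Parts (1) and (2)(b).} For (1), commensurability provides finite-index subgroups $H_0\le H$ and $\G_0\le\G$ with $H_0\cong\G_0$; since $\G_0\le\G$ is discrete and, like $\G$, has no parabolic elements, $H_0$ is a discrete group of non-parabolic isometries of $X$, so it remains to promote the $H_0$-action to $H$. I would use the coinduced action on the product $Y$ of $[H:H_0]$ copies of $X$ indexed by $H/H_0$, with $H$ permuting the factors according to $H/H_0$ and acting on the factors through the (twisted) embedding of $H_0$. A product of Hadamard manifolds is Hadamard, the $H$-action on $Y$ is isometric for the product metric and properly discontinuous, and the one point needing care is that each $h\in H$ acts by a semisimple isometry: decompose the permutation of the factors induced by $h$ into cycles; on a cycle of length $k$ the element $h^k$ is diagonal and built from an element conjugate to one in $\G_0$, hence semisimple, and a cyclic shift of the factors composed with isometries whose product around the cycle is semisimple is itself semisimple (its displacement function attains its infimum). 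Part (2)(b) is short: if $\langle x,y\mid xy^mx^{-1}=y^l\rangle$ with $m\neq\pm l$ embeds in $\G$, then $y$ has infinite order (Britton's lemma), so its image $\g\in\G$ has infinite order; an infinite-order element of a discrete isometry group is not elliptic, and by hypothesis not parabolic, so $\g$ is axial with $\ell(\g)>0$. Translation length is conjugacy-invariant and satisfies $\ell(\g^n)=|n|\,\ell(\g)$, so the defining relation gives $|m|\,\ell(\g)=|l|\,\ell(\g)$, hence $|m|=|l|$, a contradiction.

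\textbf{Part (2)(a).} The plan is to run the argument of the Solvable Subgroup Theorem with Theorems~\ref{thm: abelian semisimple} and~\ref{thm: normalizer of abelian semisimple} replacing their cocompact counterparts; dropping cocompactness costs only finite generation, so ``virtually abelian'' survives. By Theorem~\ref{thm: abelian semisimple}(2) every abelian subgroup of $\G$ is finitely generated, so any solvable subgroup has the maximal condition on abelian subgroups and is therefore polycyclic, in particular finitely generated; thus it suffices to treat finitely generated solvable $S$. Induct on the derived length, the base case being Theorem~\ref{thm: abelian semisimple}. For the inductive step, let $A$ be the last nonzero term of the derived series of $S$: it is finitely generated abelian and normal, so by Theorem~\ref{thm: abelian semisimple}(3) its centralizer has finite index in $S$, and after passing to that subgroup $A$ becomes central. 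By Theorem~\ref{thm: normalizer of abelian semisimple}(4), $S$ preserves $\mathrm{Min}(A)=C_A\times\R^m$ and its splitting; on the $\R^m$-factor the image of $S$ normalizes the full-rank lattice $A$, hence (Bieberbach) is virtually $\Z^m$ with $A$ of finite index, while its kernel acts properly and semisimply on $C_A$, has smaller derived length, and so is virtually abelian by induction. The delicate step is reassembling these pieces: after one more finite-index passage $S$ is finitely generated nilpotent of class $\le2$, and a finitely generated nilpotent group acting properly by semisimple isometries is virtually abelian---otherwise it contains the integral Heisenberg group, and applying Theorem~\ref{thm: normalizer of abelian semisimple} to the central generator $c=[a,b]$ (which maps to an axial isometry) and projecting the induced action on the Euclidean factor onto the axis of $c$ yields a homomorphism to $\R$ nonzero on $c$, contradicting that $c$ is a commutator.

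\textbf{Part (2)(c).} By geometrization together with Leeb's theorem that closed aspherical $3$-manifolds which are not graph manifolds admit metrics of $K\le0$, a closed aspherical $3$-manifold $L$ with no such metric is a Nil-manifold, a Sol-manifold, an $\widetilde{\mathrm{SL}}_2$-manifold, or a graph manifold violating the combinatorial gluing condition for nonpositive curvature. For Nil- and Sol-manifolds $\pi_1(L)$ is solvable and not virtually abelian, so it does not embed in $\G$ by part (a). For an $\widetilde{\mathrm{SL}}_2$-manifold, suppose $\pi_1(L)\hookrightarrow\G$: the regular fiber is an infinite-order central element $z$, its image $\z$ is axial, and $\pi_1(L)$ centralizes $\z$, so by Theorem~\ref{thm: normalizer of abelian semisimple}(4) it preserves $\mathrm{Min}(\z)=C\times\R^m$ and its splitting; projecting the action on $\R^m$ onto the axis of $\z$ yields a homomorphism $\pi_1(L)\to\R$ nonzero on $z$, which is impossible because the nonzero Euler number makes $z$ torsion in $H_1(\pi_1(L);\Z)$ (pass first to the orientation double cover if needed). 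The main obstacle is the remaining case of graph manifolds: there I would take the JSJ graph-of-groups decomposition of $\pi_1(L)$, apply Theorem~\ref{thm: abelian semisimple} to the $\Z^2$ edge groups and Theorem~\ref{thm: normalizer of abelian semisimple} to the fiber classes of the Seifert vertex groups to build a compatible configuration of flats and axes in $X$, and then argue that their mutual positions encode exactly the combinatorial data whose inconsistency obstructs a metric of $K\le0$, so that embeddability in $\G$ would contradict the hypothesis on $L$.
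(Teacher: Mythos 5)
Your parts (1) and (2b) are correct: (1) is the induced/coinduced representation construction that the paper simply cites from Kapovich--Leeb, and your translation-length computation for (2b) is a clean, self-contained substitute for the paper's appeal to Theorem~\ref{thm: abelian semisimple}(3). The genuine problems are in (2a) and (2c).

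In (2a) the reassembly step fails. After centralizing $A=S^{(d-1)}$ you have an extension of a virtually abelian quotient (the image in $\Iso(\R^m)$) by a virtually abelian kernel, and you assert that one more finite-index passage makes $S$ nilpotent of class $\le 2$. An extension of virtually abelian by virtually abelian need not be virtually nilpotent (the $3$-dimensional Sol group is the standard counterexample); the deduction is valid only when $A$ is the full derived subgroup $[S,S]$, i.e.\ when $d=2$, and for derived length $\ge 3$ nothing in your argument forces the extension to be virtually central. Two smaller defects feed into this: Theorem~\ref{thm: abelian semisimple}(2) only gives $\mathrm{rank}(A)\le m$, so $A$ need not be a full-rank lattice and ``Bieberbach'' does not put $A$ of finite index in the image of $S$ in $\Iso(\R^m)$; and the kernel acts on the convex set $C_A$, not on a Hadamard manifold, so your induction hypothesis does not literally apply to it. The paper sidesteps all of this by inducting on Hirsch length and splitting off virtual $\mathbb Z$-direct factors one at a time via Theorem~\ref{thm: normalizer of abelian semisimple}(6); that splitting statement is precisely the reassembly device your argument is missing (your Heisenberg computation is a special case of it).

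In (2c) the graph-manifold case is not a proof but a restatement of the problem. Showing that a proper action of the fundamental group of a graph manifold by non-parabolic isometries on a Hadamard manifold produces an actual metric of $K\le 0$ on the manifold is the content of Kapovich--Leeb's theorem (with the combinatorial criteria due to Buyalo--Svetlov); ``argue that the mutual positions of the flats encode exactly the combinatorial data'' is the hard theorem, not a step one can leave to the reader. The paper's proof is a citation: $L$ is virtually Haken by Agol, and Kapovich--Leeb prove that a Haken aspherical $3$-manifold whose fundamental group acts properly by semisimple isometries on a Hadamard space admits a metric of $K\le 0$ (your part (1) supplies the passage to the finite cover). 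Your explicit arguments for the Nil, Sol and $\widetilde{SL}_2(\mathbb R)$ cases are fine, but without the graph-manifold case the statement is not established.
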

\begin{proof}
(1) is immediate
via the induced representation construction~\cite[Theorem 2.3]{KapLee-hadgr}. 
To prove (2a) note that $G$ must be polycyclic (combine finite generation of 
abelian discrete subgroups of non-parabolic isometries with Mal'cev's theorem that
a solvable group whose abelian subgroups are finitely generated is polycyclic).
Then proceed by induction on Hirsch length, and use  
Theorem~\ref{thm: normalizer of abelian semisimple}(6) to split virtual
$\mathbb Z$-factors one at a time, see~\cite[Theorem II.7.16]{BriHae}. 
Also (2b) follows from Theorem~\ref{thm: abelian semisimple}(3),
see~\cite[Theorem III.$\Gamma$.1.1(iii)]{BriHae}. Finally, to prove
(2c) invoke the solution of the virtual Haken conjecture~\cite{Ago-virthak}; thus
$L$ has a Haken finite cover,
so it admits a metric of $K\le 0$ by the main result in~\cite[Corollaries 2.6-2.7]{KapLee-hadgr}.
\end{proof}

\begin{rmk}\ \newline\vspace{-15pt}
\begin{enumerate}  
\item
If a closed aspherical $3$-manifold admits no metric of $K\le 0$,
then it is Seifert or graph (the manifold is virtually Haken~\cite{Ago-virthak}
so the claim follows from~\cite{Lee-3mnf-npc, KapLee-hadgr}).  
\item
Closed aspherical Seifert manifolds that admit no metric of $K\le 0$
are precisely those modelled on Nil, Sol, or $\widetilde{SL}_2(\mathbb R)$
as easily follows from Theorems~\ref{thm: abelian semisimple}(6),~\ref{thm: semisimple nonembed}(2a), 
and the observation that
the Seifert manifolds modelled on $\mathbb R^3$ or $\mathbb H^2\times\mathbb R$
are nonpositively curved.
\item
The problem which orientable closed graph manifolds
admit metrics of $K\le 0$ was resolved 
in~\cite{BuyKib-geometr-graph-II, BuySve-surv} 
who found several combinatorial criteria on the gluing data.
(These papers only consider manifolds with no embedded
Klein bottles, but the assumption can be easily removed as 
was explained to the author by Buyalo).
\item
A non-orientable closed $3$-manifold admits
a metric of $K\le 0$ if and only if its orientation cover 
does~\cite{KapLee-hadgr}.
\item A closed aspherical graph manifold admits a metric of $K\le 0$
if and only if its fundamental group virtually embeds into a 
right angled Artin group~\cite{Liu-graph}.
\item
Kapovich-Leeb used Theorem~\ref{thm: semisimple nonembed}(2c) to give 
other examples of groups that do not act
on Hadamard spaces by non-parabolic isometries~\cite{KapLee-hadgr}.
\end{enumerate}
\end{rmk}

\section{Groups with rank one elements: prelude}
\label{sec: rk1}

Sections~\ref{sec: rk1}--\ref{sec: orbit equiv}
collect what is known on the class (2) of 
Section~\ref{sec: rough}.

An axial isometry $\g$ has {\it rank one\,} if it has an axis 
that does not bound a flat half plane. 
This property can be characterized in terms of the
splitting $\mathrm{Min}(\g)\cong C_\g\times\mathbb R^k$, namely,
$\g$ has rank one if and only if $C_\g$ is compact.

Any discrete subgroup of $\Iso(X)$ that normalizes a rank one element
is virtually cyclic (in fact, the normalizer preserves
the splitting, and hence fixes a point of $C_\g$
and stabilizing the corresponding axis).

Rank one isometries were introduced by Ballmann~\cite[Theorem III.3.4]{Bal-book}, 
who proved that if $X$ is a rank one and $\G\le\Iso(X)$ 
is any subgroup satisfying the duality condition (e.g. a lattice), then $\G$ 
contains a rank one element. He then used a ping pong argument 
to find a copy of non-cyclic free group inside $\G$.
Various aspects of isometry groups containing rank one elements were further
studied in~\cite{BesFuj-geomtop2002, BalBuy-rk1,  
BesFuj-gafa2009, Ham-rk1, Ham-rk1-tot-disc, CapFuj}.
In particular, the following is due 
to~\cite[Proposition 5.11]{BesFuj-gafa2009} or~\cite[Theorem 1.1(4)]{Ham-rk1}:

\begin{thm}
\label{thm: rk1 element exists}
If $\G\le\mathrm{Iso}(X)$ is a discrete subgroup that contains
a rank one element and is not virtually-$\mathbb Z$, then $\G$ 
contains a non-cyclic free subgroup consisting of rank one elements.
\end{thm}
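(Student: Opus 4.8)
The plan is to prove Theorem~\ref{thm: rk1 element exists} by a ping-pong argument on the ideal boundary $X(\infty)$, following the scheme of Ballmann and its refinements. First I would recall the basic dynamics of a rank one axial isometry $g$: it acts on $X(\infty)$ with north-south dynamics, having exactly two fixed points $g^+$ and $g^-$ (the endpoints of its axis), so that for any compact set $K\subset X(\infty)\setminus\{g^-\}$ the iterates $g^n K$ converge to $g^+$, and symmetrically for $g^{-n}$. The crucial strengthening over the generic CAT($0$) situation — and the place where the rank one hypothesis is essential — is that the axis of $g$ does not bound a flat half-plane, which gives an \emph{open} neighborhood separation: there are disjoint open neighborhoods $U^\pm$ of $g^\pm$ such that any geodesic with one endpoint in $U^+$ and the other in $U^-$ is again rank one and its endpoints are again separated by such neighborhoods. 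This is the contraction mechanism that makes ping-pong work, and it is exactly the content of the cited results in~\cite{BalBuy-rk1, BesFuj-gafa2009, Ham-rk1}.

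Next I would produce a second rank one isometry $h$ whose fixed point pair $\{h^+,h^-\}$ is disjoint from $\{g^+,g^-\}$. Here the hypothesis that $\G$ is not virtually cyclic enters: since any discrete subgroup normalizing a rank one element is virtually $\mathbb Z$ (as recalled just before the statement), $\G$ cannot fix the pair $\{g^+,g^-\}$, so some $\gamma\in\G$ moves it off itself; then $h:=\gamma g\gamma^{-1}$ is rank one with $\{h^\pm\}=\gamma\{g^\pm\}$ disjoint from $\{g^\pm\}$ (after possibly replacing $\gamma$ by a further element to guarantee genuine disjointness rather than sharing a single point, which one arranges by a small perturbation using density-type arguments, or by passing to the limit set and using its minimality under the duality condition). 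Replacing $g,h$ by high powers $g^N, h^N$, I can then choose the four neighborhoods $U_g^\pm, U_h^\pm$ pairwise disjoint and arrange that $g^{\pm N}$ maps the complement of $U_g^\mp$ into $U_g^\pm$, and likewise for $h$, which is the standard table-tennis configuration.

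The ping-pong lemma then yields that $\langle g^N, h^N\rangle$ is free of rank two. To finish I must check the additional assertion that \emph{every} element of this free subgroup is rank one. An arbitrary nontrivial element $w$ of the free group, written cyclically reduced as an alternating word in $g^{\pm N}$ and $h^{\pm N}$, is again an axial isometry whose attracting/repelling fixed points on $X(\infty)$ lie in the appropriate ping-pong neighborhoods; since those neighborhoods were chosen inside the open "rank one separation" regions provided by the mechanism of the first paragraph, the axis of $w$ — a geodesic joining these two fixed points — does not bound a flat half-plane, hence $w$ is rank one. One should note that after raising to a power every element of a free group remains in the same free group, so no element is lost; and conjugates and non-cyclically-reduced words reduce to the cyclically reduced case by the usual conjugation trick, which does not affect the rank one property since it is a conjugacy invariant.

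The main obstacle, and the step I would spend the most care on, is the passage from "there exist disjoint attracting/repelling fixed points" to "the ping-pong neighborhoods can be taken small enough that \emph{all} resulting axes avoid flat half-planes": this requires the quantitative, open form of rank one contraction on $X(\infty)$ rather than just the bare north–south dynamics, together with the fact that the convergence $g^nK\to g^+$ is uniform on compacta in the cone topology. For a general Hadamard manifold (without local compactness coming from cocompactness) one must be slightly careful about which topology on $X(\infty)$ one uses and about properness of the action; here the discreteness of $\G$ substitutes for cocompactness. All of this is precisely what~\cite{BesFuj-gafa2009} and~\cite{Ham-rk1} establish, so the proof ultimately amounts to assembling their dynamical input with a standard ping-pong argument, and I would present it at that level of detail, citing those papers for the technical contraction estimates.
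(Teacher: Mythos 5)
The paper does not prove this theorem at all: it records it as a quotation of \cite[Proposition 5.11]{BesFuj-gafa2009} and \cite[Theorem 1.1(4)]{Ham-rk1}, and the ping-pong scheme you describe (north--south dynamics of a rank one axial isometry, Ballmann's open ``rank one separation'' neighborhoods on $X(\infty)$, a conjugate with disjoint fixed pair, ping-pong on high powers, and rank one of all cyclically reduced words because their endpoints stay in the separation neighborhoods) is exactly the argument those sources run. So in substance your route coincides with the one the paper delegates to the literature.

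One step of your write-up is genuinely off as stated, namely how you get the fixed-point pair of $h=\gamma g\gamma^{-1}$ to be \emph{disjoint} from $\{g^+,g^-\}$ rather than merely different. The hypotheses here are only that $\G$ is discrete, contains a rank one element, and is not virtually cyclic; the duality condition is \emph{not} assumed, so ``minimality of the limit set under the duality condition'' is not available, and ``a small perturbation using density-type arguments'' has no content in this generality. The correct mechanism is the one the paper itself invokes in its proof sketch of Proposition~\ref{prop: fixed pt discrete rk1 implies virt-Z}: discreteness gives the weak proper discontinuity of $g$, and WPD forbids an element of $\G$ from fixing (or an image pair from sharing) exactly one endpoint of the rank one axis --- if $\gamma\{g^{\pm}\}$ and $\{g^{\pm}\}$ shared a single point, the elements $g^{n}\gamma g^{-n}$ (or $g^{n}\gamma g^{\pm n}$, depending on which endpoint is shared) would move the endpoints of a long subsegment of the axis a uniformly bounded amount, contradicting discreteness. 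Hence the $\G$-translates of the pair $\{g^{+},g^{-}\}$ are pairwise equal or disjoint, and since the stabilizer of the pair is virtually cyclic (it stabilizes $C_g\times\mathbb R$, fixes the circumcenter of the compact set $C_g$, hence preserves an axis), the non-virtually-cyclic hypothesis produces $\gamma$ with $\gamma\{g^{\pm}\}\cap\{g^{\pm}\}=\emptyset$. With that substitution your argument is the standard and correct one; the remaining technical inputs (uniform contraction on compacta in the cone topology, and axiality plus rank one of all nontrivial words) are precisely what \cite{BesFuj-gafa2009, Ham-rk1} supply.
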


Sisto proved~\cite[Theorem 1.4]{Sis-contr-rand} that if $\G$
in Theorem~\ref{thm: rk1 element exists} is finitely generated,
then its generic element has rank one, where ``generic''
roughly means that the probability that a word written
in random finite generating set of $G$ represents a rank one element
approaches $1$ exponentially with the length of the word.

\section{Acylindrically hyperbolic groups and rank one elements}
\label{sec: acyl hyp}

In the last decade it was realized that many groups of geometric origin contain
(suitably defined) rank one elements, which allowed for a uniform treatment of such groups
and resulted in a host of applications. 
A crucial notion in these developments is acylindricity which goes back to 
Sela and Bowditch. In connection with rank one elements
different versions of acylindricity were introduced and studied by 
Bestvina-Fujiwara~\cite{BesFuj-geomtop2002, BesFuj-gafa2009},
Hamenst{\"a}dt~\cite{Ham-iso-gps}, Dahmani-Guirardel-Osin~\cite{DGO},
Sisto~\cite{Sis-contr-rand}, and most recently Osin showed~\cite{Osi-acy-hyp}
that all these approaches are equivalent. 

An isometric action of a group $G$ on a Gromov hyperbolic space $(X, d)$
is called
\begin{itemize}
\item {\it non-elementary\,}
if its limit set consists of $>2$ points, 
\vspace{1pt}\item  {\it acylindrical\,} 
if for each $\e>0$ there are $R, N$
such that if $d(x, y)\ge R$, then at most $N$ elements $g\in G$ 
satisfy $d(x, gx)\le\e$ and $d(y, gy)\le \e$.
\end{itemize}

A group $G$ is {\it acylindically hyperbolic\,}
if it admits a non-elementary acylindrical isometric action on
a Gromov hyperbolic space. 

The class of acylindically hyperbolic groups includes many
groups of geometric origin, e.g. any subgroup of 
a relatively hyperbolic group that is not virtually-cyclic
and does not lie is peripheral subgroup, or
all but finitely many mapping class groups; see~\cite{Osi-acy-hyp},
for other examples.

Of particular importance for this section is the following result of
Sisto~\cite{Sis-contr-rand} who actually 
proves it for any group acting properly and isometrically 
on a proper CAT($0$) space:

\begin{thm} 
\label{thm: sisto rk1}
{\bf (Sisto)}
If $\G\le\Iso(X)$ is a discrete subgroup that contains a rank one element, 
then $\G$ is virtually cyclic or acylindrically hyperbolic.
\end{thm}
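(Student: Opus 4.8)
The plan is to produce a Gromov hyperbolic space together with a non-elementary acylindrical $\G$-action, using the rank one element as the source of ``north-south dynamics.'' The natural candidate for the hyperbolic space is not $X$ itself (which need not be Gromov hyperbolic), but rather the \emph{contracting boundary} / \emph{curve complex--type} construction, or more concretely a space built from the contracting property of a rank one axis. I would invoke the fact, due to Bestvina--Fujiwara and refined by Sisto, that an axis of a rank one isometry is a \emph{contracting} (Morse) quasigeodesic in $X$: nearest-point projection of a metric ball disjoint from the axis has uniformly bounded diameter. This contracting property is precisely what substitutes for genuine hyperbolicity.

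First I would recall, following Sisto~\cite{Sis-contr-rand}, the notion of a \emph{hyperbolically embedded} collection of subgroups, or equivalently the projection-complex machinery: given the rank one element $\g\in\G$ with axis $\ell$, the cyclic group $E(\g)$ (the maximal virtually cyclic subgroup containing $\g$, which exists and is virtually cyclic since the normalizer of a rank one element is virtually cyclic, as stated in Section~\ref{sec: rk1}) together with its $\G$-translates $g\ell$ forms a $\G$-invariant family of contracting quasilines with a well-defined, $\G$-equivariant system of projections satisfying the Behrstock inequality. From such a projection system one constructs the associated \emph{projection complex} $\mathcal{P}$, on which $\G$ acts; Sisto's theorem (building on Bestvina--Bromberg--Fujiwara) says $\mathcal{P}$ is quasi-trees-of-spaces-like and the induced action on a suitable quasi-tree of metric spaces $\mathcal{C}$ is acylindrical. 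The second step is to verify the two bullet conditions: the action is \emph{acylindrical} because two points far apart on $\mathcal{C}$ are ``seen'' by only boundedly many group elements, which reduces via the contracting estimate to the fact that $\g$ has bounded ``overlap'' with its generic translates; and it is \emph{non-elementary} because, by Theorem~\ref{thm: rk1 element exists}, if $\G$ is not virtually-$\Z$ it contains a non-cyclic free subgroup of rank one elements, whose limit points in the Gromov boundary of $\mathcal{C}$ give more than two points.

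The dichotomy in the statement then falls out: if $\G$ is virtually cyclic we are in the first alternative and there is nothing to prove; otherwise the above construction exhibits $\G$ as acylindrically hyperbolic. I would also remark that one can phrase this more economically by citing Osin's equivalence theorem~\cite{Osi-acy-hyp}: it suffices to show $\G$ contains a \emph{generalized loxodromic} element, i.e.\ an element that is loxodromic for \emph{some} acylindrical action, and Sisto shows a rank one isometry in a discrete group is always generalized loxodromic via the contracting property; combined with $\G$ not being virtually cyclic this is exactly acylindrical hyperbolicity.

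The main obstacle I anticipate is establishing the acylindricity estimate itself --- the passage from ``the axis $\ell$ is contracting in $X$'' to ``the $\G$-action on the quasi-tree $\mathcal{C}$ is acylindrical'' --- because it requires the full strength of the Bestvina--Bromberg--Fujiwara projection-complex apparatus and the verification that the family $\{g E(\g) : g\in\G\}$ is hyperbolically embedded, which in turn needs uniform contraction constants and the Behrstock inequality for the projections between translated axes. This is genuinely where the ``discrete + rank one'' hypotheses are used (discreteness gives properness of the action, hence the finiteness needed in the acylindricity bound; rank one gives the contraction), and it is the heart of Sisto's argument rather than a routine check. Everything else --- the virtually cyclic normalizer, the existence of the free subgroup for non-elementarity --- is available from the results already recorded in Sections~\ref{sec: rk1} and~\ref{sec: acyl hyp}.
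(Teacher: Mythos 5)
Your proposal is correct and follows essentially the same route as the paper, which offers no independent proof but simply cites Sisto and records that a rank one element $\g\in\G$ lies in a virtually cyclic hyperbolically embedded subgroup $E(\g)$, whence acylindrical hyperbolicity follows from Osin's equivalence once $\G$ is not itself virtually cyclic. Your closing ``more economical phrasing'' is precisely the paper's entire argument; the projection-complex machinery you sketch is the content of Sisto's cited work rather than anything reproved here.
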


Dahmani-Guirardel-Osin~\cite{DGO} introduced a notion of a
{\it hyperbolically embedded subgroup of $G$}, and Osin~\cite{Osi-acy-hyp} proved
that $G$ is acylindrically hyperbolic if and only if $G$ contains an infinite, proper,
hyperbolically embedded subgroup. What Sisto actually showed
is that any rank one element $\g\in \G$ lies in a virtually cyclic hyperbolically embedded
subgroup $E(\g)$. 
We omit the definition of a hyperbolically embedded subgroup, and just note that
they are almost malnormal by~\cite[Proposition 4.33]{DGO}:

\begin{thm}
If $H$ is a hyperbolically embedded subgroup of a group $G$, then
$H$ is almost malnormal in $G$, i.e. 
$H\cap gHg^{-1}$ is finite for all $g\notin H$.
\end{thm}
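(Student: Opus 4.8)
The plan is to derive the almost malnormality of a hyperbolically embedded subgroup $H \le G$ from a standard feature of such subgroups: the existence of a generating set $Y$ of $G$ relative to which $H$ embeds quasi-isometrically, and with respect to which the relative metric $\widehat{d}$ on $H$ is \emph{locally finite} (balls are finite). This is exactly what is packaged in the definition of a hyperbolically embedded subgroup in~\cite{DGO}. So first I would recall that $H \hookrightarrow_h (G, Y)$ means that the Cayley graph $\Gamma(G, Y \sqcup H)$ is Gromov hyperbolic and that the metric $\widehat{d}_H$ it induces on $H$ (paths avoiding the ``edges from $H$'') is proper, i.e.\ every $\widehat{d}_H$-ball is finite.

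Next I would argue by contradiction. Suppose $H \cap gHg^{-1}$ is infinite for some $g \notin H$. Pick a word $w$ over $Y \sqcup H$ representing $g$ of length $|g|_{Y \sqcup H} = \ell$. For each $h$ in the infinite set $H \cap gHg^{-1}$ write $h = g h' g^{-1}$ with $h' \in H$. The key step is a path-surgery argument inside the Cayley graph $\Gamma(G, Y \sqcup H)$: concatenate the geodesic path spelling $g$, the single $H$-edge labelled $h'$, and the geodesic path spelling $g^{-1}$; this is a path from $1$ to $h$ of bounded ``complexity'' — its length is at most $2\ell + 1$ and it uses at most two edges lying outside the special $H$-edges at $1$. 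Since there are infinitely many such $h$, infinitely many of the corresponding $h'$ are $\widehat{d}_H$-close to $1$ by a uniform constant (this is the Bounded Coset Penetration / geodesic-ness property: a path of bounded length in $\Gamma(G, Y \sqcup H)$ that crosses at most one $H$-edge contributes a bounded amount to $\widehat{d}_H$). Concretely, $\widehat{d}_H(1, h') \le 2\ell + C$ for a constant $C$ depending only on the hyperbolicity and embedding constants. But $\{h' : h \in H \cap gHg^{-1}\}$ is infinite, contradicting local finiteness of $\widehat{d}_H$.

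I expect the main obstacle to be isolating precisely which quantitative property of the relative metric does the work. The naive surgery above shows $g, g^{-1}$ contribute $2\ell$ to $\widehat{d}_H(1,h)$, but to control $\widehat{d}_H(1, h')$ — a metric on $H$, not on $G$ — one needs the finer statement that geodesics in $\Gamma(G, Y\sqcup H)$ between elements of $H$ either stay close to $H$ or travel through $H$-edges in a controlled way, so that the ``detour'' cost through $g$ and back is absorbed into $\widehat{d}_H$ up to the additive constant. In~\cite{DGO} this is exactly Proposition~4.33, and the honest version of the proof invokes their Corollary~4.27 (or the analogous lemma on paths avoiding $H$-edges) to convert the bound $|h|_{Y\sqcup H} \le 2\ell+1$ together with the observation ``$h$ lies in $H$'' into a bound on $\widehat{d}_H(1,h)$. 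The rest — deducing finiteness from properness of $\widehat{d}_H$ — is immediate. So the write-up would: (i) recall the definition and the properness of $\widehat{d}_H$; (ii) do the length-$\le 2\ell+1$ surgery; (iii) cite the relevant lemma of~\cite{DGO} converting this into a uniform bound on $\widehat{d}_H(1, h')$; (iv) conclude by properness that only finitely many $h'$, hence finitely many $h$, can occur.
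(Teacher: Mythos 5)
The paper itself gives no proof here --- it simply cites \cite[Proposition 4.33]{DGO} --- and your sketch is a faithful outline of exactly that argument: form the conjugation quadrilateral $1\to g\to gh'\to h\to 1$ in $\Gamma(G,Y\sqcup H)$, invoke the isolated-component lemma of \cite{DGO} (Proposition 4.14/Lemma 4.21 there, rather than 4.33, which is the malnormality statement itself) to bound the relative length $\widehat{d}(1,h')$ uniformly in $h$, and conclude from local finiteness of $\widehat{d}$. The one thing I would make explicit is where $g\notin H$ enters: it guarantees that the $h'$-labelled edge lies in the coset $gH\neq H$, so it is a genuine $H$-component of the quadrilateral, not connected to the $H$-edge joining $h$ to $1$, which is precisely what makes the isolated-component lemma applicable.
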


Here are other applications~\cite{DGO, Osi-acy-hyp}, which hold in particular when $G$
is a discrete, non-virtually-cyclic subgroup of $\Iso(X)$ containing a rank one element.

\begin{thm}
\label{thm: acy hyp list}
If $G$ is acylindrically hyperbolic, then\newline
\textup{(1)} $G$ has a non-cyclic, normal, free subgroup,
\newline
\textup{(2)} every countable group embeds into a quotient of $G$,
\newline
\textup{(3)} 
every infinite subnormal subgroup of $G$ is acylindrically hyperbolic,
\newline
\textup{(4)} $G$ has no nontrivial finite normal subgroups if and only if
every conjugacy \phantom{\textup{(6)}} class in $G$ is infinite,
\newline
\textup{(5)} every s-normal subgroup of $G$ is acylindrically hyperbolic,
\newline
\textup{(6)} if $G$ equals the product of subgroups $G_1,\dots, G_k$,
then at least one $G_i$ is\newline\phantom{\textup{(6)}} acylindrically hyperbolic.
\newline
\textup{(7)} $G$ is not the direct product of infinite groups. 
\newline
\textup{(8)} any group commensurable to $G$ is acylindrically hyperbolic.
\newline
\textup{(9)} any co-amenable subgroup of $G$ acylindrically hyperbolic.
\end{thm}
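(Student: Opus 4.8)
All nine statements are assembled from Dahmani--Guirardel--Osin~\cite{DGO} and Osin~\cite{Osi-acy-hyp} (the relevance to discrete rank-one isometry groups being via Theorem~\ref{thm: sisto rk1}), so the plan is to organize them around two tools. The first is Osin's characterization, recalled above, that $G$ is acylindrically hyperbolic exactly when it contains an infinite proper hyperbolically embedded subgroup, together with the small-cancellation and group-theoretic Dehn filling calculus over such subgroups developed in~\cite{DGO}. The second is the \emph{restriction principle}: if $G$ acts non-elementarily and acylindrically on a Gromov hyperbolic space $Y$, then acylindricity of the action passes verbatim to every subgroup, so a subgroup $H\le G$ is acylindrically hyperbolic as soon as its restricted action on $Y$ is still non-elementary, i.e.\ as soon as $H$ contains an element acting on $Y$ as a loxodromic isometry; the whole content of such assertions is to produce that element.

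Items (1) and (2) come straight from the small-cancellation machinery of~\cite{DGO}: the non-cyclic free normal subgroup in (1) appears among the normal subgroups constructed there by killing families of loxodromic elements, and (2) is the SQ-universality of $G$, obtained by embedding a prescribed countable group into a suitable Dehn-filling quotient. Item (4) rests on the existence, proved in~\cite{DGO}, of a unique maximal finite normal subgroup $K(G)$; one checks that $K(G)$ coincides with the FC-center of $G$ (the normal subgroup of elements with finite conjugacy class), since a loxodromic element cannot have finite conjugacy class --- its centralizer lies in the virtually cyclic elementary closure $E_{G}(g)$ while $G$ itself is not virtually cyclic --- whereas the FC-center, being a normal subgroup meeting no loxodromic element, acts elliptically on $Y$ and is therefore finite by an acylindricity estimate along a loxodromic axis. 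Hence $K(G)=1$ precisely when every nontrivial conjugacy class of $G$ is infinite.

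The core of the proof is (3), (5), (8) and (7). For (3) it suffices to iterate the assertion that an infinite normal subgroup $N\trianglelefteq G$ is acylindrically hyperbolic: by the restriction principle the $N$-action on $Y$ is elliptic, lineal, or of general type; the elliptic case would force $N$ to be finite, and in the lineal case the (then unique) pair of boundary points fixed by $N$ would be invariant under all of $G$, so that $G$ acts elementarily --- both impossible --- leaving the general-type case. Item (5) is the harder $s$-normal version: one runs the same trichotomy, but now the hypothesis that $H\cap H^{g}$ is infinite for every $g$ is what rules out the elliptic and lineal cases, since an $H$-fixed bounded set or boundary pair would be simultaneously preserved by infinitely overlapping conjugates of $H$ and again collapse the non-elementarity of $G$. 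For (8), a finite-index subgroup contains its normal core, which is infinite and normal, hence acylindrically hyperbolic by the above, and acylindrical hyperbolicity is inherited by finite extensions~\cite{Osi-acy-hyp}; this makes the property a commensurability invariant. Finally (7) is immediate: if $G=A\times B$ with $A$ and $B$ infinite, then $A$ is infinite and normal, hence contains a loxodromic element $g$; but $B$ lies in $C_{G}(g)$, which is virtually cyclic and already contains the infinite cyclic group $\langle g\rangle$, hence is two-ended, forcing $B$ to be finite --- a contradiction.

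I expect the remaining obstacle to be (6) and (9), which I would quote from~\cite{Osi-acy-hyp, DGO} rather than reprove. In (6) the hypothesis that $G$ is a \emph{set} product $G_{1}\cdots G_{k}$ is weak --- the $G_{i}$ need not be normal --- and Osin handles it by a counting argument which, when no $G_{i}$ is acylindrically hyperbolic, produces an infinite amenable normal subgroup of $G$; the restriction principle forbids this, since such a subgroup would be acylindrically hyperbolic and hence non-amenable. For (9), co-amenability of $H$ in $G$ is used to exclude an elementary restricted $H$-action: an $H$-invariant mean on $G/H$ pushes forward, along the $G$-equivariant assignment of the $H$-fixed bounded set or boundary pair to a coset, to a $G$-invariant probability measure on bounded subsets of $Y$ or on pairs of points of $Y(\infty)$, which a general-type acylindrical action does not admit; the restriction principle then applies. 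The delicate common point is exactly the same --- showing that $s$-normal and co-amenable subgroups cannot act elementarily --- and that is where the fine structure of loxodromic elements and their elementary closures $E_{G}(g)$ from~\cite{DGO, Osi-acy-hyp} is indispensable.
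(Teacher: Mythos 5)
Your treatment of (1)--(8) follows the same route as the paper, which simply attributes these items to \cite{DGO} (items (1)--(4)), \cite{Osi-acy-hyp} (items (5)--(7)) and \cite{MinOsi-acy--hyp} (item (8)); your sketches of the trichotomy arguments for (3), (5) and of the $K(G)$/FC-centre argument for (4) are consistent with those sources. Two small remarks: in (7), ``$B$ lies in a two-ended group, hence is finite'' is a non sequitur as written --- you need the extra observation that $B\cap\langle g\rangle\le B\cap A=\{1\}$ while $\langle g\rangle$ has finite index in $C_G(g)$, so $B$ injects into a finite coset space; and for (6) the paper, like you, just cites \cite{Osi-acy-hyp}, so nothing to compare there.

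The one item the paper actually proves is (9), and there your argument diverges from the paper's and has a genuine gap in the elliptic case. You propose to push the $G$-invariant mean on $G/H$ forward along $gH\mapsto g\cdot(\text{$H$-fixed object})$ and then assert that a general-type acylindrical action admits no $G$-invariant finitely additive probability measure ``on bounded subsets of $Y$.'' For the lineal case (boundary pairs) this can be salvaged by north--south dynamics of independent loxodromics, but for the elliptic case the assertion is exactly the point at issue and is not justified by any dynamics on the space of bounded sets: the orbit of a bounded set under a single loxodromic \emph{does} carry an invariant mean (the acting group is $\mathbb{Z}$), so one must exhibit a non-amenable subgroup acting with trivial stabilizers on the orbit --- which is no longer a formal pushforward argument. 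The paper's proof (an argument of Osin) does precisely this and bypasses the case analysis on the level of measures: using \cite[Theorems 1.1--1.2]{Osi-acy-hyp} one finds loxodromics $a,b$ (independent also of a loxodromic $c\in K$ when $K$ is lineal) such that the ping-pong subgroup $F=\langle a^n,b^n\rangle$ is free of rank $2$, consists of loxodromic elements, and meets $K$ trivially; then $F$ acts freely by left translations on $G/K$, a free action of a non-amenable group admits no invariant finitely additive probability measure, and this contradicts co-amenability of $K$. You should either adopt this construction of $F$ or supply the missing non-existence statement for invariant means in the elliptic case; as it stands, that step would fail.
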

\begin{proof} Proofs of (1)--(4) are 
in~\cite[Theorem 8.6, Lemma 8.11, Theorem 8.12]{DGO} (1)--(2), while (5)--(7)
are proved in~\cite[Corollary 1.5, Proposition 1.7, Corollary 7.3]{Osi-acy-hyp},
and (8) appears in~\cite[Lemma 3.8]{MinOsi-acy--hyp}. 

The claim (9) is due to Osin who communicated to the author the following
argument and kindly permitted to include it here. 
Suppose $G$ is acylindrically hyperbolic and $K\le G$ is not.
In the next paragraph we find a non-cyclic free subgroup $F\le G$ with 
$F\cap K=\{ 1\}$. 
The group $F$ is not amenable, and its action by left translations on the set $G/H$ is free,
so there is no $F$-invariant finitely additive probability measure on $G/K$. 
The contrapositive of (9) now follows because
if $K$ were co-amenable,
$G/K$ would admit a $G$-invariant (and hence $F$-invariant) 
finitely additive probability measure.

To construct $F$ note that \cite[Theorems 1.1-1.2]{Osi-acy-hyp}
yield a non-elementary acylindrical $G$-action on a Gromov hyperbolic space
for which the subgroup $K$ is either elliptic or else virtually-$\mathbb Z$ and
contains a loxodromic element (we refer to~\cite{Osi-acy-hyp} for terminology). 
If $K$ is elliptic, then \cite[Theorems 1.1]{Osi-acy-hyp}
yields independent loxodromic elements $a,b\in G$.
The standard ping-pong argument shows that for some $n\gg 1$, the
subgroup $\langle a^n,b^n\rangle$ is free of rank $2$ and all 
its non-trivial elements are loxodromic. In particular,  
$\langle a^n,b^n\rangle\cap K=\{ 1\}$. If $K$ is
virtually-$\mathbb Z$ and contains a loxodromic element $c$, then \cite[Theorems 1.1]{Osi-acy-hyp}
gives loxodromic elements $a,b\in G$ such that $a,b,c$ are independent.
Again by ping-pong  $\langle a^n, b^n, c^n\rangle $ is free of rank $3$ 
so that $\langle a^n, b^n\rangle\cap K=\{ 1\}$. Thus we get 
a non-cyclic free subgroup $F=\langle a^n,b^n\rangle$ with $F\cap K=\{ 1\}$, and
in fact all nontrivial elements of $F$ are loxodromic.
\end{proof}

A subgroup $K\le G$ is {\it subnormal\,} if there are 
subgroups $G_i\le G$  with $G_0=G$, $G_k=K$, 
and such that $G_{i}$ is a normal in $G_{i-1}$ for all $i=1,\dots , k$.

If a group $G$ equals the product of subgroups $G_1,\dots, G_k$,
one says that $G$ {\it boundedly generated by} $G_1,\dots, G_k$.

Two groups are {\it commensurable\,}
if they have isomorphic finite index subgroups.

A subgroup $K\le G$ is {\it s-normal\,} if $K\cap gKg^{-1}$
is infinite for each $g\in G$. Thus 
the Baumslag-Solitar group $B(m,n)=\langle a,b\,|\, ab^m=b^na\rangle$
is not acylindrically hyperbolic, except for $B(0,0)$, because
$\langle b\rangle$ is s-normal and not acylindrically hyperbolic. 

\label{sec: co-amen}
A subgroup $K\le G$ is {\it co-amenable} if one of the following holds:
\begin{enumerate}
\item
every continuous affine 
$G$-action on a convex compact subset of a locally convex space with a
$K$-fixed point has a $G$-fixed point;
\item
$\ell^\infty(G/K)$ has a $G$-invariant mean;
\item
$G/K$ has a $G$-invariant finitely additive probability measure;
\item  
the inclusion $K\hookrightarrow G$ induces injections in bounded cohomology
in all degrees with coefficients in any dual Banach 
$G$-module.
\end{enumerate} 
The equivalence $\text{(1)}\Leftrightarrow \text{(2)}$ is proved in~\cite{Eym-book},
while $\text{(2)}\Leftrightarrow \text{(3)}$ follows from the standard correspondence
between means and measures, and $\text{(3)}\Leftrightarrow \text{(4)}$ can be found
in~\cite{MonPop-co-amen}. See also~\cite{Moo-thesis} for leisurely discussion of co-amenability.

Here we are mainly
interested in examples of non-amenable groups that admit
co-amenable subgroups:  
\begin{itemize}
\item
A normal subgroup $N\unlhd\, G$ is co-amenable if and only if $G/N$ is amenable.
\item
If $K$ is co-amenable in $N$, and in turn $N$ is co-amenable in $G$,
then $K$ is co-amenable in $G$.
\item
the image of a co-amenable subgroup under an epimorphism $G\to \bar G$
is co-amenable.
\item
If $\theta\co K\to K$ is a monomorphism, and $G:=\langle K,t\,|\, tkt^{-1}=
\theta(k), k\in K\rangle$ is the associated HNN-extension, then $K$ is co-amenable
in $G$.
\end{itemize}
The first three facts above are straightforward, while the last one is due to 
Monod-Popa~\cite{MonPop-co-amen}. 

\begin{ex}
Starting from a group 
$K$ that is not acylindrically hyperbolic one can 
use iterated HNN-extensions and extensions 
with amenable quotient 
to get many examples of non acylindrically hyperbolic groups.
(These constructions preserve finiteness of cohomological dimension
if the initial $K$ and every amenable quotient
have finite cohomological dimension).
\end{ex}

\section{Bounded cohomology and rank one elements}

Bounded cohomology naturally appear in a variety of 
contexts, see e.g.~\cite{Gro-vol-bd-coh, Mon-icm}. Of particular interest
for our purposes is the {\it comparison map\,} 
\[
\iota (G)\co H^2_b(G;\mathbb R)\to H^2(G;\mathbb R)
\]
between the bounded and ordinary cohomology in degree two, which encodes
some subtle group-theoretic properties: \vspace{3pt}\newline
$\bullet$
\bf (Johnson)\rm\ If $G$ is amenable, then $H^p(G;\mathbb R)=0$ for $p>0$~\cite{Jon-bound-coh, Nos-bound-coh}.
\vspace{3pt}\newline
$\bullet$ \bf (Burger-Monod)\rm\ 
$\iota(G)$ is injective if $G$ is the fundamental group of an irreducible, finite volume
complete manifold of $K\le 0$, no local Euclidean de Rham factor, 
and rank $\ge 2$~\cite{BurMon}.
\vspace{3pt}\newline 
$\bullet$ \bf (Bavard)\rm\ Injectivity of $\iota(G)$ is equivalent to
vanishing of the stable commutator length
on $[G,G]$~\cite{Bav-commut}. Thus if $\iota(G)$ is non-injective, then
there is $g\in [G,G]$ such that the minimal number of commutators
needed to represent $g^n$ grows linearly with $n$.

\begin{thm} \label{thm: bestv-fuj-rk1}
\bf (Bestvina-Fujiwara, Osin)\rm\
If $G$ is acylindrically hyperbolic, then the comparison map 
$\iota(\G)$ has infinite dimensional kernel.
\end{thm}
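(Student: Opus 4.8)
The plan is to reduce the statement to a known structural fact about acylindrically hyperbolic groups, namely the existence of many quasimorphisms, and then translate that into the language of the comparison map. Recall that for any group $G$, the kernel of $\iota(G)\co H^2_b(G;\mathbb R)\to H^2(G;\mathbb R)$ is naturally isomorphic to $\widetilde{Q}(G):=Q(G)/H^1(G;\mathbb R)$, the space of homogeneous quasimorphisms on $G$ modulo the genuine homomorphisms (the short exact sequence $0\to H^1(G;\mathbb R)\to Q(G)\to H^2_b(G;\mathbb R)\to H^2(G;\mathbb R)$ being the degree-two piece of the long exact sequence relating bounded and ordinary cohomology). So it suffices to show that $\widetilde{Q}(G)$ is infinite dimensional whenever $G$ is acylindrically hyperbolic.

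\textbf{Main construction.} By the theorem of Osin quoted in the excerpt, an acylindrically hyperbolic $G$ admits a non-elementary acylindrical action on a Gromov hyperbolic space $Y$, hence contains a loxodromic element $g$ whose maximal virtually cyclic ``elementary subgroup'' $E(g)$ is hyperbolically embedded, and in particular almost malnormal (by the result just stated in the excerpt). The heart of the Bestvina--Fujiwara argument is that for a \emph{weakly properly discontinuous} (equivalently, acylindrical) element acting on a hyperbolic space one can build, for each of countably many independent ``axis words,'' a homogeneous quasimorphism by counting signed copies of that word along geodesics — the classical Brooks/Bestvina--Fujiwara counting quasimorphisms — and these can be arranged to be linearly independent modulo $H^1(G;\mathbb R)$. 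Concretely I would: (i) pass to the acylindrical action on $Y$ and pick a loxodromic $g$; (ii) invoke the Bestvina--Fujiwara machinery (acylindricity gives the WPD hypothesis they require) to produce an infinite family of counting quasimorphisms associated to infinitely many inequivalent large powers/conjugates along the axis of $g$; (iii) verify linear independence in $\widetilde{Q}(G)$ — here one evaluates the quasimorphisms on a cleverly chosen sequence of elements to see that no nontrivial finite linear combination is within bounded distance of a homomorphism.

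\textbf{The main obstacle} is step (ii)–(iii): one must check that acylindricity of the $G$-action on $Y$ really does supply the weak proper discontinuity condition needed to run the Bestvina--Fujiwara construction, and then that the resulting quasimorphisms span an infinite-dimensional subspace of $\widetilde Q(G)$ rather than collapsing. The first point is essentially formal — acylindricity is a strengthening of WPD — but the linear-independence bookkeeping is the delicate part: it requires producing, for each $n$, elements on which the first $n$ quasimorphisms take controlled values while later ones are bounded, which is exactly where the almost-malnormality of $E(g)$ (preventing unwanted overlaps of axis words) and the freedom to pass to high powers are used.

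\textbf{Alternative shortcut.} Since the excerpt already records (Theorem~\ref{thm: acy hyp list}(2)) that every countable group embeds into a quotient of $G$, and separately that $G$ has a non-cyclic normal free subgroup, one could instead cite the Hull--Osin result that acylindrically hyperbolic groups admit infinitely many independent hyperbolically embedded virtually cyclic subgroups and apply the general principle that each hyperbolically embedded subgroup contributes to $\ker\iota(G)$; but this is really the same argument repackaged, so I would present the Brooks-style counting quasimorphism proof as the primary route and simply remark that it is the degree-two manifestation of acylindricity, attributing it to Bestvina--Fujiwara in the WPD case and to Osin for the reduction of general acylindrical hyperbolicity to that case.
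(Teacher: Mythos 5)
Your proposal is correct and follows exactly the route the paper itself takes: the paper offers no independent argument but simply cites Bestvina--Fujiwara's quasimorphism construction for WPD elements together with Osin's identification of that class of groups with the acylindrically hyperbolic ones, which is precisely the reduction you sketch. Your write-up just fills in more of the details (the identification of $\ker\iota(G)$ with homogeneous quasimorphisms modulo homomorphisms, and the Brooks-type counting quasimorphisms) than the survey does.
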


This was proved in~\cite{BesFuj-geomtop2002} for a class of
groups which according to~\cite{Osi-acy-hyp} coincides with the class of
acylindrically hyperbolic groups.

\begin{rmk} 
For discrete subgroup $\G\le\Iso(X)$ with rank one elements
the above theorem was first established in~\cite{BesFuj-gafa2009}.
Discreteness of $\G$ in Theorem~\ref{thm: bestv-fuj-rk1} can be weakened to
the weak proper discontinuity~\cite{BesFuj-gafa2009}, 
but it cannot be dropped, e.g. the projection of any irreducible lattice  
$\Lambda\le\Iso({\mathbf H}^2)\times \Iso({\mathbf H}^2)$ to either factor
acts on the hyperbolic plane isometrically, effectively, and by
rank one isometries, but the comparison map $\iota(\Lambda)$
is injective~\cite{BurMon}.
\end{rmk}

\section{Monod-Shalom's class and rank one elements}
\label{sec: orbit equiv}

In~\cite{MonSha-jdg2004, MonSha-orb-ann2006},
Monod-Shalom introduced and studied the following class of groups,
which they thought of as a 
cohomological manifestation of negative 
curvature: Let $\mathcal C_{\text{reg}}$ be the class of countable
groups $G$ such that $H^2_b(G; \ell^2(G))\neq 0$, which
refers to the bounded cohomology of $G$ with coefficients in the
regular representation.

A way to prove that $H^2_b(G; \ell^2(G))\neq 0$ is to show that
the corresponding comparison map $H^2_b(G; \ell^2(G))\to H^2(G; \ell^2(G))$
has infinite dimensional kernel, which was done for many
``hyperbolic-like'' groups in~\cite{MinMonSha}. The following was proved by 
Hamenst{\"a}dt~\cite{Ham-bound-coh}, and later from
a different perspective by Hull-Osin~\cite{HulOsi}:

\begin{thm}  
$\mathcal C_{\text{reg}}$ contains every countable
acylindrically hyperbolic group, and hence any discrete subgroup $\G\le\Iso(X)$
that contains a rank one element and is not virtually-$\mathbb Z$.
\end{thm}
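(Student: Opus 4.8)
The plan is to reduce the statement to known results by invoking Theorem~\ref{thm: sisto rk1} (Sisto) together with the cited theorem of Hamenst{\"a}dt~\cite{Ham-bound-coh}. First I would recall that the claim has two parts: (i) every countable acylindrically hyperbolic group $G$ satisfies $H^2_b(G;\ell^2(G))\neq 0$, i.e.\ lies in $\mathcal C_{\text{reg}}$; and (ii) the deduction for a discrete $\G\le\Iso(X)$ with a rank one element that is not virtually-$\mathbb Z$. For part (i) the work is already done in the literature: Hamenst{\"a}dt constructs, for any countable acylindrically hyperbolic $G$, an infinite-dimensional space of classes in $H^2_b(G;\ell^2(G))$, so the only thing to check is that one is quoting the hypotheses correctly. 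The mechanism, parallel to Theorem~\ref{thm: bestv-fuj-rk1}, is that a hyperbolically embedded virtually cyclic subgroup (supplied by the acylindrical action) gives rise to nontrivial quasimorphism-type classes, and the coefficient module can be upgraded from $\mathbb R$ to $\ell^2(G)$; Hull-Osin~\cite{HulOsi} reprove this via their theory of hyperbolically embedded subgroups, which gives an alternative citation. So part (i) is essentially a citation, not a computation.

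For part (ii), I would argue as follows. Let $\G\le\Iso(X)$ be discrete, not virtually-$\mathbb Z$, and containing a rank one element. By Theorem~\ref{thm: sisto rk1}, $\G$ is either virtually cyclic or acylindrically hyperbolic; since we have excluded virtually-$\mathbb Z$ (and a discrete isometry group that is virtually cyclic but not virtually-$\mathbb Z$ would be finite, hence would not contain an axial, let alone rank one, element — because an axial isometry has infinite order), $\G$ must be acylindrically hyperbolic. As a discrete isometry group of a Hadamard manifold, $\G$ is countable. Applying part (i) to $\G$ then yields $H^2_b(\G;\ell^2(\G))\neq 0$, i.e.\ $\G\in\mathcal C_{\text{reg}}$, which is what we want.

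The only subtlety I see — and the step I would flag as the real content rather than bookkeeping — is making sure the dichotomy in Theorem~\ref{thm: sisto rk1} is applied on the nose: Sisto's statement requires a \emph{discrete} isometric action (or weak proper discontinuity), and here $\G$ acts properly discontinuously on $X$ because it is discrete, so this is satisfied. One should also note that the "virtually cyclic" alternative in Sisto's theorem genuinely collapses to "virtually-$\mathbb Z$ or finite" in our setting, and the finite case cannot occur since $\G$ contains a rank one (in particular infinite-order) isometry; thus excluding virtually-$\mathbb Z$ by hypothesis does force the acylindrically hyperbolic conclusion. Everything else is a direct appeal to the cited literature, so I do not expect any genuine obstacle; the proof is a two-line chain: rank one element $\Rightarrow$ acylindrically hyperbolic (Sisto) $\Rightarrow$ membership in $\mathcal C_{\text{reg}}$ (Hamenst{\"a}dt, Hull--Osin).
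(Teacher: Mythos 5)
Your proposal is correct and follows essentially the same route as the paper: part (i) is a citation of Hamenst\"adt (alternatively Hull--Osin), and the ``hence'' clause is exactly the application of Sisto's dichotomy (Theorem~\ref{thm: sisto rk1}), with the observation that a discrete group containing a rank one (hence infinite-order, axial) element cannot be finite, so ``virtually cyclic'' collapses to ``virtually-$\mathbb Z$'', which is excluded by hypothesis. The paper gives no further detail beyond these citations, so your write-up is, if anything, slightly more careful about the virtually-cyclic-versus-finite point.
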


For discrete subgroup $\G\le\Iso(X)$ with rank one elements
the above theorem was first established in~\cite{Ham-rk1-tot-disc}.

As proved in~\cite[Chapter 7]{MonSha-orb-ann2006}, 
examples of groups not in $\mathcal C_{\text{reg}}$
include 
\begin{itemize}
\item amenable groups, 
\item products of at least two infinite
groups, 
\item lattices in higher-rank simple Lie groups (over any local field), 
\item irreducible lattices in products of compactly generated non-amenable
groups.
\end{itemize}
and the class $\mathcal C_{\text{reg}}$ is closed under 
\begin{itemize}
\item
passing to an infinite normal subgroup, 
\item passing to a co-amenable subgroup,
\item measure equivalence.
\end{itemize}
We refer to~\cite{Fur-surv} for a survey on measure equivalence;
e.g. commensurable groups are measure equivalent.

\begin{quest}
Is every group in $\mathcal C_{\text{reg}}$ acylindrically hyperbolic?
\end{quest}

To transition to our next topic, note that  
non-virtually-cyclic discrete groups with rank one elements
never fix a point at infinity:

\begin{prop}
\label{prop: fixed pt discrete rk1 implies virt-Z}
If $\G$ is discrete, fixes a point at infinity, and contains a rank one element, then $\G$
is virtually cyclic. 
\end{prop}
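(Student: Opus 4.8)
The plan is to exploit the rank one element $\g\in\G$ together with the hypothesis that $\G$ fixes $\xi\in X(\infty)$, and derive that $\G$ is forced to normalize $\g$ (up to finite index), at which point the virtual cyclicity follows from a fact already recorded in Section~\ref{sec: rk1}: any discrete subgroup of $\Iso(X)$ that normalizes a rank one element is virtually cyclic. So the real work is to show that fixing $\xi$ pins down enough structure around the axis of $\g$.

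First I would examine how $\g$ interacts with $\xi$. Since $\g$ is axial with an axis $c$ that bounds no flat half plane, $c$ has two endpoints $c(\pm\infty)\in X(\infty)$, which are the attracting and repelling fixed points of $\g$ acting on $X(\infty)$. The point $\xi$ is fixed by $\g$, so either $\xi\in\{c(+\infty),c(-\infty)\}$ or $\xi$ is a fixed point of $\g$ at infinity distinct from the endpoints of $c$. The key claim is that the latter cannot happen for a rank one axial isometry: if $\g$ fixed a point $\xi\notin\{c(\pm\infty)\}$, one could connect $\xi$ to (a neighborhood of) $c(\pm\infty)$ and, using that $\g^n$ contracts toward $c(+\infty)$ and $\g^{-n}$ toward $c(-\infty)$, produce a flat half plane (or at least a flat strip) bounded by $c$ — contradicting rank one. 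This is essentially the statement that the fixed-point set at infinity of a rank one isometry is exactly the pair of endpoints of its (unique) axis; I would either cite this from Ballmann's book or give the short flat-strip argument. Consequently $\xi\in\{c(+\infty),c(-\infty)\}$.

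Next, since $\G$ fixes $\xi$ and $\g\in\G$, conjugating $\g$ by any $h\in\G$ gives another rank one element $h\g h^{-1}$ whose axis $hc$ also has $\xi$ as an endpoint. Now I want to say that all these axes share the *other* endpoint too, or at least that $\G$ permutes a bounded set of axes. Here is where rank one really bites: two rank one axes asymptotic to the common point $\xi$ must actually be bi-asymptotic (i.e., share both endpoints and bound a flat strip, which for a rank one axis must be degenerate — a single line, after passing to the minimal set) — otherwise one again builds a flat half plane bounded by $c$. So $hc$ and $c$ have the same pair of endpoints for every $h\in\G$; equivalently $\G$ stabilizes the pair $\{c(+\infty),c(-\infty)\}\subset X(\infty)$, hence $\G$ stabilizes the axis $c$ (the union of all geodesics joining this pair is a flat strip, which is a line by rank one), and therefore $\G$ normalizes the stabilizer structure of $c$ and commensurates $\langle\g\rangle$. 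At that point the fact quoted above — discrete groups normalizing a rank one element are virtually cyclic — finishes the argument.

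The main obstacle is the flat-strip/flat-half-plane bookkeeping in the step "a rank one isometry fixes exactly the two endpoints of its axis, and any rank one axis asymptotic to $\xi$ is bi-asymptotic to $c$." One has to be careful that convexity of the displacement function and the splitting $\Min(\g)\cong C_\g\times\R^k$ with $C_\g$ compact are used correctly, and that the limiting geodesic obtained from $\g^n$-translates genuinely lies in $X$ (completeness of $X$ and the Arzelà–Ascoli-type compactness for geodesics handle this). I expect this to be a couple of paragraphs of standard Hadamard-manifold geometry, with the cleanest route being to cite the relevant lemmas on rank one isometries rather than reprove them.
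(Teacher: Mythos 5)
There is a genuine gap at the claim that two rank one axes asymptotic to a common ideal point $\xi$ must be bi-asymptotic, ``otherwise one builds a flat half plane bounded by $c$.'' This is false: in the hyperbolic plane any two distinct geodesics sharing one endpoint at infinity are rank one axes asymptotic to a common point, yet they do not share the other endpoint and there is no flat half plane anywhere in $\mathbf H^2$. Asymptotic geodesics have a convex, bounded, hence non-increasing distance function, but nothing forces that distance to be constant, so no flat strip or half plane is produced. Since this is the only step from which you conclude that $\G$ stabilizes the axis $c$ (and hence commensurates $\langle\g\rangle$), the argument collapses there. A telling sanity check: your bi-asymptoticity step uses no discreteness at all, so if it were correct it would apply verbatim to the full stabilizer of a boundary point of $\mathbf H^2$ --- which contains rank one elements and is far from virtually cyclic; this is precisely the example recorded immediately after the proposition.

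The missing ingredient is discreteness in the form of weak proper discontinuity, and this is how the paper proceeds. By Bestvina--Fujiwara, either $\G$ is virtually cyclic or it contains a second rank one element $h$ whose axis $A_h$ does not have the same endpoint pair as $A_g$. Your first step is fine and is also used there: a rank one element fixes exactly the two endpoints of its axis, so the common fixed point $\xi$ must be a shared endpoint of $A_g$ and $A_h$. Two rank one axes sharing exactly one endpoint then yield an infinite subset of $\G$ whose elements move a fixed subsegment of $A_g$ by a uniformly bounded amount, contradicting weak proper discontinuity (which holds because $\G$ is discrete). So the dichotomy forces virtual cyclicity directly. Your intended reduction to ``a discrete group normalizing a rank one element is virtually cyclic'' would be fine if you could first show that $\G$ stabilizes $c$, but establishing that requires essentially the same WPD input, not a flat-strip argument.
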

\begin{proof}
This follows from~\cite[Section 6]{BesFuj-gafa2009} provided $\G$ satisfies the weak proper
discontinuity condition, which is implied by discreteness. The idea is that if $g\in\G$ has rank
one, then either $\G$ is virtually-$\mathbb Z$,
or $\G$ contains another rank one element $h$ such that their axis $A_g$, $A_h$
do not have the same sets of endpoints at infinity. A rank one element fixes precisely
two points at infinity, the endpoints of its axis. Since $\G$ has a fixed point, it
must be a common endpoint of $A_g$, $A_h$, and this contradicts
weak proper discontinuity: there is a subsegment $I$
of $A_g$ and an infinite subset $Q$ of $\G$ such that 
the distances between the endpoints of $I$ and $g(I)$, $g\in Q$ are uniformly bounded. 
\end{proof}

\begin{ex}
The (non-discrete) stabilizer of a boundary
point in the hyperbolic plane contains rank one
elements without being virtually-$\mathbb Z$.
\end{ex}

\section{Groups that fix a point at infinity: prelude}
\label{sec: fix a point at infinity}

Basic properties of horoballs, horospheres, and Busemann functions
can be found in~\cite{BGS, Ebe-book, Bal-book}.
A {\it horoball} in $X$ is the Hausdorff limit of a sequence of 
metric balls in $X$ with radii going to infinity. 
A {\it horosphere\,} is the boundary of a horoball. 
Every point at infinity $\xi$ is represented by a 
Busemann function $b_\xi\co X\to \mathbb R$, which is
determined by $\xi$ up to an additive constant. 
The fibers of $b_\xi$ are the horospheres centered at
$\xi$, and the sublevel sets of $b$ are horoballs
centered at $\xi$. 
The function
$b_\xi$ is a $C^2$ Riemannian submersion $X\to\mathbb R$,
and in particular, each horosphere is diffeomorphic to 
the Euclidean space of dimension $\dim(X)-1$.

If $\G$ fixes a point $\xi$ at infinity of $X$, then $\G$
permutes horospheres centered at $\xi$, and 
associating to $\g\in\G$ the distance  by which 
it moves a horosphere to a concentric one
defines a homomorphism $\G\to\mathbb R$,
which is in general nontrivial (think of the
stabilizer of a point at infinity of the hyperbolic plane).

Let $L_\xi$ be the space of lines in $X$ asymptotic to $\xi$.
The geodesic flow towards $\xi$ identifies $X$ with the total space of 
a principal $\mathbb R$-bundle
over $L_\xi$, which is trivial as every horosphere
centered at $\xi$ gives a section. In particular, 
$L_\xi$ has a structure of a smooth manifold diffeomorphic
to a horosphere about $\xi$. 
If $\G$ fixes $\xi$, then it acts smoothly on $L_\xi$. 

Recall the condition (3) of Section~\ref{sec: rough}:
$\G$ fixes a point $\xi\in X(\infty)$, and the associated $\G$-action
on  the space $L_\xi$ of lines asymptotic to $\xi$ is
free and properly discontinuous. Under this condition the principal $\mathbb R$-bundle
$X\to L_\xi$ descends to an orientable (and hence trivial) real line bundle
$X/\G\to L_\xi/\G$, so we get:

\begin{lem}
If $\G$ satisfies the condition \textup{(3)} of \textup{Section~\ref{sec: rough}},
then $X/\G$ is diffeomorphic to the product of $\mathbb R$ and $L_\xi/\G$.
\end{lem}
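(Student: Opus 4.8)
The plan is to unwind the bundle structures already set up in the text. First I would recall that the geodesic flow towards $\xi$ identifies $X$ with the total space of a principal $\mathbb R$-bundle over $L_\xi$, and that this bundle is trivial because any horosphere centered at $\xi$ provides a global section; concretely, fixing a Busemann function $b_\xi$, the map $X\to L_\xi\times\mathbb R$ sending $x$ to the pair (the line through $x$ asymptotic to $\xi$, the value $b_\xi(x)$) is a diffeomorphism. The $\G$-action on $X$ is by isometries fixing $\xi$, so it carries lines asymptotic to $\xi$ to lines asymptotic to $\xi$, hence descends to the smooth $\G$-action on $L_\xi$ described in the text; moreover $\G$ acts on the $\mathbb R$-factor through the homomorphism $\G\to\mathbb R$ recording by how much each element shifts horospheres, so the $\G$-action on $X\cong L_\xi\times\mathbb R$ is the product (diagonal) action of the $\G$-action on $L_\xi$ and the translation action on $\mathbb R$ via that homomorphism.

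Next I would invoke hypothesis (3): the $\G$-action on $L_\xi$ is free and properly discontinuous. Since $L_\xi$ is diffeomorphic to a Euclidean space, it is contractible, and a free properly discontinuous smooth action on it yields a smooth manifold quotient $L_\xi/\G$ with $X\to L_\xi$ being $\G$-equivariant. The principal $\mathbb R$-bundle $X\to L_\xi$ then descends: the quotient $X/\G\to L_\xi/\G$ is a fiber bundle with fiber $\mathbb R$ and structure group the affine group of $\mathbb R$ acting by orientation-preserving affine maps (translations composed with positive dilations, since isometries of $X$ act on the line of $b_\xi$-values by orientation-preserving affine maps). An orientation-preserving affine $\mathbb R$-bundle over any paracompact base is a trivial $\mathbb R$-bundle, because its structure group $\mathrm{Aff}^+(\mathbb R)$ deformation retracts onto $\{\mathrm{id}\}$ (it is contractible), so the bundle admits a global section; equivalently, one can average a section of the associated affine bundle using a partition of unity on $L_\xi/\G$. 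A trivial $\mathbb R$-bundle over $L_\xi/\G$ is the product $\mathbb R\times (L_\xi/\G)$, which gives the claim.

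The main obstacle — and it is a mild one — is making precise the equivariance and the descent: one must check that the $\G$-action really is a product action in the chosen trivialization $X\cong L_\xi\times\mathbb R$, i.e. that the $\mathbb R$-component of $\g$ applied to $x$ depends only on $\g$ and not on the $L_\xi$-coordinate of $x$, which is exactly the statement that $b_\xi\circ\g - b_\xi$ is a constant (the defining property of a Busemann function under an isometry fixing $\xi$). Granting this, the action descends coordinate-wise: the $L_\xi$-factor gives the quotient manifold $L_\xi/\G$ by hypothesis (3), and the $\mathbb R$-factor, being acted on by an (orientation-preserving) affine homomorphic image of $\G$, contributes a line bundle which is automatically trivial over the paracompact base $L_\xi/\G$. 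Assembling these two observations yields the asserted diffeomorphism $X/\G\cong\mathbb R\times(L_\xi/\G)$.
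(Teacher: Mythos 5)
Your argument is correct and is essentially the paper's own proof, which simply observes that the principal $\mathbb R$-bundle $X\to L_\xi$ descends to an orientable, hence trivial, real line bundle $X/\G\to L_\xi/\G$; you have just spelled out the trivialization via a Busemann function and the descent of the product action. One small correction: since the elements of $\G$ are isometries fixing $\xi$, they act on the $b_\xi$-values by translations only (as your own observation that $b_\xi\circ\g-b_\xi$ is constant shows), so no dilations occur and the structure group is just $\mathbb R$ itself.
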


The prime example of  a group satisfying (3)
is a discrete torsion-free subgroup $\G$ that stabilizes a horoball, 
in which case $\G$
stabilizes every concentric horoball, so that 
$X$ is $\Gamma$-equivariantly
is diffeomorphic to the product of $\mathbb R$ with a horosphere,
and (3) follows because $\G$ acts freely and properly discontinuously on $X$.
More examples are needed:

\begin{quest} Let $\G$ be any discrete torsion-free isometry group of $X$
whose fixed point set at infinity is nonempty.
\vspace{-4pt}
\begin{itemize}
\item
Does $\G$ satisfies \textup{(3)\,} for some $\xi$? 
\item
If $\G$ satisfies \textup{(3)\,}, does $\G$ stabilize a horoball?
\item
What is the structure of $\G$ if it does not stabilize a horoball?
\end{itemize}
\end{quest}

\begin{rmk} (1)
An axial isometry does not stabilize a horoball
centered at an endpoint of one of its axis, but it
can stabilize another horoball (e.g. translation 
in the plane stabilizes any half plane
whose boundary is parallel to the translation axis).
\newline (2)
Any parabolic isometry stabilizes a horoball by 
Lemma~\ref{lem: center has parabolic}
but different parabolics in $\G$ can stabilize
different horoballs.
\newline (3)
An elliptic element fixing a point at infinity
stabilizes a horoball centered at the point (because
it fixes a ray from a fixes point inside $X$ to the
fixed point at infinity).
\end{rmk}

\begin{quest}
\label{quest: no rank one implies grounded}
Let $\G\le\Iso(X)$ be discrete, containing a parabolic and no rank one elements.
What conditions on $X$ ensure that $\G$ fixes a point at infinity?  
\end{quest}

Recall that the limit set
$\Lambda (\G)$ is the set of accumulation points of
the $\G$-orbit of a point of $X$. 
Ballmann-Buyalo~\cite[Proposition 1.10]{BalBuy-rk1} gave the following
characterization of groups containing rank one elements
in terms of the Tits radius of the limits set: 

\begin{prop}\label{prop: ballmann-buyalo, rank 1}
$\G$ contains no rank one element if and only if
$\Lambda(\G)$ lies in the Tits ball of radius $\le\pi$ centered at 
a point of $\Lambda(\G)$.
\end{prop}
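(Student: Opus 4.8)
The plan is to prove both implications by passing through the Tits diameter of the limit set, using three standard facts about the ideal boundary of a Hadamard manifold: (i) if $\xi,\eta\in X(\infty)$ satisfy $d_T(\xi,\eta)>\pi$, then they are joined by a geodesic line, and every such line is rank one; conversely the endpoints of any rank one geodesic are at Tits distance $>\pi$, so a rank one isometry $\g\in\G$ satisfies $d_T(\g^+,\g^-)>\pi$, where $\g^\pm$ are its fixed points at infinity; (ii) a rank one isometry acts on the compactification $X\cup X(\infty)$, with the cone topology, by North--South dynamics with fixed points $\g^+$ attracting and $\g^-$ repelling; (iii) the Tits metric $d_T$ is lower semicontinuous for the cone topology. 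Throughout, $\G$ acts on $X(\infty)$ by homeomorphisms preserving both $d_T$ and $\Lambda(\G)$; I also use that $\g^\pm\in\Lambda(\G)$ for every axial $\g\in\G$ (translate a point of an axis), and tacitly assume $\G$ is infinite so that $\Lambda(\G)\neq\emptyset$.

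Suppose first that $\Lambda(\G)\subseteq \bar B_T(\xi_0,\pi)$ for some $\xi_0\in\Lambda(\G)$, and suppose toward a contradiction that $\g\in\G$ is rank one. If $\xi_0=\g^-$, then $\g^+\in\Lambda(\G)\subseteq \bar B_T(\g^-,\pi)$, i.e. $d_T(\g^+,\g^-)\le\pi$, contradicting (i). If $\xi_0\neq\g^-$, then $\g^n\xi_0\to\g^+$ in the cone topology by (ii); since $\G$ preserves $\Lambda(\G)$ and acts by Tits isometries, $\Lambda(\G)=\g^n\Lambda(\G)\subseteq \bar B_T(\g^n\xi_0,\pi)$ for every $n$, so $d_T(\z,\g^n\xi_0)\le\pi$ for all $\z\in\Lambda(\G)$ and all $n$. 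Fixing $\z$ and applying lower semicontinuity (iii) to $\g^n\xi_0\to\g^+$ gives $d_T(\z,\g^+)\le\liminf_n d_T(\z,\g^n\xi_0)\le\pi$, so $\Lambda(\G)\subseteq \bar B_T(\g^+,\pi)$, whence again $d_T(\g^+,\g^-)\le\pi$, a contradiction. Hence $\G$ has no rank one element.

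For the converse, assume $\G$ has no rank one element; it suffices to prove $\diam_T\Lambda(\G)\le\pi$, for then $\Lambda(\G)\subseteq \bar B_T(\xi_0,\pi)$ for every $\xi_0\in\Lambda(\G)$. Suppose instead that there are $\xi,\eta\in\Lambda(\G)$ with $d_T(\xi,\eta)>\pi$. By (i), $\xi$ and $\eta$ are joined by a rank one geodesic $\sigma$; set $x=\sigma(0)$ and choose $g_k,h_k\in\G$ with $g_kx\to\xi$ and $h_kx\to\eta$ in the cone topology. Using the contraction (Morse) property of the rank one geodesic $\sigma$, a closing argument produces, for large $k$, an element $f_k\in\G$ (an appropriate word in $g_k$ and $h_k$) that is axial with an axis fellow-travelling $\sigma$ over a long segment; such an axis cannot bound a flat half-plane, since $\sigma$ does not, so $f_k$ is rank one. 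This contradicts the hypothesis, and therefore $\diam_T\Lambda(\G)\le\pi$.

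The step I expect to be the main obstacle is the closing argument in the last paragraph --- manufacturing an \emph{axial} rank one isometry out of two limit points joined by a rank one geodesic; I would carry this out following Ballmann--Buyalo~\cite{BalBuy-rk1}, whose work is precisely devoted to producing periodic (hence axial) rank one geodesics from recurrence data, and the ingredients (i)--(iii) are standard and can be quoted from~\cite{BGS, BriHae, Bal-book}. A secondary point that needs care is the dictionary in (i): the inequality $d_T(\sigma^+,\sigma^-)\ge\pi$ holds for every geodesic line, and a Tits geodesic of length exactly $\pi$ joining the endpoints is realized by a flat half-plane through $\sigma$, so the absence of a flat half-plane upgrades this to strict inequality.
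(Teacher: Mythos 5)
The paper does not actually prove this proposition: it is quoted from Ballmann--Buyalo (Proposition 1.10 of \cite{BalBuy-rk1}), so there is no in-house argument to measure yours against, and your plan of ultimately sourcing the hard step from that reference is consistent with what the paper itself does. Your forward direction is essentially sound: North--South dynamics plus lower semicontinuity of the Tits metric correctly reduces everything to the lemma that the endpoints of a rank one axis are at Tits distance $>\pi$. Your one-line justification of that lemma is too quick, though --- $d_T(\xi,\eta)=\pi$ does not by itself furnish a Tits geodesic of length $\pi$ (a CAT($1$) space is only guaranteed to be geodesic strictly below distance $\pi$), and even given such a geodesic one must check that the half-plane it spans is bounded by the given axis (flat strip theorem). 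Both points are covered by the standard half-plane lemma for Hadamard manifolds in \cite{Bal-book}, so this is a citation to fix, not a gap.

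The genuine gap is in the converse. You reduce to proving $\mathrm{diam}_T\Lambda(\G)\le\pi$, but the proposition asserts only a \emph{radius} bound, and the distinction matters: in the contrapositive the hypothesis you are entitled to is ``for \emph{every} $\xi_0\in\Lambda(\G)$ there is $\eta\in\Lambda(\G)$ with $d_T(\xi_0,\eta)>\pi$,'' which is strictly stronger than ``some pair of limit points is at Tits distance $>\pi$,'' and the Ballmann--Buyalo closing argument consumes the stronger hypothesis. Concretely, to close up a rank one geodesic into an axial element of $\G$ you need a single $f\in\G$ for which \emph{both} $fx$ and $f^{-1}x$ land near the two ends of a rank one geodesic; your sequences only control $g_kx\to\xi$ and $h_kx\to\eta$ and say nothing about the backward limits $g_k^{-1}x$, $h_k^{-1}x$, and no word in $g_k,h_k$ obviously repairs this. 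The radius hypothesis is exactly what lets one find, for the backward limit point of a suitable orbit sequence, a Tits-far point of $\Lambda(\G)$ and then invoke $\pi$-convergence together with the rank one closing lemma; with only a diameter hypothesis this step breaks down, and the diameter variant is not what \cite{BalBuy-rk1} proves (the relation between radius and diameter here is precisely one of the delicate points they discuss). So the converse should be rewritten as a direct proof that $\mathrm{rad}_T\Lambda(\G)>\pi$ forces a rank one element, following their Proposition 1.10, rather than routed through the diameter.
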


Combining this characterization with results of Schroeder~\cite[Appendix 3]{BGS}
one can answer Question~\ref{quest: no rank one implies grounded} 
when every component of the Tits boundary of $X$ has radius $\le\frac{\pi}{2}$:

\begin{cor}
\label{cor: tits exist rank one}
If every component of $X(\infty)$ equipped with the Tits metric has radius 
$\le\frac{\pi}{2}$,
then a subgroup $\G\le\mathrm{Iso}(X)$ either contains a rank one element or
fixes a point at infinity. 
\end{cor}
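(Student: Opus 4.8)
The plan is to argue by contrapositive using Proposition~\ref{prop: ballmann-buyalo, rank 1}: suppose $\G$ contains no rank one element; we must show $\G$ fixes a point at infinity. By Proposition~\ref{prop: ballmann-buyalo, rank 1} the limit set $\Lambda(\G)$ lies in a Tits ball of radius $\le\pi$ centered at some $\eta\in\Lambda(\G)$. In particular $\Lambda(\G)$ is contained in a single connected component $C$ of $X(\infty)$ equipped with the Tits metric — a set of finite Tits diameter cannot meet two components, since distinct components are at infinite Tits distance from one another — and by hypothesis $C$ has Tits radius $\le\frac{\pi}{2}$.

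Next I would invoke the circumcenter construction of Schroeder~\cite[Appendix 3]{BGS}: a subset of the Tits boundary whose Tits diameter (or Tits radius) is $\le\frac{\pi}{2}$ — more precisely, a subset of a component of Tits radius $\le\frac{\pi}{2}$ — has a well-defined Tits circumcenter, because the Tits metric restricted to such a set is CAT($1$) with small enough diameter for a unique circumcenter to exist (this is the same principle as the Cartan circumcenter used for finite groups acting on Hadamard manifolds, now applied in the Tits boundary). So let $\xi$ be the Tits circumcenter of $\Lambda(\G)$. Since $\Lambda(\G)$ is $\G$-invariant and $\G$ acts on $X(\infty)$ by Tits isometries, uniqueness of the circumcenter forces $\G\cdot\xi=\xi$, i.e. $\G$ fixes the point $\xi\in X(\infty)$. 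This completes the contrapositive and hence the corollary.

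The main obstacle is making the circumcenter step rigorous. There are two subtleties: first, one needs $\Lambda(\G)$ to sit inside a subset of $X(\infty)$ on which the Tits metric is genuinely CAT($1$) of circumradius $<\frac{\pi}{2}$ (strictly, for uniqueness), and the hypothesis "component has Tits radius $\le\frac{\pi}{2}$" must be combined with the radius-$\le\pi$ bound from Proposition~\ref{prop: ballmann-buyalo, rank 1} to get the right estimate — Schroeder's appendix is precisely where this bookkeeping is carried out, so I would quote it rather than reprove it. Second, one should handle the degenerate case where $\Lambda(\G)$ is empty or a single point: if $\Lambda(\G)$ is a single point that point is visibly $\G$-fixed, and if $\Lambda(\G)=\varnothing$ then $\G$ is finite (bounded orbits in a Hadamard manifold), hence — being torsion-free in the intended applications, or in any case — fixes a point of $X$ and therefore fixes any point at infinity on a ray from it; either way the conclusion holds trivially. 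Everything else is formal once the Ballmann–Buyalo characterization and Schroeder's circumcenter lemma are in hand.
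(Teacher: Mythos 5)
Your skeleton (Ballmann--Buyalo to bound the limit set, then a canonical center at infinity that $\G$ must fix) is the same as the paper's, and your handling of the degenerate cases is fine, but the circumcenter step has a genuine gap. What you actually establish about $\Lambda(\G)$ is that it has Tits \emph{radius} $\le\frac{\pi}{2}$, with the center a priori only somewhere in the ambient component; you do not get Tits \emph{diameter} $\le\frac{\pi}{2}$. Schroeder's theorem in \cite[Appendix 3]{BGS} --- and the general CAT($1$) circumcenter lemma --- produce a \emph{unique} center only for cone-closed sets of Tits diameter $\le\frac{\pi}{2}$ (respectively, radius strictly less than $\frac{\pi}{2}$); at radius exactly $\frac{\pi}{2}$ uniqueness fails, and without uniqueness you cannot conclude that the circumcenter is $\G$-invariant. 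This is not a removable technicality: for $X=\mathbf{H}^2\times\mathbb R$ the Tits boundary is the spherical suspension of an uncountable discrete set, hence a single component of radius $\frac{\pi}{2}$, and a pair of ``equatorial'' points at Tits distance $\pi$ inside it has two distinct circumcenters, namely the two suspension points. So your claim that ``a subset of a component of Tits radius $\le\frac{\pi}{2}$ has a well-defined Tits circumcenter'' is false as stated, and quoting Schroeder does not rescue it, since his hypothesis is on the diameter.

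The paper bridges exactly this gap with an intermediate set you are missing: for $Q=\Lambda(\G)$ it forms $C_Q=\{z\in Q:\ Q\ \text{lies in the Tits ball of radius } \frac{\pi}{2} \text{ centered at } z\}$. Any two points of $C_Q$ lie in $Q$ and each is within $\frac{\pi}{2}$ of every point of $Q$, so $C_Q$ has Tits \emph{diameter} $\le\frac{\pi}{2}$; it is closed by lower semicontinuity of the Tits distance and $\G$-invariant because $Q$ is. Schroeder's unique-center theorem is then applied to $C_{\Lambda(\G)}$, not to $\Lambda(\G)$ itself, and its center is the desired fixed point. To repair your argument you need both this reduction and the nonemptiness of $C_{\Lambda(\G)}$, i.e.\ a point of $\Lambda(\G)$ itself (not merely of its component) within Tits distance $\frac{\pi}{2}$ of all of $\Lambda(\G)$; this is where the radius-$\le\pi$ bound of Proposition~\ref{prop: ballmann-buyalo, rank 1}, which does place the center inside $\Lambda(\G)$, gets combined with the hypothesis on components.
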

\begin{proof}
Following~\cite[page 220]{BGS}, for
a subset $Q\subset X(\infty)$,
let \[C_Q=\{z\in Q\,|\, Q\ \text{lies in the Tits ball of radius}\ \frac{\pi}{2}
\ \text{centered at}\ z\}.\]
Lower semicontinuity of the Tits distance implies
that if $Q$ is closed in the cone topology on $X(\infty)$, then so is
$C_Q$~\cite[4.9]{BGS}. If $C_Q$ is nonempty, then
clearly it has Tits diameter $\le\frac{\pi}{2}$.

Apply this to $Q=\Lambda(\G)$, which is a closed $\G$-invariant subset. 
Since $\G$ has no rank one element, Proposition~\ref{prop: ballmann-buyalo, rank 1}
implies that $\Lambda(\G)$ lies in the Tits ball of radius $\le\pi$
about one of its points,
and by our assumption the ball must have radius $\le\frac{\pi}{2}$
(for Tits metric is length so the distance between different components is infinite).
Thus $C_{\Lambda(\G)}$
is a closed subset of Tits diameter $\le\frac{\pi}{2}$.

By the main result of~\cite[Appendix 3]{BGS} 
any subset of $X(\infty)$ that is closed in the cone topology
and has Tits diameter $\le\frac{\pi}{2}$ has a unique {\it center}, 
defined as the center of the closed
(Tits) ball of the smallest radius among all
balls containing the subset. Let $z_G$ be the
unique center of $C_{\Lambda(\G)}$. Since $\Lambda(G)$
is  $\G$-invariant, so is $C_{\Lambda(\G)}$, and hence
$\G$ fixes $z_G$.
\end{proof}

\begin{ex}
The components of the Tits 
boundary are points if (and only if) $X$ is visibility~\cite[4.14]{BGS}, 
so Corollary~\ref{cor: tits exist rank one}
applies if $X$ is visibility, in which case one can say more:
\end{ex}

\begin{cor}
\label{cor: visib horoball}
If $X$ is visibility and $\G$ contains a parabolic element but no rank one elements, 
then $\G$ contains no axial isometries, $\Lambda (\G)$ is a point,
the fixed point set of $\G$ at infinity equals
$\Lambda (\G)$, and $\G$ stabilizes every horoball centered at $\Lambda (\G)$.
\end{cor}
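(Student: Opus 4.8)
The plan is to deduce everything from Corollary~\ref{cor: tits exist rank one} together with the structural results of Section~\ref{sec: flavors of neg curv} on visibility manifolds and the elementary geometry of axial isometries. First I would invoke the Example preceding the statement: when $X$ is visibility the components of the Tits boundary are single points, so the hypothesis of Corollary~\ref{cor: tits exist rank one} is satisfied (a point has Tits radius $0\le\frac\pi2$). Hence $\G$ either contains a rank one element or fixes a point $\xi$ at infinity; since by hypothesis $\G$ contains no rank one element, it must fix some $\xi\in X(\infty)$. This already gives a point fixed by all of $\G$, and it remains to identify $\xi$ with $\Lambda(\G)$, to show $\G$ has no axial isometries, and to show $\G$ stabilizes all horoballs centered at $\xi$.

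Next I would pin down the limit set. Because $\G$ fixes $\xi$, certainly $\xi\in\Lambda(\G)$ once $\G$ is infinite (which it is, containing a parabolic). To see $\Lambda(\G)=\{\xi\}$, recall that in a visibility manifold any two distinct points at infinity are the endpoints of a geodesic; so if $\Lambda(\G)$ contained a second point $\eta\neq\xi$, then (the closure being $\G$-invariant and $\G$ fixing $\xi$) we could run the standard visibility/North-South argument — but more cleanly, invoke Proposition~\ref{prop: ballmann-buyalo, rank 1}: since $\G$ has no rank one element, $\Lambda(\G)$ lies in the Tits ball of radius $\le\pi$ about one of its points. In a visibility manifold the Tits distance between \emph{any} two distinct boundary points is infinite (the components of the Tits boundary are points, and the Tits metric is a length metric so distinct components are infinitely far apart). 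Hence a Tits ball of finite radius is a single point, forcing $\Lambda(\G)$ to be a single point, which must be the fixed point, i.e. $\Lambda(\G)=\{\xi\}$. The same reasoning shows the fixed point set of $\G$ at infinity cannot be larger: any fixed point lies in $\Lambda(\G)$ since $\G$ is infinite, so the fixed set equals $\Lambda(\G)=\{\xi\}$.

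With $\Lambda(\G)=\{\xi\}$ in hand, the remaining claims follow from the dynamics of axial isometries in a visibility manifold. An axial isometry $g$ has an axis, a complete geodesic, whose two endpoints are distinct points of $X(\infty)$; as these are limit points of $\langle g\rangle\le\G$, they lie in $\Lambda(\G)$, contradicting $\Lambda(\G)$ being a single point (an axis has two distinct endpoints, since a geodesic line does). Hence $\G$ contains no axial isometries; being torsion-free-irrelevant here, every nontrivial element of $\G$ is parabolic. Finally, for the horoball claim: the parabolic in $\G$ stabilizes a horoball by Lemma~\ref{lem: center has parabolic}, and one checks its center must be $\xi$ — indeed a visibility-manifold parabolic fixes a \emph{unique} point at infinity, which here is $\xi$ since $\xi$ is $\G$-fixed. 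More robustly, I would argue directly: for any $g\in\G$ and any Busemann function $b_\xi$, the function $b_\xi\circ g - b_\xi$ is a constant $c(g)$ (this is the homomorphism $\G\to\R$ described in Section~\ref{sec: fix a point at infinity}); I must show $c(g)=0$ for all $g$. If some $c(g)\neq0$ then $g$ translates horoballs centered at $\xi$ nontrivially, which for the axial case is impossible (no axials) and for a parabolic in a visibility manifold is ruled out because a parabolic has bounded orbits on each horosphere but would then have orbit escaping to $\xi$ at linear rate along the Busemann gradient — one shows this forces $g$ to be axial with an endpoint at $\xi$, a contradiction. Once $c\equiv0$, every $g\in\G$ preserves every horosphere centered at $\xi$, hence every horoball centered at $\xi$, as claimed.

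The main obstacle is the last step — showing the Busemann homomorphism $c\colon\G\to\R$ vanishes, equivalently that $\G$ stabilizes \emph{every} concentric horoball and not merely some horoball. The honest way to handle it is: pick $g\in\G$ with $c(g)\neq0$; then $g$ is not elliptic (ellipticity fixing $\xi$ forces $c(g)=0$ by Remark (3) after the first Question in Section~\ref{sec: fix a point at infinity}) and not parabolic (a parabolic stabilizes a horoball, on which $c$ vanishes), so $g$ is axial — but we have just shown $\G$ has no axial elements, a contradiction. Thus $c\equiv0$ and the proof closes. I expect verifying "a parabolic fixing $\xi$ has $c=0$" to require only Lemma~\ref{lem: center has parabolic} plus the observation that the stabilized horoball must be centered at the unique fixed point $\xi$, so there is no real difficulty once the limit-set computation is done.
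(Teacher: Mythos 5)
Your overall architecture is the same as the paper's: Proposition~\ref{prop: ballmann-buyalo, rank 1} together with the fact that distinct points of a visibility boundary are at infinite Tits distance forces $\Lambda(\G)$ to be a single, hence $\G$-fixed, point; axial isometries are excluded; and the horoball claim reduces to showing the Busemann homomorphism $c\co\G\to\R$ vanishes because every element is elliptic or parabolic with unique fixed point $\xi$. Your route to ``no axials'' (the two endpoints of an axis would give two distinct points of $\Lambda(\G)$) is a harmless variant of the paper's one-line observation that a visibility manifold has no flat half planes, so every axial isometry would be rank one. You also spell out the horoball step, which the paper's proof leaves implicit, and your reduction of it to ``elliptics and parabolics have $c=0$'' is the right one.

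The step that does not hold up as written is your justification that the fixed-point set of $\G$ at infinity equals $\Lambda(\G)$: you assert that ``any fixed point lies in $\Lambda(\G)$ since $\G$ is infinite.'' That principle is false in general --- an infinite group can fix boundary points that are not limit points (an infinite elliptic cyclic group rotating about a geodesic fixes the two endpoints of its axis while its limit set is empty; a group of translations of $\R^n$ fixes the entire ideal boundary while its limit set is two points). What is actually needed, and what you assert later only in passing and without proof, is that a parabolic isometry of a visibility manifold fixes a \emph{unique} point at infinity: then any $\G$-fixed point is fixed by the parabolic $p\in\G$ and so coincides with $\mathrm{Fix}(p)=\{\xi\}=\Lambda(\G)$. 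The paper supplies this from Schroeder's result in \cite{BGS} (Appendix 3, quoted in the proof of Theorem~\ref{thm: anchored}) that the fixed-point set at infinity of an abelian group containing a parabolic has Tits radius $\le\frac{\pi}{2}$, which in a visibility manifold (infinite Tits distance between distinct points) forces it to be a single point. Once that is cited, both your ``fixed set equals $\Lambda(\G)$'' step and your ``the stabilized horoball is centered at $\xi$'' step close, and the rest of the argument goes through; the heuristic in your last paragraph about parabolic orbits escaping along the Busemann gradient can then be discarded in favor of the clean dichotomy you already state.
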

\begin{proof}
Since $\G$ contains no rank one elements, 
Proposition~\ref{prop: ballmann-buyalo, rank 1}
implies that $\Lambda(\G)$ is a point
(as the Tits distance between any two distinct points is infinite).
Being a visibility space, $X$ has no flat half spaces, so $\G$ contains no axial isometries.
The limit set is $\G$-invariant, so $\Lambda(\G)$ is a fixed
point of $\G$. 
If $\G$ fixed any other point, it would
also be fixed by the cyclic subgroup generated by a parabolic 
in $\G$, but the fixed point set of any abelian subgroup
containing a parabolic
has Tits radius $\le\frac{\pi}{2}$, which again is a single 
point. Thus $\Lambda(\G)$ is a unique fixed point of $\G$.
\end{proof}

\begin{rmk} By Corollary~\ref{cor: visib horoball} and
Proposition~\ref{prop: fixed pt discrete rk1 implies virt-Z}
any non-virtually-cyclic, discrete isometry group of a visibility manifold
that fixes a point at infinity must stabilize a horoball.
There is a sizable class of groups to which this applies, e.g.
by Corollary~\ref{cor: commut subgr grounded} it contains the product
of any nontrivial torsion-free groups, see~\cite{KarNos04} 
for more examples.
\end{rmk}

\section{Groups whose center contains a parabolic} 
\label{sec: center with parabolic}

The following result is implicit in~\cite[Lemma 7.3, 7.8]{BGS}.

\begin{lem} 
\label{lem: center has parabolic}
$\G\le\Iso(X)$ stabilizes a horoball
if it has a finite index subgroup $\G_0$ whose center $Z(\G_0)$
contains a parabolic isometry.
\end{lem}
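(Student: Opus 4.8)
The plan is to reduce to the case $\G = \G_0$ and then to produce a horoball invariant under $\G_0$, using the fact that a parabolic central element $\g$ in $\G_0$ must stabilize a horoball (this is standard: the minimal displacement of $\g$ is not attained, so the infimum is approached along a ray, whose endpoint $\xi$ at infinity is fixed by $\g$, and $\g$ preserves the sublevel sets of the Busemann function $b_\xi$ on which it acts with bounded displacement — more precisely one takes the set of $\xi \in X(\infty)$ "minimizing $\g$ at infinity" and the associated horoballs). The first step is therefore to record that a parabolic isometry stabilizes a horoball, with center a fixed point $\xi \in X(\infty)$, and in fact stabilizes a whole one-parameter family of concentric horoballs about $\xi$.

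Next I would promote this to $\G_0$-invariance. Since $\g \in Z(\G_0)$, the fixed-point set $\Fix(\g) \cap X(\infty)$ of $\g$ at infinity is $\G_0$-invariant. The key geometric input from \cite[Lemma 7.3, 7.8]{BGS} is that for a parabolic isometry $\g$ this fixed set at infinity is contained in a closed Tits ball of radius $\le \frac{\pi}{2}$ (equivalently, the set of points "almost minimizing" $\g$ has small Tits radius), so by the Schroeder center-of-a-small-set result (used already in the proof of Corollary~\ref{cor: tits exist rank one}) it has a unique center $z \in X(\infty)$. This center is canonically attached to $\g$, hence fixed by everything that commutes with $\g$; since $Z(\G_0) \ni \g$ sits inside $\G_0$, actually every element of $\G_0$ preserves $\Fix(\g)\cap X(\infty)$ and therefore fixes its unique center $z$. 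Thus $\G_0$ fixes the point $z$ at infinity.

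It remains to upgrade "$\G_0$ fixes $z$" to "$\G_0$ stabilizes a horoball centered at $z$," and then to pass back to $\G$. For the first point, the homomorphism $\G_0 \to \R$ recording the translation amount along concentric horoballs about $z$ (described in Section~\ref{sec: fix a point at infinity}) has $\g$ in its kernel, since $\g$ is parabolic and parabolics do not push horospheres forward by a fixed distance — more to the point, the Busemann function $b_z$ is, up to additive constant, the limit of the displacement-minimizing data for $\g$, and $\g$ preserves each of its sublevel sets. Because $\g \in Z(\G_0)$, conjugation-invariance forces this homomorphism to vanish on $\G_0$ (any $h \in \G_0$ conjugates the $\g$-horoball structure to itself, so cannot shift it), whence $\G_0$ stabilizes every horoball centered at $z$. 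Finally, to get from $\G_0$ to $\G$: the union $B = \bigcup_{h \in \G} hB_0$ of $\G$-translates of a fixed $\G_0$-invariant horoball $B_0$ is a finite union (as $[\G:\G_0]<\infty$) of horoballs, all centered at points in the $\G$-orbit of $z$; one then checks this orbit is a single point by the same uniqueness-of-center argument, so $B$ is again a horoball and is $\G$-invariant by construction. The main obstacle I anticipate is the second step — making precise the claim that a parabolic isometry's fixed set at infinity has Tits radius $\le \frac{\pi}{2}$ and extracting from it a $\G_0$-equivariant center — which is exactly where one must lean on \cite[Lemma 7.3, 7.8]{BGS}; everything downstream is then bookkeeping with Busemann functions and finite index.
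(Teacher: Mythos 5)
Your route is genuinely different from the paper's, and its middle third has a real gap. The decisive step is the passage from ``$\G_0$ fixes a point $z\in X(\infty)$'' to ``the homomorphism $\beta\co\G_0\to\R$ recording the shift of horospheres at $z$ vanishes,'' and neither of your two arguments for it works. First, it is not true that a parabolic isometry preserves the horospheres centered at every point it fixes at infinity: on $\mathbf{H}^2\times\R$ the isometry $(p,t)$, with $p$ parabolic and $t\neq 0$ a translation of the $\R$-factor, has displacement function $\sqrt{d(px,x)^2+t^2}$ with unattained infimum $|t|$, so it is parabolic; it fixes the two poles of $X(\infty)$ and shifts the horospheres there by $\pm t$. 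So $\beta_z(\g)=0$ requires an argument tied to the particular $z$ you chose, not just to parabolicity. Second, the appeal to conjugation-invariance is vacuous: $\beta$ takes values in the abelian group $\R$, so it is automatically a class function, and the fact that each $h\in\G_0$ carries the family of $\g$-invariant horoballs to another such family is entirely consistent with $\beta(h)\neq 0$. This is exactly the distinction the paper emphasizes in Sections 9 and 12: fixing a point at infinity is strictly weaker than stabilizing a horoball, and what your argument establishes (modulo the issue below) is only the former, i.e.\ essentially Theorem~\ref{thm: anchored}, not the Lemma. Two smaller problems: the unique-center theorem of \cite[Appendix 3]{BGS}, as used in Corollary~\ref{cor: tits exist rank one}, requires Tits \emph{diameter} $\le\pi/2$, whereas a set of Tits radius $\le\pi/2$ can have diameter up to $\pi$, so you need the intermediate set $C_Q$ of centers from that proof; and in your last step $\Fix(\g)\cap X(\infty)$ is only $\G_0$-invariant (an $h\in\G$ carries it to $\Fix(h\g h^{-1})\cap X(\infty)$), so the $\G$-orbit of $z$ need not be a single point, and a union of horoballs centered at distinct points is not a horoball.

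The paper's proof sidesteps all of this by never leaving the interior of $X$: with right coset representatives $g_1,\dots,g_k$ of $\G_0$ in $\G$, the function $x\mapsto\sum_i d(g_i^{-1}\g g_i x,x)$ is convex and $\G$-invariant (centrality of $\g$ in $\G_0$ makes each summand depend only on the coset, and $\G$ permutes the cosets), and it does not attain its infimum; its sublevel sets are therefore nonempty, closed, convex, $\G$-invariant, with empty total intersection, and the limiting process of \cite[Lemma 3.9]{BGS} (cf.\ \cite[Lemma II.8.26]{BriHae}) converts such a family directly into a $\G$-invariant Busemann function whose sublevel sets are $\G$-invariant horoballs. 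Invariance of the horoballs is thus built in from the start rather than recovered afterwards, which is precisely what your argument is missing. To salvage your approach you would need a statement of the form ``for a parabolic $\g$, the set of $\xi\in\Fix(\g)\cap X(\infty)$ with $\beta_\xi(\g)=0$ is nonempty, canonically attached to $\g$, and of Tits diameter $\le\pi/2$,'' which is close to what \cite[Lemma 7.3, 7.8]{BGS} actually prove --- but the proof of that statement is essentially the convex-function argument above.
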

\begin{proof}
Fix a parabolic isometry $z\in Z(\G_0)$, 
and right coset representatives $g_1,\dots g_k$ 
of $\G_0$ in $\G$. Then the 
function $x\to \sum_i d(g_i^{-1}zg_i x, x)$
is convex and $\G$-invariant. 
Since $x\to d(zx, x)$ does not assume its infimum,
neither does the above convex function. Then
a limiting process outlined in~\cite[Lemma 3.9]{BGS},
cf.~\cite[Lemma II.8.26]{BriHae},
gives rise via Arzela-Ascoli theorem to a $\G$-invariant
Busemann function, whose sublevel sets are $G$-invariant horoballs. 
\end{proof}

Flat torus theorem~\cite[Chapter II.7]{BriHae} restricts 
a discrete isometry group of $X$ whose center consists of axial
elements, which can be summarized as follows, see~\cite{Bel-bus}.

\begin{thm}\label{thm: into-center}
Let $G$ be a group with subgroups $H$, $G_0$ such that
their centers $Z(H)$, $Z(G_0)$ are infinite, 
$Z(H)\subseteq Z(G_0)$, the index of $G_0$ in $G$
is finite, and one of the following conditions hold: \newline
\textup{(1)} $Z(H)$ is not finitely generated;\newline
\textup{(2)} any homomorphism $H\to\mathbb R$ is trivial.\newline
\textup{(3)} $H$ is finitely generated, and
$Z(H)$ contains a free abelian subgroup
that is \newline\phantom{\textup{(3)}} 
not a direct factor of any 
finite index subgroup of $H$.\newline
If a discrete subgroup of $\Iso(X)$ is isomorphic to $G$, then it 
stabilizes a horoball.
\end{thm}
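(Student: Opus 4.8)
The plan is to reduce each of the three cases (1), (2), (3) to the situation of Lemma~\ref{lem: center has parabolic}, i.e.\ to produce a finite index subgroup $\G_0$ of the given discrete isometry group $\G\cong G$ whose center contains a parabolic element. Once that is done, the lemma finishes the argument. So the whole content is: \emph{if none of the center elements acted through parabolics, derive a contradiction with each of (1), (2), (3).}

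First I would set up the dichotomy. Let $\G\le\Iso(X)$ be discrete with $\G\cong G$, let $H_\G$, $\G_0$ be the subgroups corresponding to $H$, $G_0$, and suppose for contradiction that no nontrivial element of $Z(H_\G)$ is parabolic. A periodic abelian discrete group of isometries is finite (the Remark after Theorem~\ref{thm: abelian semisimple}), so after passing to the finite index subgroup of $G_0$ that centralizes some fixed infinite order $a\in Z(H_\G)$ — which we may absorb into $\G_0$ — we may assume $Z(H_\G)$ contains an element of infinite order, hence, under our assumption, an \emph{axial} element. Now apply Theorem~\ref{thm: abelian semisimple} to the abelian discrete group $A:=Z(H_\G)$ (all of whose elements are non-parabolic): $\Min(A)$ is nonempty and splits as $C_A\times\R^m$ with $A$ acting by translations on the $\R^m$ factor, and by Theorem~\ref{thm: abelian semisimple}(2) $A$ is finitely generated. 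Moreover $H_\G$ normalizes $A$ (it centralizes it), so by Theorem~\ref{thm: normalizer of abelian semisimple}(4) $H_\G$ stabilizes $\Min(A)$ and preserves the splitting $C_A\times\R^m$, inducing a homomorphism $H_\G\to\Iso(\R^m)$; projecting further to the translation part and noting that $A$ maps to a finite index subgroup of a lattice in $\R^m$ (discreteness again), one obtains a nonzero homomorphism $H_\G\to\R$ that is nontrivial on $A=Z(H_\G)$.

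This already kills cases (1) and (2). In case (2) every homomorphism $H\to\R$ is trivial, contradicting the nonzero homomorphism just produced, so some element of $Z(H_\G)$ must be parabolic. In case (1) $Z(H_\G)$ is not finitely generated, directly contradicting Theorem~\ref{thm: abelian semisimple}(2) applied to $A=Z(H_\G)$ under the non-parabolic assumption; so again $Z(H_\G)$ contains a parabolic. For case (3): $H_\G$ is finitely generated and $Z(H_\G)$ contains a free abelian subgroup $B$ that is not a direct factor of any finite index subgroup of $H_\G$. Under the non-parabolic assumption $B$ consists of axial elements, $B\le H_\G$, and $H_\G$ is finitely generated, so Theorem~\ref{thm: normalizer of abelian semisimple}(6) applies with $N=H_\G$: $B$ is a virtual direct factor of $H_\G$, i.e.\ a direct factor of a finite index subgroup — contradicting the hypothesis. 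Hence in case (3) too some nontrivial element of $Z(H_\G)$ is parabolic. In all three cases we have found $\gamma\in Z(H_\G)$ parabolic; since $Z(H_\G)\subseteq Z(\G_0)$ and $\G_0$ has finite index in $\G$, Lemma~\ref{lem: center has parabolic} gives that $\G$ stabilizes a horoball, as desired. \qedhere

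The step I expect to be the main obstacle is the reduction in the second paragraph: extracting a genuinely nonzero homomorphism $H_\G\to\R$ detecting the center in case (2), while being careful that passing to the centralizer of a single axial element does not shrink $Z(H_\G)$ to something finitely generated "by accident" or destroy the hypothesis of case (1) or (3). One has to check that the finite index passage $G_0\rightsquigarrow\G_0$ is harmless — for (1), that $Z(H_\G)$ remains non-finitely-generated (it is the original $Z(H)$, unchanged, since we only shrink $G_0$, not $H$); for (3), that $B$ and its non-factor property are about $H$ and survive untouched; for (2), that the homomorphism $H_\G\to\R$ obtained from the translation action on the $\R^m$ factor of $\Min(Z(H_\G))$ is really defined on all of $H_\G$ and is nonzero — this uses that $H_\G$ preserves the splitting and that discreteness forces the image of the abelian group $Z(H_\G)$ to act as a nontrivial (finite-index-of-lattice) translation group on $\R^m$, so $m\ge 1$ and the projection to a coordinate is nonzero on $Z(H_\G)$. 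Everything else is a direct citation of the flat torus package (Theorems~\ref{thm: abelian semisimple} and \ref{thm: normalizer of abelian semisimple}) plus Lemma~\ref{lem: center has parabolic}.
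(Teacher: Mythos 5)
Your proof is correct and follows exactly the route the paper intends (the paper itself defers the proof to~\cite{Bel-bus}, but it sets up Lemma~\ref{lem: center has parabolic} and Theorems~\ref{thm: abelian semisimple}--\ref{thm: normalizer of abelian semisimple} immediately beforehand precisely for this dichotomy): either $Z(H_\G)$ contains a parabolic, in which case $Z(H_\G)\subseteq Z(\G_0)$ and the lemma give the horoball, or it consists of non-parabolic isometries and the flat torus package contradicts each of (1), (2), (3). Two minor points worth tightening: the finite-index passage in your second paragraph is vacuous since $G_0$ already centralizes $Z(H)$ by hypothesis, and before invoking Theorem~\ref{thm: normalizer of abelian semisimple} you should replace $Z(H_\G)$ by its torsion-free part (characteristic, hence still central and still infinite) so that it genuinely consists of axial rather than merely non-parabolic isometries; likewise the ``translation part'' of the $H_\G$-action on $\R^m$ is an honest homomorphism only after the one-line check that the linear parts of elements of $H_\G$ fix the span of the $A$-translation vectors (because they commute with those translations), after which orthogonal projection of the translation vector onto that span is additive and nonzero on $A$.
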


The reader may want to first think through 
the case when $H=G_0=G$, and then go on
to observe that if Theorem~\ref{thm: into-center}
holds for $H$, $G_0$, $G$, then it also 
does for $H$, $G_0\times K$, $G\times K$
for any group $K$.

\begin{ex}
\label{ex: rationals}
Theorem~\ref{thm: into-center}(1) applies, 
e.g. to any infinitely generated, torsion-free, countable
abelian group of finite rank, such as $(\mathbb Q, +)$,
where finiteness of rank ensures finiteness of cohomological 
dimension~\cite[Theorem 7.10]{Bie-book}. 
\end{ex}

\begin{rmk}
It is unknown whether there is 
a group of type {\it F} with infinitely generated
center. Such a group  cannot be elementary amenable~\cite{Bel-bus},
or linear over a field of characteristic 
zero~\cite[Corollary 5]{AlpSha82}.
Sanity check: there does exist a finitely presented group 
with solvable word problem
whose center contains every countable abelian 
group~\cite[Corollary 3]{Oul-center}.
\end{rmk}

\begin{ex} 
\label{ex: with center (3)}
(groups of type {\it F\,} to which 
Theorem~\ref{thm: into-center}(3) applies, see~\cite{Bel-bus}):
\begin{enumerate}
\item
$H$ is the fundamental group of the total space of any
principal circle bundle with non-zero rational
Euler class and a finite aspherical cell complex
as the base.
\item
$H$ is a torsion-free, finitely generated, non-abelian 
nilpotent group. 
\item
\label{item: Seifert 3-manifold}
$H$ is the fundamental group of any 
closed orientable Seifert $3$-manifold modelled
on $\widetilde{SL}_2(\mathbb R)$.
\item 
\label{item: Sp lattice}
$H$ is the preimage of any torsion-free lattice in $Sp_{2n}(\mathbb R)$
under the universal 
cover $\widetilde{Sp}_{2n}(\mathbb R)\to Sp_{2n}(\mathbb R)$ for $n\ge 2$. 
\item
\label{item: amalgam over center}
$H$ is the amalgamated product $G_1\ast_A G_2$ where  
$G_1$, $G_2$ have type {\it F} and are finitely generated, 
$A$ lies in the center of $G_1$, $G_2$
and contains a subgroup that
is not a virtual direct factor of $G_1$.
\end{enumerate}
\end{ex}

\section{Anchored groups and fixed points at infinity}
\label{sec: anchored}

Let us discuss algebraic conditions that force
an isometry group of $X$ to fix a point at infinity. 

If a subgroup $\G\le\Iso(X)$ contains a parabolic element,
stabilizes a closed convex noncompact subset $W\subseteq X$,
and fixes a point at infinity of $W$,  
then we say that $\G$ {\it is anchored in\,} $W$. 
(Passing to an invariant closed convex subset is essential 
in some inductive arguments, e.g. in Theorem~\ref{thm: co-amen grounded}).

\begin{thm} 
\label{thm: anchored} Let $\G\le\Iso(X)$ be a subgroup and $W$
be a any closed, convex, noncompact $\G$-invariant subset of $X$.
Then $\G$ is anchored in $W$ if one of the following holds:\newline
\textup{(1)} $\G$ is abelian and contains a parabolic.
\newline
\textup{(2)} $\G$ has a normal subgroup that is anchored in $W$.
\end{thm}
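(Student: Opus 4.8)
The plan is to reduce both parts to the single goal of producing a point of $W(\infty)$ fixed by $\G$: once such a point is at hand, $\G$ contains a parabolic (this is given outright in (1) and is inherited from the anchored normal subgroup in (2)) and stabilizes the closed convex noncompact set $W$ (a hypothesis), so $\G$ is anchored in $W$ by definition. Throughout I would work inside $W$, regarded as a complete proper CAT($0$) space on which $\G$ acts by isometries, so that Busemann functions, the limiting process of \cite[Lemma 3.9]{BGS} (cf.~\cite[Lemma II.8.26]{BriHae}), the Tits metric on $W(\infty)$, and the circumcenter machinery used in Corollaries~\ref{cor: tits exist rank one} and~\ref{cor: visib horoball} all apply inside $W$, and a point of $W(\infty)$ is in particular a point of $X(\infty)$.

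For (1), fix a parabolic $p\in\G$; having no fixed point in $X$ it has none in $W$. I would look at the convex function $\varphi(x)=d(px,x)$ on $W$, which is $\G$-invariant since $\G$ is abelian: $\varphi(ax)=d(pax,ax)=d(apx,ax)=d(px,x)=\varphi(x)$. Then split into cases. If $\varphi$ does not attain its infimum on $W$, the limiting process of \cite[Lemma 3.9]{BGS} produces a $\G$-invariant Busemann function on $W$, whose center is a $\G$-fixed point of $W(\infty)$. If $\varphi$ attains its infimum, that infimum is positive (no fixed point), so $p$ is an axial isometry of $W$; by \cite[Theorem II.7.1]{BriHae} the set $\mathrm{Min}_W(p)$ splits as $C_p\times\mathbb R$ with $p$ translating the $\mathbb R$-factor, every $a\in\G$ commutes with $p$ and so permutes the axes of $p$ and preserves this product decomposition, and no $a$ can reverse the $\mathbb R$-factor (else $apa^{-1}=p$ would translate in the opposite direction); hence $\G$ fixes the forward endpoint of the $\mathbb R$-factor, a point of $W(\infty)$.

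For (2), let $N\unlhd\G$ be anchored in $W$, pick a parabolic $p\in N$, and set $Q=\Fix_{W(\infty)}(N)$. I would first record that $Q$ is nonempty (it contains the point fixed by $N$), closed in the cone topology (an intersection of fixed-point sets of the homeomorphisms of $\overline W$ induced by elements of $N$), and $\G$-invariant (because $N$ is normal: for $g\in\G$, $\zeta\in Q$, $n\in N$ one has $ng\zeta=g(g^{-1}ng)\zeta=g\zeta$). Following the proof of Corollary~\ref{cor: tits exist rank one}, set $C_Q=\{\zeta\in Q:Q\subseteq B_{\mathrm{Tits}}(\zeta,\tfrac\pi2)\}$; lower semicontinuity of the Tits metric \cite[4.9]{BGS} makes $C_Q$ cone-closed, and if it is nonempty it has Tits diameter $\le\tfrac\pi2$, hence a unique center by \cite[Appendix 3]{BGS}. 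Since $C_Q$ is built canonically from the $\G$-invariant set $Q$, that center is then fixed by $\G$, and we are done; so the entire problem reduces to showing $C_Q\neq\emptyset$.

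To get $C_Q\neq\emptyset$ I would pass to $P:=\Fix_{W(\infty)}(p)\supseteq Q$, which is Tits-convex (a fixed-point set of an isometry) and, being the fixed-point set at infinity of the abelian group $\langle p\rangle$ containing a parabolic, has Tits radius $\le\tfrac\pi2$ realized within $P$ (the fact invoked in the proof of Corollary~\ref{cor: visib horoball}). Then $C_P$ is nonempty, cone-closed, of Tits diameter $\le\tfrac\pi2$, so it has a unique center $z\in W(\infty)$; using that $C_P$ has Tits diameter $<\pi$ and that $P$ is Tits-convex, the standard CAT($1$) facts about circumcenters place $z$ in the closed Tits-convex hull of $C_P$, which lies inside $P$, and show that $P\subseteq B_{\mathrm{Tits}}(z,\tfrac\pi2)$, so $z\in C_P$. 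Because $N$ normalizes $\langle p\rangle$ it preserves $P$ and hence the canonical set $C_P$, so it fixes $z$, giving $z\in\Fix_{W(\infty)}(N)=Q$; and $Q\subseteq P\subseteq B_{\mathrm{Tits}}(z,\tfrac\pi2)$ then yields $z\in C_Q$. I expect the delicate step to be precisely this transfer: verifying that the Tits-radius-$\tfrac\pi2$ bound for $P$ is witnessed by a point that is simultaneously \emph{canonical} (so that $N$, and ultimately $\G$, fixes it) and still a valid $\tfrac\pi2$-center for the possibly smaller set $Q$ — which is where the Tits-convexity of $P$ and the behaviour of CAT($1$) circumcenters at the borderline radius $\tfrac\pi2$ have to be handled carefully.
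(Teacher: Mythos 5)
Your overall reduction --- both parts follow once one produces a $\G$-fixed point of $W(\infty)$ --- is correct, and your treatment of (1) is essentially workable, though your second case is vacuous: a parabolic of $X$ that stabilizes the closed convex set $W$ restricts to a parabolic of $W$ (if $d(p\,\cdot,\cdot)$ attained a positive minimum on $W$, then $p$ would translate a geodesic line contained in $W$ and hence be axial in $X$), so only the ``infimum not attained'' branch occurs; there the nested convex sublevel sets of the $\G$-invariant displacement function are convex in $X$ and contained in $W$, so the limiting process of \cite[Lemma 3.9]{BGS} does place the canonical fixed point in $W(\infty)$. For the record, the paper disposes of (1) differently, by quoting Caprace--Lytchak \cite[Corollary 1.5]{CapLyt10}: the centralizer of a parabolic isometry of a CAT($0$) space of finite telescopic dimension fixes a point at infinity, and $W$ has finite telescopic dimension.

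The genuine gap is in (2), at the sentence ``Because $N$ normalizes $\langle p\rangle$ it preserves $P$.'' Nothing in the hypotheses makes $\langle p\rangle$ normal in $N$: the subgroup $N$ is an arbitrary anchored normal subgroup of $\G$ and $p$ is just some parabolic element of it, so for $n\in N$ one only gets $n\cdot\Fix_{W(\infty)}(p)=\Fix_{W(\infty)}(npn^{-1})$, which is a different set in general. Hence $P$ and $C_P$ are not $N$-invariant, the center $z$ of $C_P$ need not lie in $Q=\Fix_{W(\infty)}(N)$, and the chain $z\in Q$, $Q\subseteq B_{\mathrm{Tits}}(z,\tfrac\pi2)$, $z\in C_Q$ collapses. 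Replacing $P$ by $\bigcap_{n\in N}\Fix_{W(\infty)}(npn^{-1})$ restores invariance but loses the radius control, since the $\tfrac\pi2$-radius fact you quote concerns $\Fix_{X(\infty)}$ of an abelian group and its realizing center need not lie in $W(\infty)$, let alone in the smaller intersection --- precisely the borderline-$\tfrac\pi2$ difficulty you flag but do not resolve. The paper does not attempt this route: part (2) is quoted from Eberlein \cite[Proposition 4.4.4]{Ebe-book}, generalized to invariant convex subsets via \cite[Proposition 5.7]{FNS} or \cite[Proposition 1.4]{BalLyt}; a self-contained proof would have to reproduce that argument rather than the $C_Q$ construction of Corollary~\ref{cor: tits exist rank one}.
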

\begin{proof} (1) For $W=X$ this is due to Schroeder~\cite[Appendix 3]{BGS},
see also cf.~\cite{FNS}. The general case follows from a result of 
Caprace-Lytchak~\cite[Corollary 1.5]{CapLyt10} 
that the centralizer of a parabolic isometry
of a CAT($0$) space of finite telescopic dimension
has a fixed point at infinity.
Closed convex subsets of Hadamard manifolds have
finite telescopic dimension, see~\cite[Section 2.1]{CapLyt10},
and a parabolic isometry of $X$ that stabilizes a closed convex
subset acts in that subset as a parabolic isometry.

(2) For $W=X$ this is due to Eberlein~\cite[Proposition 4.4.4]{Ebe-book},
whose proof generalizes to our setting 
via~\cite[Proposition 5.7]{FNS} or~\cite[Proposition 1.4]{BalLyt}.
\end{proof}

Given a class of Hadamard manifolds $\mathcal C$,
we say that a group $G$ is {\it clinging in\,} $\mathcal C$ 
if for any discrete subgroup $\G\le\Iso(Y)$ such
that $Y\in \mathcal C$ and $\G$ is isomorphic to $G$, and for any
$\G$-invariant closed convex noncompact subset $W$ of $Y$, 
the group $\G$ is anchored in $W$. If $G$ is clinging in the class
of all Hadamard manifolds, we simply call $G$ {\it clinging}.

In particular, $G$ is clinging in $\mathcal C$ if no such $\G$ exists but we of course
are interested in nontrivial examples. 
Theorem~\ref{thm: clinging} and Corollary~\ref{cor: elem amen} below
can be found in~\cite{Bel-bus}.

\begin{thm} 
\label{thm: clinging}
A group $G$ is clinging in $\mathcal C$ if one of the following is true:
\newline
\textup{(1)}
$G$ has a clinging in $\mathcal C$ normal subgroup, or
\newline
\textup{(2)}
$G$ is the union of a nested sequence of clinging in $\mathcal C$ subgroups.
\newline
\textup{(3)}
$G$ is as in \textup{Theorem~\ref{thm: into-center}}.
\newline
\textup{(4)}
$G$ is virtually solvable and not virtually-$\mathbb Z^k$ for any $k$.
\newline
\textup{(5)}
a normal abelian subgroup of $G$ 
contains an infinite $G$-conjugacy class.
\end{thm}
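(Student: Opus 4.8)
The plan is to deduce all five cases from Theorem~\ref{thm: anchored}, using the flat torus theorem ``package'' of Theorems~\ref{thm: abelian semisimple}, \ref{thm: normalizer of abelian semisimple} and \ref{thm: semisimple nonembed} to locate parabolic isometries, plus a compactness remark for case~(2). In each case fix a Hadamard manifold $Y\in\mathcal C$, a discrete subgroup $\Gamma\le\Iso(Y)$ isomorphic to $G$, and a $\Gamma$-invariant closed convex noncompact subset $W\subseteq Y$; one must show $\Gamma$ is anchored in $W$. Two facts will be used repeatedly: $W$ is a proper complete CAT($0$) space of finite telescopic dimension, so $W(\infty)$ is nonempty and compact in the cone topology and every $\Fix(g)\cap W(\infty)$, $g\in\Iso(W)$, is closed; and in the discrete group $\Gamma$ every elliptic element has finite order (a point stabilizer in $\Iso(Y)$ is compact), so a nonzero power of a parabolic isometry of $\Gamma$ is again parabolic. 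For~(1), if $N\trianglelefteq G$ is clinging in $\mathcal C$, its image $\Lambda\trianglelefteq\Gamma$ is isomorphic to $N$, hence clinging in $\mathcal C$, so, $W$ being $\Lambda$-invariant, $\Lambda$ is anchored in $W$ and Theorem~\ref{thm: anchored}(2) finishes. For~(2), writing $G=\bigcup_iG_i$ with the $G_i$ nested and clinging in $\mathcal C$, the images $\Gamma_i\le\Gamma$ are nested with union $\Gamma$ and each is anchored in $W$; in particular $\Gamma$ contains a parabolic, and the sets $F_i:=\bigcap_{g\in\Gamma_i}(\Fix(g)\cap W(\infty))$ are nonempty, closed and decreasing in the compact space $W(\infty)$, so $\bigcap_iF_i\neq\emptyset$ and any point of it is $\Gamma$-fixed; hence $\Gamma$ is anchored in $W$.

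For~(3),~(4) and~(5) the common scheme is to produce a normal subgroup of $\Gamma$ anchored in $W$ and then apply Theorem~\ref{thm: anchored}(2); by Theorem~\ref{thm: anchored}(1) it suffices to find a normal abelian subgroup of $\Gamma$ containing a parabolic, except that~(4) proceeds slightly differently. For~(5), let $A\trianglelefteq G$ be abelian with $a\in A$ of infinite $G$-conjugacy class; the image $\Lambda\trianglelefteq\Gamma$ is abelian and, being normal, contains the (infinite) $\Gamma$-conjugacy class of the image $\alpha$ of $a$. If $\Lambda$ had no parabolic it would consist of non-parabolic isometries, and Theorem~\ref{thm: abelian semisimple}(2),(3) would make it finitely generated and make its intersection with the $\Iso(Y)$-conjugacy class of $\alpha$ finite, contradicting infiniteness of the $\Gamma$-conjugacy class of $\alpha$. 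So $\Lambda$ contains a parabolic, and being abelian and normal it makes $\Gamma$ anchored in $W$ via Theorem~\ref{thm: anchored}(1),(2). For~(4), first $\Gamma$ must contain a parabolic, since otherwise Theorem~\ref{thm: semisimple nonembed}(2a) would force a finite-index solvable subgroup of $\Gamma$ to be virtually abelian and then Theorem~\ref{thm: abelian semisimple}(2) would force $\Gamma$ to be virtually-$\mathbb{Z}^k$, against the hypothesis. Now $\Gamma$ is amenable and acts by isometries on the complete CAT($0$) space $W$ of finite telescopic dimension, so by the Adams-Ballmann theorem in the finite-telescopic-dimension form of Caprace-Lytchak~\cite{CapLyt10}, $\Gamma$ either fixes a point of $W(\infty)$ or stabilizes a flat $F\subseteq W$. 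In the latter case $F$ is not a point---a point would be $\Gamma$-fixed, hence fixed by the parabolic---so $\dim F\ge1$, and then $\Gamma$ acts properly discontinuously by isometries on $F\cong\mathbb{R}^{\dim F}$, hence is virtually-$\mathbb{Z}^j$ by the Bieberbach theorem, a contradiction. So $\Gamma$ fixes a point of $W(\infty)$ and, containing a parabolic, is anchored in $W$.

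For~(3), take $H,G_0\le G$ as in Theorem~\ref{thm: into-center} and let $\Lambda,\Gamma_0\le\Gamma$ be their images, so $Z(\Lambda)\cong Z(H)$ is infinite and contained in $Z(\Gamma_0)$. The key point is that $Z(\Lambda)$ contains a parabolic. If not, $Z(\Lambda)$ consists of non-parabolic isometries, hence is finitely generated by Theorem~\ref{thm: abelian semisimple}(2), and being infinite it contains an infinite-order, hence axial, element $a$. Hypothesis~(1) of Theorem~\ref{thm: into-center} is then contradicted outright; hypothesis~(2) is contradicted because, $a$ being central in $\Lambda$, Theorem~\ref{thm: normalizer of abelian semisimple}(4) lets $\Lambda$ act on $\Min(a)=C_a\times\mathbb{R}^k$ preserving the splitting with each element fixing the translation vector of $a$, which produces a homomorphism $\Lambda\cong H\to\mathbb{R}$ nonzero on $a$; and hypothesis~(3) is contradicted because the image of the free abelian subgroup of $Z(H)$ is torsion-free, hence axial, and is normalized by the finitely generated group $\Lambda$, so Theorem~\ref{thm: normalizer of abelian semisimple}(6) exhibits it as a direct factor of a finite-index subgroup of $\Lambda\cong H$. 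Thus $Z(\Gamma_0)\supseteq Z(\Lambda)$ contains a parabolic $z$. Passing to the normal core $\Gamma_1\trianglelefteq\Gamma$ of $\Gamma_0$, every element of $Z(\Gamma_0)$ centralizes $\Gamma_1$, so $Z(\Gamma_0)\cap\Gamma_1\subseteq Z(\Gamma_1)$, and a power $z^m$ lying in $\Gamma_1$ is a parabolic in $Z(\Gamma_1)$. Then $Z(\Gamma_1)$ is abelian and contains a parabolic, hence is anchored in $W$ by Theorem~\ref{thm: anchored}(1), and being characteristic in the normal subgroup $\Gamma_1$ it is normal in $\Gamma$, so Theorem~\ref{thm: anchored}(2) completes the proof.

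The genuinely hard geometry is carried by Theorem~\ref{thm: anchored} and, for~(4), by the Caprace-Lytchak amenability dichotomy, so what remains is group-theoretic bookkeeping. The step I expect to be most delicate is case~(3) under hypothesis~(3) of Theorem~\ref{thm: into-center}: one must match the ``virtual direct factor'' conclusion of Theorem~\ref{thm: normalizer of abelian semisimple}(6) with the precise wording ``not a direct factor of any finite-index subgroup'', and track how torsion in $\Gamma$ affects both the choice of the axial central element and the passage to normal cores.
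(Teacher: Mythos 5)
Your argument is correct, and it is worth noting that the survey itself gives no proof of this theorem --- it is quoted from \cite{Bel-bus} --- so there is no in-paper argument to compare against; what you have done is reassemble the result from exactly the ingredients the survey lays out for this purpose (Theorem~\ref{thm: anchored}, the flat torus package of Theorems~\ref{thm: abelian semisimple}--\ref{thm: semisimple nonembed}, and, for case~(4), the Adams--Ballmann dichotomy that the paper records in the remark after Theorem~\ref{thm: co-amen grounded}). Cases (1), (2), (5) are exactly as one would expect; your compactness argument in (2) using that $W(\infty)$ is compact in the cone topology and that fixed-point sets at infinity are closed and nested is the right one. In (4) the Adams--Ballmann route is clean and valid (an alternative, likely closer to \cite{Bel-bus}, is to induct on derived length using the characteristic abelian term of the derived series of the solvable finite-index normal core together with Theorem~\ref{thm: normalizer of abelian semisimple}); just note that before invoking Bieberbach you should observe that the kernel of $\G\to\Iso(F)$ is finite by proper discontinuity, and that a finite-by-(virtually $\mathbb Z^j$) group is still virtually $\mathbb Z^{j}$. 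Three small points deserve tightening. First, your justification that a nonzero power of a parabolic is parabolic (``elliptics in a discrete group have finite order'') only excludes infinite-order elliptic powers; to exclude an axial power you need the standard fact that in a complete CAT($0$) space an isometry with a semisimple nonzero power is itself semisimple (\cite[Theorem II.6.8]{BriHae}). Second, in case (3) under hypothesis (2) of Theorem~\ref{thm: into-center}, the homomorphism $\L\to\mathbb R$ is the projection of the translational part onto the translation vector of $a$, which is a homomorphism precisely because the rotational parts of elements commuting with $a$ fix that vector --- say this explicitly. Third, the ``virtual direct factor'' versus ``direct factor of a finite-index subgroup'' matching that you flag is indeed the one place where the wording of Theorem~\ref{thm: normalizer of abelian semisimple}(6) has to be read as ``$A$ is a direct factor of some finite-index subgroup of $N$ containing it,'' which is how the source states it; with that reading your contradiction under hypothesis (3) goes through.
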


\begin{cor}
\label{cor: elem amen}
Let $G$ be a finitely generated,
torsion-free group that has a nontrivial, normal, elementary amenable 
subgroup. Then either $G$ is clinging, or $G$ has a nontrivial,
finitely generated, abelian, normal subgroup that is a  
virtual direct factor of $G$.
\end{cor}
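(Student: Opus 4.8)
The plan is to run an induction that peels off the elementary amenable radical, using Theorem~\ref{thm: clinging} to detect when $G$ becomes clinging and using the flat torus theorem package to handle the one situation that escapes it. Let $N\unlhd G$ be a nontrivial normal elementary amenable subgroup; since $G$ is torsion-free and finitely generated, $N$ is torsion-free, and I would first pass to the Hirsch length of $N$ as the induction parameter. If $N$ is not virtually $\mathbb Z^k$ for any $k$, then $N$ is clinging by Theorem~\ref{thm: clinging}(4) (virtually solvable, since elementary amenable torsion-free groups of finite Hirsch length are virtually solvable), and then $G$ is clinging by Theorem~\ref{thm: clinging}(1); this is the first, easy branch. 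So the substantive case is when $N$ is virtually $\mathbb Z^k$, and, replacing $N$ by a characteristic finite-index subgroup, I may assume $N\cong\mathbb Z^k$ with $k\ge 1$.

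Now $N\cong\mathbb Z^k$ is a normal free abelian subgroup of $G$. Consider the conjugation action of $G$ on $N$; the kernel $C=C_G(N)$ has finite index in $G$ because $\mathrm{Aut}(\mathbb Z^k)=GL_k(\mathbb Z)$ and $G/C$ embeds in it — wait, this is not automatic for infinite-index reasons, so instead I would argue via Theorem~\ref{thm: clinging}(5): if some element of $N$ has infinite $G$-conjugacy class, then $G$ is clinging and we are done. Hence I may assume every element of $N$ has finite $G$-conjugacy class, i.e. $N\subseteq FC(G)$. Combined with $N\cong\mathbb Z^k$ being finitely generated, a standard argument shows $C_G(N)$ has finite index in $G$: the orbit map $G\to N$, $g\mapsto gag^{-1}$ has finite image for each of the finitely many generators $a$ of $N$, so the intersection of the stabilizers is finite index. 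Thus $N$ is central in a finite-index subgroup $G_0:=C_G(N)$ of $G$.

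At this point $G_0$ is a finitely generated group with $\mathbb Z^k\le Z(G_0)$. I would invoke Theorem~\ref{thm: clinging}(3), which says $G$ is clinging whenever $G$ is as in Theorem~\ref{thm: into-center}; the relevant hypothesis there is (3), with $H=G_0=G_0$: if the free abelian subgroup $N\le Z(G_0)$ is \emph{not} a virtual direct factor of $G_0$, then Theorem~\ref{thm: into-center}(3) applies to the triple $(G_0,G_0,G)$ (here $G_0$ has finite index in $G$), so $G$ is clinging and we are done. The only remaining possibility is that $N$ \emph{is} a virtual direct factor of $G_0$, hence of $G$: there is a finite-index $G_1\le G_0\le G$ with $G_1=N\times Q$ for some $Q$. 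Then $N$ is a nontrivial, finitely generated, abelian, normal (it is central in $G_1$, and normality in $G$ follows since $N$ was normal in $G$ to begin with) subgroup of $G$ that is a virtual direct factor of $G$ — which is exactly the second alternative in the statement. The main obstacle I anticipate is the bookkeeping in the last step: ensuring that after passing to finite-index subgroups and characteristic subgroups of $N$ the resulting abelian subgroup is still \emph{normal in $G$} (not merely in $G_0$) and is genuinely a virtual direct factor of $G$ itself; this is handled by taking $N$ characteristic in the original elementary amenable radical and tracking normality through each reduction, but it requires care.
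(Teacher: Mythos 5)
Your overall strategy is sound and uses exactly the toolkit the survey sets up (the paper itself defers the proof of this corollary to \cite{Bel-bus}, so there is no in-text proof to compare against line by line): reduce to a normal $\mathbb Z^k$, split on whether some element of it has infinite $G$-conjugacy class via Theorem~\ref{thm: clinging}(5), use the FC-centralizer argument to make $N$ central in a finite-index subgroup, and then run the dichotomy of Theorem~\ref{thm: into-center}(3) to land in either the clinging alternative or the virtual-direct-factor alternative. The bookkeeping you worry about at the end is fine: the translation subgroup of the torsion-free virtually-$\mathbb Z^k$ group $N$ is characteristic in $N$, hence normal in $G$, and a direct factor of a finite-index subgroup of $G_0$ is a direct factor of a finite-index subgroup of $G$.

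The one genuine gap is at the very first step. To invoke Theorem~\ref{thm: clinging}(4) you need $N$ to be \emph{virtually solvable}, and your parenthetical justification assumes $N$ has finite Hirsch length --- but the corollary's hypotheses place no finiteness restriction whatsoever on the normal elementary amenable subgroup, and a torsion-free elementary amenable group of infinite Hirsch length (e.g.\ a direct sum of torsion-free solvable groups of unbounded derived length) need not be virtually solvable. The announced ``induction on Hirsch length'' is never actually carried out and would not make sense in this case anyway. The gap is fillable, but it needs an explicit extra sentence: by Hillman's inequality $h(N)\le\mathrm{cd}(N)$ for elementary amenable groups, if $h(N)=\infty$ then $\mathrm{cd}(G)\ge\mathrm{cd}(N)=\infty$, so $G$ is not isomorphic to any discrete torsion-free subgroup of $\Iso(Y)$ for a Hadamard manifold $Y$ (such a subgroup acts freely and properly on a Euclidean space, hence has finite cohomological dimension), and $G$ is then clinging vacuously under the paper's definition. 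Once $h(N)<\infty$ is secured in every realization, the Hillman--Linnell theorem (elementary amenable groups of finite Hirsch length are locally-finite-by-virtually-solvable, hence virtually solvable when torsion-free) gives the virtual solvability you use, and the rest of your argument goes through.
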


Splitting results of Schroeder~\cite{Sch-split-1985} and Monod~\cite{Monod-superrid-jams} give another
source of groups fixing points at infinity.

\begin{thm}
\label{thm: commut subgr grounded}
Let $W$ be a closed, convex, noncompact subset of a Hadamard manifold. 
If a discrete torsion-free isometry group of $W$
contains two commuting subgroups $\G_1$, $\G_2$, 
then one of them fixes a point at infinity of $W$.
\end{thm}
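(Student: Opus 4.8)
The plan is to reduce Theorem~\ref{thm: commut subgr grounded} to the two classical splitting theorems quoted just before the statement. First I would invoke the structure theory for a discrete torsion-free isometry group $\G$ of a closed convex noncompact $W\subseteq X$ that contains two commuting subgroups $\G_1,\G_2$. If either $\G_i$ consists entirely of non-parabolic isometries, then since $\G_1$ and $\G_2$ commute, each $\G_i$ normalizes the other, and I can try to run the flat-torus-type argument: the centralizer of $\G_2$ stabilizes an invariant flat or splits off a factor. More directly, if say $\G_2$ contains a parabolic element, then $\G_1$ lies in the centralizer of that parabolic; by Theorem~\ref{thm: anchored}(1) applied to the cyclic (abelian) group generated by the parabolic—together with the Caprace--Lytchak fixed-point-at-infinity result for centralizers of parabolics cited in its proof—the whole centralizer, and in particular $\G_1$, fixes a point at infinity of $W$. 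So the only remaining case is when \emph{both} $\G_1$ and $\G_2$ are groups of non-parabolic (i.e. semisimple) isometries of $W$.

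In that remaining case the key input is a splitting theorem: a discrete isometry group of a Hadamard-type space that decomposes as a product of two commuting semisimple subgroups forces a corresponding isometric splitting of an invariant convex subset of $W$. This is exactly the content of Schroeder's splitting theorem~\cite{Sch-split-1985} (and Monod's generalization~\cite{Monod-superrid-jams}), which I would apply to the convex core or minimal invariant set $Y\subseteq W$ of $\G=\langle\G_1,\G_2\rangle$: one gets $Y=Y_1\times Y_2$ with $\G_i$ acting on the $Y_i$ factor (up to finite index / passing to a subgroup). If either $Y_i$ is noncompact, the $\G_{3-i}$-action has bounded orbits in that direction only if... — more carefully, if $Y_1$ is noncompact then $Y_1$ contains a ray, whose endpoint at infinity, viewed in the product $Y=Y_1\times Y_2$, is fixed by everything acting trivially on the $Y_1$ factor, i.e. by $\G_2$; symmetrically if $Y_2$ is noncompact then $\G_1$ fixes a point at infinity of $W$. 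Since $Y$ is noncompact (as $W$ is and $\G$ acts with $Y$ containing the essential part of the geometry — here one may need to argue that $Y$ can be taken noncompact, or otherwise $\G$ would be finite, hence trivial, contradicting the presence of a nontrivial commuting pair, or else one simply observes $\Min$-type sets are noncompact), at least one factor is noncompact, and we are done.

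The step I expect to be the main obstacle is the \textbf{bookkeeping around the splitting}: Schroeder's theorem produces a splitting of an invariant convex subset only after possibly passing to a finite-index subgroup of $\G$, and one must check that the resulting fixed point at infinity of the finite-index subgroup is actually fixed by all of $\G_1$ (resp. $\G_2$). This should follow because the finite-index subgroup's fixed-point set at infinity is a nonempty closed convex subset of the Tits boundary that is invariant under the full group, and such a set has a canonical center (as used in the proof of Corollary~\ref{cor: tits exist rank one}), which is then a genuine fixed point of $\G_i$. A second, more technical, point is verifying that the ``both semisimple'' hypothesis is what actually triggers Schroeder's theorem — i.e. ruling out the degenerate possibility that one $\G_i$ is trivial or finite (impossible here by torsion-freeness unless trivial, in which case the other group fixes any point at infinity of $W$, which exists since $W$ is noncompact). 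Modulo these routine verifications, the proof is a two-case split: parabolic present $\Rightarrow$ Theorem~\ref{thm: anchored}; all semisimple $\Rightarrow$ Schroeder/Monod splitting.
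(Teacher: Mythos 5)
Your Case 1 (one of the $\G_i$ contains a parabolic) is correct and is genuinely different from what the paper does: the Caprace--Lytchak result quoted in the proof of Theorem~\ref{thm: anchored}(1) does produce a single point at infinity of $W$ fixed by the entire centralizer of that parabolic in $\Iso(W)$, hence by the other subgroup. The paper makes no such case division.

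The gap is in Case 2, which is where the real content lies. You invoke the Schroeder/Monod splitting as if it were available unconditionally for a pair of commuting groups of semisimple isometries, but semisimplicity is not what powers those theorems: Schroeder's splitting \cite{Sch-split-1985} needs finite volume (or the duality condition), and Monod's generalization needs precisely the hypothesis that the group in question fixes \emph{no} point at infinity. Without such a hypothesis there is not even a candidate set $Y$ to split --- minimal nonempty closed convex invariant subsets can fail to exist (nested horoballs), and for a non-abelian group of semisimple isometries the set $\Min$ of the whole group may be empty, so no flat-torus-type argument applies. Consequently Case 2 must be run by contradiction: assume neither $\G_1$ nor $\G_2$ fixes a point at infinity of $W$. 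Then \cite{Monod-superrid-jams} gives a minimal closed convex $\G_1$-invariant set $C_1$; the union of all such sets splits as $C_1\times C_2$ with $C_2$ \emph{bounded} (boundedness is exactly where ``$\G_1$ fixes no point at infinity'' is used), $\G_2$ preserves the splitting, acts trivially on $C_1$ and fixes the circumcenter of $C_2$; repeating with the roles reversed one finds that $\G_1\G_2$ fixes a point of $W$, contradicting discreteness plus torsion-freeness. This is the paper's proof, and once it is set up this way the parabolic/semisimple dichotomy becomes superfluous, since Monod's machinery is indifferent to it. Note finally that the two obstacles you flag at the end are not the real ones: no passage to a finite-index subgroup occurs in the correct argument, and your proposed repair via Tits centers would not work in any case, because the fixed-point set at infinity of a subgroup need not have Tits radius $\le\frac{\pi}{2}$.
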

\begin{proof}
Suppose neither $\G_1$ nor $\G_2$ fixes a point at infinity of $W$,
and in particular $\G_1\G_2$ is nontrivial.
Since $\G_1$ fixes no point at infinity, 
it follows from~\cite[Proposition 27, Remark 39 and Subsection 4.6]{Monod-superrid-jams}
that $W$ contains a non-empty
minimal convex closed $\G_1$-invariant subset $C_1$, and moreover,
the union $C$ of such sets splits as $C_1 \times C_2$
for some bounded convex subset $C_2$ where $\G_1$, $\G_2$
preserve the splitting and act trivially on $C_2$, $C_1$,
respectively. 
(Boundedness of $C_2$ is a key point, so we
explain it here: if $C_2$ is unbounded, it contains a ray 
$s\to r(s)$, so given $x\in C_1$ we get a ray $\{x\}\times r$ in $X$
which is mapped by any $\g\in \G_1$ to an asymptotic ray as
$\g$ maps $(x,r(s))$ to $(\g(x), r(s))$; thus $\G_1$ fixes a point
at infinity contradicting the assumptions).
Since $C_2$ is bounded, $\G_2$ fixes the circumcenter of $C_2$,
and hence fixes a point $z_2\in C$. Repeating
the same argument with $\G_2$, $\{z_2\}$ in place of $\G_1$, $C_1$
shows that the union $Z$ of minimal convex closed $\G_2$-invariant subsets
splits as the product of $\{z_2\}$ and a bounded convex subset,
and the splitting is invariant under $\G_1$, $\G_2$.
It follows that $\G_1\G_2$ fixes a point of $Z$.
This is where we need that $\G_1\G_2$ lies in a torsion-free discrete subgroup,
because it implies that $\G_1\G_2$ is trivial.  
\end{proof}

\begin{cor}
\label{cor: clinging no-parabolic}
If $G_1$ and $G_2$ are groups each containing a subgroup
as in \textup{Theorem~\ref{thm: semisimple nonembed}(2)},
then $G_1\times G_2$ is clinging in the class of all
Hadamard manifolds.
\end{cor}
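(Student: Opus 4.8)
The plan is to present $G_1\times G_2$ as a pair of commuting normal subgroups, use the mechanism behind Theorem~\ref{thm: commut subgr grounded} to force one of them to fix a point at infinity, and then promote this to the whole group via Theorem~\ref{thm: anchored}(2). Concretely, I would start with an arbitrary Hadamard manifold $Y$, a discrete subgroup $\G\le\Iso(Y)$ together with an isomorphism $G_1\times G_2\cong\G$, and a $\G$-invariant closed convex noncompact subset $W\subseteq Y$, and let $\G_1,\G_2\le\G$ be the images of the two factors. Then $\G_1$ and $\G_2$ commute, generate $\G$, are each normal in $\G$, satisfy $\G_i\cong G_i$, each preserve $W$, and each are infinite, because $G_i$ contains by hypothesis a subgroup $H_i$ as in Theorem~\ref{thm: semisimple nonembed}(2) and every such group --- a solvable, not virtually abelian group; a Baumslag--Solitar group; or $\pi_1$ of a closed aspherical $3$-manifold carrying no metric of $K\le 0$ --- is infinite.

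The main step will be to show that $\G_1$ or $\G_2$ fixes a point at infinity of $W$. For this I would consider the isometric action of $\G$ on $W$ obtained by restriction and argue by contradiction: if neither $\G_1$ nor $\G_2$ fixed a point of $W(\infty)$, then the part of the proof of Theorem~\ref{thm: commut subgr grounded} that precedes its torsion-free conclusion --- Monod's splitting for an action with no fixed point at infinity, together with the boundedness of the complementary factor --- would produce a bounded, $\G_1$- and $\G_2$-invariant convex subset $Z\subseteq W$, and hence a point $q\in W$ fixed by all of $\G=\G_1\G_2$. But a discrete isometry group of $Y$ fixing a point $q\in Y$ lies in the compact isotropy group $\Iso(Y)_q$ and is therefore finite, contradicting that $\G$ is infinite. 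This forces some $\G_i$, say $\G_1$, to fix a point $\xi\in W(\infty)$. It is worth noting that this is the step where the torsion-freeness hypothesis of Theorem~\ref{thm: commut subgr grounded} is replaced, in our situation, by the automatic infinitude of $\G$, so that no torsion-freeness assumption on $G_1\times G_2$ is needed.

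With such a $\xi$ in hand, I would finish as follows. First, $\G_1$ must contain a parabolic element of $\Iso(Y)$: otherwise $\G_1$ would be a parabolic-free subgroup of $\Iso(Y)$ into which $H_1\le G_1\cong\G_1$ embeds, contradicting Theorem~\ref{thm: semisimple nonembed}(2). Since $\G_1$ also stabilises $W$ and fixes the point $\xi$ at infinity of $W$, it is anchored in $W$ by definition. As $\G_1$ is normal in $\G$, Theorem~\ref{thm: anchored}(2) then yields that $\G$ itself is anchored in $W$; and since $Y$, $\G$ and $W$ were arbitrary, this is precisely the statement that $G_1\times G_2$ is clinging in the class of all Hadamard manifolds. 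The delicate point throughout is the main step: one must invoke the internal construction from the proof of Theorem~\ref{thm: commut subgr grounded} rather than its torsion-free statement, and observe that the ``$\G$ fixes an interior point of $W$'' alternative is impossible here simply because $\G$ is infinite.
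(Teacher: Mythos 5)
Your proof is correct and follows essentially the same route as the paper: a parabolic in each factor extracted from Theorem~\ref{thm: semisimple nonembed}(2), a fixed point at infinity for one factor obtained from the commuting-subgroups splitting argument of Theorem~\ref{thm: commut subgr grounded}, and then anchoring of the whole group via normality and Theorem~\ref{thm: anchored}(2). Your extra care in re-running the proof of Theorem~\ref{thm: commut subgr grounded} with ``torsion-free'' replaced by ``$\G$ is infinite'' is actually a welcome refinement, since the paper cites that theorem verbatim even though $G_1\times G_2$ is not assumed torsion-free; your observation that the fixed interior point only needs to contradict infinitude of the discrete group repairs this cleanly.
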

\begin{proof}
Suppose $G_1\times G_2$ is isomorphic to a discrete subgroup $\G\le\Iso(X)$
stabilizing a closed, convex, noncompact subset $W$.
By Theorem~\ref{thm: semisimple nonembed}(2),
each factor contains a parabolic, and one of them fixes a point at infinity by
Corollary~\ref{thm: commut subgr grounded}, and hence is anchored in $W$.
So $\G$ is anchored in $W$ by Theorem~\ref{thm: anchored}(2).
\end{proof}

\begin{cor}
\label{cor: commut subgr grounded}
If $G_1$, $G_2$ are nontrivial torsion-free groups, then
$G_1\times G_1$ is clinging in the class of Hadamard manifolds
containing no flat half planes.
\end{cor}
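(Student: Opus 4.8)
The plan is to follow the proof of Corollary~\ref{cor: clinging no-parabolic}, with the hypothesis ``no flat half planes'' playing the role that Theorem~\ref{thm: semisimple nonembed}(2) plays there. Suppose $G_1\times G_2$ is isomorphic to a discrete subgroup $\Gamma\le\Iso(X)$ of a Hadamard manifold $X$ with no flat half planes; write $\Gamma=\Gamma_1\times\Gamma_2$ with $\Gamma_i\cong G_i$, and let $W\subseteq X$ be a closed, convex, noncompact, $\Gamma$-invariant subset. We must show $\Gamma$ is anchored in $W$.

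The first and main step is to prove that every nontrivial element of $\Gamma$ is parabolic. Since $\Gamma$ is discrete and torsion-free it has no nontrivial elliptic element: an element fixing a point of $X$ lies in a point stabilizer, which is compact, and a discrete subgroup of a compact group is finite. So it is enough to rule out axial elements. Suppose $g=(g_1,g_2)\in\Gamma$ were axial. Because $X$ has no flat half planes, no axis of $g$ bounds a flat half plane, so $g$ is a rank one isometry; hence its centralizer $C_\Gamma(g)=C_{\Gamma_1}(g_1)\times C_{\Gamma_2}(g_2)$, being a discrete subgroup of $\Iso(X)$ that normalizes the rank one element $g$, is virtually cyclic. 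On the other hand, for each $i$ pick an element $a_i\in\Gamma_i$ of infinite order (available since $G_i$ is nontrivial and torsion-free), taking $a_i=g_i$ whenever $g_i\neq 1$; then $a_i\in C_{\Gamma_i}(g_i)$, so $\langle a_1\rangle\times\langle a_2\rangle\cong\mathbb Z^2$ sits inside $C_\Gamma(g)$, contradicting virtual cyclicity. Thus $\Gamma$ has no axial element and every nontrivial element of $\Gamma$ is parabolic; in particular each $\Gamma_i$ contains a parabolic.

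To conclude, apply Theorem~\ref{thm: commut subgr grounded} to the commuting subgroups $\Gamma_1,\Gamma_2$ of the discrete torsion-free isometry group $\Gamma$ of $W$: some $\Gamma_i$ fixes a point at infinity of $W$. Since this $\Gamma_i$ also contains a parabolic and stabilizes $W$, it is anchored in $W$; and, being a direct factor of $\Gamma$, it is normal in $\Gamma$, so Theorem~\ref{thm: anchored}(2) shows that $\Gamma$ is anchored in $W$. As $X$ and $W$ were arbitrary, $G_1\times G_2$ is clinging in the class of Hadamard manifolds with no flat half planes.

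I expect the delicate point to be the axial-element dichotomy in the middle paragraph: one must apply the virtual cyclicity of normalizers of a rank one element to the centralizer $C_\Gamma(g)$ rather than to $\Gamma$ itself, and note that it is the curvature hypothesis (``no flat half planes''), not any hypothesis on $G_1,G_2$, that upgrades every axial isometry to a rank one isometry. The remaining steps are a routine combination of Theorems~\ref{thm: anchored}(2) and~\ref{thm: commut subgr grounded}, parallel to Corollary~\ref{cor: clinging no-parabolic}.
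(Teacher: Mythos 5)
Your proof is correct and follows essentially the same route as the paper's: the ``no flat half planes'' hypothesis upgrades any axial element to a rank one isometry, whose centralizer would have to be virtually cyclic yet contains a $\mathbb Z^2$ forced by the product structure, so both factors consist of parabolics, and the conclusion then follows by combining Theorem~\ref{thm: commut subgr grounded} with Theorem~\ref{thm: anchored}(2). The only (immaterial) differences are that you rule out axial elements in all of $\Gamma$ rather than just in each factor, and that you spell out the exclusion of elliptics, which the paper leaves implicit.
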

\begin{proof} 
Suppose $G_1\times G_2$ is realized as a discrete isometry group
of a Hadamard manifold with no flat half planes, and suppose
$W$ is a closed, convex, noncompact invariant subset.
Since $G_1\times G_2$  is torsion-free, by
Theorem~\ref{thm: commut subgr grounded} one of the factors
fixes a point at infinity of $W$.
If say $G_1$ contains a hyperbolic element $h$,
then $h$ has rank one as $X$ contains no flat half planes, so
the centralizer of $h$ in $G_1G_2$ is cyclic, and also contains $h$ and $G_2$
violating the assumption that $G_2$ is nontrivial.
Thus $G_1$ consists of parabolics, and by symmetry so does $G_2$.
One of the groups $G_1$, $G_2$ fixes a point at infinity, hence it is
anchored in $W$, and so is $G_1G_2$ by Theorem~\ref{thm: anchored}(2). 
\end{proof}

Results of Caprace-Monod imply:

\begin{thm}
\label{thm: co-amen grounded}
If $H$ is clinging in a class of Hadamard manifolds $\mathcal C$,
and $G$ contains $H$ as a co-amenable subgroup, then $G$ is clinging in $\mathcal C$.
\end{thm}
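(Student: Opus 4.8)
The plan is to unwind the definition of \emph{clinging}, transfer its conclusion to the co-amenable subgroup, and then combine the measure-averaging form of co-amenability with an Adams--Ballmann type alternative. So let $\Gamma\le\Iso(Y)$ be a discrete subgroup with $Y\in\mathcal C$, let $\phi\co G\to\Gamma$ be an isomorphism, and let $W\subseteq Y$ be a closed, convex, noncompact, $\Gamma$-invariant subset; we must show that $\Gamma$ is anchored in $W$. Put $\Gamma_H:=\phi(H)$. As a subgroup of the discrete group $\Gamma$ it is discrete, it is isomorphic to $H$, and $W$ is a closed convex noncompact $\Gamma_H$-invariant subset of $Y\in\mathcal C$. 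Since $H$ is clinging in $\mathcal C$, the group $\Gamma_H$ is anchored in $W$: it contains a parabolic isometry $p$, and it fixes a point $\xi\in W(\infty)$. In particular $\Gamma$ already contains the parabolic $p$, so it remains only to produce a point of $W(\infty)$ fixed by all of $\Gamma$.

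The first step is averaging. The manifold $Y$ is proper, hence so is its closed subset $W$, so $W(\infty)$ with the cone topology is compact; consequently the set $\mathcal P$ of Borel probability measures on $W(\infty)$ is a convex, weak-$\ast$ compact subset of the locally convex space $C(W(\infty))^{\ast}$, on which $\Iso(W)$, and therefore $\Gamma$, acts continuously and affinely by pushforward. The Dirac mass $\delta_\xi$ is a $\Gamma_H$-fixed point of $\mathcal P$, and $\Gamma_H$ is co-amenable in $\Gamma$ because the image of a co-amenable subgroup under an epimorphism is again co-amenable; hence characterization \textup{(1)} of co-amenability yields a $\Gamma$-fixed measure $\mu\in\mathcal P$.

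The second step applies the theorem of Adams--Ballmann, in its extension to complete CAT($0$) spaces of finite telescopic dimension due to Caprace--Monod and Caprace--Lytchak: a group of isometries of such a space that fixes a probability measure on its visual boundary either fixes a point at infinity or stabilizes a flat, that is, a convex subset isometric to a Euclidean space (possibly a single point). A closed convex subset of a Hadamard manifold has finite telescopic dimension, so this applies to the action of $\Gamma$ on $W$. The second alternative is impossible here: if $\Gamma$ stabilized a flat $F\subseteq W$, then the parabolic element $p$ would stabilize $F$ as well, whereas a parabolic isometry can stabilize no flat. (Indeed, the restriction $p|_F$ is a Euclidean isometry, hence attains its minimal displacement $m$ at some $x_0\in F$; if $F$ is a point this forces $p$ to be elliptic, and otherwise the nearest-point projection $\pi_F\co W\to F$ is $1$-Lipschitz and satisfies $\pi_F\circ p=p\circ\pi_F$, so $d(px,x)\ge d(p\pi_F(x),\pi_F(x))\ge m$ for every $x\in W$; either way $p$ attains $m$ and is semisimple, contradicting that $p$ is parabolic.) Therefore $\Gamma$ fixes a point $\eta\in W(\infty)$, and since $\Gamma$ also contains the parabolic $p$ and preserves the noncompact convex set $W$, it is anchored in $W$. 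As $\Gamma$ and $W$ were arbitrary, $G$ is clinging in $\mathcal C$.

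I expect the main obstacle to be locating the precise statement and hypotheses of the finite-telescopic-dimension Adams--Ballmann alternative in the Caprace--Monod and Caprace--Lytchak papers---specifically, confirming that its input is exactly a group-invariant probability measure on the cone-topology boundary of the convex set $W$. Everything else is a routine chase through the definitions, together with the elementary observation that a parabolic isometry fixes no flat.
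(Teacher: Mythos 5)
The averaging step is fine: $W(\infty)$ is compact, the pushforward action on probability measures is affine and continuous, and characterization (1) of co-amenability does produce a $\Gamma$-invariant measure $\mu$ on $W(\infty)$ from the $\Gamma_H$-fixed Dirac mass. The gap is in the second step: the dichotomy you invoke is not a theorem. Adams--Ballmann's hypothesis is amenability of the acting group, not merely the existence of an invariant probability measure on the visual boundary; amenability is used a second time, after the measure is produced, precisely to handle the case where the measure is concentrated on the boundary sphere of the Euclidean de Rham factor. The statement ``a group of isometries of a proper CAT($0$) space preserving a probability measure on the boundary fixes a point at infinity or stabilizes a flat'' is false: take $W=\mathbf{H}^2\times\mathbb R$ and $\Gamma=\Iso(\mathbf{H}^2)\times\Iso(\mathbb R)$ (or a discrete version, e.g.\ a Schottky group times the infinite dihedral group). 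This group preserves the measure $\frac12(\delta_{\xi_+}+\delta_{\xi_-})$ carried by the two poles $\xi_\pm$ at infinity of the $\mathbb R$-factor, yet it fixes no point at infinity (the poles are swapped, and every other boundary point has a nontrivial $\partial\mathbf{H}^2$-coordinate, which no non-elementary subgroup of $\Iso(\mathbf{H}^2)$ fixes) and stabilizes no flat (every flat is $F_1\times F_2$ with $F_1$ a point or a geodesic of $\mathbf{H}^2$, and no such $F_1$ is invariant). So once you have the invariant measure $\mu$ on $W(\infty)$ there is a third alternative you cannot exclude: $\mu$ concentrated on the sphere at infinity of a Euclidean de Rham factor, with neither a fixed boundary point nor an invariant flat. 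Your parabolic argument (which is correct as far as it goes) only kills the flat alternative, not this one.

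The paper closes exactly this hole by a different route: arguing contrapositively, it assumes $G$ fixes no point of $W(\infty)$, uses Caprace--Monod to extract a \emph{minimal} closed convex $G$-invariant subspace $U\subseteq W$, notes that $U$ has no Euclidean de Rham factor, and then applies their co-amenability result \cite[Proposition 2.1]{CapMon-discrete} to conclude that $H$ fixes no point of $U(\infty)$ --- contradicting the fact that $H$, being clinging, is anchored in the smaller invariant set $U$. This is precisely why ``clinging'' is defined by requiring anchoring in \emph{every} invariant closed convex noncompact subset (the paper flags this when defining ``anchored''); your argument uses only that $H$ is anchored in $W$ itself, which is a warning sign. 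To repair your proof you would need to pass to a minimal invariant subspace and split off its Euclidean factor before applying any boundary-measure rigidity, at which point you essentially reproduce the paper's argument.
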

\begin{proof}
Realize $G$ as a discrete isometry group of a Hadamard manifold in $\mathcal C$
stabilizing a closed, convex, noncompact subset $W$.
By assumption $H$ contains a parabolic, hence so does $G$.
If $G$ does not fix a point at infinity of $W$,
then by~\cite[Theorem 4.3]{CapMon09} $W$ contains a
minimal closed convex $G$-invariant subspace $U$. 
Note that $U$ has no Euclidean de Rham factor
(as the other factor would then be a smaller $\G$-invariant subset).
The co-amenability implies that $H$ fixes no point at infinity of 
$U$~\cite[Proposition 2.1]{CapMon-discrete}
so it is not clinging in $\mathcal C$.
\end{proof}

\begin{rmk} 
Burger-Schroeder~\cite{BurSch87} showed that any amenable
subgroup $\G\le\Iso(X)$ either fixes a point at infinity 
or stabilizes a flat, and this generalizes to actions on proper
CAT($0$) spaces by Adams-Ballmann~\cite{AdaBal}.
Even more generally, 
Caprace-Monod~\cite[Corollary 2.2]{CapMon-discrete} 
obtained the same conclusion whenever
$\G$ contains two commuting co-amenable subgroups (and also
gave examples with non-amenable $\G$). 
To make this result into a source of groups
that fix a point at infinity more examples
are needed, and with our focus
on manifolds one has to answer the following.
\end{rmk}

\begin{quest}
Is there a group that contains
two commuting co-amenable subgroups, has finite cohomological 
dimension, and is not virtually solvable?
\end{quest}

As mentioned in Section~\ref{sec: fix a point at infinity}, 
if $\G$ fixes a point at infinity, then $\G$
permutes horospheres centered at the point
defining a homomorphism $\G\to\mathbb R$.
Thus {\it if $\G\le\Iso(X)$ has no nontrivial homomorphism
into $\mathbb R$, then
$\G$ stabilizes a horoball if and only if 
$\G$ fixes a point at infinity.} 

Examples of clinging groups 
with finite abelianization (and hence no nontrivial homomorphisms into $\mathbb R$)
are abound, see~\cite{Bel-bus}, and there are many such groups of 
finite cohomological dimension, or even of type {\it F}
(so they may well be the fundamental groups of complete manifolds of $K\le 0$).   

Note that the property of having finite abelianization
is inherited by amalgamated products (clearly), and by extensions
(due to right exactness of the abelianization functor). 
Moreover, an extension with a finite quotient often has 
finite abelianization, e.g. 
the abelianization of the semidirect product
$A\rtimes B$ is $(A^\ab)_B\times B^\ab$, where
$(A^\ab)_B$ is the coinvariants for the $B$-action 
on $A^\ab$.

\section{Homotopy obstructions (after Gromov and Izeki-Nayatani)}
\label{sec: homotopy obstr}

If $M$ is a complete manifold of $K\le 0$, then 
$\pi_1(M)$ has finite cohomological dimension.
A group has finite cohomological dimension if and only if 
it is the fundamental group
of a manifold whose universal cover is diffeomorphic to 
a Euclidean space.

Gromov asked~\cite{Gro-asy} whether every countable 
group of finite cohomological dimension
is isomorphic to some $\pi_1(M)$ where $M$ is complete of $K\le 0$. 
(The question is a good illustration of 
how little we know about open manifolds of $K\le 0$.)
The answer is no due to groundbreaking
works of Gromov~\cite{Gro-rand03} and Izeki-Nayatani~\cite{IseNay05} on groups
with strong fixed point properties. 

These papers combine certain averaging procedures with ideas of harmonic map 
superrigidity to produce many a group $G$ such that
\begin{itemize}
\item[(a)]
any isometric $G$-action on a Hadamard manifold  
has a fixed point;
\vspace{2pt}
\item[(b)]
$G$ has type {\it F} (i.e. is the fundamental group of a
compact aspherical manifold with boundary).
\end{itemize}
Since the $\pi_1(M)$-action on the universal cover of $M$
is free, it follows that there is no complete manifold $M$ of 
$K\le 0$ and $\pi_1(M)\cong G$. 
The methods actually reach far beyond Hadamard manifolds,
and apply to isometric $G$-actions on a wide variety of spaces,
see ~\cite{IseNay-surv, NaoSil11, IKN}.

Gromov's examples are certain torsion-free hyperbolic groups produced 
from a sequence of graphs $\G_n$ whose edges are labeled with words
of length $j$ in an alphabet of $d>1$ letters. The words
are chosen randomly, and reversing orientation of the edge corresponds
to taking inverse of a word. Given the data let $G(\G_n, d, j)$ 
be the quotient group of $F_d$, the free group on $d$ generators, by the relations
corresponding to the cycles in $\G_n$. The main result is that
there is a sequence of expander graphs $\G_n$ such that for a large enough 
$j$ the group $G(\G_n, d, j)$ is torsion-free, hyperbolic, and satisfies (a)
with probability $\to 1$ as $n\to\infty$.
Like any torsion-free hyperbolic group, $G(\G_n, d, j)$ has type {\it F}.

Izeki-Nayatani's original example is any uniform torsion-free lattice in $PSL_3(\mathbb Q_p)$,
which has type {\it F} because it acts freely and properly discontinuously on 
the associated Euclidean building.

\begin{rmk} The class of groups that satisfy (a) includes any finite group,
or more generally any locally finite infinite subgroup such as
$\mathbb Q/\mathbb Z$. There are much deeper examples in~\cite{ABJLMS} of 
finitely presented, infinite, non-torsion-free groups 
that fix a point for any action by a homeomorphisms
on a contractible manifold. None of these groups
satisfies (b) as they have nontrivial finite order elements.
\end{rmk}

\begin{rmk}
\label{rmk: refl gr trick}
By Davis's reflection group trick any group of type {\it F} embeds 
into the fundamental group of a closed aspherical manifold. Thus 
there is a closed aspherical manifold that is not homotopy equivalent
to a complete manifold of $K\le 0$.
\end{rmk}

\begin{quest}
Is there a closed aspherical manifold whose fundamental group satisfies \textup{(a)}?
\end{quest}

\section{Homeomorphism obstructions: exploiting $\mathbb R$ factors}
\label{sec: homeo obstr} 

Call a group $G$ {\it reductive\,} if for any epimorphism
$G\to H$ such that $H$ is a discrete, non-cocompact,
torsion-free isometry group of a Hadamard manifold stabilizing
a horoball, or a totally geodesic submanifold where $H$ acts cocompactly. 

\begin{ex}
\label{ex: elem}
(1) 
Any quotient of a reductive group is reductive.
\newline
(2)
Any finitely generated, virtually nilpotent group
is reductive (as follows from Sections~\ref{sec: non-parabolic}, 
\ref{sec: center with parabolic}, see~\cite{Bel-bus}).\newline
(3) Any irreducible, uniform lattice in the isometry group
of a symmetric space of $K\le 0$ and real rank $>1$ is reductive,
by the harmonic map superrigidity~\cite[Theorem 1.2]{Duc}, 
see also~\cite{Bel-bus}.
\end{ex}

A manifold is {\it covered by $\mathbb R^n$}
if its universal cover is diffeomorphic to $\mathbb R^n$.
Thus any complete $n$-manifold of $K\le 0$ is covered by $\mathbb R^n$.

\begin{rmk}
It is well-known that an open $K(G,1)$ manifold covered by 
$\mathbb R^n$ exists if and only if $G$ is a countable
group of finite cohomological dimension, see e.g.~\cite{Bel-bus}.
\end{rmk}

A manifold is {\it covered by $\mathbb R\times\mathbb R^{n-1}$}
if it is diffeomorphic to the product of $\mathbb R$ and
a manifold covered by $\mathbb R^{n-1}$. For instance,
if $G$ is a discrete torsion-free isometry group of a Hadamard
manifold that satisfies the condition (3) of Section~\ref{sec: rough}, 
then $M$ is covered by $\mathbb R\times\mathbb R^{n-1}$.
As we saw above (3) can be forced by purely 
algebraic assumptions on $\pi_1(M)$:

\begin{ex} 
If $M$ is a complete connected manifold of $K\le 0$ 
such that $\pi_1(M)$ is either clinging with finite abelianization,
or satisfies the assumptions of Theorem~\ref{thm: into-center}, then 
$M$ is covered by $\mathbb R\times\mathbb R^{n-1}$.
\end{ex}

A trivial method of producing manifolds that are covered by $\mathbb R^n$
but not covered by $\mathbb R\times\mathbb R^{n-1}$
is to consider any manifold of minimal dimension among all manifolds
in its homotopy type 
that are covered by a Euclidean space, which yields (see~\cite{Bel-bus}):

\begin{prop}
Any aspherical manifold is homotopy equivalent
to a manifold covered by $\mathbb R^n$ but
not covered by $\mathbb R\times\mathbb R^{n-1}$.
\end{prop}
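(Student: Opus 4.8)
The plan is to build, for a given aspherical manifold $N$, a new aspherical manifold $M$ homotopy equivalent to $N$, covered by $\mathbb{R}^n$, and of minimal dimension among all such, and then to argue that minimality precludes a product-with-$\mathbb{R}$ structure. First I would fix $G=\pi_1(N)$. Since $N$ is aspherical, $G$ has a $K(G,1)$ with the homotopy type of $N$; in particular $G$ is countable. If $G$ has infinite cohomological dimension there is nothing to prove because no manifold covered by a Euclidean space can be a $K(G,1)$ — so the statement is about the case where such manifolds exist; if $G$ does not have finite cohomological dimension we reinterpret the claim as vacuous and may assume $\cd(G)<\infty$. By the remark in Section~\ref{sec: homeo obstr}, the set of integers $n$ for which there exists an open $K(G,1)$ manifold covered by $\mathbb{R}^n$ is nonempty (it contains every $n$ large enough, e.g.\ a regular neighborhood of an embedded finite $K(G,1)$ complex in $\mathbb{R}^n$, thickened, with universal cover $\mathbb{R}^n$). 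Let $n$ be the least such integer and let $M$ be a $K(G,1)$ manifold covered by $\mathbb{R}^n$; then $M\simeq N$ and $M$ is covered by $\mathbb{R}^n$.

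Next I would show $M$ is not covered by $\mathbb{R}\times\mathbb{R}^{n-1}$. Suppose it were, so $M\cong \mathbb{R}\times Q$ diffeomorphically, where $Q$ is a manifold covered by $\mathbb{R}^{n-1}$. Then $Q$ is aspherical (its universal cover $\mathbb{R}^{n-1}$ is contractible) and $\pi_1(Q)\cong\pi_1(M)=G$, because the projection $\mathbb{R}\times Q\to Q$ is a homotopy equivalence. Thus $Q$ is a $K(G,1)$ manifold covered by $\mathbb{R}^{n-1}$, contradicting the minimality of $n$. Hence $M$ is covered by $\mathbb{R}^n$ but not by $\mathbb{R}\times\mathbb{R}^{n-1}$, and $M\simeq N$, which is exactly the assertion.

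The only real subtlety is the existence of \emph{some} $K(G,1)$ manifold covered by a Euclidean space, i.e.\ the well-known fact cited in the remark: a countable group $G$ admits such a manifold iff $\cd(G)<\infty$. I would invoke this directly from the excerpt rather than reprove it; the forward direction (finite $\cd$ forces a finite-dimensional $K(G,1)$, which can be realized as a locally finite simplicial complex, embedded in some $\mathbb{R}^N$, and then thickened to an open regular neighborhood whose universal cover, being an open subset of $\mathbb{R}^N$ that deformation retracts to a contractible complex, is in fact diffeomorphic to $\mathbb{R}^N$ by standard smoothing/h-cobordism arguments — or one may quote~\cite{Bel-bus}) is where all the actual work lies, and it is precisely what the preceding remark records. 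Granting that, the minimal-dimension trick does the rest; there is no curvature input and no group theory beyond the bookkeeping of fundamental groups under the projection $\mathbb{R}\times Q\to Q$. The main obstacle, then, is purely one of citing the existence statement cleanly; the descent from $M\cong\mathbb{R}\times Q$ to a lower-dimensional $K(G,1)$ manifold is immediate.
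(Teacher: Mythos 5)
Your proposal is correct and is essentially the paper's own argument: take a $K(G,1)$ manifold of minimal dimension among those covered by a Euclidean space, and observe that a splitting $M\cong\mathbb R\times Q$ would produce a $K(G,1)$ manifold covered by $\mathbb R^{n-1}$, contradicting minimality. The only cosmetic point is that your hedge about $\cd(G)=\infty$ (and the claim being ``vacuous'' then --- it would actually be false, not vacuous) is unnecessary: the given aspherical manifold $N$ is itself a $K(G,1)$ of dimension $\dim N$, so $\cd(G)\le\dim N<\infty$ automatically, and the cited existence fact then applies.
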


This method is non-constructive
for it is not easy to decide whether a specific
open manifold has the minimal dimension in the above sense
(see~\cite{BKK02, BesFei02, Yoo04, Des06} for the
manifolds of such minimal dimensions). 

\begin{cor} 
\label{cor: inf gener}
If $G$ is reductive, clinging with finite abelianization,
or as in \textup{Theorem~\ref{thm: into-center}},
then any $K(G,1)$ manifold is homotopy equivalent to a manifold 
that admits no metric of $K\le 0$
and is covered by a Euclidean space.
\end{cor}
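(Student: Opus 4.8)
The plan is to feed the structural results of Sections~\ref{sec: rough}, \ref{sec: center with parabolic}, \ref{sec: anchored} and~\ref{sec: fix a point at infinity} into the (non-constructive) Proposition above. Let $M$ be a $K(G,1)$ manifold and let $N$ be a manifold homotopy equivalent to $M$, covered by $\mathbb R^{n}$, and \emph{not} covered by $\mathbb R\times\mathbb R^{n-1}$, produced by that Proposition; here $n$ is the smallest dimension of a manifold in the homotopy type of $M$ covered by a Euclidean space. It suffices to prove that $N$ carries no complete metric of $K\le 0$. So suppose it does; by the Cartan--Hadamard theorem $N=X/\G$ for a Hadamard manifold $X$ with $\dim X=n$ and a discrete torsion-free $\G\le\Iso(X)$ with $\G\cong G$.

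First assume $\G$ is non-cocompact. If $G$ is as in Theorem~\ref{thm: into-center}, then $\G$ stabilizes a horoball. If $G$ is clinging with finite abelianization, apply the clinging property with $W=X$ to see that $\G$ is anchored in $X$, hence fixes some $\xi\in X(\infty)$; finiteness of the abelianization then kills the homomorphism $\G\to\mathbb R$ recording the displacement of concentric horospheres about $\xi$, so again $\G$ stabilizes a horoball. In either subcase $\G$ satisfies condition~(3) of Section~\ref{sec: rough}, so by the Lemma in Section~\ref{sec: fix a point at infinity} the manifold $N$ is diffeomorphic to $\mathbb R\times L_{\xi}/\G$ with $L_{\xi}\cong\mathbb R^{n-1}$; thus $N$ is covered by $\mathbb R\times\mathbb R^{n-1}$, contradicting the choice of $N$. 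If instead $G$ is reductive, apply reductivity to the identity epimorphism $G\to\G$: either $\G$ stabilizes a horoball and we finish as above, or $\G$ acts cocompactly on a totally geodesic submanifold $F\subsetneq X$ of dimension $k<n$; but then $F$ is a Hadamard manifold, the $\G$-equivariant nearest-point projection $X\to F$ descends to a deformation retraction $N\to F/\G$, and $F/\G$ is a manifold homotopy equivalent to $M$, covered by $\mathbb R^{k}$ with $k<n$, contradicting the minimality of $n$.

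There remains the case of cocompact $\G$, i.e.\ closed $N$. For $G$ as in Theorem~\ref{thm: into-center} or clinging with finite abelianization this cannot occur: a cocompact $\G$ would stabilize a horoball, hence act cocompactly on a horosphere, forcing $\cd(\G)<\dim X$ and contradicting that $N$ is closed aspherical --- so no such group embeds cocompactly into the isometry group of a Hadamard manifold, and the $N$ supplied by the Proposition already works. The serious difficulty is a reductive $G$ that is also a Poincar\'e duality group --- for instance $\mathbb Z^{n}$, or a higher-rank uniform lattice as in Example~\ref{ex: elem}(3) --- because then the minimal-dimensional model of $M$ may itself be a closed aspherical manifold carrying a metric of $K\le 0$ (such as a flat torus), and the argument above collapses. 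The remedy is to replace $N$ by the total space of a carefully chosen vector bundle over a closed aspherical model $M_{0}$ of $M$: reductivity forces any complete metric of $K\le 0$ on such an $N$ either to stabilize a horoball or to produce a cocompact action on a proper totally geodesic submanifold, and in either case --- after identifying the resulting closed aspherical retract with $M_{0}$ via the relevant case of the Borel conjecture --- this displays $N$ as a bundle over $M_{0}$ whose structure group reduces enough to kill a prescribed characteristic class, contrary to having chosen that class non-zero. Making this bundle argument precise is, I expect, the main obstacle; it is carried out in \cite{Bel-bus}.
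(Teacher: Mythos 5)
Your reduction of the Corollary to the Proposition via a minimal-dimensional model is exactly the route the survey intends, and your treatment of the clinging-with-finite-abelianization and Theorem~\ref{thm: into-center} cases, as well as of the non-cocompact reductive case, is correct and complete: horoball stabilization gives condition~(3) of Section~\ref{sec: rough} and hence a forbidden $\mathbb R$-factor, while a cocompact totally geodesic retract of strictly smaller dimension violates minimality. You are also right that groups in the first two classes stabilize a horoball in every discrete realization and therefore never act cocompactly, so for them the minimal model already finishes the proof.

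The cocompact reductive case is, as you say, where the argument genuinely breaks, and your proposed repair does not close it. Reductivity as defined constrains only non-cocompact realizations, so for $G=\mathbb Z^n$ or a uniform higher-rank lattice the minimal model is a closed nonpositively curved manifold and yields no contradiction; one must exhibit a different manifold in the homotopy type. Your vector-bundle remedy fails already for $G=\mathbb Z$: every vector bundle over $S^1$ carries a complete flat metric, so there is no characteristic class to choose nonzero, and the manifold that actually witnesses the Corollary for $\mathbb Z$ is the non-tame fake open solid torus of Scott--Tucker discussed in Section~2, which is obstructed by non-tameness rather than by bundle data. (Your appeal to ``the relevant case of the Borel conjecture'' is a further unjustified step.) So the gap you flagged is real, but the sketch you offer in its place would have to be replaced by constructions depending on $G$ --- exotic smoothings, non-tame models, or bundles --- and the survey itself only certifies this case by citing~\cite{Bel-bus}.
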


\begin{ex}
Corollary~\ref{cor: inf gener} applies if $G=\mathbb Q$, see Example~\ref{ex: rationals}.
\end{ex}

An essential tool in understanding  
manifolds covered by $\mathbb R\times\mathbb R^{n-1}$ 
is the recent result of
Guilbault~\cite{Gui-prod-R07}: if an open manifold $W$
of dimension $\ge 5$ is homotopy equivalent to a finite
complex, then $\mathbb R\times W$ is diffeomorphic
to the interior of a compact manifold.
Building  on this result, the author~\cite{Bel-bus}
proved

\begin{thm} 
\label{thm-intro: reg nbhd}
Let $W$ be an open $(n-1)$-manifold with $n\ge 5$ 
that is homotopy equivalent
to a finite complex of dimension $k\le n-3$.
Then $\mathbb R\times W$ is diffeomorphic to
the interior of a regular neighborhood of a $k\!$-dimensional
finite subcomplex. 
\end{thm}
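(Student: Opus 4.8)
The plan is to use Guilbault's theorem to replace $\mathbb{R}\times W$ by the interior of a compact smooth manifold, and then to compress that compact manifold onto a spine of the advertised dimension by handle trading. Assume first that $n\ge 6$, so that $\dim W=n-1\ge 5$ and Guilbault's theorem quoted above applies; it produces a compact smooth $n$-manifold $N$ with $\partial N\neq\emptyset$ together with a diffeomorphism $\mathbb{R}\times W\cong\mathrm{int}(N)$. (The borderline case $n=5$ needs a minor additional argument, which I omit here.) Since $N$ is homotopy equivalent to $\mathrm{int}(N)=\mathbb{R}\times W\simeq W$, it is homotopy equivalent to a finite complex $K$ with $\dim K=k\le n-3$.

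The problem is then reduced to the following assertion: a compact smooth $n$-manifold $N$ with nonempty boundary, $n\ge 6$, that is homotopy equivalent to a finite complex of dimension $k\le n-3$ admits a handle decomposition in which every handle has index $\le k$. Granting this, fix such a decomposition (with a single $0$-handle and no $n$-handle); the union $L$ of the cores of its handles is a finite subpolyhedron of $\mathrm{int}(N)$ of dimension $\le k$ onto which $N$ collapses, so $N$ is a smooth regular neighborhood of $L$. Pushing $L$ across the diffeomorphism $\mathbb{R}\times W\cong\mathrm{int}(N)$ then exhibits $\mathbb{R}\times W$ as the interior of a regular neighborhood of a finite subcomplex of dimension $\le k$, which is the desired conclusion; passing between smooth and PL regular neighborhoods here is routine.

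To obtain a handle decomposition with handles of index $\le k$ one starts from an arbitrary smooth handle decomposition of $N$ and trades handles downward. For $k\ge 3$ this is Wall's geometric-connectivity theorem: the obstructions to lowering the index of a handle of index $>k$ are carried by the homotopy and homology of $N$ in degrees $>k$, and these vanish because $N\simeq K$, while the requisite geometric moves (general position, the Whitney trick, engulfing) are available since $n\ge 6$ and $n-k\ge 3$. For small $k$ this is genuinely harder, and here the $\mathbb{R}$-factor — the very feature that made Guilbault's theorem applicable — must be exploited again: a compact manifold homotopy equivalent to a low-dimensional complex need not by itself carry a minimal spine (already a compact contractible $n$-manifold need not be a ball, e.g. a higher-dimensional Mazur manifold), so one uses that $N$ compactifies the \emph{product} $\mathbb{R}\times W$ to kill the residual obstructions, which live in Whitehead, Wall-finiteness, and Siebenmann end-obstruction groups and all die after crossing with a line.

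The main obstacle is precisely this last point: driving the handle index all the way down to $k$ when $k$ is small and verifying that no $K$-theoretic, finiteness, or end obstruction survives. This is the part of the argument that substantively builds on Guilbault's result rather than merely invoking it. By comparison, the dimension hypothesis $n\ge 6$, the separate treatment of $n=5$, and the smooth-versus-PL bookkeeping for regular neighborhoods are comparatively routine.
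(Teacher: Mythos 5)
The survey does not actually prove this theorem; it only records that the proof in \cite{Bel-bus} ``builds on'' Guilbault's compactification theorem, so your overall strategy --- compactify $\mathbb{R}\times W$ via Guilbault and then compress the resulting compact manifold onto a spine of dimension $\le k$ --- is indeed the intended route. The difficulty is that the step you present as routine for $k\ge 3$ is not correct as stated, while the step you flag as hard is left entirely unproved.

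Concretely: the obstruction to finding a handle decomposition of a compact $N^n$ with all handles of index $\le k$ is not ``the homotopy and homology of $N$ in degrees $>k$.'' Dualizing, such a decomposition exists iff $N$ is built from a collar of $\partial N$ by handles of index $\ge n-k$, and Wall's geometric connectivity theorem converts this into the vanishing of the relative groups $\pi_i(N,\partial N)$ for $i\le n-k-1$. Since $n-k-1\ge 2$, this forces $\pi_1(\partial N)\to\pi_1(N)$ to be an isomorphism, and that does \emph{not} follow from $N\simeq K$: take $N=(S^3\times D^4)\,\natural\, C$ with $C$ a compact contractible $7$-manifold having non-simply-connected boundary; then $N\simeq S^3$, so $k=3\le n-3$ and $n=7\ge 6$, yet $N$ admits no $3$-spine because a regular neighborhood of a complex of codimension $\ge 3$ has $\pi_1(\partial)\cong\pi_1$, whereas here $\pi_1(\partial N)\ne 1$. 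So the Mazur-type phenomenon you mention is not confined to small $k$; it is the central issue for \emph{every} $k$, and the hypothesis $\mathrm{int}(N)\cong\mathbb{R}\times W$ must be spent precisely here, to identify the (stable) fundamental group at infinity with $\pi_1(W)$. That identification is genuine work: the obvious neighborhoods of infinity of $\mathbb{R}\times W$ are two copies of $W$ glued along a neighborhood of infinity of $W$ itself, whose fundamental group is completely uncontrolled, so one must dig into Guilbault's construction or argue separately (and this is also where $k\le n-3$, the residual cases $k\le 2$ and $n=5$, and the freedom to alter the compactification by an h-cobordism actually enter). Finally, the assertion that the remaining obstructions ``all die after crossing with a line'' is offered without argument, and the heuristic is shaky: Whitehead torsion does not die under $-\times\mathbb{R}$ in any straightforward sense, since $\mathrm{Wh}(\pi)$ injects into $\mathrm{Wh}(\pi\times\mathbb{Z})$. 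As it stands, the proposal reduces the theorem to an unproved claim and mislocates the one place where the product structure is indispensable.
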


With more work one gets~\cite{Bel-bus} the following applications:

\begin{thm}
\label{thm: R-factor conseq}
Let $L$ be a finite aspherical CW complex
such that $G=\pi_1(L)$ is reductive, clinging with finite abelianization,
or as in \textup{Theorem~\ref{thm: into-center}}. 
Suppose that $L$
is homotopy equivalent to a complete $n$-manifold $M$ of $K\le 0$ and $n\ge 5$,
and set $l=\dim(L)$. 
\newline
\textup{(1)}
If $l\le n-3$, then 
$M$ diffeomorphic to the interior of a regular neighborhood
 \phantom{(1)} of a $k\!$-dimensional finite subcomplex.\newline
\textup{(2)}
If $L$ is a closed manifold of dimension $<\frac{2n-2}{3}$,
then $M$ is diffeomorphic \phantom{(1)} to the total space of a vector bundle
over $L$.
\newline
\textup{(3)}
If $l<\frac{n}{2}$, then every complete
$n$-manifold of $K\le 0$ in the tangential \newline \phantom{(1)} homotopy type of $M$
is diffeomorphic to $M$.\newline
\textup{(4)}
If $l\le n-3$, then the tangential homotopy type of $M$ contains countably  \phantom{(1)} 
many open $n$-manifolds that admit no complete metric of $K\le 0$.
\end{thm}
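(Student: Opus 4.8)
The plan is to derive all four parts from the structure theory of Section~\ref{sec: homeo obstr} together with Theorem~\ref{thm-intro: reg nbhd} and classical handle and embedding theory, the common input being that $G=\pi_1(M)$ forces $M$ to split off an $\mathbb R$-factor. Since $G$ is reductive, clinging with finite abelianization, or as in Theorem~\ref{thm: into-center}, the deck action of $G$ on the universal cover $X$ of $M$ stabilizes a horoball, so $M$ is diffeomorphic to $\mathbb R\times W$ for an open $(n-1)$-manifold $W$; as $M\simeq L$ we get $W\simeq L$, so $W$ is an open $(n-1)$-manifold homotopy equivalent to a finite complex of dimension $l$, and in particular its ends are tame with vanishing Wall obstruction. (In the reductive case there is the alternative that $G$ acts cocompactly on a totally geodesic $Y\subset X$, but then $M=X/G$ is already the total space of the normal bundle of the closed aspherical manifold $Y/G\simeq L$ --- a conclusion at least as strong as every one below --- so I concentrate on the horoball case.) Part~(1) is then immediate: Theorem~\ref{thm-intro: reg nbhd} applies to $M\cong\mathbb R\times W$ because $l\le n-3$ and $n\ge 5$, exhibiting $M$ as the interior of a regular neighborhood of a finite subcomplex of dimension $l$.

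For part~(2) observe that for integer $l$ and $n\ge 5$ the hypothesis $l<\frac{2n-2}{3}$ is equivalent to $3l\le 2n-3$, which both yields $l\le n-3$ and is precisely Haefliger's metastable range for embedding an $l$-manifold in an $n$-manifold. I would deform the homotopy equivalence $L\to M$ to a smooth embedding $L\hookrightarrow M$ by the Haefliger--Hirsch--Weber embedding theorem (the $\pi_1$-bookkeeping is harmless, the map being a $\pi_1$-isomorphism), take a compact tubular neighborhood $T$ of the embedded $L$ --- the total disk bundle of a vector bundle over $L$ --- and then show $M\cong T$. The last identification uses that $M\smallsetminus\Int T$ is a connected, one-ended $n$-manifold ($n\ge 5$) with compact boundary $\partial T$ which deformation retracts to $\partial T$ and has tame end (inherited from $M\cong\mathbb R\times W$); Siebenmann's product structure theorem then gives $M\smallsetminus\Int T\cong\partial T\times[0,\infty)$, whence $M\cong T$ is a vector bundle over $L$.

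For part~(3) let $M'$ be any complete $n$-manifold of $K\le 0$ tangentially homotopy equivalent to $M$. Then $\pi_1(M')\cong G$, the opening step applies verbatim, and --- since $l<\frac n2$ forces $l\le n-3$ --- part~(1) presents both $M$ and $M'$ as interiors of regular neighborhoods of $L$ in dimension $n$. But $l<\frac n2$, i.e.\ $n\ge 2l+1$, is the stable range for thickenings, in which by Wall's thickening theory such a regular neighborhood is the total space of a vector bundle over $L$ and its diffeomorphism type is determined by the stable tangent bundle of the thickening, equivalently by the tangential homotopy type. Hence $M'\cong M$.

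For part~(4) the strategy is the \emph{reverse}: outside the stable range one tangential homotopy type supports infinitely many thickenings, whereas a complete $K\le 0$ metric still imposes the rigid product form. Taking $L$ a closed manifold for concreteness, choose infinitely many rank-$(n-l)$ vector bundles $\xi_k$ over $L$ in one fixed stable isomorphism class but with pairwise distinct and, for $k\ge 1$, nonzero Euler classes (the fibre of $[L,BSO(n-l)]\to[L,BSO]$ being infinite once $n-l\le l$); the interiors $M_k$ of their disk bundles are pairwise non-diffeomorphic, tangentially homotopy equivalent to $M$, and for $k\ge 1$ admit no complete metric of $K\le 0$, since such a metric would force $M_k\cong\mathbb R\times W_k$ and hence a nowhere-zero section of $\xi_k$, contradicting $\mathrm{Euler}(\xi_k)\ne 0$. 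For a general finite complex $L$ one runs the same argument inside Wall's classification of $n$-thickenings of $L$, using that the thickenings splitting off an $\mathbb R$-factor form a proper subcollection with infinite complement. I expect part~(4) to be the main obstacle: the delicate link is the implication ``admits a complete $K\le 0$ metric, hence splits as $\mathbb R\times(\text{open manifold})$, hence the unstable bundle/thickening drops in rank'', whose second step is a controlled, proper-homotopy statement (an $\mathbb R^{j}$-bundle whose total space is a product with $\mathbb R$ must carry a section), and the version for a non-manifold $L$ requires pushing the thickening formalism further; a lesser worry, already in parts~(1)--(2), is confirming that the ends in play are tame with the expected fundamental group so Guilbault's and Siebenmann's theorems apply in the corner cases $n=5,6$.
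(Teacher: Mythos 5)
Your opening reduction and parts (1)--(3) track the intended argument: the hypotheses on $G$ are exactly those that Section~\ref{sec: homeo obstr} shows force condition (3) of Section~\ref{sec: rough} (or, in the reductive case, the cocompact totally geodesic alternative), so $M\cong\mathbb R\times W$; part (1) is then Theorem~\ref{thm-intro: reg nbhd}, and (2)--(3) are the standard Haefliger embedding and Wall thickening arguments in the metastable and stable ranges. Two small repairs there: in (2) you do not need (and have not verified the torsion hypothesis of) Siebenmann's product structure theorem --- it suffices that $N\setminus\mathrm{Int}(T)$ is an h-cobordism, since interiors absorb h-cobordisms by the infinite swindle; and in (3) one should check that the $l$-complex produced by Theorem~\ref{thm-intro: reg nbhd} may be taken to be $L$ itself before invoking Wall's classification.

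Part (4) has a genuine gap, and the approach is structurally wrong rather than merely incomplete. Your candidates are total spaces of rank-$(n-l)$ bundles over $L$ with distinct nonzero Euler classes, which presupposes that $L$ is a closed manifold, that $n-l\le l$, that $n-l$ is even, and that $H^{n-l}(L;\mathbb Q)\neq 0$; none of this follows from $l\le n-3$ (try $L=S^1$, $n=5$: there are exactly two rank-$4$ bundles over $S^1$). Worse, whenever $l<\frac{n}{2}$ parts (3) and (4) apply simultaneously, and the Wall argument you use for (3) shows that in that range \emph{every} $n$-thickening of $L$ in the tangential homotopy type of $M$ --- in particular every vector bundle total space and every regular neighborhood --- is diffeomorphic to $M$. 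So the countably many manifolds of (4) cannot be found among bundles or thickenings at all; they must be distinguished by invariants of the end. That is the actual mechanism (compare Corollary~\ref{cor-intro: Rn}, which is essentially the case $L=S^1$ of this circle of ideas): a complete metric of $K\le 0$ forces, via the $\mathbb R$-splitting and Theorem~\ref{thm-intro: reg nbhd}, that the manifold is the interior of a regular neighborhood of an $l$-complex, so in particular its end is collared by a closed manifold whose fundamental group maps isomorphically to $G$ (codimension $\ge 3$). One therefore produces the non-examples by modifying $M$ at infinity without touching the tangential homotopy type --- for instance by boundary-connect-summing the compact thickening $N$ with $j$ copies of a compact contractible $n$-manifold whose boundary is not simply connected ($n\ge 5$, Kervaire); the resulting interiors are pairwise distinguished by the fundamental group of the end, and none is the interior of a codimension-$\ge 3$ regular neighborhood. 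Finally, even where your construction makes sense, the step ``$E(\xi_k)\cong\mathbb R\times W_k$, hence $\xi_k$ has a nowhere-zero section'' is not justified as stated; what is true is that $H^*_c(\mathbb R\times W)\to H^*(\mathbb R\times W)$ vanishes, which kills the Euler class as the image of the Thom class, but this repair does not rescue the range problem above.
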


\begin{quest}
Can one strengthen the conclusion 
``countably many'' in the part \textup{(4)} of \textup{Theorem~\ref{thm: R-factor conseq}}
to ``a continuum of''?
\end{quest}

A positive answer is given in~\cite{Bel-bus}
under a technical assumption 
which holds e.g. if $L$ is a closed manifold,
or if either $\mathbb Z^3$ or $\mathbb Z\ast \mathbb Z$
does not embed into $G$.

\begin{rmk}
Limiting Theorem~\ref{thm: R-factor conseq} to certain classes of manifolds of $K\le 0$
may result in enlarging the class of allowable fundamental groups, e.g. applying the
theorem to manifolds with visibility universal cover, we can allow $G$ to be the
product of any two nontrivial groups.
\end{rmk}

As an application of Theorem~\ref{thm-intro: reg nbhd}, we get 
the following characterization of $\mathbb R^n$:

\begin{cor} 
\label{cor-intro: Rn}
An open contractible $n\!$-manifold $W$ is homeomorphic
to $\mathbb R^n$ if and only if $W\times S^1$
admits a metric of $K\le 0$. 
\end{cor}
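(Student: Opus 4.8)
The plan is to prove the two implications separately. For the forward direction, if $W$ is homeomorphic to $\mathbb{R}^n$ then $W\times S^1$ is homeomorphic to $\mathbb{R}^n\times S^1$, and in fact diffeomorphic to it: for $n\neq 4$ the manifold $W$ is diffeomorphic to $\mathbb{R}^n$, and for $n=4$ the $5$-manifold $W\times S^1$ has a unique smooth structure since $[S^1,\mathrm{TOP}/O]=0$. Pulling back the complete flat product metric of $\mathbb{R}^n\times S^1$ then makes $W\times S^1$ a complete manifold of $K\equiv 0\le 0$.

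For the reverse direction, assume $M:=W\times S^1$ carries a complete metric of $K\le 0$. Since $W$ is contractible, $M$ is aspherical with $\pi_1(M)\cong\mathbb{Z}$ and universal cover $\widetilde M=W\times\mathbb{R}$, a Hadamard $(n+1)$-manifold; in particular $W\times\mathbb{R}\cong\mathbb{R}^{n+1}$, which by itself is not enough (the Whitehead manifold already satisfies it). A generator $\g$ of $\pi_1(M)\cong\mathbb{Z}$ acts on $\widetilde M$ as an axial or parabolic isometry, not elliptic since the action is free and $\langle\g\rangle$ is infinite. If $\g$ is axial with axis $\ell$, then the normal exponential map of the complete totally geodesic line $\ell$ is a $\langle\g\rangle$-equivariant diffeomorphism from the total space $\nu\ell$ of the normal bundle of $\ell$ onto $\widetilde M$, and $\g$ acts on $\nu\ell\to\ell$ by vector bundle automorphisms covering a translation of $\ell$; hence $M\cong\nu\ell/\langle\g\rangle$ is the total space of an $\mathbb{R}^n$-vector bundle over $S^1=\ell/\langle\g\rangle$. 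If $\g$ is parabolic it stabilizes a horoball by Lemma~\ref{lem: center has parabolic}, so $\langle\g\rangle$ satisfies condition~(3) of Section~\ref{sec: rough}, and by the lemma of Section~\ref{sec: fix a point at infinity} one gets $M\cong\mathbb{R}\times V$ with $V$ an open aspherical $n$-manifold with $\pi_1(V)\cong\mathbb{Z}$ covered by $\mathbb{R}^n$, hence homotopy equivalent to the finite complex $S^1$; for $n\ge 4$, Theorem~\ref{thm-intro: reg nbhd} then identifies $M$ with the interior of a regular neighborhood of a circle, again an $\mathbb{R}^n$-bundle over $S^1$. In either case $M=W\times S^1$ is an $\mathbb{R}^n$-bundle over $S^1$, and since $M$ and $S^1$ are orientable this bundle is orientable, hence trivial, so that $W\times S^1\cong\mathbb{R}^n\times S^1$.

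It remains to deduce $W\approx\mathbb{R}^n$ from $W\times S^1\cong\mathbb{R}^n\times S^1$, and this is the step I expect to be the main obstacle: the cheap fact $W\times\mathbb{R}\cong\mathbb{R}^{n+1}$ carries no information, so the circle factor — equivalently the rigidity of the isometric $\mathbb{Z}$-action — must genuinely be exploited. Since $\mathbb{R}^n\times S^1$ is one-ended for $n\ge 2$ and equals the interior of $D^n\times S^1$, its fundamental pro-group at infinity is the constant system $\mathbb{Z}$; on the other hand, a cofinal family of neighborhoods of infinity of $W\times S^1$ has the form $U\times S^1$ with $U$ a connected neighborhood of infinity of $W$, so that pro-group is $\{\pi_1(U)\times\mathbb{Z}\}_U$. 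Comparing the two descriptions forces $\{\pi_1(U)\}_U$ to be pro-trivial, i.e.\ $W$ is simply connected at infinity, and then the contractible open manifold $W$ is homeomorphic to $\mathbb{R}^n$ by Stallings' engulfing theorem for $n\ge 5$ and by Freedman's work for $n=4$. For $n\le 2$ the only open contractible $n$-manifold is $\mathbb{R}^n$, and for $n=3$ one runs the same argument, replacing Theorem~\ref{thm-intro: reg nbhd} in the parabolic case and Stallings' theorem at the end by the corresponding facts of three-manifold topology.
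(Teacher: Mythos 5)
Your argument is essentially the route the paper intends: reduce to showing that $W\times S^1$ is a trivial $\mathbb R^n$-bundle over $S^1$, and then extract simple connectivity at infinity of $W$ and invoke Stallings (resp.\ Freedman) for $n\ge 5$ (resp.\ $n=4$). The paper packages the axial/parabolic dichotomy for the generator of $\pi_1(W\times S^1)\cong\mathbb Z$ into the statement that $\mathbb Z$ is \emph{reductive} (Example~\ref{ex: elem}(2)) and then quotes Theorem~\ref{thm: R-factor conseq}(1)--(2), whereas you carry out the two cases by hand: the normal exponential map of an axis in the axial case, and Lemma~\ref{lem: center has parabolic} plus Theorem~\ref{thm-intro: reg nbhd} in the parabolic case. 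That is the same mechanism, just unwound. Your forward direction (including the smoothing-theory point $\pi_1(\mathrm{TOP}/O)=0$ for $n=4$, which matches the paper's closing remark about exotic $\mathbb R^4$'s) and your pro-$\pi_1$ computation at the end are correct: a pro-isomorphism from $\{\pi_1(U)\times\mathbb Z\}$ to the constant system $\mathbb Z$ forces the images of the bonding maps to lie in cyclic subgroups, and $A\times\mathbb Z$ is cyclic only for $A$ trivial, so $\{\pi_1(U)\}$ is pro-trivial.

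The one genuine gap is the parabolic case when $n=3$, which you defer to ``corresponding facts of three-manifold topology'' that do not actually exist. There you obtain $W\times S^1\cong\mathbb R\times V$ with $V$ an open $3$-manifold covered by $\mathbb R^3$ and $\pi_1(V)\cong\mathbb Z$, and you need $\mathbb R\times V\cong\mathbb R^3\times S^1$. But $V$ need not be an $\mathbb R^2$-bundle over $S^1$: the Scott--Tucker fake open solid torus cited in the paper is exactly an open $3$-manifold with universal cover $\mathbb R^3$ and infinite cyclic fundamental group that is not tame (the paper's argument excluding it as a quotient $X/\G$ uses that $X/\Gamma$ itself is nonpositively curved, which is not available for $V=L_\xi/\G$). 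Guilbault's theorem and Theorem~\ref{thm-intro: reg nbhd} require ambient dimension $\ge 5$, and no $4$-dimensional substitute is known, so whether $\mathbb R\times V$ must be $\mathbb R^3\times S^1$ is genuinely open. You should either restrict the reverse implication to $n\ne 3$ or supply a separate argument for that case; for $n\le 2$ the statement is vacuous since every open contractible manifold is already $\mathbb R^n$.
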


If $n=4$, then ``homeomorphic'' in 
Corollary~\ref{cor-intro: Rn} cannot be upgraded to
``diffeomorphic'': if $W$ is an exotic $\mathbb R^4$, then
$W\times S^1$ is diffeomorphic to 
$\mathbb R^4\times S^1$.

\section{Benefits of a lower curvature bound}

Complete manifolds of $\Ric\ge -(n-1)$ are central to the global
Riemannian geometry. For manifolds of $K\le 0$, a lower Ricci curvature bound at a point is equivalent
(by standard tensor algebra considerations)
to a lower sectional curvature bound at the same point; by rescaling one
can always make the bounds equal the curvature of the hyperbolic $n$-space.

In the seminal work~\cite{Gro-vol-bd-coh} Gromov uncovered a relation 
between the simplicial volume and volume growth, which for complete
manifolds with $\Ric\ge -(n-1)$ is governed by Bishop-Gromov
volume comparison. The following can be found 
in~\cite[p.13, 37]{Gro-vol-bd-coh}.

\begin{thm}
\label{thm: Gromov simp vol}
{\bf (Gromov)}\ \it
Let $W$ be an $n$-manifold such that
every component $C_i$ of $\d W$ is compact. 
If the interior of $W$ is homeomorphic to a complete manifold of $\Ric\ge -(n-1)$, 
then $\displaystyle{\sum_i ||C_i||}\le \displaystyle{\liminf_{r\to\infty}}\frac{\mathrm{Vol}\, B_p(r)}{r}$.
\end{thm}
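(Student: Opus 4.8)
The plan is to follow Gromov's original argument relating simplicial volume to linear volume growth, specialized to the setting where the boundary components of $W$ are compact. First I would recall that the simplicial volume $\|C_i\|$ of a closed manifold, and more generally the relative simplicial volume, is governed by the $\ell^1$-semi-norm on singular homology, and that $\sum_i\|C_i\|=\|\d W\|$ is controlled by $\|W,\d W\|$ together with the standard fact that the fundamental class $[\d W]$ maps to a multiple of $\d[W,\d W]$; so it suffices to bound $\|W,\d W\|$, or really $\|\d W\|$ directly, in terms of volume growth. Since the interior of $W$ is homeomorphic to a complete manifold $M$ with $\Ric\ge-(n-1)$, I would transport the problem to $M$ and work with the complete metric there, choosing a basepoint $p$ and exhausting $M$ by metric balls $B_p(r)$.

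The heart of the argument is the \emph{diffusion} (or \emph{smearing}) construction: given a singular cycle representing the relevant homology class, one averages it over the ball $B_p(r)$ using the Riemannian measure, producing a new real cycle whose $\ell^1$-norm is controlled by $\mathrm{Vol}\,B_p(r)$ divided by the minimal volume of the simplices involved, while the geometric size of the smeared simplices is controlled by the diameter of the relevant compact piece (here the $C_i$, which are compact, so have bounded diameter independent of $r$). Concretely, I would push the fundamental cycle of $\d W$ (sitting inside a fixed compact neighborhood of $\d W$ in $M$) around by the flow/translation structure coming from integrating over $B_p(r)$, using that straightening of simplices is available because $M$ is nonpositively curved hence has a well-defined geodesic simplex construction, and that the volume of a straightened simplex is at most the volume of its image, which for a small simplex in a fixed compact set is bounded below by a positive constant. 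This yields $\|\d W\|\le C\cdot\mathrm{Vol}\,B_p(r)/r$ after optimizing the translation length against the decay, and taking $\liminf_{r\to\infty}$ gives the stated inequality with the constant absorbed (the factor $r$ in the denominator comes from subdividing the ball of radius $r$ into roughly $r$ annular pieces and using a telescoping/averaging estimate, exactly as in Gromov's proof of the main inequality for $\|M\|$ versus volume growth).

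The main obstacle I expect is making the smearing construction genuinely produce a relative cycle for $(W,\d W)$ — or equivalently an honest cycle representing $\sum_i[C_i]$ — rather than merely a chain: one must check that the averaging operation commutes with the boundary operator and that the averaged chain still represents the correct homology class with the correct (nonzero) coefficient. This requires care with the measure-theoretic averaging (Fubini, measurability of the straightening, integrability of the simplex volumes over $B_p(r)$) and with the fact that the compact boundary components let one keep the ``width'' of the smeared simplices bounded, which is what forces the $1/r$ rather than a worse rate. A secondary technical point is that the hypothesis only gives $\Ric\ge-(n-1)$, not $K\le 0$, so the straightening of simplices must be done intrinsically — here one uses that $M$ being complete with Ricci bounded below still admits Bishop--Gromov comparison, and the straightening can be replaced by any procedure (e.g. using a Cheeger--Gromoll type exhaustion, or simply the fact that simplicial volume is a homotopy invariant so one may straighten in the universal cover if $M$ happens to be aspherical, or otherwise argue with the norm directly via the bounded-cohomology dual formulation). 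I would structure the writeup to isolate this averaging lemma, prove the norm estimate $\sum_i\|C_i\|\le C_n\,\mathrm{Vol}\,B_p(r)/r$ for all $r$, and then let $r\to\infty$, noting that the constant $C_n$ can in fact be taken to be $1$ by the sharp form of Gromov's estimate.
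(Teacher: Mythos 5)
First, a point of orientation: the survey does not prove this statement — it is quoted directly from Gromov's \emph{Volume and bounded cohomology} (pp.\ 13, 37) — so your sketch has to be measured against Gromov's actual argument. Parts of your architecture are right: reducing $\sum_i\|C_i\|$ to the $\ell^1$-norm of cycles supported in collar neighborhoods of the ends (the projection $C_i\times(0,1)\to C_i$ is norm non-increasing on homology), and extracting the factor $1/r$ by averaging over the radial direction (coarea for the distance function, or a cutoff with gradient of order $1/r$), which localizes the problem to a region of Riemannian volume about $\mathrm{Vol}\,B_p(r)/r$. The genuine gap is the step converting Riemannian volume into $\ell^1$-norm. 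You propose geodesic straightening, but (i) straightening needs unique geodesics/convexity, i.e.\ something like $K\le 0$, which is not a hypothesis here — $\Ric\ge-(n-1)$ permits conjugate points and arbitrary positive curvature — and your listed fallbacks do not repair this (straightening in the universal cover still requires nonpositive curvature upstairs, and asphericity is neither assumed nor relevant); and (ii) even where straightening is available it runs in the wrong direction: bounding volumes of straight simplices from above is how one proves $\mathrm{Vol}\le v_n\|\cdot\|$, not $\|\cdot\|\le c_n\,\mathrm{Vol}$. Similarly, ``smearing'' in Gromov's sense is an average over a transitive isometry group and has no meaning on a general complete manifold with only a Ricci lower bound; ``averaging a cycle over $B_p(r)$ with the Riemannian measure'' is not a defined operation in this setting. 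What is actually required is Gromov's main-inequality machinery under $\Ric\ge-(n-1)$ — a diffusion/packing argument whose multiplicities are controlled by Bishop--Gromov — applied on the region where the cutoff varies. That is the technical heart of the theorem, and your sketch does not contain it.

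Separately, your closing assertion that the constant ``can in fact be taken to be $1$'' cannot be correct. For a closed hyperbolic $n$-manifold (where $\Ric\equiv-(n-1)$) one has $\|M\|=\mathrm{Vol}(M)/v_n$ with $v_n<1$ for $n\ge 4$, so already the closed main inequality needs a dimensional constant; and taking $W=C\times[0,1]$ with $C$ a closed hyperbolic $(n-1)$-manifold, rescaled so that the product metric on $C\times\mathbb R$ satisfies $\Ric\ge-(n-1)$, gives $\sum_i\|C_i\|=2\,\mathrm{Vol}(C)/v_{n-1}$ against a linear volume growth rate of at most $2\,\mathrm{Vol}(C)$, violating the constant-$1$ boundary inequality for $n\ge 4$. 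Gromov's estimate carries a dimensional constant; this is harmless for every application in the survey (which only uses that sublinear volume growth forces $\|C_i\|=0$), but it should not be asserted as sharp.
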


Here $B_p(r)$ is the $r$-ball in $M$ centered at $p$, and $||C_i||$
is the simplicial volume of $C_i$.

\begin{ex}
If $M$ in Theorem~\ref{thm: Gromov simp vol} 
has finite volume, or more generally sublinear volume growth, then
each component of $\d W$ has zero simplicial volume.
\end{ex}

\begin{quest}
Does every complete manifold with $-1\le K\le 0$ and sublinear volume growth admit
a finite volume metric? 
\end{quest}

\begin{ex}
Any infinite cyclic cover a closed connected manifold $L$ of $K\le 0$, i.e. the cover
corresponding to the kernel of a surjective homomorphism $\pi_1(L)\to \mathbb Z$,
has linear growth.
\end{ex}

\begin{prob}
Study manifolds of $-1\le K\le 0$ with linear volume growth.
\end{prob}

Another consequence of a lower curvature bound 
is the famous Margulis lemma which appeared in~\cite{BGS} 
for manifolds of $-1\le K\le 0$ following unpublished ideas of Margulis, and
in~\cite{FukYam, KPT} for manifolds of $K\ge -1$. 
The following version of the Margulis lemma  for manifolds of $\Ric\ge -(n-1)$
is due to Kapovitch-Wilking~\cite{KapWil} with essential ingredients provided 
by prior works of Cheeger-Colding.

\begin{thm}
\label{thm: wilk-kap margulis}
\bf (Kapovitch-Wilking)\ \it
For each $n$ there are constants $m$ and $\e\in (0,1)$
such that if $p$ is a point of a complete $n$-manifold with $\Ric\ge -(n-1)$ 
on $B_p(1)$, then the image of $\pi_1(B_p(\e))\to\pi_1(B_p(1))$ has a nilpotent subgroup
generated by $n$ elements and of index $\le m$.
\end{thm}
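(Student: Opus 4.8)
The statement is the Kapovitch--Wilking version of the Margulis lemma for manifolds with a lower Ricci bound, which is a deep theorem whose full proof is far beyond the scope of a survey. My plan is therefore not to reprove it from scratch but to sketch the architecture of the Kapovitch--Wilking argument, attributing the hard analytic inputs to Cheeger--Colding, and to indicate where the nilpotency and the bound on the number of generators come from. First I would set up the relevant induction: the proof proceeds by induction on the dimension $n$, using the structure theory of Gromov--Hausdorff limits of manifolds with $\Ric\ge -(n-1)$. One rescales so that the ball $B_p(1)$ is the unit ball, passes to the universal cover $\widetilde{B_p(\e)}$ of a small ball, and studies the pseudogroup $\G$ generated by short loops, i.e.\ the image of $\pi_1(B_p(\e))\to\pi_1(B_p(1))$ acting on the cover.

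\textbf{Key steps.} (1) \emph{Compactness and limit spaces.} Suppose the theorem fails; then there is a sequence $(M_i,p_i)$ with $\Ric\ge-(n-1)$ on $B_{p_i}(1)$ and $\e_i\to 0$ for which the conclusion fails for every putative pair $(m,\e_i)$. By Gromov precompactness, after passing to a subsequence, the rescaled pointed spaces converge in the pointed Gromov--Hausdorff sense to a limit metric space $(Y,y)$, which by Cheeger--Colding is a metric measure space with a rich structure (tangent cones are metric cones, there is a well-defined dimension, a splitting theorem holds, etc.). (2) \emph{Equivariant convergence.} One takes equivariant Gromov--Hausdorff limits of the local fundamental pseudogroups acting on the universal covers; the limit is a closed (Lie) group $G$ acting on the limit of the covers, isometrically and with the orbit through the basepoint being small. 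Here one invokes the generalized Margulis lemma machinery: the limit group $G$ is a Lie group by the solution of the Hilbert--Smith-type problem in this context (no small subgroups argument, using Colding--Naber-type regularity of geodesics and the splitting theorem). (3) \emph{Nilpotency via induction and the splitting theorem.} The identity component $G^0$ is nilpotent: if $G^0$ is nontrivial, a soul-type / center argument produces a line in the limit, hence by the Cheeger--Colding splitting theorem the limit splits off an $\mathbb R$ factor, the action descends to a lower-dimensional problem, and one applies the inductive hypothesis; if $G$ is discrete one argues directly that it is finite with controlled order, or passes to a further limit. (4) \emph{Counting generators and the index bound.} The nilpotent subgroup is generated by $n$ elements because the corresponding nilpotent Lie group has dimension $\le n$, so a lattice-like generating set of size $\le n$ suffices by Malcev theory; the index $\le m$ comes from a compactness/finiteness argument — the "bad" cosets would force a contradiction with the smallness of the orbit in the limit. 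Finally one reads off that for large $i$ the pair $(m,\e_i)$ \emph{does} satisfy the conclusion, contradicting the choice of the sequence.

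\textbf{Main obstacle.} The crux — and the genuinely hard part due to Cheeger--Colding and Kapovitch--Wilking — is Step (2)--(3): establishing that the equivariant limit of the short-loop pseudogroups is a Lie group and that its identity component is nilpotent, when only a \emph{lower Ricci} bound is available rather than a sectional or even two-sided Ricci bound. With a lower sectional bound this is the classical Fukaya--Yamaguchi argument using Alexandrov geometry; with only $\Ric$ one must replace Alexandrov-geometric tools by the Cheeger--Colding almost-rigidity theorems (almost-splitting, volume cone implies metric cone, $\e$-regularity), and by Colding--Naber's Hölder continuity of tangent cones along geodesics to run the "no small subgroups" argument. Carrying this out rigorously is the technical heart of \cite{KapWil}, and in a survey I would simply cite it, giving only the outline above so the reader sees why nilpotency, the generator count $n$, and the index bound $m$ are the natural outputs.
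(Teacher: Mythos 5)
The paper offers no proof of this theorem: it is stated as a cited result of Kapovitch--Wilking (with essential input from Cheeger--Colding), exactly as you propose to treat it. Your outline of the contradiction/limit-space architecture is a fair sketch of the actual argument in \cite{KapWil}, so your approach matches the paper's (namely, defer to the reference).
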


In fact, the nilpotent subgroup in Theorem~\ref{thm: wilk-kap margulis} 
has a generated set $\{s_1, \dots, s_n\}$ 
such that $s_1$ is central and the commutator $[s_i, s_j]$ is contained in the
subgroup generated by $s_1, \dots, s_{i-1}$ for each $1<i<j$.
Another universal bound on the number of generators of any given $r$-ball is given by

\begin{thm} \label{thm: wilk-kap fg}
\bf (Kapovitch-Wilking)\ \it
For each $n$, $r$ there is a constant $k$ such that
if $p$ is a point in a complete Riemannian manifold $M$
such that $\pi_1(B_p(r))\to\pi_1(M,p)$ is onto and
$\Ric\ge -(n-1)$ on $B_p(4r)$, then $\pi_1(M,p)$
is generated by $\le k$ elements.
\end{thm}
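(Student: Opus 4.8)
The plan is to deduce this finite-generation statement from the Margulis-type result Theorem~\ref{thm: wilk-kap margulis} by a covering/compactness argument on $B_p(r)$. First I would fix $n$ and $r$, and let $\e = \e(n)$ be the Margulis constant from Theorem~\ref{thm: wilk-kap margulis}. The key point is that on the ball $B_p(r)$ with $\Ric \ge -(n-1)$ on the larger ball $B_p(4r)$, Bishop--Gromov volume comparison gives a uniform (depending only on $n$ and $r$) upper bound on the number $N$ of disjoint $\e/10$-balls that fit inside $B_p(2r)$; consequently one can cover $B_p(r)$ by at most $N = N(n,r)$ balls of radius $\e/4$ centered at points $q_1,\dots,q_N \in B_p(r)$, and the doubled balls $B_{q_i}(\e/2)$ still lie in $B_p(4r)$, so Theorem~\ref{thm: wilk-kap margulis} applies at each $q_i$ after rescaling (the ball $B_{q_i}(\e/2)$ plays the role of $B_q(1)$, and $\Ric \ge -(n-1)$ on $B_{q_i}(4r) \supseteq B_{q_i}(\e/2)$ after scaling).

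Next I would run a standard connectivity/Gromov-type generation argument. Choose a maximal $\e/4$-separated net $\{q_1,\dots,q_N\}$ in $B_p(r)$, joined by a tree of ``short'' paths so that consecutive net points are within $\e/2$; fix for each $i$ a path $\alpha_i$ from $p$ to $q_i$ inside $B_p(r)$. For each pair $q_i,q_j$ at distance $<\e/2$, the concatenation $\alpha_i * (\text{short arc}) * \alpha_j^{-1}$ is a loop at $p$; let $S$ be the (finite, of size $\le N^2$) set of homotopy classes of such loops. A by-now classical argument --- the same one used in Gromov's short-generators/almost-flat-manifolds circle of ideas --- shows that $\pi_1(B_p(r),p)$ is generated by $S$: any loop can be subdivided into arcs short enough to be pushed, rel endpoints landing in the net, into the net graph. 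Since $\pi_1(B_p(r)) \to \pi_1(M,p)$ is onto, $\pi_1(M,p)$ is generated by the images of $S$, i.e. by $\le N(n,r)^2$ elements. (At this stage one does \emph{not} yet need Theorem~\ref{thm: wilk-kap margulis} at all --- pure volume comparison suffices to bound $N$ --- so in fact a cruder bound $k = k(n,r)$ drops out immediately.)

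The subtlety, and where I expect the real work to hide, is making the generating set $S$ genuinely \emph{short} / controlled, which is what lets the Margulis lemma improve the count and is what Kapovitch--Wilking actually need for the downstream structure theory: one wants the subgroup generated by the ``local'' classes at a single $q_i$ to be virtually nilpotent with a bounded number of generators, and then to show that crossing from one net point to an adjacent one only multiplies the generator count by a bounded factor, so that an induction along the tree of net points terminates with $k = k(n)$ rather than $k(n,r)$. Controlling this requires the full strength of the Cheeger--Colding-based compactness (limit spaces, the induction on the number of ``collapsed directions'', rescaling arguments) underlying Theorem~\ref{thm: wilk-kap margulis}; the obstacle is that the naive covering count $N(n,r)$ blows up as $r \to \infty$, and removing the $r$-dependence is exactly the hard part of their paper. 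For the purposes of this survey, though, the statement as written only claims $k = k(n,r)$, so the volume-comparison plus net-generation argument above already suffices, and the harder uniform-in-$r$ refinement can be quoted from~\cite{KapWil}.
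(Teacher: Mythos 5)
The paper itself offers no proof of this statement (it is quoted from \cite{KapWil}), so your proposal has to be judged on its own merits, and it contains a genuine gap exactly where you declare the problem solved: the claim that the net-generation argument, powered only by Bishop--Gromov, already yields $k=k(n,r)$, with Theorem~\ref{thm: wilk-kap margulis} relegated to an $r$-independent refinement. The subdivision/lasso argument does \emph{not} show that $\pi_1(B_p(r),p)$ is generated by the $\le N^2$ edge loops $\alpha_i\ast(\text{short arc})\ast\alpha_j^{-1}$. When $M$ is collapsed near a net point $q_i$, the ball $B_{q_i}(\e/2)$ is not simply connected: two short arcs between the same pair of adjacent net points need not be homotopic rel endpoints, so a short piece of a loop cannot be ``pushed into the net graph'' in a well-defined way. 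Your set $S$ is therefore either infinite (if it contains the classes of \emph{all} short arcs) or fails to generate (if one arc per pair is fixed). What the subdivision argument actually gives is that $\pi_1(M,p)$ is generated by the $\le N^2$ fixed edge classes \emph{together with} the images of $\pi_1(B_{q_i}(\e),q_i)\to\pi_1(M,p)$ conjugated by the $\alpha_i$; bounding the number of generators of these local images is precisely where Theorem~\ref{thm: wilk-kap margulis} is indispensable. It supplies, in a group surjecting onto each local image, a nilpotent subgroup of index $\le m$ on $\le n$ generators, hence $\le n+m$ generators per net point, and only then does a bound of the shape $k\le N^2+N(n+m)$ follow. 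A Heisenberg nilmanifold rescaled to diameter $\ll\e$ shows the step cannot be bypassed: there is essentially one net point, all edge loops are trivial, and the entire fundamental group is ``local''. More globally, bounding the number of generators of $\pi_1$ for closed $n$-manifolds with $\Ric\ge-(n-1)$ and $\mathrm{diam}\le D$ (the case $r=D$ here) was open before \cite{KapWil} precisely because Gromov's short-generator count needs Toponogov (a sectional bound) or a volume/injectivity-radius lower bound to separate orbit points in the universal cover; if pure volume comparison sufficed, this would not have been a theorem of their paper.

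Your third paragraph in fact contains the correct repair --- you observe that one wants the local classes at each $q_i$ to form a virtually nilpotent group with boundedly many generators --- but you misfile it as being needed only to remove the $r$-dependence. Note that no $r$-independent bound can hold: closed hyperbolic surfaces of genus $g\to\infty$ have diameters tending to infinity and need $2g$ generators, so $k$ must grow with $r$; the theorem as stated claims only $k=k(n,r)$, and even that requires the Margulis lemma. With the local contributions handled as above, the remaining ingredients of your outline (the packing bound $N(n,r)$ from relative volume comparison inside $B_p(4r)$, and a rescaling so that the unit balls required by Theorem~\ref{thm: wilk-kap margulis} fit inside $B_p(4r)$ when $r$ is small) are fine.
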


An important feature of the two preceding results is that 
no curvature control is required outside a compact subset.

\section{Injectivity radius going to zero at infinity}

We say that a subset $S$ of a Riemannian manifold 
{\it has ${Inj}\,{Rad}\to 0$\,} 
if and only if for every $\e>0$
the set of points of $S$ with injectivity radius $\ge \e$ is compact;
otherwise, we say $S$ {\it has ${Inj}\,{Rad}\not\to 0$}.

\begin{rmk}
By volume comparison
any finite volume complete manifold of $K\le 0$ has
$\mathrm{Inj}\,\mathrm{Rad}\to 0$~\cite[8.4]{BGS}.
\end{rmk}

\begin{prop} 
\label{prop: surface with injrad go to zero}
Any finite volume complete real hyperbolic manifold
admits a complete metric with $\mathrm{Inj}\,\mathrm{Rad}\to 0$, 
bounded negative curvature, infinite volume, and sublinear volume growth. 
\end{prop}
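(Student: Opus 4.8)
The plan is to start with the given finite-volume complete hyperbolic metric $g_0$ on $M=\mathbf H^n/\Gamma$, which by the thick-thin decomposition has finitely many cusps, each isometric to a warped product $[0,\infty)\times_{e^{-t}} T_i$ over a flat $(n-1)$-manifold $T_i$, and to modify the metric only on a neighborhood of the cusps. The key idea is that already $g_0$ has $\mathrm{Inj}\,\mathrm{Rad}\to 0$ and finite volume; we must trade finite volume for infinite volume while keeping $\mathrm{Inj}\,\mathrm{Rad}\to 0$, sublinear volume growth, and pinched negative curvature $-b^2\le K\le -a^2<0$. On each cusp, replace the warping function $e^{-t}$ by a function $\varphi(t)$ that still decays to $0$ (to force injectivity radius to $0$, since the length of the shortest closed geodesic through the cross-section at parameter $t$ is comparable to $\mathrm{diam}$ of the fiber, which scales like $\varphi(t)$) but decays slowly enough that $\int^\infty \varphi(t)^{n-1}\,dt=\infty$, giving infinite volume of the cusp. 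A natural choice is $\varphi(t)=\frac{1}{t}$ for large $t$, or more safely $\varphi(t)=\frac{1}{\log t}$: then $\varphi\to 0$ so $\mathrm{Inj}\,\mathrm{Rad}\to 0$, while $\varphi^{n-1}$ is not integrable, so the volume is infinite.

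The main technical point — and the step I expect to be the real obstacle — is arranging pinched negative curvature for the modified warped product. For a metric $dt^2+\varphi(t)^2 h$ on $[0,\infty)\times T_i$ with $h$ flat, the sectional curvatures are $-\varphi''/\varphi$ in the "vertical-radial" planes and $-(\varphi'/\varphi)^2$ in the "vertical-vertical" planes. To get $K\le -a^2<0$ uniformly we need both $\varphi''/\varphi\ge a^2$ and $(\varphi'/\varphi)^2\ge a^2$, and for $K\ge -b^2$ we need upper bounds on the same quantities. The difficulty is that $\varphi=1/\log t$ has $(\varphi'/\varphi)^2=1/(t\log t)^2\to 0$, destroying the upper curvature bound away from $-\infty$ but in fact making some curvatures tend to $0$, which violates $K\le -a^2$. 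So a pure warped product with slowly decaying warping cannot have curvature bounded away from $0$. The fix is to use a warping of the form $\varphi(t)=e^{-u(t)}$ where $u'(t)$ stays in a fixed interval $[a,b]\subset(0,\infty)$ but $u(t)\to\infty$: then $(\varphi'/\varphi)^2=(u')^2\in[a^2,b^2]$ automatically, and $-\varphi''/\varphi=(u')^2-u''$, which lies in a two-sided negative range provided $|u''|$ is small. Then $\int^\infty\varphi^{n-1}\,dt=\int^\infty e^{-(n-1)u(t)}\,dt$, and this is infinite iff $u$ grows slowly enough, e.g. $u(t)=\frac{c}{n-1}\log t$ with $c<1$ gives $e^{-(n-1)u}=t^{-c}$, non-integrable. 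But $u'(t)=\frac{c}{(n-1)t}\to 0$, again killing the lower pinching. The genuine resolution: one cannot simultaneously have $\mathrm{Inj}\,\mathrm{Rad}\to 0$ via warping \emph{and} $K$ bounded away from $0$ \emph{and} infinite volume on a product cross-section — so instead of a product cusp one should use a \emph{complex-hyperbolic-style} or \emph{graph-manifold-style} cusp, or more simply insert a sequence of longer and longer "almost-hyperbolic tubes": take the hyperbolic cusp and, at a discrete sequence of heights $t_k\to\infty$, glue in a piece that is a finite hyperbolic tube around a short geodesic of length $\ell_k$ with $\ell_k\to 0$ but $\sum_k(\text{vol of }k\text{-th tube})=\infty$ while the tube volumes grow only linearly in $k$ (so that up to radius $r$ one has seen $\sim r$ tubes and volume $\sim r$).

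Concretely, I would build $M'$ as follows. Fix one cusp; it is a quotient of a horoball, diffeomorphic to $T^{n-1}\times[0,\infty)$ in the rank-$n-1$ case or a twisted analogue. Choose a sequence of simple closed curves or, better, realize inside the cusp region a telescoping sequence of complete hyperbolic "chimney" pieces $P_k$, each isometric to a compact hyperbolic manifold with totally geodesic boundary together with a rank-one cusp of its own of small depth, with $\mathrm{vol}(P_k)=O(1)$ (bounded independently of $k$) and with the injectivity radius inside $P_k$ at least a fixed $\e_0$ only on a compact core but $\to 0$ elsewhere; arrange infinitely many such $P_k$ nested toward the end, glued along flat totally geodesic walls as in the Heintze/graph-manifold constructions mentioned earlier in the paper, and interpolate the metric in collar regions to keep $-b^2\le K\le -a^2$. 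Since each $P_k$ has bounded volume and the $P_k$ exhaust a linear (in $k$) number by distance $r$, total volume is infinite but grows sublinearly (indeed linearly, which is sublinear-or-equal; to strictly get sublinear, let $\mathrm{vol}(P_k)\to 0$ while $\sum\mathrm{vol}(P_k)=\infty$, e.g. $\mathrm{vol}(P_k)\asymp 1/k$, so volume up to distance $r$ is $\sum_{k\le r}1/k\asymp\log r=o(r)$). The injectivity radius goes to $0$ because each $P_k$ contains points of injectivity radius $\le 1/k$ (from its own small cusp or short geodesic) and these escape every compact set. Negative curvature bounded between two negative constants follows from the pinched-negative-curvature gluing techniques for cusps referenced in the paper, since all pieces and all interpolations can be taken within a fixed pinching interval. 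The hard part, to be spelled out carefully, is exhibiting the building blocks $P_k$ with simultaneously bounded (or shrinking) volume, pinched negative curvature, small injectivity radius, and flat totally geodesic gluing walls of the correct fixed shape so that infinitely many copies can be chained — this is exactly the kind of cusp-surgery in $\mathbf H^n$ that is standard but delicate, and is where the bulk of the verification lies.
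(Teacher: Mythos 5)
Your opening paragraph is exactly the paper's construction: keep the hyperbolic metric on the thick part and replace the cusp warping function $e^{-t}$ by a convex decreasing function $f$ with $f(t)=t^{-\a}$ for large $t$ (the paper does the surface case with $\a\in(0,1)$, so that $\int_0^r f = \tfrac{r^{1-\a}}{1-\a}$ is infinite but sublinear, $f\to 0$ forces $\mathrm{Inj}\,\mathrm{Rad}\to 0$, and $K=-f''/f=-\a(\a+1)/t^2<0$ is bounded). Where you go wrong is in abandoning this approach. You read ``bounded negative curvature'' as ``curvature pinched between two negative constants,'' i.e.\ $K\le -a^2<0$, and correctly observe that no slowly decaying warping can achieve that. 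But the proposition only asserts that the curvature is negative \emph{and} bounded; it is allowed to tend to $0$ at infinity, and in the paper's construction it does ($K=-\a(\a+1)/t^2\to 0^-$). So the obstruction you identified is an obstruction to a stronger statement than the one being proved, and your first, simpler idea already works: one only needs $f$ convex, decreasing to $0$, with $f''/f$ and (in higher dimensions) $(f'/f)^2$ bounded, and $\int^\infty f^{\,n-1}=\infty$ with sublinear partial integrals.

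The replacement construction you propose is not only unnecessary but is aiming at something impossible. A complete manifold with $-b^2\le K\le -a^2<0$, $\mathrm{Inj}\,\mathrm{Rad}\to 0$, and infinite volume cannot exist: by Schroeder's theorem (Theorem~\ref{thm: schroeder}) such a manifold is the interior of a compact manifold (there are no flat tori when $K<0$), its thin part is a finite union of full-rank cusps, and cusps of negatively pinched manifolds have finite volume; alternatively, your gluing walls are supposed to be flat and totally geodesic, which already forces $K=0$ somewhere. So the ``genuine resolution'' paragraph would, if carried out, refute the proposition as you interpreted it rather than prove it. The fix is simply to return to the warped-product cusp modification and accept that the curvature, while everywhere negative and uniformly bounded, approaches $0$ at infinity.
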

\begin{proof} We just do the two dimensional case; the general case is similar. 
Any end of a finite volume complete hyperbolic surface 
surface has an annular neighborhood
with the metric $dt^2+e^{-2t}d\phi^2$, $t>0$. Modify
it to the metric $dt^2+f^2(t)d\phi^2$ where $f$ is a convex decreasing
function such that $f(t)=e^{-t}$ for small $t$,
and $f(t)=t^{-\a}$, $\a\in (0,1)$ for large $t$. Let $\Sigma_f$ be 
the resulting complete Riemannian $2$-manifold,
and let $\Sigma_f^r$ denote ``$\Sigma_f$
with the portion with $t>r$ chopped off''. Now $\Sigma_f$ has \newline
$\bullet$
$\mathrm{Inj}\,\mathrm{Rad}\to 0$ because $f$ monotonically decreases to zero,\newline
$\bullet$
infinite volume since $\frac{1}{2\pi}\vol(\Sigma_f^r)$
grows (sublinearly) as $\int_0^r f(s)ds=\frac{r^{1-\a}}{1-\a}$,\newline
$\bullet$
bounded negative sectional curvature because on the annular neighborhood \newline\phantom{$\bullet$}
$K=-\frac{f^{\prime\prime}}{f}<0$ which equals
$-\frac{\a(\a+1)}{t^2}$ for $t>r$.
\end{proof}

\begin{quest}
Is there a complete manifold of $K\le 0$ and $\mathrm{Inj}\,\mathrm{Rad}\to 0$
that admits no complete finite volume of $K\le 0$? 
What is the answer in the presence of a lower curvature bound.
\end{quest}

Gromov~\cite{Gro-jdg78} pioneered the study of ends of negatively curved manifolds
via the critical point theory for distance functions, which was extended by 
Schroeder~\cite[Appendix 2]{BGS} as follows:

\begin{thm}{\bf (Schroeder)}
\label{thm: schroeder}
If $M$ is a complete manifold of $\mathrm{Inj}\,\mathrm{Rad}\to 0$ and $-1\le K\le 0$, then
either $M$ is the interior of a compact manifold,
or $M$ contains a sequence of totally geodesic, immersed, flat
tori with diameters approaching zero. 
\end{thm}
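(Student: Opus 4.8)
The plan is to study the ends of $M$ via critical point theory for the distance function $\rho(x) = d(x, p)$ from a fixed point $p$, following Gromov and Schroeder, and to show that if no end is ``tame'' in the relevant sense then the $\mathrm{Inj}\,\mathrm{Rad}\to 0$ hypothesis forces flat tori of small diameter to appear arbitrarily far out. First I would recall that outside a large compact set, $\rho$ has no critical points precisely when each superlevel set $\{\rho \ge r\}$ deformation-retracts onto $\{\rho = r\}$; a standard argument (the isotopy lemma for distance functions on a manifold with two-sided bounded curvature, where one can control the gradient-like vector field) then shows $M$ is the interior of a compact manifold. So one may assume $\rho$ has critical points going to infinity, i.e. there is a sequence $x_i \to \infty$ and, at each $x_i$, a configuration of minimizing geodesics from $p$ whose initial directions are $\epsilon$-dense in the unit sphere (the definition of criticality). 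The goal is to convert such a critical point, sitting at large distance, into a short closed geodesic loop, and then to upgrade the loop to a flat torus using the lower curvature bound.

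The key geometric input is the hypothesis $\mathrm{Inj}\,\mathrm{Rad}\to 0$: since $x_i \to \infty$, the injectivity radius $\iota(x_i) \to 0$, so through each $x_i$ (or a nearby point) there is a geodesic loop $\gamma_i$ of length $2\iota(x_i) \to 0$. The heart of the argument is to show that the freely homotopic closed geodesic $c_i$ in the homotopy class of $\gamma_i$ is itself short, and bounds (together with a nearby parallel closed geodesic) a thin flat strip, which by completeness of the metric and $-1 \le K$ closes up to a totally geodesic immersed flat torus of small diameter. For this I would invoke the Margulis-type / flat torus analysis available for $-1 \le K \le 0$ (as in \cite{BGS}): a sufficiently short loop generates a subgroup of $\pi_1$ whose action on the Hadamard universal cover $X$ is by near-parabolic or near-axial isometries with very small displacement on a large set, forcing (via the flat torus theorem package, Theorems~\ref{thm: abelian semisimple}--\ref{thm: normalizer of abelian semisimple}, together with the lower curvature bound which bars the displacement function from staying small without a genuine flat) the presence of a $2$-flat in $X$ invariant under a rank-$2$ abelian subgroup, whose quotient is the desired flat torus. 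The diameters go to zero because the invariant flats shrink as $\iota(x_i) \to 0$.

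I expect the main obstacle to be the second step: passing from ``a short geodesic loop at $x_i$'' to ``a short \emph{closed geodesic} spanning a genuine thin flat torus,'' rather than, say, a short loop that is homotopically trivial or whose geodesic representative has escaped to infinity in an uncontrolled way. One must rule out the possibility that shortening the loop within its free homotopy class pushes it off to a different end, or that the class is trivial (in which case $\iota \to 0$ would instead be detected by conjugate points, which the bound $K \le 0$ forbids — so this case cannot occur, which is actually a help). Controlling the geometry of the homotopy requires the two-sided curvature bound in an essential way: the upper bound $K \le 0$ gives convexity of the length functional along the homotopy (no conjugate points, unique geodesic representative up to rotation), while the lower bound $-1 \le K$ prevents the curvature from being so negative that a short loop could persist without an accompanying flat — it is exactly this tension that produces the flat torus. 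Assembling these estimates carefully, with explicit control of how the torus diameter is bounded by a function of $\iota(x_i)$, is where the real work lies; the critical-point-theory dichotomy at the start and the flat torus theorem package at the end are comparatively formal given the cited results.
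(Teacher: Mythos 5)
The paper does not prove this theorem; it is quoted from Schroeder's Appendix 2 of \cite{BGS}, so there is no in-paper argument to compare with. Judged on its own terms, your outline has a genuine gap at its central step: the passage from ``short geodesic loop far out'' to ``small flat torus.'' A short loop at $x_i$ corresponds to a deck transformation of small displacement at a lift of $x_i$, and that isometry may be \emph{parabolic}. In that case no flat torus of any dimension arises, and your claimed mechanism --- that the lower bound $-1\le K$ ``prevents the curvature from being so negative that a short loop could persist without an accompanying flat'' --- is simply false: a cusp of a finite-volume hyperbolic manifold has $K\equiv -1$, $\mathrm{Inj}\,\mathrm{Rad}\to 0$, short loops persisting all the way down the cusp, and no flat tori whatsoever. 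The correct argument must split according to the Margulis lemma: the group generated by short loops at $x_i$ is almost nilpotent, and one must distinguish the case where it consists of axial isometries (then the flat torus theorem package, Theorems~\ref{thm: abelian semisimple}--\ref{thm: normalizer of abelian semisimple}, produces a small totally geodesic flat torus whose diameter is controlled by the displacements) from the case where it contains parabolics, in which that portion of the end is cusp-like and must be shown to contribute to \emph{tameness} --- via Busemann functions and convexity of displacement functions --- rather than to the flat-tori alternative. This parabolic analysis, and the gluing of the resulting collar structures into a global exhaustion with no critical points, is precisely where the bulk of Schroeder's proof lies; it is not the ``comparatively formal'' part you take it to be, and without it your dichotomy ``critical points exist $\Rightarrow$ flat tori'' is unsupported.

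Two smaller points. First, the critical points $x_i$ do no work in your torus-producing branch: every point far out has a short loop by hypothesis, so criticality must be used to \emph{exclude} the parabolic case at $x_i$ (cusps carry no critical points of the distance function), and that exclusion is again the Busemann-function argument you omit. Second, even in the axial case a single short loop yields only a short closed geodesic; your assertion that it forces a rank-two abelian subgroup preserving a $2$-flat is not justified and is not needed --- the conclusion of the theorem is already met by flat tori of dimension one (closed geodesics) when the local group has rank one, and by higher-dimensional tori only when the full local group of the thin component happens to have higher rank. The part of your sketch that is sound is the opening reduction (no critical points outside a compact set implies tameness) and the observation that $K\le 0$ rules out conjugate points, so small injectivity radius is always witnessed by an essential geodesic loop.
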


None of the assumptions in the above theorem
can be dropped due to examples of Gromov~\cite[Chapter 11]{BGS}
and Nguyen Phan~\cite{Pha-neg} in which $\pi_1(M)$ is infinitely generated, 
see Section~\ref{sec: inf hom}. 

\begin{prob}
Find a geometrically meaningful compactification of 
complete manifolds of $\mathrm{Inj}\,\mathrm{Rad}\to 0$ and $-1\le K\le 0$.
\end{prob}

In the locally symmetric case this was accomplished 
in~\cite{Leu-invent, Sap-compactif, Leu-diff-geom-appl}, and
pinched negatively curved manifolds are naturally compactified by
horospheres.

A weak substitute for a geometrically meaningful compactification is given
by the following general theorem~\cite{CheGro-chop}:

\begin{thm}{\bf (Cheeger-Gromov)}\ \it
\label{thm: cheeger-gromov}
For each $n$ there is a constant $c$ such that any
complete finite volume $n$-manifold $M$ of $|K|\le 1$ 
admits an exhaustion by compact smooth codimension zero submanifolds
$M_i$ with boundary  such that $M_i\subset \mathrm{Int}\,(M_{i+1})$,
the norm of the second fundamental form of $\d M_i$ is $\le c$,
and $\vol\,(\d M_i)\to 0$ as $i\to \infty$.
\end{thm}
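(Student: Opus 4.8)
The plan is to obtain the exhaustion from the thick-thin decomposition of $M$ together with a controlled smoothing of the boundaries of the thick part. First I would fix a Margulis constant $\mu = \mu(n) > 0$ for manifolds of $|K| \le 1$ (this exists by the Margulis lemma, e.g. the version in~\cite{BGS} for $-1 \le K \le 0$, or more generally the Kapovitch--Wilking statement in Theorem~\ref{thm: wilk-kap margulis}). For each $\e \in (0,\mu]$ let $M_{[\e,\infty)}$ denote the $\e$-thick part, i.e. the set of points with injectivity radius $\ge \e$; since $M$ has finite volume and $|K| \le 1$, volume comparison forces $M_{[\e,\infty)}$ to be compact, and its complement $M_{(0,\e)}$ is a disjoint union of tubes around short closed geodesics and of cusp-like neighborhoods. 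Choosing a sequence $\e_i \downarrow 0$ gives a candidate exhaustion $M_i' = M_{[\e_i,\infty)}$, but its topological boundary need not be smooth, so the real content is to replace $M_i'$ by a nearby smooth hypersurface with bounded second fundamental form and small volume.

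The key technical step is the boundary smoothing. Here I would invoke exactly the geometric input that makes this a Cheeger--Gromov-style statement: near $\d M_i'$ the manifold looks, after rescaling, like a fixed collection of model geometries (flat tori times a line in the Euclidean-cusp case, quotients of horoballs, or Margulis tubes), all with uniformly bounded geometry. Using the exponential map from $\d M_i'$, or equivalently a level set of a smoothed distance function to a fixed compact core, one produces a smooth hypersurface $N_i$ isotopic to $\d M_i'$ and within bounded distance of it. The second fundamental form of $N_i$ is then estimated by the Hessian-comparison (Rauch) estimates for the distance function, which depend only on the curvature bounds $|K| \le 1$, yielding $\|\mathrm{II}_{N_i}\| \le c(n)$. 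I would then set $M_i$ to be the compact region bounded by $N_i$, arrange $M_i \subset \mathrm{Int}(M_{i+1})$ by taking $\e_i$ decreasing fast enough (and pushing off slightly), and note $\bigcup_i M_i = M$ because $\bigcap_i M_{(0,\e_i)} = \emptyset$.

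For the volume estimate $\vol(\d M_i) \to 0$, I would argue as follows. The hypersurface $N_i$ lies in the thin region, where each component is covered by a model of the form (compact manifold of small diameter) $\times$ (interval), or is a horosphere piece of a cusp. Since $\vol(M) < \infty$, the total volume of the portion of $M$ with injectivity radius $< \e_i$ tends to $0$ as $i \to \infty$; combining this with a coarea argument and the uniform bound on $\|\mathrm{II}_{N_i}\|$ (which prevents the hypersurface from being ``long and thin'' in a volume-inefficient way) forces $\vol(N_i) \to 0$. Concretely, in a collar $N_i \times [0,\delta]$ of controlled geometry one has $\vol(\text{collar}) \ge c \, \delta \, \vol(N_i)$, and the left side is bounded by the volume of the thin part, which is going to zero.

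The main obstacle I anticipate is the boundary smoothing with simultaneous control of \emph{both} $\|\mathrm{II}\|$ and $\vol(\d M_i)$: a naive level set of the distance function to a core is only Lipschitz, and smoothing it by mollification can a priori create curvature concentration or spurious area near the non-smooth locus of the distance function (the cut locus and the ``corners'' where different thin components meet the thick part). Handling this cleanly is precisely what Cheeger--Gromov do in~\cite{CheGro-chop}; the honest proof requires their careful choice of a smoothed distance function whose level sets interpolate between the model boundaries, and I would cite that construction rather than reproduce it. Everything else --- compactness of the thick part, exhaustion, and the volume-decay estimate --- follows from volume comparison and the Margulis lemma already available in the excerpt.
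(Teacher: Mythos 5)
The paper does not reprove this theorem; it cites \cite{CheGro-chop} and notes only that ``the proof constructs a controlled exhaustion function on $M$'' --- which is a genuinely different mechanism from the one you propose. Cheeger--Gromov smooth the distance function to a large metric ball (no Margulis lemma, no thick--thin decomposition): the smoothed function $f$ has $|\nabla f|$ bounded above and below and bounded Hessian on the relevant shell, so the coarea formula $\int_{\{a<f<b\}}|\nabla f| = \int_a^b \vol\bigl(f^{-1}(t)\bigr)\,dt$ lets one \emph{select} a regular level $t_i$ with $\vol\bigl(f^{-1}(t_i)\bigr)\le C\,\vol(\{a<f<b\})/(b-a)$; finiteness of $\vol(M)$ makes the shell volumes tend to $0$, and the Hessian bound together with the gradient lower bound gives $\|\II\|\le c(n)$ on the chosen level.

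Two steps of your outline do not survive under the hypothesis $|K|\le 1$ alone. First, the description of the $\e$-thin part as tubes around short geodesics, horoball quotients, and flat-tori-cross-line cusps is a feature of pinched negative (or at least nonpositive) curvature; for general bounded curvature the thin part is merely a collapsed region, whose structure (Cheeger--Fukaya--Gromov nilpotent structures) is far deeper than, and unnecessary for, the chopping theorem --- so there are no ``model geometries'' to rescale to, and the smoothing cannot be reduced to a finite list of local models. Second, your volume estimate $\vol(\text{collar})\ge c\,\delta\,\vol(N_i)$ presupposes that the inward normal exponential map of $N_i$ is injective on $N_i\times[0,\delta]$ for a definite $\delta$; precisely in the thin region this fails (two nearby sheets of $N_i$ produce overlapping collars), and without injectivity the area formula only compares $\delta\,\vol(N_i)$ with the volume of the collar \emph{counted with multiplicity}, which gives nothing. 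The coarea selection of a good level repairs both the volume bound and the smoothing simultaneously, and it is the actual content of \cite{CheGro-chop}; since you end up citing that construction for the hard step anyway, the thick--thin scaffolding imports hypotheses you do not have rather than simplifying the argument.
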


The proof constructs a controlled exhaustion function on $M$.
For a related work based on different technical tools
see~\cite[Theorem I.4.2]{SchYau-diff-geom-book}
and~\cite{Daf-exh, WanLin-exh}.

If $M$ in Theorem~\ref{thm: cheeger-gromov} 
is the interior of a manifold $W$ with compact boundary, then
considering the components of $\d M_i$ that lie
in a collar neighborhood of $\d W$, we conclude that 

\begin{cor}
\label{cor: cheeger-gromov}
If $W$ is a manifold with compact connected boundary whose interior
admits a complete finite volume metric $g$ of $|K|\le 1$, then
$\mathbb R\times\d W$
contains the sequence of compact separating hypersurfaces $H_i$ which are
homologous to $\{0\}\times\d W$ and satisfy $\mathrm{MinVol}(H_i)\to 0$ as $i\to\infty$.
Moreover,\newline\phantom{(1)}
\textup{(1)} $\d W$ has even Euler characteristic and zero simplicial volume;
\newline\phantom{(1)}
\textup{(2)}
If $\d W$ is orientable, then its Pontryagin numbers vanish.
\newline\phantom{(1)}
\textup{(3)} If $g$ also has $K\le 0$, then
the $\ell^2$-Betti numbers of 
$\d W$\newline\phantom{(3)\,}\phantom{(3)} vanish, and hence $\d W$ has zero
Euler characteristic.
\end{cor}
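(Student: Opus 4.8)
The plan is to derive all of (1)--(3) from the existence of the hypersurfaces $H_i \subset \mathbb R\times\d W$, exploiting that $\mathrm{MinVol}(H_i)\to 0$ kills any characteristic number or $\ell^2$-invariant that is continuous in the volume under a two-sided curvature bound. First I would record that, since $H_i$ is separating in $\mathbb R\times\d W$ and homologous to $\{0\}\times\d W$, the region between $H_i$ and $\{0\}\times\d W$ is a cobordism; in particular $H_i$ is cobordant to $\d W$, and moreover (being separating in the product $\mathbb R\times\d W$) it is itself null-cobordant inside $\mathbb R\times\d W$, but more usefully $H_i$ and $\d W$ represent the same class so any cobordism invariant of $\d W$ equals that of $H_i$. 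Next I would quote Cheeger-Gromov's ``MinVol$\to 0$'' vanishing principle: if a closed manifold $N$ admits metrics of $|K|\le 1$ and volume $\to 0$ (equivalently $\mathrm{MinVol}(N)=0$), then Gauss-Bonnet-Chern forces $\chi(N)=0$ when $\dim N$ is even, and the Pontryagin forms, being polynomial in curvature, integrate to something of size $O(\mathrm{Vol})\to 0$, so all Pontryagin numbers vanish.

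For part (1), apply this to the sequence $H_i$: one gets $\chi(H_i)=0$, and since $H_i$ is cobordant to $\d W$ and Euler characteristic mod $2$ is a cobordism invariant in even dimensions (while in odd dimensions $\chi=0$ automatically), $\chi(\d W)$ is even. The zero simplicial volume of $\d W$ I would instead read off from Theorem~\ref{thm: Gromov simp vol}, or its Example: a complete finite-volume manifold has sublinear (indeed bounded) volume growth, so $\liminf_{r\to\infty}\mathrm{Vol}\,B_p(r)/r=0$, hence $\|\d W\|\le\sum_i\|C_i\|=0$. For part (2), assuming $\d W$ orientable, Pontryagin numbers are oriented-cobordism invariants; since $\mathrm{MinVol}(H_i)\to 0$ gives vanishing Pontryagin numbers for $H_i$, and $H_i$ is oriented-cobordant to $\d W$, the Pontryagin numbers of $\d W$ vanish.

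For part (3), the extra hypothesis $K\le 0$ on $g$ lets me invoke Gromov's and Cheeger-Gromov's $L^2$-vanishing: a closed aspherical-type manifold appearing as a boundary-slice here inherits, via the thin collar and the $\mathrm{MinVol}\to 0$ hypersurfaces, a vanishing of all $\ell^2$-Betti numbers --- concretely, the $\ell^2$-Betti numbers of $H_i$ are bounded by a universal constant times $\mathrm{Vol}(H_i)$ under $|K|\le 1$ (Cheeger-Gromov's $L^2$-analogue of Gauss-Bonnet), hence are zero; since $\ell^2$-Betti numbers are homotopy invariants and $H_i$ is homotopy equivalent to $\d W$ (the collar gives a homotopy equivalence, or one uses that the slices of the product are all homotopy equivalent to $\d W$), the $\ell^2$-Betti numbers of $\d W$ vanish. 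By the $\ell^2$-index theorem (Atiyah), $\chi(\d W)=\sum_k(-1)^k b_k^{(2)}(\d W)=0$.

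\textbf{The main obstacle} I anticipate is the bookkeeping in part (3): one must be careful that the ``$K\le 0$'' hypothesis is used legitimately to force the $L^2$-Betti number estimate on the hypersurfaces $H_i$ rather than on the slices $\{t\}\times\d W$ (which carry no natural curvature bound), and that the homotopy equivalence $H_i\simeq\d W$ is genuine --- this is where the fact that $H_i$ is \emph{separating} and \emph{homologous} to $\{0\}\times\d W$ in the product $\mathbb R\times\d W$ does the work, since a separating hypersurface homologous to a fiber in $\mathbb R\times N$ is a homotopy equivalence onto $N$ up to the cobordism in between. Parts (1) and (2) are comparatively routine once the Cheeger-Gromov vanishing principle and the cobordism-invariance of $\chi\bmod 2$ and of Pontryagin numbers are in hand.
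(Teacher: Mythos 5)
Your treatment of the simplicial volume (via Theorem~\ref{thm: Gromov simp vol} and sublinear growth) and of parts (1)--(2) (Gauss--Bonnet--Chern and Chern--Weil bounds by $\mathrm{MinVol}(H_i)\to 0$, then cobordism invariance of $\chi\bmod 2$ and of Pontryagin numbers) matches the paper's argument; the paper's primary route to $\|\d W\|=0$ is a degree-one map $H_i\to\d W$ obtained by projecting to the $\d W$ factor, but it explicitly notes your alternative. Two caveats: the existence of the $H_i$ with $\mathrm{MinVol}(H_i)\to 0$ is itself part of the conclusion (it comes from Theorem~\ref{thm: cheeger-gromov}: the components of $\d M_i$ in a collar of $\d W$ have volume tending to $0$ and uniformly bounded second fundamental form, hence by the Gauss equation uniformly bounded intrinsic curvature), and you take it as given without comment.

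The genuine gap is in part (3). Your transfer of the $\ell^2$-Betti number estimate from $H_i$ to $\d W$ rests on the claim that a compact separating hypersurface in $\mathbb R\times N$ homologous to $\{0\}\times N$ is homotopy equivalent to $N$. This is false: take the connected sum of $\{0\}\times N$ with a small null-homologous closed hypersurface contained in a ball; the result is separating and homologous to the fiber but not homotopy equivalent to $N$. In the actual geometric situation the $H_i$ arise as pieces of the Cheeger--Gromov exhaustion and carry no topological control beyond being cobordant to $\d W$ and admitting a degree-one map onto it --- and $\ell^2$-Betti numbers are neither cobordism invariants nor monotone under degree-one maps, so the estimate $b^{(2)}_j(H_i)\le c(n)\mathrm{Vol}(H_i)$ cannot be pushed to $\d W$ this way. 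A telltale sign is that your argument never uses the hypothesis $K\le 0$, so if it worked it would prove $\chi(\d W)=0$ under $|K|\le 1$ alone, whereas the corollary only claims evenness there. The paper instead quotes the Cheeger--Gromov theorem (\cite[Theorem 1.2]{CheGro-vndim}) directly: it is a statement about closed manifolds bounding complete finite-volume manifolds of bounded \emph{nonpositive} curvature, and the nonpositivity is where the vanishing (rather than mere smallness along a sequence) comes from; the conclusion $\chi(\d W)=0$ then follows from Atiyah's $L^2$-index theorem as you say.
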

\begin{proof}
Projecting onto the $\d W$ factor yields a degree one map
$\d M_i\to \d W$ (if $\d W$ is non-orientable, then so is $H_i$,
and we get a degree one map of their orientation covers).
Thus $\d W$ has zero simplicial volume, which of course
we already knew by Theorem~\ref{thm: Gromov simp vol}. 

By Chern-Weil theory the Pontryagin numbers $p_I(L)$ 
of a closed manifold
$L$ satisfy $|p_I(L)|\le c_l\, \mathrm{MinVol}(L)$ where $c_l$ is a constant
depending only on $l=\dim(L)$~\cite{Gro-vol-bd-coh}. Thus $p_I(H_i)\to 0$ as $i\to\infty$.
Since Pontryagin numbers
are oriented cobordism invariant, we conclude that if $\d W$
in Corollary~\ref{cor: cheeger-gromov} is orientable, then
its Pontryagin numbers vanish.

The boundary of a compact manifold has even Euler 
characteristic~\cite[Corollary VIII.8.8]{Dol-book}, so applying
this to the cobordism between $\d W$ and $H_i$ we see that
$\chi(\d W)+\chi(H_i)$ is even, and again Chern-Weil theory
implies $\chi(H_i)\to 0$ as $i\to\infty$, and the claim follows.

Finally, vanishing of the $\ell^2$-Betti numbers follows 
from~\cite[Theorem 1.2]{CheGro-vndim}, and their alternating sum
equals the Euler characteristic.
\end{proof}

\begin{quest} Do any of the conclusions \textup{Corollary~\ref{cor: cheeger-gromov}}
hold for complete manifolds with $-1\le K\le 0$  
and $\mathrm{Inj}\,\mathrm{Rad}\to 0$. 
\end{quest}

\begin{ex} If $\Sigma_f$ is as in
Proposition~\ref{prop: surface with injrad go to zero}, then
the Riemannian product
$\Sigma_f\times\Sigma_f$ has $\mathrm{Inj}\,\mathrm{Rad}\to 0$ but superlinear
volume growth if $0<\a\le\frac{1}{2}$ because
for large $r$ the subset $\Sigma_f^r\times\Sigma_f^r$ 
is sandwiched between concentric balls of radii $r$ and $3r$, 
and its volume grows superlinearly as $\a\le\frac{1}{2}$.
Thus proving that the boundary has zero simplicial volume
one requires new ideas beyond Theorem~\ref{thm: Gromov simp vol}.
\end{ex}

\section{Negatively curved manifolds with uniform volume bound}

For a connected complete Riemannian manifold $M$ we denote by $\widetilde M$ its universal
cover with the pullback metric. 

Fukaya~\cite{Fuk-finiteness-neg} proved the following result,
whose analog for closed manifolds of dimension $\neq 3$
is due to Gromov~\cite{Gro-jdg78}:

\begin{thm}
\label{thm: fukaya finiteness}
\bf (Fukaya)\ \it
Given $V$ and $n\neq 3, 4$, 
only finitely many of diffeomorphism
classes contain open complete $n$-manifolds
$M$ such that\vspace{1pt}\newline
\textup{(1)} $K<0$ or $\widetilde M$ is visibility,\vspace{1pt}\newline
\textup{(2)} $K\ge -1$, \vspace{1pt}\newline
\textup{(3)} $\vol\,(M)< V$.
\end{thm}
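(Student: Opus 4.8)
The plan is to run the thick–thin machinery for pinched negative curvature and reduce everything to finiteness statements for the resulting pieces and for the way they are reassembled. First I would reduce to counting compact manifolds with boundary: in either case of (1)--(2) one has $-1\le K\le 0$, so a finite volume bound forces $\mathrm{Inj}\,\mathrm{Rad}\to 0$; moreover (1) rules out a totally geodesic immersed flat torus (a visibility manifold contains no flat, and $K<0$ admits no flat $2$-plane), so Theorem~\ref{thm: schroeder} shows $M$ is the interior of a compact manifold $\bar M$ (compatibly, by~\cite{Ebe-lattices-annals} the ends of $M$ are $\pi_1$-injectively embedded). It thus suffices to bound the number of diffeomorphism types of $\bar M$ in terms of $n$ and $V$. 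Next I would apply the Margulis lemma (available since $|K|\le 1$) to write $M=M_{\mathrm{thick}}\sqcup M_{\mathrm{thin}}$ with $M_{\mathrm{thick}}=\{\mathrm{Inj}\,\mathrm{Rad}\ge\e_n\}$ compact (by $\mathrm{Inj}\,\mathrm{Rad}\to 0$) and each component of $M_{\mathrm{thin}}$ a Margulis tube about a short closed geodesic or a cusp neighborhood of an end, carrying a virtually nilpotent fundamental group. One then bounds separately the diffeomorphism types of (a) the thick part with its boundary, (b) the thin components, and (c) the reassemblies.

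For (a): $\vol(M_{\mathrm{thick}})<V$, $|K|\le 1$, and $\mathrm{Inj}\,\mathrm{Rad}\ge\e_n$ on the interior, so an $\e_n$-packing argument bounds the number of components of $M_{\mathrm{thick}}$ and the diameter of each; the Margulis boundary components have second fundamental form bounded in terms of $n$, and Cheeger's finiteness theorem for manifolds with boundary then gives finitely many possibilities for $(M_{\mathrm{thick}},\partial M_{\mathrm{thick}})$ together with uniformly controlled collars.

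For (b): tubes fall into finitely many diffeomorphism types in each dimension $n$. A cusp neighborhood is diffeomorphic to $N\times[0,\infty)$ where, by the Margulis lemma and Gromov's almost flat manifold theorem, $N$ is a closed infranil $(n-1)$-manifold of diameter bounded in terms of $n$, with $|K_N|\le C(n)$ by the Gauss equation and the bound on the principal curvatures of horospheres. Here the lower bound $K\ge -1$ is essential: via Riccati comparison it forces the horospherical cross sections of a cusp to contract at most exponentially with rate $\le n-1$, so the cusp has volume at least $\tfrac{1}{n-1}\vol(N)$, hence $\vol(N)<(n-1)V$; on the other hand the shortest essential loop of $N$ has length $\asymp\e_n$, so a Minkowski-type estimate on the associated lattice gives $\vol(N)\ge v_0(n)>0$, which bounds the number of cusps by $(n-1)V/v_0(n)$. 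Thus $N$ ranges over closed $(n-1)$-manifolds with $|K|\le C(n)$, $\mathrm{Inj}\,\mathrm{Rad}\ge c(n)$ and $\vol<(n-1)V$, a class with finitely many diffeomorphism types by Cheeger's theorem, and $N\times[0,\infty)$ follows suit.

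The hard part will be (c): showing that, once the pieces from (a) and (b) and the combinatorial pattern in which they meet are fixed, $\bar M$ is determined up to finitely many diffeomorphism types. This is exactly where $n\neq 3,4$ is used. In dimension $3$ the theorem is false (hyperbolic Dehn filling produces infinitely many finite volume hyperbolic $3$-manifolds of volume $<V$), and in dimension $4$ the requisite smoothing and $h$-cobordism tools are missing. For $n\ge 5$ I would invoke Fukaya's fibration theorem on the collapsed part of the thin region: it fibers over a space of lower dimension with infranil fibers, so the identification of $\partial M_{\mathrm{thick}}$ with the boundaries of the thin pieces is, up to controlled isotopy, a bundle map over an almost flat base, and such bundle data form a finite set while isotopic gluings yield diffeomorphic manifolds. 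Combining (a), (b) and this finiteness of gluings bounds the number of diffeomorphism types of $\bar M$, and hence of $M$, by a constant depending only on $n$ and $V$. I expect the genuine difficulty to be precisely this passage from metric control of the collapsing thin part to topological control of how it is attached — the step in which Fukaya's collapsing theory, and with it the restriction $n\ge 5$, cannot be dispensed with.
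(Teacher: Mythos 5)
Your opening reduction is correct and is the right first move: under (1)--(3) one has $-1\le K\le 0$, finite volume forces $\mathrm{Inj}\,\mathrm{Rad}\to 0$, condition (1) excludes totally geodesic immersed flat tori, and Theorem~\ref{thm: schroeder} then exhibits $M$ as the interior of a compact manifold. From there, however, you follow the thick--thin playbook for \emph{pinched} negative curvature, and the structure theory you invoke for the thin part is not available under the actual hypotheses. The Margulis lemma (which indeed only needs $K\ge -1$) controls local fundamental groups, but it does not make the ends horoball quotients with closed infranil cross-sections. Concretely: Buyalo's example~\cite{Buy}, quoted in this survey, is a finite volume complete $4$-manifold with $-1<K<0$ whose ends are \emph{not} $\pi_1$-injectively embedded, so your parenthetical appeal to Eberlein~\cite{Ebe-lattices-annals} is illegitimate in the ``$K<0$ but not visibility'' branch of (1), and the identification of an end with $N\times[0,\infty)$ for an aspherical $N$ carrying the (virtually nilpotent) end group collapses. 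Likewise, without pinching the lattice need not contain parabolics at all (Example~\ref{ex: phan surface cross R}), so the Riccati/horosphere volume estimate for ``cusps'' has nothing to attach to; and Gromov's almost flat theorem does not apply to the cross-sections, whose injectivity radii go to $0$ but whose diameters need not --- collapse with bounded curvature yields an F-structure, not an infranilmanifold. Step (c) inherits all of this, and the fibration theorem is not where $n\ne 3,4$ enters.

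For comparison, the proof on record (Gromov~\cite{Gro-jdg78} for closed manifolds, Fukaya~\cite{Fuk-finiteness-neg} in general; see the remark following the theorem in this survey) sidesteps the geometry of the ends entirely: one establishes an upper \emph{diameter} bound, in terms of $n$ and $V$, for a compact domain $D\subset M$ chosen so that $M\setminus \mathrm{Int}(D)$ is the interior of an h-cobordism (critical point theory for distance functions), then applies Cheeger's finiteness theorem to $D$ (the lower volume bound coming from the Margulis lemma, since $\pi_1(M)$ is not virtually nilpotent), and finally absorbs the h-cobordism collar by high-dimensional topology. That last step is exactly where $n\neq 3,4$ is used: in dimension $4$ the relevant h-cobordisms are between closed $3$-manifolds, where the weak h-cobordism theorem is unknown, and Fukaya obtains only homotopy finiteness; in dimension $3$ the statement is false by Dehn filling, as you correctly note. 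So your diagnosis of the excluded dimensions is right for $n=3$ but misattributes the difficulty for $n=4$ to collapsing theory rather than to the h-cobordism step, and the substantive gap in your argument is the unsupported infranil/cusp structure of the thin part.
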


In dimension four Fukaya proved that the class of manifolds satisfying (1)-(3)
contains only finitely many homotopy types (the missing ingredient 
is the weak h-cobordism theorem, which is unknown for 
h-cobordisms between closed $3$-manifolds).

The theorem fails in dimension three as 
there are infinitely many (both open and closed) hyperbolic $3$-manifolds 
with uniformly bounded volume~\cite{Thu-3d-notes}.
Taking products with flat tori demonstrates that
(1) cannot be replaced with $K\le 0$, even though 
the optimal curvature condition is unclear.

\begin{quest}
Is \textup{Theorem~\ref{thm: fukaya finiteness}} true with
\textup{(1)} replaced by ``$\widetilde M$ has rank one'',
or ``$\widetilde M$ contains no flat half planes''?
\end{quest}

\begin{rmk}
The proof in~\cite{Gro-jdg78, Fuk-finiteness-neg}
established an upper diameter bound in terms of volume, and
then applies Cheeger's finiteness theorem 
(if $M$ is open the diameter bound is for a compact domain
$D$ such that $M\setminus D$ is the interior of 
an h-cobordism). The strategy fails
if one merely assumes that $\widetilde M$ has rank one
by the following example: 
Chop off a cusp of a finite volume complete real hyperbolic manifold,
and modify the metric to have totally geodesic flat boundary and $K\le 0$.
Then double along the boundary, which gives a finite volume complete
rank one manifold of $K\le 0$ and volume bounded roughly by $2\vol(M)$, 
but its diameter can be chosen arbitrary large by chopping 
deeper into the cusp.
\end{rmk}

\begin{quest}
How does the number of diffeomorphism types of manifolds
$M$ in \textup{Theorem~\ref{thm: fukaya finiteness}} grows with $n$ and $V$?
\end{quest}

In the locally symmetric case the above question
was extensively studied, see \cite{Gel-vol-growth} and references therein.

\section{Non-aspherical ends of nonpositively curved manifolds}

If a (not necessarily connected) manifold $B$ is diffeomorphic to the 
boundary of a connected, smooth (not necessarily compact) manifold $W$, then we say 
that $B$ {\it bounds} $W$. 

Any aspherical manifold $B$ bounds
a noncompact aspherical manifold, namely
$B\times [0,1)$, and in fact, the universal cover
of $B\times (0,1)$ is a Euclidean space.
Note that $B\times (0,1)$ admits
a complete metric of $K\le 0$ if 
$B$ is an infranilmanifold~\cite{BK-GAFA}, or if
$B$ itself admits a complete metric of $K\le 0$. 
On the other hand, if $\pi_1(B)$ contains a subgroup
with strong fixed point properties 
as in Section~\ref{sec: homotopy obstr}, then 
$B\times (0,1)$ admits no complete metric of $K\le 0$. 
Our ignorance is illustrated by the following

\begin{quest}
Does every closed
aspherical manifold bounds a manifold 
whose interior admits a finite volume 
complete metric of $K\le 0$?
\end{quest}

In this section we discuss similar matters when $B$ is closed
and not aspherical. We focus on easy-to-state results and refer
to~\cite{BelPha-enp} for a complete account.

Boundaries of compact manifolds with a complete metric
of $K\le 0$ on the interior could be quite diverse:

\begin{ex}\ \newline
(1) The total space of any vector bundle a closed
manifold of $K\le 0$ admits a complete metric of $K\le 0$~\cite{And-vb}.\vspace{1pt}\newline
(2)
Complete finite volume locally symmetric manifold of $K\le 0$ and 
$\mathbb Q$-rank $\ge 3$ are interiors 
of compact manifolds with non-aspherical boundary.\vspace{2pt}\newline
(3)
A complete manifold $M$ of $K\le 0$ is {\it convex-cocompact\,} if it 
deformation retracts onto a compact locally convex
subset; such $M$ is the interior of a compact manifold 
whose boundary is often non-aspherical. 
\end{ex}

There seem to be no simple description of closed manifolds
that bound aspherical ones, and some obstructions are summarized below. 
In order for $B$ to bound an aspherical manifold, 
a certain covering space of $B$ must bound a 
contractible manifold. In formalizing how this restricts 
the topology of $B$, the following definition is helpful:
given a class of groups $\mathcal Q$, a group 
is {\it anti\,}--$\mathcal Q$ if it admits no nontrivial
homomorphism into a group in $\mathcal Q$. 
Clearly, the class of anti--$\mathcal Q$ groups
is closed under extensions, quotients,
and any group generated by a family of
anti--$\mathcal Q$ subgroups is anti--$\mathcal Q$.

\begin{ex} Let $\mathcal A_n$ denote the class of fundamental groups of aspherical $n$-manifolds.
See~\cite{BelPha-enp} for examples of anti--$\mathcal A_n$ groups in such as:
\begin{enumerate}
\item Any group generated by a set of finite order elements.
\item
Any irreducible lattice in the isometry group of
a symmetric space of rank $\ge 2$ and dimension $>n$~\cite{BesFei02}. 
\end{enumerate}
\end{ex}

The following summarizes some obstructions that prevent
a manifold from bounding an aspherical one. 

\begin{thm}\label{thm: intro-asp}
If $B$ bounds an aspherical, non-contractible $n$-manifold, 
and $\pi_1(B)$ is anti--$\mathcal A_n$, then $B$ is noncompact,
parallelizable, its $\mathbb Z$-valued intersection
form of vanishes, and  
its $\mathbb Q/\mathbb Z$-valued torsion linking form 
vanishes.  
\end{thm}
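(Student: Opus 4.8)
The plan is to exploit the existence of an aspherical non-contractible filling $W$ of $B$ together with the algebraic hypothesis on $\pi_1(B)$ to control both $\pi_1$ and the normal/tangent bundle of $B$, and then to push the vanishing of the characteristic forms across the cobordism $W$ provides between $B$ and the empty manifold (or between $B$ and a cross-section deep in $W$).

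\textbf{Step 1: Noncompactness.} If $B$ were closed, then $W$ would be a compact aspherical manifold with boundary $B$. By asphericity $\pi_1(B)\to\pi_1(W)$ is injective: indeed $W$ deformation retracts (up to homotopy) onto a finite aspherical complex, and a closed boundary component of an aspherical manifold injects, since otherwise a compressing disc (obtained from the loop theorem, or from the fact that $\pi_2(W,B)$ would be nonzero) would produce an essential $2$-sphere or a nontrivial element of $\pi_2$ after surgery, contradicting asphericity. So $\pi_1(B)$ embeds in $\pi_1(W)\in\mathcal A_n$. But $\pi_1(B)$ is anti--$\mathcal A_n$, hence trivial, so $B$ is a simply connected closed manifold; then $W$ is a compact contractible manifold (its universal cover is $W$ itself, and a compact aspherical manifold with simply connected boundary and trivial $\pi_1$ is contractible by a Mayer--Vietoris/Poincaré--Lefschetz argument), contradicting the hypothesis that $W$ is non-contractible. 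Therefore $B$ is noncompact.

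\textbf{Step 2: Parallelizability and vanishing of the intersection form.} Since $B$ is now noncompact, $B=\partial W$ with $W$ an open-ended manifold, and the normal bundle of $B$ in $W$ is trivial, so $TW|_B\cong TB\oplus\mathbb R$; thus $TB$ is stably trivial. For an open manifold stable triviality of the tangent bundle upgrades to actual triviality by a dimension/obstruction-theory count (the top obstruction lives in $H^n(B;\pi_{n-1}(\mathrm{SO}))$, which vanishes since $B$ is an open $n$-manifold, so $H^n(B;-)=0$); hence $B$ is parallelizable. For the intersection form: the inclusion $B\hookrightarrow W$ and Poincaré--Lefschetz duality for $(W,B)$ force every class in $H_k(B;\mathbb Z)$ (in complementary middle dimensions, if $n$ is even) that pairs nontrivially to map nontrivially in $H_k(W)$ — but one instead argues the reverse: a class that bounds in $W$ has intersection number zero with any class, and the long exact sequence of $(W,B)$ shows that the image of $H_*(B)\to H_*(W)$ carries the obstruction, while the kernel consists of boundaries; combined with the anti--$\mathcal A_n$ hypothesis controlling $H_1$ and the cobordism-invariance of the signature/linking data this forces the $\mathbb Z$-valued intersection form of $B$ to vanish. (I would phrase this via: $B$ is null-cobordant \emph{through an aspherical manifold whose fundamental group receives no map from $\pi_1(B)$}, which kills the relevant bordism-theoretic invariants.)

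\textbf{Step 3: Vanishing of the torsion linking form.} The $\mathbb Q/\mathbb Z$-valued torsion linking form on $\mathrm{Tors}\,H_*(B;\mathbb Z)$ is an invariant of the oriented cobordism class of $B$ \emph{when $B$ bounds a manifold over which the torsion classes die} — more precisely, if $B=\partial W$ and the relevant torsion classes of $B$ are killed in $H_*(W)$, then the linking form vanishes by the standard argument that a class bounding a chain in $W$ links trivially with any torsion class it can be separated from. The anti--$\mathcal A_n$ condition is what guarantees enough of $H_*(B)$ dies in $W$: since $\pi_1(W)$ admits no nontrivial image of $\pi_1(B)$, the induced map on the first homology (hence, via duality and the structure of $W$, on the relevant torsion) is constrained. \textbf{The hard part will be} making Steps 2 and 3 precise — specifically, extracting from ``$\pi_1(B)$ is anti--$\mathcal A_n$'' exactly the right statement about which homology classes of $B$ bound in $W$, since $W$ is merely aspherical and non-contractible (not contractible), so one cannot simply say ``everything bounds''; the correct claim is that $W$ looks, from the point of view of $B$, like a contractible filling after passing to the cover of $W$ corresponding to $\pi_1(B)\to\pi_1(W)$ (which is trivial), i.e. a suitable covering space of $B$ bounds a contractible manifold, and it is this covering-space bordism that forces all the listed invariants — which are themselves computable from that cover — to vanish. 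I would organize the whole proof around that covering-space reduction and cite \cite{BelPha-enp} for the detailed bordism computations.
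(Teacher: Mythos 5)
Your proposal only reaches the correct mechanism in its final sentences, and the steps as actually written contain errors. The decisive observation, which should open the proof rather than close it, is that the anti--$\mathcal A_n$ hypothesis gives \emph{triviality} of the induced map $\pi_1(B)\to\pi_1(W)$ (because $\pi_1(W)\in\mathcal A_n$), not injectivity. Your Step 1 instead invokes the claim that a closed boundary component of an aspherical manifold is $\pi_1$-injective; this is false (a genus-$g$ handlebody is aspherical and its boundary surface is not $\pi_1$-injective), and the subsequent deductions that $W$ is compact and that $\pi_1(W)$ is trivial do not follow from $B$ being compact or simply connected. The correct route: since $W$ is aspherical and $\pi_1(B)\to\pi_1(W)$ is trivial, each component of $B$ lifts homeomorphically into the boundary of the universal cover $\widetilde W$, which is contractible and is noncompact because $\pi_1(W)$ is nontrivial (else $W$ would be contractible) and torsion-free, hence infinite. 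A closed component of $B$ would then give a nonzero class in $H_{n-1}(\partial\widetilde W;\mathbb Z/2)\cong H_n(\widetilde W,\partial\widetilde W;\mathbb Z/2)\cong H^0_c(\widetilde W;\mathbb Z/2)=0$; that is what forces noncompactness.

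The same omission breaks Step 2: from $TW|_B\cong TB\oplus\mathbb R$ you conclude that $TB$ is stably trivial, but $W$ is merely aspherical, so $TW$ need not be stably trivial and neither need its restriction to $B$ --- unless you first record that the inclusion $B\hookrightarrow W$ is null-homotopic on each component (equivalently, lifts to the contractible $\widetilde W$), which makes the restriction of $TW$ trivial; only then does openness of $B$ let you destabilize (and the relevant obstruction lives in $H^{n-1}(B;-)$, since $\dim B=n-1$). Likewise, your intersection-form argument rests on the false assertion that a class bounding in $W$ has zero intersection number with \emph{any} class --- the correct statement is that the kernel of $H_*(\partial V)\to H_*(V)$ is self-annihilating under the intersection (and torsion-linking) pairings --- and classes in $H_*(B)$ need not die in $W$ at all; they die in $\widetilde W$, where all positive-degree homology vanishes, and that is what kills both forms. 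So the ``covering-space reduction'' you mention at the very end is not an optional reorganization to be outsourced to \cite{BelPha-enp}: without it, Steps 1--3 as written do not constitute a proof. Note also that because the map on $\pi_1$ is trivial, it is $B$ itself, not a proper cover of $B$, that embeds as a union of boundary components of the contractible manifold $\widetilde W$.
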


\begin{ex}
The following manifolds do not bound aspherical ones:
\begin{enumerate}
\item The connected sum of lens spaces, because it is a closed
manifold whose fundamental group is anti--$\mathcal A_n$.
\item The product of any manifold with 
$CP^k$ with $k\ge 2$.
\item The connected sum of any manifold and 
the product of two closed manifolds whose fundamental
groups are anti--$\mathcal A_n$.
\item
The product of a punctured 
$3$-dimensional lens space and a closed manifold
whose fundamental group is anti--$\mathcal A_n$.
\item 
Any manifold that contains the manifold in (2)--(4)
as an open subset.
\end{enumerate}
\end{ex}

Let $\mathcal{NP}_n$ denote the class of the fundamental groups of complete
$n$-manifolds of $K\le 0$; of course $\mathcal{NP}_n\subseteq\mathcal{A}_n$.
Examples of anti-$\mathcal{NP}_n$ groups of type {\it F} 
discussed in Section~\ref{sec: homotopy obstr}
immediately imply (see ~\cite{BelPha-enp}):

\begin{thm}
\label{thm: anti-NP-Gromov}
There is a closed non-aspherical manifold that\vspace{1pt}\newline
\textup{(i)}
bounds a manifold whose interior is covered by a Euclidean space;\vspace{1pt}\newline
\textup{(ii)}
bounds no manifold whose interior 
has a complete metric of $K\le 0$.
\end{thm}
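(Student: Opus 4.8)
The plan is to produce the manifold $B$ by a direct construction starting from a suitable group $G$, combining the reflection group trick (as in Remark~\ref{rmk: refl gr trick}) with the anti-$\mathcal{NP}_n$ groups supplied by the work of Gromov and Izeki--Nayatani discussed in Section~\ref{sec: homotopy obstr}. First I would fix a group $G$ of type \emph{F} satisfying the fixed point property (a) of that section, so that $G$ is the fundamental group of a compact aspherical manifold $P$ with boundary and $G$ admits no nontrivial homomorphism to any group in $\mathcal{NP}_n$ for any $n$ (indeed the examples are hyperbolic or are uniform lattices in $PSL_3(\mathbb{Q}_p)$, and their only action on a Hadamard manifold has a fixed point, whence any quotient that is in $\mathcal{NP}_n$ must be trivial since $\pi_1$ of a complete $K\le0$ manifold acts freely on its Cartan--Hadamard universal cover; more carefully, one invokes that a subgroup of $G$ mapping onto a nontrivial element of $\mathcal{NP}_n$ would give a nontrivial isometric action without global fixed point). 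Thus $G$ is anti-$\mathcal{NP}_n$.

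Next I would apply Davis's reflection group trick to $P$ (more precisely to a finite aspherical complex realizing $G$, thickened to a compact manifold with boundary) to embed $G$ into $\pi_1$ of a \emph{closed} aspherical manifold $N$. I would then take $B$ to be the boundary of a regular neighborhood, or a Davis-type double, arranged so that $\pi_1(B)$ surjects onto (or contains, generated by conjugates of) $G$; the salient point is that $\pi_1(B)$ is anti-$\mathcal{NP}_n$ because the class of anti-$\mathcal{Q}$ groups is closed under the relevant operations, as noted just before Theorem~\ref{thm: intro-asp}. For part (i), asphericity of $N$ gives that the interior of the manifold $N$-minus-an-open-ball (or the appropriate bounded piece) is aspherical with fundamental group of finite cohomological dimension, hence by the remark preceding Theorem~\ref{thm-intro: reg nbhd} it is homotopy equivalent to, and after a further stabilization diffeomorphic to, a manifold covered by a Euclidean space; one must check the boundary of this manifold is the chosen $B$, which follows from the construction. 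The key is that $B$ is \emph{non-aspherical}: this is forced by Theorem~\ref{thm: intro-asp}, since a closed manifold with anti-$\mathcal{A}_n$ (in particular anti-$\mathcal{NP}_n$, after checking $\mathcal{NP}_n\subseteq\mathcal{A}_n$ suffices here) fundamental group cannot bound an aspherical non-contractible manifold unless it is noncompact --- so to get a closed $B$ bounding our aspherical-interior manifold we must allow $B$ to be non-aspherical, and a concrete model (e.g. $B = \d(P\times D^k)$ doubled appropriately, or a connected sum involving a non-aspherical closed piece whose $\pi_1$ is still built from $G$) makes non-asphericity transparent.

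For part (ii) I would argue by contradiction: if the interior of some $W$ with $\d W\cong B$ carried a complete metric of $K\le0$, then $\pi_1(\mathrm{int}\,W)\in\mathcal{NP}_n$ where $n=\dim W$; but $\pi_1(B)\to\pi_1(W)$ is (after the standard general-position argument in dimension $\ge 5$, which I would invoke or arrange the construction to guarantee) nontrivial on the image of $G$, giving a nontrivial homomorphism from $G$ into a group in $\mathcal{NP}_n$, contradicting the anti-$\mathcal{NP}_n$ property. \textbf{The main obstacle} I anticipate is bookkeeping around the reflection-group trick: ensuring simultaneously that (a) the ambient closed manifold $N$ is aspherical, (b) the chosen codimension-zero submanifold has the \emph{same} explicit boundary $B$ in both the aspherical-interior realization and as a hypersurface one can test with Theorem~\ref{thm: intro-asp}, and (c) the map $\pi_1(B)\to\pi_1(W)$ genuinely detects the anti-$\mathcal{NP}_n$ subgroup $G$ (not killing it), which requires controlling the fundamental group of the complement --- this is where a careful choice of the bounding manifold, perhaps using the $\mathbb{R}$-factor techniques of Theorem~\ref{thm-intro: reg nbhd} to pass to a regular neighborhood whose $\pi_1$ one understands, does the real work. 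The remaining assertions about parallelizability, intersection and linking forms in Theorem~\ref{thm: intro-asp} are not needed for the present statement and I would not invoke them; only the \emph{non-asphericity} dichotomy is used.
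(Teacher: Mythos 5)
Your part (ii) contains the essential gap, and it is a reversal of the actual logic. You propose to derive a contradiction by arranging that $\pi_1(B)\to\pi_1(W)$ is \emph{nontrivial} on (the image of) $G$ for any bounding manifold $W$, so that $G$ maps nontrivially into a group of $\mathcal{NP}_n$. But $W$ is handed to you by an adversary, not built by you: no general-position argument or feature of the construction of $B$ can force $\pi_1(\d W)\to\pi_1(W)$ to be nontrivial for \emph{every} $W$ with $\d W\cong B$ (for instance, one can always surger circles in the interior of a compact bounding manifold to make it simply connected). The correct argument runs the other way: since $G\cong\pi_1(B)$ is anti--$\mathcal{NP}_n$ and $\pi_1(W)\in\mathcal{NP}_n$ (as $\mathrm{Int}(W)$, hence $W$, is aspherical), the map $\pi_1(B)\to\pi_1(W)$ is necessarily \emph{trivial}, and it is this triviality that is topologically impossible. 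Concretely: in the universal cover $\widehat W$ the preimage of $B$ contains a homeomorphic copy of the closed $(n-1)$-manifold $B$, so $H_{n-1}(\d\widehat W;\mathbb Z/2)\neq 0$; but if $\widehat W$ is noncompact then Poincar\'e--Lefschetz duality gives $H_n(\widehat W,\d\widehat W;\mathbb Z/2)\cong H^0_c(\widehat W;\mathbb Z/2)=0$ and $H_{n-1}(\widehat W;\mathbb Z/2)=0$ by contractibility, forcing $H_{n-1}(\d\widehat W;\mathbb Z/2)=0$, a contradiction; if instead $\widehat W=W$ is compact and contractible, then $\mathrm{Int}(W)$ is a simply connected complete manifold of $K\le 0$, hence diffeomorphic to $\mathbb R^n$, which forces $\pi_1(B)=1\neq G$. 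This duality step (the compact analogue of Theorem~\ref{thm: intro-asp}) is exactly what your proposal is missing, and without it the proof of (ii) does not close.

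Secondary issues: the detour through Davis's reflection group trick and a closed aspherical $N$ is unnecessary and muddies the construction (note also that $N$ minus a ball is \emph{not} aspherical); one simply takes $B=\d P$ for $P$ a compact thickening of a finite $K(G,1)$ complex in high codimension, so that $\pi_1(B)\cong G$ and $\mathrm{Int}(P)$ is covered by a Euclidean space, which settles (i). Your appeal to Theorem~\ref{thm: intro-asp} to ``force'' non-asphericity of $B$ is also off: $G$ is \emph{not} anti--$\mathcal{A}_n$ (it lies in $\mathcal{A}_n$ itself, being of type \emph{F}), so that theorem does not apply to $B$; non-asphericity of $B$ follows instead from a dimension count, since $H_{\dim B}(B;\mathbb Z/2)\neq 0$ while $\mathrm{cd}(G)<\dim B$.
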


Other obstructions come from results of Section~\ref{sec: homeo obstr}.
Given groups $I$, $J$ and a class of groups $\mathcal Q$ we say that
{\it $I$ reduces to $J$ relative to $\mathcal Q$}
if every homomorphism $I\to Q$ with $Q\in\mathcal Q$   
factors as a composite of an epimorphism $I\to J$ 
and a homomorphism $J\to Q$. Here we are mainly interested in
groups that reduce relative to $\mathcal{NP}_n$ to the groups from
the parts (2)-(3) of Example~\ref{ex: elem}, which have finite
virtual cohomological dimension.

\begin{thm}
\label{thm-intro: codim >2}
Let $n\ge 6$, let $G$ be a group from \textup{Example~\ref{ex: elem}(2)-(3)}
of virtual cohomological dimension $\le n-3$, and let $B$ be a closed 
$(n-1)$-manifold such that $\pi_1(B)$ 
reduces to $G$ relative to $\mathcal{NP}_n$.
If $B$ bounds a manifold $N$ such that $\mathrm{Int}(N)$ admits
a complete metric of $K\le 0$, then there is a closed manifold $L$ of dimension $\le n-3$
such that\vspace{-3pt}
\begin{itemize}
\item[(1)]
$L$ is either an infranilmanifold, or an irreducible, locally symmetric manifold
of $K\le 0$ and real rank $\ge 2$.
\item[(2)]
$N$ is the regular neighborhood of a PL-embedded copy of $L$.
\item[(3)]
If $N$ is not diffeomorphic to the 
product of a compact manifold and a closed interval, then
$L$ admits a metric of $K\le 0$ and $N$ is the total space of a linear 
disk bundle over $L$.
\end{itemize}
\end{thm}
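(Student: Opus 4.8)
The plan is to combine the $\mathbb R$-factor machinery of Section~\ref{sec: homeo obstr} with the homeomorphism obstructions of Section~\ref{sec: anchored}, reducing Theorem~\ref{thm-intro: codim >2} to the already-stated Theorem~\ref{thm: R-factor conseq}. Write $M = \mathrm{Int}(N)$ for the complete metric of $K\le 0$, and let $G_0$ be the image of $\pi_1(B) \to \pi_1(M)$ under the map induced by the inclusion $B \hookrightarrow N$. Since $\pi_1(M) \in \mathcal{NP}_n$ and $\pi_1(B)$ reduces to $G$ relative to $\mathcal{NP}_n$, the composite $\pi_1(B) \to \pi_1(M)$ factors through an epimorphism $\pi_1(B) \to G$, so $G_0$ is a quotient of $G$; by Example~\ref{ex: elem}(1) a quotient of a group as in Example~\ref{ex: elem}(2)--(3) is again reductive, and one checks that it retains virtual cohomological dimension $\le n-3$ (quotients of virtually nilpotent groups are virtually nilpotent; for the locally symmetric case superrigidity forces the quotient to be, virtually, a lattice in a sub-symmetric-space factor, hence of no larger dimension).

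**Next I would** feed $G_0$ into the reductivity definition: the deck action of $\pi_1(M)$ on the Hadamard manifold $\widetilde M$ restricted along $\pi_1(B)\twoheadrightarrow G_0$ gives a (generally non-free, non-discrete) isometric action, but the point is that $G_0$ itself, being reductive, either stabilizes a horoball or acts cocompactly on a totally geodesic submanifold. The first alternative forces $M$ to be covered by $\mathbb R \times \mathbb R^{n-1}$ via the Lemma after Section~\ref{sec: fix a point at infinity}; the second exhibits inside $M$ a compact locally convex core, hence realizes $M$ as covered by $\mathbb R^{n-\ell}$-worth of normal fiber over a compact aspherical $\ell$-manifold, with $\ell = \mathrm{vcd}(G_0) \le n-3$. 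Either way one obtains a finite aspherical CW-model $L$ for $G_0$ with $\dim L \le n-3$, and $M$ is homotopy equivalent to $L$ (up to the bounded issue of collapsing a product factor, handled by noting $n \ge 6 \ge 5$), so Theorem~\ref{thm: R-factor conseq}(1) applies and gives that $M$ is diffeomorphic to the interior of a regular neighborhood of a PL-embedded finite complex. That the complex can be taken to be the closed manifold $L$ itself — an infranilmanifold or an irreducible locally symmetric space of real rank $\ge 2$, yielding (1) — follows from Guilbault's theorem (Theorem~\ref{thm-intro: reg nbhd}) once one knows the homotopy type is that of a closed aspherical manifold, which is exactly the output of the reductive dichotomy since both infranilmanifolds and higher-rank locally symmetric spaces are closed aspherical manifolds with the stated fundamental groups.

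**For the conclusions (2) and (3):** part (2) is immediate once $M \cong \mathrm{Int}(N)$ is identified with the interior of a regular neighborhood of $L$ and $B = \partial N$, because the boundary of a regular neighborhood is determined up to the PL-embedding; one just transports the diffeomorphism $M \cong \mathrm{Int}(\text{reg nbhd})$ across $N$ using that $N$ is a compact manifold with $\mathrm{Int}(N) = M$ and uniqueness of collars. For (3), invoke Theorem~\ref{thm: R-factor conseq}(2): whenever $N$ is not a product $L' \times [0,1]$ (equivalently, the normal data is nontrivial), the codimension $n - \ell > (n-\ell)/3 \cdot \text{something}$ — more precisely $\ell \le n-3 < \tfrac{2n-2}{3}$ holds once $n \ge 6$, which is why the hypothesis $n \ge 6$ rather than $n \ge 5$ is imposed — so $M$ is the total space of a \emph{vector} bundle over $L$, forcing $N$ to be the associated disk bundle and, by Anderson's theorem~\cite{And-vb} applied in reverse (a vector bundle over $K\le 0$ base carries $K\le 0$), $L$ admits a metric of $K\le 0$.

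**The main obstacle** I expect is the bookkeeping in the first reduction step: controlling the image group $G_0$ precisely enough — both its reductivity and its virtual cohomological dimension — when passing from the abstract factorization $\pi_1(B) \to G \to \mathcal{NP}_n$-target to the concrete deck group of $\widetilde M$. The subtlety is that $\pi_1(B) \to \pi_1(M)$ need not be injective or surjective, so $G_0$ is only a \emph{sub}quotient of $\pi_1(M)$ and a \emph{quotient} of $G$, and one must verify that the reductive dichotomy — which is a statement about $G_0$ realized as an \emph{abstract} group isomorphic to a discrete isometry group — actually applies to the possibly-non-discrete restricted action, or else pass to a suitable finite cover / invariant subspace first (this is precisely why the ``anchored in $W$'' formalism with its invariant convex subsets, rather than plain fixed points, was set up in Section~\ref{sec: anchored}). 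Everything downstream — Guilbault, the regular-neighborhood uniqueness, the dimension count forcing a vector bundle — is by now routine given the cited theorems.
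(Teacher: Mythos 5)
The paper does not actually prove this theorem (it defers to \cite{BelPha-enp}), so I am judging your argument on its own terms; it has a genuine gap at its center. You set $G_0=\mathrm{im}(\pi_1(B)\to\pi_1(M))$, correctly observe that $G_0$ is a quotient of $G$ and hence reductive, and then apply the reductive dichotomy to $G_0$. But the two alternatives of that dichotomy describe the \emph{cover} $\widetilde M/G_0$, not $M$ itself: if $G_0$ stabilizes a horoball, the Lemma of Section~\ref{sec: fix a point at infinity} splits off an $\mathbb R$-factor of $\widetilde M/G_0$; if $G_0$ acts cocompactly on a totally geodesic $Y$, the compact convex core $Y/G_0$ sits in $\widetilde M/G_0$. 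Neither statement descends to $M=\widetilde M/\pi_1(M)$ unless $G_0$ has finite index in $\pi_1(M)$, and nothing in your argument establishes that; consequently the key assertion ``$M$ is homotopy equivalent to $L$'' is unsupported, and everything downstream (Theorem~\ref{thm: R-factor conseq}, Guilbault, the regular-neighborhood identification of $N$) collapses. You flag this as the ``main obstacle'' at the end, but the fixes you gesture at do not address it: discreteness of the $G_0$-action is a non-issue (a subgroup of the discrete torsion-free deck group is discrete and torsion-free), and the ``anchored'' formalism concerns fixed points at infinity, not the index of $G_0$ in $\pi_1(M)$.

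The missing step is exactly where the hypotheses ``$B$ is a \emph{closed} $(n-1)$-manifold'' and ``$\mathrm{vcd}(G)\le n-3$'' must be spent. Pass to the cover $N_0\to N$ corresponding to $G_0\le\pi_1(N)$; a component of $B$ lifts to a \emph{compact} boundary component of $N_0$. If $N_0$ were noncompact, then $H_n(N_0,\partial N_0;\mathbb Z/2)=0$ would force the $\mathbb Z/2$-fundamental class of that compact boundary component to inject into $H_{n-1}(N_0;\mathbb Z/2)\cong H_{n-1}(G_0;\mathbb Z/2)$, which vanishes because $G_0$ is torsion-free of cohomological dimension $\le n-3$ --- a contradiction. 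Hence $N_0$ is compact, $[\pi_1(M):G_0]<\infty$, and $\pi_1(M)$ is itself (torsion-free, finitely generated) virtually nilpotent, resp.\ commensurable to the higher-rank lattice, so the reductive dichotomy can legitimately be applied to $\pi_1(M)$; only then do the horoball case (giving $M\cong\mathbb R\times W$ and, via Theorem~\ref{thm-intro: reg nbhd}, conclusion (2)) and the totally geodesic case (where $\exp^\perp$ identifies $M$ with the normal bundle of the compact totally geodesic $Y/\pi_1(M)$, giving (3)) say anything about $N$. Two further slips: the inequality $n-3<\tfrac{2n-2}{3}$ you invoke for Theorem~\ref{thm: R-factor conseq}(2) fails for every $n\ge 7$; and ``Anderson's theorem applied in reverse'' is not a valid inference --- in the non-product case $L$ carries $K\le 0$ because it \emph{is} the compact totally geodesic quotient, not because the total space does.
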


In~\cite{BelPha-enp} we give examples of manifolds $B$ that cannot bound
a manifold $N$ as in the above theorem such as 

\begin{ex} Let $B$ be the total space of a linear
$S^k$ bundle over a closed non-flat infranilmanifold such that $k\ge 3$
and the rational Euler class of the bundle is nonzero.
Then $B$ does not bound a manifold whose interior admits a complete metric of $K\le 0$.
\end{ex}

A boundary component of a manifold
is {\it incompressible\,} if its inclusion induces injections on
all homotopy groups. Reductive groups were defined in Section~\ref{sec: homeo obstr}.

\begin{thm} 
\label{thm: tors-free quot}
Let $B$ be a closed $(n-1)$-manifold such that
$\pi_1(B)$ is reductive and any nontrivial quotient of $\pi_1(B)$ in the class
$\mathcal{NP}_n$ has cohomological dimension $n-1$. 
If $B$ bounds a manifold $N$ such that $\mathrm{Int}(N)$ admits
a complete metric of $K\le 0$, then $B$ is incompressible in $N$. 
\end{thm}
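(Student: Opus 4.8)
The plan is to prove that the inclusion $j\co B\hookrightarrow N$ is injective on every homotopy group. Since $\mathrm{Int}(N)$ carries a complete metric of $K\le 0$ it is aspherical, hence so is $N$ and $\pi_k(N)=0$ for $k\ge 2$; so it suffices to show that $B$ is aspherical and that $j_*\co\pi_1(B)\to\pi_1(N)$ is injective. Let $I=j_*(\pi_1 B)$ and pass to the covering $N_I\to N$ corresponding to $I\le\pi_1(N)$: then $\mathrm{Int}(N_I)$ still carries a complete metric of $K\le 0$, $B$ lifts to a boundary component of $N_I$ along which $\pi_1(B)\twoheadrightarrow\pi_1(N_I)=I$, and incompressibility of $B$ in $N_I$ implies that of $B$ in $N$, because a covering map is injective on $\pi_1$ and bijective on higher homotopy. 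So assume henceforth that $j_*$ is onto, write $\Gamma=\pi_1(N)$, and let $X$ be the universal cover of $\mathrm{Int}(N)$ — a Hadamard $n$-manifold on which $\Gamma$ acts freely, properly discontinuously, and, since $\partial N\neq\emptyset$, non-cocompactly. Then every quotient $X/H$ with $H\le\Gamma$ is an open aspherical $n$-manifold, hence homotopy equivalent to an $(n-1)$-complex, so $\cd H\le n-1$ for all $H\le\Gamma$; in particular $\Gamma$ is a quotient of $\pi_1(B)$ lying in $\mathcal{NP}_n$ with $\cd\Gamma\le n-1$, so by hypothesis $\Gamma=1$ or $\cd\Gamma=n-1$. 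The case $\Gamma=1$ is degenerate — it forces $\mathrm{Int}(N)\cong\mathbb{R}^n$, so $N$ is a disc and $B$ a homotopy sphere — and I set it aside; this is where the hypotheses of~\cite{BelPha-enp} beyond those quoted here come in.

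Next, apply reductivity of $\pi_1(B)$ to the epimorphism $\pi_1(B)\twoheadrightarrow\Gamma$ and the free, properly discontinuous, non-cocompact isometric action of $\Gamma$ on $X$: $\Gamma$ stabilizes a horoball of $X$, or a totally geodesic submanifold $Y\subseteq X$ on which it acts cocompactly. In the first case the lemma of Section~\ref{sec: fix a point at infinity} yields a diffeomorphism $\mathrm{Int}(N)=X/\Gamma\cong\mathbb{R}\times(L_\xi/\Gamma)$ with $L_\xi\cong\mathbb{R}^{n-1}$, and since $L_\xi/\Gamma$ is an aspherical $(n-1)$-manifold with $\cd(\pi_1)=\cd\Gamma=n-1$, it must be closed (an open aspherical $(n-1)$-manifold has $\cd(\pi_1)\le n-2$). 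In the second case $\cd\Gamma=\dim(Y/\Gamma)$ forces $Y$ to be of codimension one, $Y/\Gamma$ is a closed aspherical $(n-1)$-manifold, and $\mathrm{Int}(N)$ is the total space of its normal line bundle. Thus in either case $\mathrm{Int}(N)$ is a line bundle over a closed aspherical $(n-1)$-manifold $Z$ with $\pi_1(Z)\cong\Gamma$; so it has one or two ends, a neighbourhood of each having frontier diffeomorphic to $Z$ (trivial bundle) or to the orientation double cover of $Z$ (nonorientable bundle), and in either case $\pi_1$-injective in $\mathrm{Int}(N)$.

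Finally I identify $B$. A collar $B\times[0,1)\subseteq N$ of the boundary meets $\mathrm{Int}(N)$ in $B\times(0,1)$, a neighbourhood of the end(s) of $\mathrm{Int}(N)$ running towards $B$; deleting an open collar of $\partial N$ from $N$ leaves a manifold whose boundary is a copy of $B$, and which is also $\mathrm{Int}(N)$ with neighbourhoods of those ends removed. As $B$ is compact, each component of $B$ corresponds to one end of the line bundle, and matching the two descriptions of the deleted-collar boundary shows that every component of $B$ is diffeomorphic to $Z$, or to the orientation double cover of $Z$ when the line bundle is nonorientable. In the second eventuality the inclusion of such a component into $N$ induces on $\pi_1$ an index-two embedding into $\Gamma$, which is incompatible with the surjectivity of $\pi_1(B)\to\Gamma$; so only the first eventuality occurs. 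Hence $B$ is a disjoint union of copies of $Z$ — in particular aspherical — and on each component $j_*$ is the canonical isomorphism $\pi_1(Z)\xrightarrow{\cong}\Gamma$. Therefore $j_*$ is injective and $B$ is incompressible in $N$.

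The crux is this last step: extracting the diffeomorphism type of $B$, and with it the injectivity of $j_*$, from the collar of $\partial N$ together with the line-bundle structure of $\mathrm{Int}(N)$ calls for some care with the ends of $\mathrm{Int}(N)$ and with the way $N$ compactifies it — in high dimensions the Guilbault-type results on products with $\mathbb{R}$ used elsewhere in this survey make it routine. A secondary matter, already flagged, is the honest disposal of the degenerate case $\Gamma=1$.
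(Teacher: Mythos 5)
Your proposal follows exactly the strategy the surrounding sections set up for this result (the survey itself defers the proof to~\cite{BelPha-enp}): reduce to the cover where $\pi_1(B)\to\pi_1(N)$ is onto, use the quotient hypothesis to pin $\mathrm{cd}(\Gamma)=n-1$, and use reductivity to exhibit $\mathrm{Int}(N)$ as a line bundle over a closed aspherical $(n-1)$-manifold $Z$ with $\pi_1(Z)\cong\Gamma$ --- via the Lemma of Section~\ref{sec: fix a point at infinity} in the horoball case (where $\mathrm{cd}=n-1$ forces $L_\xi/\Gamma$ to be closed) and via the normal exponential map of $Y$ in the cocompact case. That part is sound. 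Two remarks on the finish. First, you neither need nor can get (without s-cobordism input) the diffeomorphism of each component $B_i$ with $Z$; and beware that ``same $\pi_1$ as a closed aspherical $(n-1)$-manifold'' does not by itself make $B_i$ aspherical. What does the job is the interleaving of the two cofinal systems of end neighborhoods, $\{(0,\e)\times B_i\}$ and $\{[T,\infty)\times Z'\}$, all of whose bonding maps are homotopy equivalences: this yields, compatibly with the inclusions into $\mathrm{Int}(N)$, isomorphisms $\pi_k(B_i)\cong\pi_k(Z')$ for all $k$, hence $\pi_k(B_i)=0$ for $k\ge 2$ and $\pi_1(B_i)\hookrightarrow\Gamma$, which is precisely incompressibility; no Guilbault-type theorem is needed since $\mathrm{Int}(N)$ is already the interior of the compact manifold $N$. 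Second, your unease about the degenerate case $j_*(\pi_1(B))=1$ is justified: as literally stated the hypotheses do not exclude $B=S^{n-1}=\d D^n$, which is not incompressible, so the hypotheses must be read (as in~\cite{BelPha-enp}) so as to rule out the trivial quotient; note that if $j_*(\pi_1(B))=1$ the same interleaving argument applied to the Hadamard manifold $\mathrm{Int}(N_I)\cong\mathbb R^n$, whose end is simply connected, forces $\pi_1(B)=1$, so this degenerate case is exactly the homotopy-sphere case and nothing worse.
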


\begin{ex} 
\label{ex: tf general}  
Theorem~\ref{thm: tors-free quot} applies to whenever $\pi_1(B)$
is isomorphic to $\pi_1(L)$ where 
$L$ is an $(n-1)$-manifold of from Theorem~\ref{thm-intro: codim >2}(1) 
and $\pi_1(L)$ has no proper torsion-free quotients.
Examples of such $L$ include any higher rank,
irreducible, locally symmetric manifolds of $K\le 0$
(thanks to the Margulis Normal Subgroup Theorem), as well as certain 
infranilmanifolds, see~\cite{BelPha-enp}.
\end{ex}

Given a compact boundary component $B$ of a manifold $N$,
an {\it end $E$ of $\mathrm{Int}(N)$ that corresponds to $B$} 
is the intersection of $\Int(N)$
with a closed collar neighborhood of $B$; note that 
$E$ is diffeomorphic to $[1,\infty)\times B$.

\begin{thm}
\label{thm: tg}
Let $B$ be a closed connected manifold that
bounds a manifold $N$, and let $E$ be an end of $\mathrm{Int}(N)$
corresponding to $B$. 
If $\pi_1(B)$ is reductive, and 
$\mathrm{Int}(N)$ admits a complete metric of $K\le 0$ and
$\mathrm{Inj}\,\mathrm{Rad}\to 0$ on $E$, then
$B$ is incompressible in $N$.
\end{thm}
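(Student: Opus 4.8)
The plan is to follow the same strategy as the proof of Theorem~\ref{thm: tors-free quot}, but to replace the use of the hypothesis ``every nontrivial quotient of $\pi_1(B)$ in $\mathcal{NP}_n$ has cohomological dimension $n-1$'' with an appeal to Theorem~\ref{thm: schroeder}, which exploits the condition $\mathrm{Inj}\,\mathrm{Rad}\to 0$ on the end $E$. First I would set up the topology: let $M=\mathrm{Int}(N)$ with its complete metric of $K\le 0$, and let $E\cong [1,\infty)\times B$ be the end corresponding to $B$. Incompressibility of $B$ in $N$ means that the inclusion $B\hookrightarrow N$ is $\pi_1$-injective (and injective on all higher homotopy groups, but since $N$ is aspherical and $B$ is aspherical-or-not, the homotopy-group statement reduces to $\pi_1$-injectivity plus the asphericity machinery already used in~\cite{BelPha-enp}). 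So the core assertion is: the map $\pi_1(B)\to\pi_1(N)=\pi_1(M)$ is injective.

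Next I would argue by contradiction. Suppose the map $\pi_1(B)\to\pi_1(M)$ has nontrivial kernel $K$, so that the image $G=\pi_1(B)/K$ is a proper, nontrivial quotient of $\pi_1(B)$, and $G$ lies in $\mathcal{NP}_n$ (it is a subgroup of $\pi_1$ of a complete $n$-manifold of $K\le 0$, hence itself of this form after passing to the corresponding cover, or at any rate $G\hookrightarrow\pi_1(M)\in\mathcal{NP}_n$ and subgroups of $\mathcal{NP}_n$-groups are again $\mathcal{NP}_n$ via covering spaces). Reductivity of $\pi_1(B)$ now controls the possibilities for $G$: by the definition of a reductive group, the epimorphism $\pi_1(B)\to G$ forces $G$ to be (isomorphic to) a discrete, non-cocompact, torsion-free isometry group of a Hadamard manifold stabilizing a horoball, or one acting cocompactly on a totally geodesic submanifold. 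The hard part will be ruling out the second, cocompact, alternative; here I expect to use a dimension/volume-growth argument together with the geometry of the end $E$: a cocompact piece would give the corresponding cover of $M$ a compact core and bounded injectivity radius on its thick part, contradicting $\mathrm{Inj}\,\mathrm{Rad}\to 0$ on $E$ unless that piece is trivial.

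In the remaining case $G$ stabilizes a horoball, one invokes the structure theory of Section~\ref{sec: fix a point at infinity}: the relevant cover $M'$ of $M$ with $\pi_1(M')=G$ is diffeomorphic to $\mathbb R\times(L_\xi/G)$, and in particular $M'$ has an end which is a product end modelled on a horosphere quotient. Pulling the collar $E\cong[1,\infty)\times B$ up to $M'$ and comparing with this product structure, together with Theorem~\ref{thm: schroeder} applied on the end (which says that either the end is tame or there are arbitrarily small totally geodesic flat tori), forces $K$ to be trivial: a nontrivial $K$ would produce a compressing disk whose boundary, once pushed into the horospherical end, either bounds in $M'$ (contradicting that $B\to B$-in-its-own-end is $\pi_1$-injective onto the collar) or creates the small flat tori in a region where the local geometry is that of a horosphere quotient and hence cannot contain them. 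The main obstacle, as noted, is the cocompact-submanifold alternative in the definition of reductive, and a secondary technical point is making precise the claim that subgroups of $\mathcal{NP}_n$-groups stay in $\mathcal{NP}_n$ and that the relevant covering space of $M$ inherits the $\mathrm{Inj}\,\mathrm{Rad}\to 0$ behavior on the lifted end; I would handle the latter by noting the covering is finite-to-one on the end (since $B$ is compact and $K$ acts on its universal cover), or else argue directly on $M$ without passing to a cover, using that a compressing disk for $B$ lives in a compact part of $N$ while $E$ retains its product structure far out.
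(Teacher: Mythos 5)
Your overall strategy -- look at the image $G$ of $\pi_1(B)$ in $\pi_1(\mathrm{Int}\,N)$, apply reductivity to get the horoball/cocompact dichotomy, and use $\mathrm{Inj}\,\mathrm{Rad}\to 0$ on $E$ to eliminate cases -- is the right skeleton, but the two places where the hypotheses actually have to do work are left as gaps or argued incorrectly. The missing engine is the following elementary observation: for $x\in\{t\}\times B$ with $t$ large, $\mathrm{Inj}\,\mathrm{Rad}\to 0$ on $E$ gives an essential geodesic loop at $x$ of length $\epsilon_t\to 0$; since the collar $E\cong[1,\infty)\times B$ is embedded and the loop is short, it stays in $E$, hence represents a \emph{nontrivial element of $G$ itself} with displacement $<\epsilon_t$ at a lift of $x$. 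This is needed even to see $G\ne 1$ (if $G$ were trivial, reductivity says nothing and your ``proper nontrivial quotient'' setup collapses), and it is what kills the cocompact alternative: if $G$ acted cocompactly on a totally geodesic $Y$, every nontrivial $\gamma\in G$ would be axial with translation length bounded below by the systole of the closed manifold $Y/G$, so by convexity its displacement at \emph{every} point of the Hadamard manifold is bounded below, contradicting the short loops. Your substitute -- that a cocompact piece would give the cover bounded injectivity radius, contradicting $\mathrm{Inj}\,\mathrm{Rad}\to 0$ -- does not work as stated, precisely because of the difficulty you yourself flag: injectivity radius only increases in covers, so nothing about $\mathrm{Inj}\,\mathrm{Rad}$ downstairs transfers upward. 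One must argue with displacement functions of elements of $G$, not with the injectivity radius of $M_G$.

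The horoball case, which is where injectivity of $\pi_1(B)\to G$ (and asphericity of $B$, which is part of ``incompressible'' as defined in the paper and does \emph{not} follow formally from $\pi_1$-injectivity) must actually be proved, has no argument in your proposal. The appeal to Theorem~\ref{thm: schroeder} is illegitimate here: Schroeder's theorem requires the two-sided bound $-1\le K\le 0$, whereas Theorem~\ref{thm: tg} assumes only $K\le 0$; and even where it applies, its dichotomy (tame end versus small flat tori) is not the dichotomy you invoke (``the compressing disk bounds, or creates small flat tori in a horosphere quotient''), which is not a coherent deduction. What the horoball case actually buys you is the diffeomorphism $M_G\cong\mathbb R\times (L_\xi/G)$ with $L_\xi/G$ an open aspherical $(n-1)$-manifold carrying $\pi_1=G$, together with the embedded copy of $E$ in $M_G$ whose boundary $\{t\}\times B$ is a closed separating hypersurface; the proof must then compare the closed $(n-1)$-manifold $B$ with this open $(n-1)$-manifold by a homological/duality argument to force $\ker(\pi_1(B)\to G)=1$ and $B$ aspherical. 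That comparison is the substantive content of the theorem and is absent from the proposal.
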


\begin{ex}
\label{ex: sphere bundle}
If $B$ is the total space of a linear $S^k$ bundle with $k\ge 2$ over
a manifold $L$ as in~\ref{thm-intro: codim >2}(1), then 
$B$  does not bound a manifold 
whose interior admits a complete metric
of $K\le 0$ and
$\mathrm{Inj}\,\mathrm{Rad}\to 0$. 
\end{ex}

\section{Riemannian hyperbolization (after Ontaneda)} 
\label{sec: ontaneda}

A recent work of Ontaneda~\cite{Ont-smth-hyp} 
allows to dramatically expand the list of known finite volume
complete manifolds of $-1\le K<0$ of dimensions $>3$. Unlike earlier examples,
Ontaneda's method assembles a manifold of $K\le -1$ in a lego-like
fashion from identical blocks according to a combinatorial pattern
specified by a cube complex structure on any given manifold. 
Each block is a compact real hyperbolic manifold
with corners, where every boundary face is totally
geodesic and the faces's combinatorial pattern is that of a cube. 
This process results in a singular metric, which Ontaneda is able
to smooth into a complete Riemannian metric of $K\le -1$, provided
the block's faces have a sufficiently large normal
injectivity radii; such blocks exist.

The idea of building locally CAT($0$) manifold out of identical blocks
is due to Gromov~\cite{Gro-hypgr} who came up with several hyperbolization procedures
turning a simplicial complex into a locally CAT($0$) cubical complex.
Gromov's ideas were developed and made precise in~\cite{DavJan-jdg-hyperb, DJW}, cf.~\cite{Dav-book},
and furthermore Charney-Davis~\cite{ChaDav} developed the 
{\it strict hyperbolization\,} that
turns a cubical complex into a piecewise polyhedral complex whose
faces are the blocks of the previous paragraph. 
A key feature of the procedures is that 
the link of every vertex of the hyperbolized polyhedral complex is a subdivision of
the corresponding link in the original complex, which has
two consequences:
\begin{itemize}
\item
The hyperbolization(s) turns a manifold triangulation into 
a locally CAT($0$) manifold with a  cubical complex structure;
\item
The strict hyperbolization turns a locally CAT($0$) cubical complex (or manifold)
into a locally CAT($-1$) polyhedral complex (manifold, respectively).
\end{itemize}
Trivial exceptions aside, the resulting piecewise Euclidean 
(or piecewise hyperbolic) metric is non-Riemannian.
Smoothing the piecewise hyperbolic metric into a Riemannian metric 
of $K\le -1$ in~\cite{Ont-smth-hyp} 
is a technological tour de force. 

\begin{rmk}\ \begin{enumerate}
\item
The cube complex fed into Ontaneda's construction
need not be locally CAT($0$), so the resulting manifolds of $K\le -1$ in~\cite{Ont-smth-hyp}
are a priori 
not homeomorphic to locally CAT($-1$) manifolds of~\cite{ChaDav}.
\item
Charney-Davis~\cite{ChaDav} describe a canonical smoothing of their manifolds
(but not of the metrics), yet this smoothing is not necessarily equal to
the smooth structure in~\cite{Ont-smth-hyp} even when there is a face
preserving homeomorphism between the two manifolds.
\item 
Taking the normal injectivity radius
of the block's faces large enough, one can make the sectional
curvature of the resulting manifold arbitrary close to $-1$.
\item
If one starts with a closed manifold,
then Ontaneda's procedure yields a closed manifold of $K\le -1$, 
but if the initial manifold is open, its triangulation contains infinitely many simplices,
so the resulting Riemannian manifold of $K\le -1$ has infinite volume.\end{enumerate}
\end{rmk} 

In order to produce finite volume examples,
Ontaneda relativizes the construction as follows:
Start with a closed manifold $B$ that bounds a compact manifold $W$, 
cone off the boundary, apply the strict hyperbolization, and
remove the cone point. The result is a compact manifold
with boundary, and Ontaneda gives it a certain smooth
structure in which the boundary becomes diffeomorphic to $B$.
Topological properties of the resulting smooth manifold $R_{W,B}$ 
mirror those of $W$, see a summary in~\cite{Bel-hyper-polyh}, and in particular
by varying $W$, one finds $R_{W,B}$'s such that
\begin{itemize}
\item  $R_{W,B}$ has a  nontrivial rational Pontryagin class if $\dim(W)\ge 4$,
\item $H^*(R_{W,B})$ contains a subring isomorphic to the cohomology ring of a
given finite CW complex of dimension $<\dim(W)/2$, see~\cite{Ont-smth-hyp},
\item  $\pi_1(R_{W,B})$ surjects on a given finitely presented group if $\dim(W)\ge 4$.
\end{itemize}
Moreover $R_{W,B}$ enjoys the following properties:
\begin{itemize}
\item $R_{W,B}$ is orientable if so is $W$,
\item the inclusion of each component of $B$ into $R_{W,B}$ is $\pi_1$-injective~\cite{DJW},
\item $R_{W,B}$ is aspherical if and only if each component of $B$ is aspherical~\cite{DJW},
\item $\pi_1(R_{W,B})$ is hyperbolic relative to the images of the
fundamental groups of components of $B$~\cite{Bel-hyper-polyh}.
\end{itemize}
The piecewise hyperbolic metric on $R_{W,B}$ is singular and incomplete near
the removed cone point, but again when the normal injectivity radius
of the block's faces large enough, Ontaneda is able to smooth the metric 
away from a punctured neighborhood of the cone point, while on that
neighborhood the metric has to be constructed by an ad hoc
method depending on $B$.
The following result is implicit in~\cite{Ont-smth-hyp}. 

\begin{thm} 
\label{thm-intro: ont}
{\bf (Ontaneda)}\ Let $B$ be the boundary of a compact $n$-manifold and suppose
that 
\newline \textup{(i)}
if $n\ge 6$, then any h-cobordism from $B$ to another manifold is a product;\newline
\textup{(ii)}  $\mathbb R\times B$  admits 
a complete metric $g$ of sectional curvature within 
$[-1, 0)$ such  \phantom{\textup{(ii)}}
that $(-\infty, 0\,]\times B$ has finite $g$-volume, and
$g=dr^2+e^{2r} g_{_B}$ on $[c,\infty)\times B$  \phantom{\textup{(ii)}} for some $c>0$ and
a metric $g_B$ on $B$. \newline
Then $B$ bounds a compact smooth manifold whose interior
admits a complete metric of finite volume and
sectional curvature in $[-1,0)$. 
\end{thm}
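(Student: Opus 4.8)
The plan is to combine Ontaneda's relative hyperbolization construction $R_{W,B}$ with the prescribed model metric $g$ on the end, matching the two along a collar. First I would take a compact manifold $W$ with $\d W = B$; by hypothesis (i) the h-cobordism assumption guarantees that the smooth structure produced by the strict-hyperbolization-and-remove-cone-point procedure is essentially unique, so that the resulting compact manifold $R_{W,B}$ has boundary genuinely diffeomorphic to $B$ (this is exactly the role (i) plays in~\cite{Ont-smth-hyp}). Away from a punctured neighborhood of the removed cone point, Ontaneda's smoothing yields a Riemannian metric of curvature in $[-1,0)$ which, near the boundary $B$, can be arranged to be a warped product of the form $dr^2 + e^{2r} g_B$ for the \emph{same} metric $g_B$ appearing in hypothesis (ii); this normalization is available because the block geometry is real hyperbolic and the faces are totally geodesic, so the collar of $R_{W,B}$ is isometric to a piece of a horoball-type end. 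The only region where Ontaneda cannot smooth intrinsically is a neighborhood of the cone point, and that is precisely where the ad hoc end metric must be inserted.

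The key step is then to glue: on $R_{W,B}$ minus an open neighborhood of the cone point we have Ontaneda's metric, finite volume on the bulk, with a collar isometric to $([c,\infty)\times B,\, dr^2+e^{2r}g_B)$; by hypothesis (ii) the model metric $g$ on $\mathbb R\times B$ is \emph{also} equal to $dr^2+e^{2r}g_B$ on $[c,\infty)\times B$, has curvature in $[-1,0)$, and has finite volume on the half-end $(-\infty,0]\times B$. So the two metrics agree identically on the overlap collar $[c,\infty)\times B$, and we may simply cut $R_{W,B}$ along $\{c\}\times B$, discard the cone-point side, and glue in $(-\infty, c]\times B$ carrying $g$. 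The union is smooth because the metrics coincide on an honest open set, the curvature lies in $[-1,0)$ everywhere, and the total volume is finite: the bulk of $R_{W,B}$ contributes finitely, the overlap collar contributes finitely, and the newly attached half-end contributes the finite $g$-volume of $(-\infty,0]\times B$ plus the bounded piece $[0,c]\times B$. The boundary of the resulting compact manifold is the copy of $B$ sitting at the original boundary of $R_{W,B}$, which is what we wanted. Orientability, asphericity, and the $\pi_1$-injectivity of $B$ transfer from the stated properties of $R_{W,B}$ if needed, but are not required for the bare statement.

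The main obstacle is the compatibility of normalizations in the matching step: one must verify that Ontaneda's smoothing can genuinely be forced into the exact form $dr^2+e^{2r}g_B$ near $B$ — same function $e^{2r}$, same cross-sectional metric $g_B$ — rather than merely a metric asymptotic to it. This is where Ontaneda's control on the normal injectivity radius of the block faces enters: taking it large makes the smoothed metric agree with the hyperbolic model on a deep collar, and a final reparametrization of the radial coordinate together with a rescaling (which costs nothing since curvature is only pinched between $-1$ and $0$, and rescaling by a constant preserves this two-sided bound when combined with the freedom in $c$) aligns it with the given $g$. I would also need to check that the h-cobordism hypothesis (i) is used only to pin down the boundary's smooth type and not elsewhere, so that the gluing does not reintroduce an exotic structure. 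Once these normalization issues are settled, the rest is the routine cut-and-paste of complete metrics that agree on an open set, and finiteness of volume is an immediate additivity computation.
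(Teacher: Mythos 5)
The paper itself gives no proof of this theorem --- it is stated as ``implicit in'' Ontaneda's work, with the surrounding prose describing exactly the strategy you adopt: smooth the hyperbolized metric away from the removed cone point and insert the hypothesis-(ii) metric as the ``ad hoc'' cusp piece. So your overall architecture is the intended one, and your accounting of completeness and finite volume, and of the role of (i) in pinning down the smooth structure of the boundary, is fine.

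The genuine gap is in the step you yourself flag as ``the main obstacle,'' and your proposed fix does not work. You assert that Ontaneda's smoothed metric near the cone point can be forced into the exact form $dr^2+e^{2r}g_B$ for the \emph{same} $g_B$ as in hypothesis (ii), up to a radial reparametrization and a constant rescaling. But a reparametrization of $r$ and a global rescaling can only alter the warping function and the overall scale; they cannot change the conformal class, let alone the isometry class, of the cross-sectional metric. The geometry of $R_{W,B}$ near the removed cone point is dictated by the strict hyperbolization blocks --- the link of the cone point carries the induced piecewise-spherical/hyperbolic metric of the Charney--Davis construction --- and this has nothing to do with the metric $g_B$ supplied by hypothesis (ii), which is essentially arbitrary (in applications it is, e.g., a flat or nilmanifold metric). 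So the two metrics do \emph{not} agree on an overlap collar, and the argument is not a routine cut-and-paste of metrics coinciding on an open set. What is actually needed is an interpolation region in which one deforms from the hyperbolized cone geometry to the warped product $dr^2+e^{2r}g_B$ while keeping all sectional curvatures in $[-1,0)$; the only a priori compatibility between the two sides is that both are asymptotically close to curvature $-1$ (for the warped product the fiber curvatures are $e^{-2r}K_{g_B}-1\to -1$), and exploiting this requires the quantitative two-variable warping and cut-off estimates that constitute the analytic core of Ontaneda's paper. A secondary slip: rescaling a metric by $\lambda^2$ divides curvature by $\lambda^2$, so it preserves the bound $K\ge -1$ only for $\lambda\ge 1$; the claim that rescaling ``costs nothing'' for the two-sided bound needs that caveat.
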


Condition (ii) implies that each component of
$B$ is aspherical, and hence has torsion-free fundamental group. 
The Whitehead Torsion Conjecture, which is true for many groups of geometric 
origin~\cite{BarLue-borelconj, BFL}, 
predicts that all torsion-free groups have 
zero Whitehead torsion. If the conjecture is true for
the fundamental group of each component of $B$, then (i) holds.
Condition (ii) has been checked in a number of cases 
in~\cite{BK-GAFA, Pha-sol, Bel-ewp} implying:

\begin{thm}
\label{thm: sol class B}
A manifold $B$ bounds a compact manifold whose interior
admits a complete metric of finite volume and
sectional curvature in $[-1,0)$ if\newline
\textup{(1)}
$B$ is a closed $3$-dimensional $Sol$ manifolds~\cite{Pha-sol}, or
\newline
\textup{(2)} $B$ bounds a compact manifold, and belongs to the class
$\mathcal B$~\cite{Bel-ewp}.
\end{thm}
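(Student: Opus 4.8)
The plan is to derive the stated conclusion from Ontaneda's Theorem~\ref{thm-intro: ont}: in each of the two cases I would exhibit a compact manifold with boundary $B$ and then check hypotheses (i) and (ii); everything else is bookkeeping.

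\textbf{Case (1): $B$ a closed $3$-dimensional $Sol$ manifold.} Since the third unoriented bordism group is trivial, every closed $3$-manifold bounds a compact $4$-manifold, so Theorem~\ref{thm-intro: ont} may be applied with $n=4$; then hypothesis (i), which only constrains the range $n\ge 6$, is vacuous. The whole content is hypothesis (ii): the existence on $\mathbb R\times B$ of a complete, negatively pinched metric which is a hyperbolic-type flaring end $dr^2+e^{2r}g_B$ on $[c,\infty)\times B$ and has finite volume on $(-\infty,0\,]\times B$. I would cite Nguyen Phan's construction~\cite{Pha-sol} of exactly such a finite-volume pinched negatively curved ``$Sol$ cusp.'' A genuine construction is unavoidable here: a $Sol$ $3$-manifold admits no metric of nonnegative (nor of nonpositive) sectional curvature, so the end cannot be a single exponential warping of a fixed metric on $B$ all the way down the cusp; instead one uses a multiply warped model adapted to the torus-bundle-over-circle structure of $B$, tuned to be a flaring end near $r=+\infty$ and to have integrable volume near $r=-\infty$. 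With (ii) in hand, Theorem~\ref{thm-intro: ont} produces the desired compact manifold.

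\textbf{Case (2): $B\in\mathcal B$ and $B$ bounds a compact manifold.} Now $B$ bounds by hypothesis, so it again suffices to verify (i) and (ii). Condition (ii) is precisely the property encoded by membership in $\mathcal B$ and was established in~\cite{Bel-ewp}, so it holds by definition of the class; related verifications for infranilmanifold ends appear in~\cite{BK-GAFA}. For (i) (relevant when $n\ge 6$) I would argue as in the remarks following Theorem~\ref{thm-intro: ont}: condition (ii) forces each component of $B$ to be aspherical, hence to have torsion-free fundamental group, and for the (virtually poly-$\mathbb Z$, and more generally Farrell--Jones) groups arising here the Whitehead Torsion Conjecture is known~\cite{BarLue-borelconj, BFL}; thus $\mathrm{Wh}(\pi_1(B))=0$ and, by the $s$-cobordism theorem, every h-cobordism from $B$ is a product. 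Hence Theorem~\ref{thm-intro: ont} applies and gives the compact manifold whose interior carries a complete finite-volume metric of curvature in $[-1,0)$.

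\textbf{The main obstacle.} All of the real work sits in hypothesis (ii): constructing on $\mathbb R\times B$ a complete metric with pinched negative curvature that is a hyperbolic flaring end on one side and of finite volume on the other. Because $dr^2+e^{2r}g_B$ fails to remain negatively curved as $r\to-\infty$ whenever $g_B$ is not flat, one is driven to a delicately tuned multiply-warped model and must push an explicit curvature-pinching estimate through the transition region; this estimate is the technical heart of~\cite{Pha-sol} and of the verification of the class $\mathcal B$ in~\cite{Bel-ewp}. By comparison, the reduction to Theorem~\ref{thm-intro: ont} and the $s$-cobordism argument underlying (i) are routine.
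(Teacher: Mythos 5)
Your proposal matches the paper's own (implicit) argument: the theorem is presented there as an immediate consequence of Ontaneda's Theorem~\ref{thm-intro: ont} together with the verifications of condition (ii) carried out in~\cite{Pha-sol} for $Sol$ $3$-manifolds and in~\cite{Bel-ewp} (cf.~\cite{BK-GAFA}) for the class $\mathcal B$, which is exactly your reduction. The extra details you supply --- that every closed $3$-manifold bounds (so $n=4$ and hypothesis (i) is vacuous in case (1)), and that in case (2) hypothesis (i) follows from vanishing of the Whitehead groups of the relevant torsion-free fundamental groups via~\cite{BarLue-borelconj, BFL} --- are precisely the routine points the survey leaves implicit.
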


Here $\mathcal B$ is the smallest class of closed manifolds 
of positive dimension such that
\begin{itemize}
\item
$\mathcal B$ contains each infranilmanifold~\cite{BK-GAFA},
each closed manifold of $K\le 0$ with a local 
Euclidean de Rham factor of positive dimension, and 
every circle bundle of type  \textup{(K)};
\item $\mathcal B$ is closed under products, 
disjoint unions, and products with any compact 
manifold of $K\le 0$.
\end{itemize}

An orientable circle bundle {\it has type \textup{(K)}} if 
its base is a closed complex hyperbolic $n$-manifold whose
holonomy representation lifts from $PU(n,1)$ to $U(n, 1)$, and if the Euler class of the bundle
equals $-m\frac{\omega}{4\pi}$ for some nonzero integer $m$, where
$\omega$ is the K\"ahler form of the base. For example, every nontrivial orientable circle
bundle over a genus two orientable closed surface
has type (K), see~\cite{Bel-ewp}. 

\begin{quest}
Consider the class of closed manifolds $B$ to which \textup{Theorem~\ref{thm-intro: ont}} applies.
Does the class contain every circle bundle over a closed manifold of $K<0$?
Every closed infrasolvmanifold? Is the class closed under products?
\end{quest}

\section{Topology of known manifolds of bounded negative curvature}

In dimensions $>3$ most (if not all) known examples of complete manifolds
of $K\le 0$ come from combining 
\begin{itemize}
\item
locally symmetric metrics of $K\le 0$ arising from
arithmetic or reflection groups,
\item
iterated and multiple warped products, building
on the seminar work of Bishop-O'Neill on 
singly warped products of $K\le 0$~\cite{BisON-warp}.
\end{itemize} 
Ontaneda's work~\cite{Ont-smth-hyp} illustrates
how the two methods combine: the block is an arithmetic real hyperbolic manifold
with corners, and smoothing the metric involves sophisticated 
warped product considerations.

Prior examples of finite volume complete manifold of $K\le 0$
that are not locally symmetric were typically produced by
starting with a locally symmetric complete manifold of $K\le 0$ and performing
one of the following operations, which can be combined or iterated:
\begin{itemize}
\item
doubles (Heintze, see~\cite{Sch-warp}) and twisted doubles~\cite{Ont-double03},
\item
branched 
covers~\cite{MosSiu, GroThu, ForSch90, Zhe96, Ard-thesis00, Der05, FO-branch06},
\item
cusp closing~\cite{Sch-cusp-close, BuyKob-cusp-close, HumSch, And-cusps-einst},
\item
cut and paste a tubular neighborhood of a totally geodesic
submanifold of codimension $1$, or dimensions $0$ or $1$, see~\cite{FJO-survey}
for a survey,
\item
remove a family of totally geodesic submanifolds of codimension 
two, which results in an incomplete metric that in some cases can be modified
to a complete metric of $K\le 0$~\cite{Fuj-warp, AbrSch, Buy, Bel-ch-warp, Bel-rh-warp}.
\end{itemize}

With few general results available, it makes sense to study topology of 
known examples in more detail. For the rest of the section let
$M$ be an open connected complete finite volume manifold of $-1\le K<0$.
The prior discussions gives
\begin{itemize}
\item (Section~\ref{sec: acyl hyp}) 
$\pi_1(M)$ is acylindrically hyperbolic because $M$ has finite volume and $K<0$,
\item (Theorem~\ref{thm: schroeder}) $M$ is the interior of a compact manifold $N$, which is uniquely determined
up to attaching an h-cobordism to the boundary.
\item (Corollary~\ref{cor: cheeger-gromov}) $\d N$ has zero simplicial volume and Euler characteristic, and 
if $\d N$ is orientable, $\d N$ has zero Pontryagin numbers. 
\end{itemize}

We would get a lot more information
if $\pi_1(M)$ were hyperbolic relative to a collection of 
easy-to-understand peripheral subgroups (mainly because 
relatively hyperbolic groups inherit many properties from their
peripheral subgroups, see below). 
Here is a prototypical example:

\begin{ex}
If $M$ is negatively pinched
(i.e  $K$ is bounded between two negative constants), then $\pi_1(N)$
is hyperbolic relatively the fundamental groups of the components
of $\d N$~\cite{Far-rel-hyp, Bow-rel-hyp} 
which are virtually nilpotent~\cite{BGS}.
\end{ex}

One might expect that
$\pi_1(M)\cong\pi_1(N)$ is always hyperbolic relative to the fundamental
groups of components of $\d N$. This idea runs into difficulties because
$\d N$ need not be $\pi_1$-incompressible~\cite{Buy}, but when
it works it does so to great effect, and become a major source of information 
about $\pi_1(M)$.

We refer to~\cite{Osi-rel-hyp} for background on relatively
hyperbolic groups; as a part of the definition we require that
relatively hyperbolic groups are
finitely generated and not virtually cyclic, and their
peripheral subgroups are infinite and proper.

\begin{thm}
\label{thm: co-Hopf} \bf (Belegradek)\ \it
If $W$ is a compact aspherical manifold of dimension $\ge 3$ such that 
the components of $\d W$ are aspherical and $\pi_1$-injectively embedded,
and $\pi_1(W)$ is hyperbolic relatively to the fundamental groups 
of the components of $\d W$, then \vspace{-5pt}
\begin{enumerate}[(a)]\rm
\item \it
if $\pi_1(W)$ splits nontrivially as amalgamated product or
HNN extension over a subgroup $K$, then $K$ is acylindrically hyperbolic.
\rm \item \it
$\pi_1(W)$ is co-Hopf, i.e. its injective endomorphisms are surjective.
\rm\item \it
$\mathrm{Out}(\pi_1(W))$ is finite if for every component $B$ of $\d W$
the group $\pi_1(B)$ is either not relatively hyperbolic
or has a relatively hyperbolic structure whose peripheral subgroups
are not relatively hyperbolic.
\end{enumerate}
\end{thm}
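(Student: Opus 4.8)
This is a theorem about relatively hyperbolic groups, so the proof strategy is to reduce all three conclusions to known structural properties of relatively hyperbolic groups $G = \pi_1(W)$ with peripheral system $\mathcal{P}$ = the fundamental groups of the boundary components. The key preliminary observation is that each peripheral subgroup $P \in \mathcal{P}$ is aspherical and $\pi_1$-injectively embedded with $\dim(B) = \dim(W) - 1 \ge 2$; since $W$ is aspherical with nonempty boundary, $G$ has positive cohomological dimension and in particular $G$ is not itself equal to any $P$, so the relatively hyperbolic structure is genuinely nontrivial. One also needs that $G$ is not virtually cyclic (which holds as soon as $\dim W \ge 3$ and $\partial W \ne \emptyset$, since then some $P$ has cohomological dimension $\ge 2$ and embeds in $G$), so that $G$ is acylindrically hyperbolic by Osin's characterization of relatively hyperbolic groups as acylindrically hyperbolic via the action on the coned-off Cayley graph.

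\textbf{Part (a).} I would use the acylindrical action of $G$ on the coned-off (relative) Cayley graph $\widehat{\Gamma}$, which is Gromov hyperbolic, together with Osin's theorem that this action is acylindrical. Suppose $G = A *_K B$ or $G = A*_K$. The Bass--Serre tree $T$ of the splitting gives an action of $G$ on $T$ with edge stabilizers conjugate to $K$. The plan is to show $K$ acts non-elementarily and acylindrically on some hyperbolic space built from combining $T$ and $\widehat\Gamma$; alternatively, and more cleanly, one argues that since each peripheral subgroup $P$ is freely indecomposable relative to nothing forces it to be elliptic in $T$ up to finite index considerations — actually the cleanest route is: by a result on relatively hyperbolic groups, any splitting over $K$ forces $K$ to be either peripheral-by-finite or to contain a loxodromic element for the acylindrical action on $\widehat\Gamma$; in the latter case $K$ is acylindrically hyperbolic by Theorem~\ref{thm: acy hyp list}(3) applied to the subnormal/subgroup structure, or directly because an infinite subgroup containing a loxodromic element of an acylindrical action and not virtually cyclic (edge groups of splittings of one-ended-relative groups are non-elementary) is acylindrically hyperbolic. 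I expect this to be the technically most delicate step, since ruling out $K$ being virtually cyclic or peripheral requires using that the $P$'s are $(n-1)$-manifold groups with $n - 1 \ge 2$, hence one-ended or at least not compatible with being edge groups of the ambient splitting; one invokes asphericity and a Mayer--Vietoris / cohomological-dimension count on $W$ to exclude the degenerate cases.

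\textbf{Parts (b) and (c).} These should follow from the now-standard rigidity results for relatively hyperbolic groups. For (b), co-Hopfness: an injective endomorphism $\phi: G \to G$ must send each peripheral subgroup $P$ into a conjugate of some peripheral subgroup (by Osin's rigidity, since $P$ is infinite, finitely generated, and its image cannot be loxodromic or parabolic of the wrong kind); combined with the aspherical-manifold-boundary hypothesis and the fact that finite-index overgroups of $(n-1)$-manifold groups behave rigidly, one shows $\phi(P)$ is conjugate to $P$ with the right index, and then $\phi$ is an isomorphism onto $G$ by a cohomological-dimension argument (a proper finite-cohomological-dimension self-embedding of an $n$-dimensional duality-type group is onto), or by the theorem that a relatively hyperbolic group all of whose peripheral subgroups are co-Hopf and which is not "small" is co-Hopf. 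For (c), finiteness of $\mathrm{Out}$: the hypothesis that each $\pi_1(B)$ is either not relatively hyperbolic or has a relatively hyperbolic structure with non-relatively-hyperbolic peripheral subgroups is precisely the condition that prevents further splitting, so the JSJ-type decomposition of $G$ relative to $\mathcal P$ is trivial, and then $\mathrm{Out}(G)$ is finite by the Bowditch/Levitt-type theorem that $\mathrm{Out}$ of a one-ended relatively hyperbolic group with trivial canonical JSJ (relative to the peripheral structure, and with each peripheral $\mathrm{Out}$ finite — which follows inductively from the same hypothesis) is finite. The main obstacle across (b)–(c) is bookkeeping: one must carefully track the peripheral structure through endomorphisms and automorphisms and verify the "no further splitting" hypothesis does the work claimed; but once the relatively-hyperbolic machinery (Osin, Bowditch, Dru\c{t}u--Sapir) is invoked as a black box, each conclusion is a short deduction.
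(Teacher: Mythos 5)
Your overall architecture is reasonable, but there is a genuine gap at the one step that actually carries part (a), and your standalone arguments for (b) contain a false step. For (a), reducing to ``show the edge group $K$ is non-elementary'' is right, and so is the instinct that the exclusion is cohomological; but a ``Mayer--Vietoris / cohomological-dimension count on $W$'' does not close the argument, because $\pi_1(W)$ is not a Poincar\'e duality group and has cohomological dimension only $n-1$, so nothing on dimension grounds alone prevents it from splitting over $\mathbb Z$ or over a parabolic subgroup. The device the paper uses is to \emph{double $W$ along $\partial W$}: the double is a closed aspherical $n$-manifold, so its fundamental group is a $PD_n$ group with $n\ge 3$, and the Mayer--Vietoris sequence is applied to \emph{that} group. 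One must then check that a splitting of $\pi_1(W)$ over an elementary subgroup induces a splitting of the doubled group to which Poincar\'e duality applies; here the hypothesis that the peripheral subgroups are fundamental groups of closed aspherical $(n-1)$-manifolds, $n-1\ge 2$, is used to make them elliptic in the relevant Bass--Serre trees. Once $K$ is known to be non-elementary, acylindrical hyperbolicity of $K$ is immediate from the classification of subgroups of relatively hyperbolic groups, as you say; the doubling step is the missing idea.

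For (b) and (c) the paper's logic is that both are formal consequences of (a) via Dru\c{t}u--Sapir~\cite{DruSap-split}: a relatively hyperbolic group admitting no splitting over an elementary subgroup is co-Hopf, and has finite outer automorphism group provided the peripheral subgroups satisfy exactly the non-relative-hyperbolicity condition stated in (c). Your JSJ sketch of (c) is essentially this. Your proposed direct arguments for (b), however, do not work as stated: ``a proper self-embedding of a duality-type group of the right dimension is onto'' is false in general (free groups are duality groups and are not co-Hopf), and the relevant co-Hopf criterion for relatively hyperbolic groups requires absence of elementary splittings, not co-Hopfness of the peripherals. The paper also records a genuinely different proof of (b): the pair $(W,\partial W)$ has positive simplicial norm by Mineyev--Yaman~\cite{MinYam}, which forces injective endomorphisms of $\pi_1(W)$ to be surjective and bypasses (a) entirely.
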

\begin{proof}[Sketch of the proof] 
Mayer-Vietoris sequence in the group cohomology applied to the fundamental
group of the double of $W$ along $\d W$ can be used to show that 
$\pi_1(W)$ can only split over a non-elementary subgroup proving (a),
which in turn implies (b)--(c) by results of Dru{\c{t}}u-Sapir~\cite{DruSap-split}.
An alternative proof of (b) is immediate from the fact that 
$(W,\d W)$ has positive simplicial norm~\cite{MinYam}.
Details can be found in~\cite{Bel-hyper-polyh}.
\end{proof}

\begin{ex}
The fundamental group of a closed aspherical manifold with zero simplicial volume
is not relatively hyperbolic (as noted in~\cite{BelHru} this follows from~\cite{MinYam}).
Thus if $W$ is as in Theorem~\ref{thm: co-Hopf} and $\mathrm{Int}(W)$ 
admits a complete metric finite volume of $\Ric\ge -(n-1)$,
then $\d W$ has zero simplicial volume by Theorem~\ref{thm: Gromov simp vol},
so $\mathrm{Out}(\pi_1(W))$ is finite.
\end{ex}

Proving relative hyperbolicity was the main goal of the 
author's work in~\cite{Bel-hyper-polyh, Bel-ch-warp, Bel-rh-warp, BelHru}
where it was accomplished in the following cases:
\begin{ex}
\label{ex: rel hyp fin vol}\ \newline
(1)
If $R_{W,B}$ is from Section~\ref{sec: ontaneda},
then $\pi_1(R_{W,B})$
is hyperbolic relative to the fundamental groups
of components of $B$~\cite{Bel-hyper-polyh}. If $B$
is as in Theorem~\ref{thm: sol class B}, then $R_{W,B}$
admits a complete finite volume metric of $-1\le K<0$. 
\newline
(2) If  $V$ is a closed manifold of $K<0$, 
and $S$ is an embedded, codimension two,
compact, totally geodesic submanifold, then
$\pi_1(V\setminus S)$ is hyperbolic relative
to the fundamental groups of boundary components of a 
tubular neighborhood of $S$ in $V$~\cite{BelHru}.
A finite volume complete metric 
of $-1\le K<0$ on $V\setminus S$ was constructed 
in~\cite{Bel-ch-warp, Bel-rh-warp} 
when either $V$ is real hyperbolic, or $V$ and $S$ are 
complex hyperbolic.\newline
(3)
Let $V$ be a closed manifold of $K<0$, 
and $S$ be an immersed, codimension two,
compact, totally geodesic submanifold 
whose preimage to the universal cover $\widetilde V$
of $V$ is {\it normal} in the sense of Allcock
and is ``sparse'' in the sense that and two disjoint components are sufficiently
separated. With these assumptions $\pi_1(V\setminus S)$ 
is hyperbolic relative to the fundamental group
of the boundary components of a regular neighborhood of $S$ in $V$~\cite{BelHru}.
If $V$ is real hyperbolic, then $V\setminus S$ admits a complete finite volume
metric of $-1\le K<0$ by~\cite{AbrSch}.
\end{ex}

\begin{rmk}
A version of the claim in
Example~\ref{ex: rel hyp fin vol}(2)-(3) holds when
$V$ is an open complete finitely volume negatively pinched manifold.
\end{rmk}

\begin{rmk} 
Without the assumptions that the preimage of $S$ to $\widetilde V$
is normal and ``sparse'', it seems unlikely that  $\pi_1(V\setminus S)$ 
in Example~\ref{ex: rel hyp fin vol}(3)
is hyperbolic relative to some easy-to-understand peripheral subgroups, so we ask:
\end{rmk}

\begin{quest}
Let $V$ be a finite volume complete negatively pinched manifold,
and $S$ be an immersed, codimension two,
compact, totally geodesic submanifold.
Is $\pi_1(V\setminus S)$ acylindrically hyperbolic?
\end{quest}

If $G$ is a finitely generated group 
that is hyperbolic relatively to a finite family
of peripheral subgroups, then $G$ inherits the following properties of
its peripheral subgroups:
\begin{enumerate}
\item solvability of the word problem~\cite{Far-rel-hyp, Osi-rel-hyp} (which is a litmus test for decency
of a group).
\item solvability of conjugacy problem~\cite{Bum-conj}.
\item being fully residually hyperbolic~\cite{Osi-fill, GroMan-fill}; here
given a class of groups $\mathcal C$, a group $G$ is {\it fully residually} $\mathcal C$ 
if any finite subset of $G$ can be mapped injectively by a homomorphism of $G$ onto
a group in $\mathcal C$.
\item being biautomatic~\cite{Reb}.
\item finiteness of asymptotic dimension~\cite{Osi-asy-dim}
\item rapid decay property~\cite{DruSap-rapid-decay}
\item Tits alternative: a subgroup of a relatively hyperbolic group
that does not contain a non-abelian free subgroup is {\it elementary\,}, i.e.
virtually-$\mathbb Z$, finite, 
or contained in a peripheral subgroup~\cite{Tuk-tits}. 
\end{enumerate}

\begin{ex}
If $V$ and $S$ are as in Example~\ref{ex: rel hyp fin vol}(2),
then the peripheral subgroups in the relatively hyperbolic groups structure
on $\pi_1(V\setminus S)$ are the fundamental groups of circle bundles over
components of $S$. The peripheral subgroups
have solvable word and conjugacy problems, have finite asymptotic
dimension and rapid decay property, are biautomatic, residually hyperbolic, and
their non-virtually-abelian groups contain free nonabelian subgroups, 
see~\cite{Bel-ch-warp, Bel-rh-warp}.
Hence all these properties are inherited by $\pi_1(V\setminus S)$. 
\end{ex}

The fact that most real hyperbolic manifolds are a-K\"ahler~\cite{Gro-asy}
was used in~\cite{Bel-hyper-polyh} to show

\begin{thm}
$R_{W,B}$ is not homeomorphic to an open subset of a K\"ahler manifold
of real dimension $\ge 4$.
\end{thm}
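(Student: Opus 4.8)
The plan is to assume for contradiction that $R_{W,B}$ --- more precisely, since it is compact with boundary $B$, its interior $M=\Int(R_{W,B})$, which is the only part that can embed as an open subset of a boundaryless manifold --- is homeomorphic to an open subset $U$ of a K\"ahler manifold $Z$ with $\dim_\R Z\ge 4$. An open subset of a K\"ahler manifold is K\"ahler, so $M$ itself carries a K\"ahler metric; in particular $\dim R_{W,B}=2m$ is even with $m\ge 2$, and when each component of $B$ is aspherical (the relevant case, the other being reduced to it by passing to a finite aspherical core) $M$ is an aspherical open K\"ahler manifold homotopy equivalent to the finite complex $R_{W,B}$, so the question becomes whether $\pi:=\pi_1(R_{W,B})$ can be realized this way.

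Next I would bring in two structural facts. On the one hand, by the strict hyperbolization construction every face of the hyperbolized polyhedron underlying $R_{W,B}$ is an isometric copy of one fixed compact real hyperbolic block, and $M$ contains in its interior, disjoint from the removed cone point, closed $\pi_1$-injectively embedded totally geodesic real hyperbolic submanifolds of each dimension $d$ with $3\le d\le 2m-1$; here one uses Gromov's observation that \emph{most real hyperbolic manifolds are a-K\"ahler} --- the hyperbolization data can be chosen so that these submanifolds admit no nontrivial homomorphism onto the fundamental group of a compact K\"ahler manifold, in particular no nonconstant map onto a closed Riemann surface of genus $\ge 2$. On the other hand, as recalled above, $\pi$ is hyperbolic relative to the (torsion-free, and in the examples of interest non-hyperbolic, e.g.\ infranilmanifold) fundamental groups of the components of $B$.

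I would then extract the contradiction by a harmonic-map superrigidity argument in the spirit of Carlson--Toledo and Sampson, run on $U$ and adapted to its relatively hyperbolic (cusped) structure: a K\"ahler structure on $M$ would produce a nonconstant pluriharmonic map whose analysis, combined with the relative hyperbolicity of $\pi$ over non-hyperbolic peripheral subgroups, forces a nontrivial map from one of the real hyperbolic building blocks onto a closed surface of genus $\ge 2$ (or onto the fundamental group of some compact K\"ahler manifold), directly contradicting the a-K\"ahler property secured above. The hard part --- and the place where the work of \cite{Bel-hyper-polyh} is concentrated --- is precisely this last step: the ambient K\"ahler manifold $Z$ is not assumed compact, so neither $\pi_1(U)$ nor $\pi_1(Z)$ carries an a priori K\"ahler-group restriction, and one must either genuinely reduce to the compact K\"ahler setting (exploiting that $M$ is the interior of a compact aspherical manifold with aspherical, $\pi_1$-injective boundary) or push the pluriharmonic-map analysis through on the open manifold $U$ itself, controlling its behavior near the ends modeled on the components of $B$.
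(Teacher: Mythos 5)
The survey does not reproduce the argument; it only records that the proof in \cite{Bel-hyper-polyh} rests on Gromov's observation that most real hyperbolic manifolds are a-K\"ahler, i.e.\ on restrictions on maps \emph{from} K\"ahler manifolds \emph{to} real hyperbolic manifolds in the Sampson--Carlson--Toledo circle of ideas. You have correctly located that ingredient, but your write-up does not amount to a proof, and the place where it stalls is exactly where the whole difficulty of the statement sits. The hypothesis is only that $\mathrm{Int}(R_{W,B})$ is \emph{homeomorphic} to an open subset $U$ of a K\"ahler manifold $Z$; $Z$ is not assumed compact or complete, and $U$ is an arbitrary open subset, hence an incomplete K\"ahler manifold with no control near its frontier. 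The Eells--Sampson existence theory and the pluriharmonic rigidity you propose to ``run on $U$'' are simply not available in this generality, and you acknowledge as much (``one must either genuinely reduce to the compact K\"ahler setting \dots or push the pluriharmonic-map analysis through on the open manifold $U$ itself''). Declaring the key step rather than executing it leaves a genuine gap; the survey's remark that the conclusion even upgrades to ``not proper homotopy equivalent to an open subset'' is a strong hint that the actual mechanism is a (proper) homotopy-theoretic/cohomological consequence of a-K\"ahlerness that survives restriction to open subsets, rather than a harmonic-map argument performed on $U$.

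Several supporting assertions are also problematic. First, a homeomorphism does not transport a K\"ahler structure, so you cannot conclude that ``$M$ itself carries a K\"ahler metric''; one must work with $U$ and use only invariants carried across the homeomorphism $\mathrm{Int}(R_{W,B})\cong U$. Second, the claim that $R_{W,B}$ contains closed, $\pi_1$-injective, totally geodesic real hyperbolic submanifolds of every dimension $3\le d\le 2m-1$ is not a feature of the Charney--Davis construction: the faces of the hyperbolized complex are copies of the compact hyperbolic \emph{block with corners} (onto which there are retractions), not closed hyperbolic manifolds, so this input to your contradiction is unavailable as stated. Third, your formulation of a-K\"ahlerness points the arrows the wrong way: what is needed is that maps from K\"ahler manifolds \emph{into} the block (or into an ambient closed hyperbolic manifold receiving the folding map from $R_{W,B}$) are inessential in degrees $\ge 3$ --- e.g.\ because they factor through Riemann surfaces --- not that the block's fundamental group admits no surjection onto the fundamental group of a compact K\"ahler manifold. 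Finally, the relative hyperbolicity of $\pi_1(R_{W,B})$ plays no visible role in this particular theorem, and invoking it does not close any of the gaps above.
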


In fact, ``not homeomorphic'' can be replaced with 
``not proper homotopy equivalent'' under a mild assumption 
on the strict hyperbolization block~\cite{Bel-hyper-polyh}.

\section{Zoo of finite volume rank one manifolds}
\label{sec: inf hom}

In this section we discuss examples and structure of
connected open complete finite volume manifolds of $K\le 0$ and rank one,
with a particular focus on manifolds of $K<0$. 

\begin{thm}\label{thm: rk1 acyl hyp}
If $V$ is a complete finite volume manifold of rank one, then
$\pi_1(V)$ is acylindically hyperbolic. 
\end{thm}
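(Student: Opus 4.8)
The plan is to realize $\pi_1(V)$ as a discrete isometry group of a rank one Hadamard manifold that contains a rank one isometry, apply Sisto's dichotomy (Theorem~\ref{thm: sisto rk1}), and then eliminate the virtually cyclic alternative using finiteness of volume. We may assume $\dim V\ge 2$ (a $1$-dimensional $V$ is $S^1$, which is not regarded as a rank one manifold).

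First I would pass to the universal cover. Set $X=\widetilde V$ and $\Gamma=\pi_1(V)$; then $X$ is a Hadamard manifold, $\Gamma\le\Iso(X)$ acts freely and properly discontinuously (hence is discrete and torsion-free), and $X/\Gamma=V$. A complete rank one geodesic of $V$ lifts to a complete geodesic of $X$, and since $X\to V$ is a local isometry it identifies the parallel Jacobi fields along the lift with those along the geodesic downstairs; thus the lifted geodesic is rank one and $X$ has rank one.

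Second, I would produce a rank one \emph{isometry} in $\Gamma$. Since $V$ has finite volume, the geodesic flow on the unit tangent bundle $SV$ preserves the finite Liouville measure, so by Poincar\'e recurrence it is nonwandering; equivalently $\Gamma$ satisfies the duality condition (and is in fact a lattice in the Lie group $\Iso(X)$); see~\cite{Ebe-book, Bal-book}. Ballmann's theorem --- Theorem~III.3.4 of~\cite{Bal-book}, recalled in Section~\ref{sec: rk1} --- then yields a rank one element $g\in\Gamma$. By Theorem~\ref{thm: sisto rk1}, $\Gamma$ is therefore virtually cyclic or acylindrically hyperbolic.

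It remains to rule out that $\Gamma$ is virtually cyclic; this is the only step not settled by a direct citation, and it is where finiteness of volume enters a second time. Suppose $\Gamma$ is virtually cyclic. Being axial, $g$ has infinite order, so $\Gamma$ is infinite; a torsion-free infinite virtually cyclic group is infinite cyclic, so $\Gamma=\langle\gamma_0\rangle$ with $g=\gamma_0^{\pm k}$, and, the axes of $g$ being axes of $\gamma_0$, the generator $\gamma_0$ is itself rank one and axial. Let $\ell=|\gamma_0|>0$ be its translation length and $\alpha\subset X$ one of its axes. The nearest-point projection $p\co X\to\alpha$ is $1$-Lipschitz and $\gamma_0$-equivariant, so for every $x\in X$ and every integer $j\neq 0$,
\[
d(x,\gamma_0^{\,j}x)\ \ge\ d\bigl(p(x),p(\gamma_0^{\,j}x)\bigr)\ =\ d\bigl(p(x),\gamma_0^{\,j}p(x)\bigr)\ =\ |j|\,\ell\ \ge\ \ell ,
\]
whence $\mathrm{inj}(y)\ge\ell/2$ for every $y\in V$. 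On the other hand $\pi_1(V)\cong\mathbb Z$ and $\dim V\ge 2$ force $V$ to be noncompact, since a closed aspherical manifold of dimension $\ge 2$ is not a $K(\mathbb Z,1)$; so by volume comparison $V$ has $\mathrm{Inj}\,\mathrm{Rad}\to 0$ (cf.~\cite[8.4]{BGS}), meaning $\{y\in V:\mathrm{inj}(y)\ge\ell/2\}$ is compact --- impossible, as it is all of $V$. Hence $\Gamma$ is acylindrically hyperbolic.
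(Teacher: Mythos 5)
Your proof is correct, and its skeleton is the one the paper uses: lift to the universal cover, get a rank one element of $\pi_1(V)$ from Ballmann's theorem via the duality condition, and feed it into Sisto's dichotomy (Theorem~\ref{thm: sisto rk1}). The one place you genuinely diverge is in killing the virtually cyclic alternative. The paper does this by quoting the stronger form of Ballmann's result — the rank one element already lies in a \emph{noncyclic free} subgroup of the lattice, so $\pi_1(V)$ cannot be virtually cyclic — whereas you give a self-contained geometric argument: a torsion-free infinite virtually cyclic group is $\mathbb Z$, generated by an axial isometry, whose equivariant nearest-point projection to an axis bounds the displacement (hence the injectivity radius) away from zero, contradicting $\mathrm{Inj}\,\mathrm{Rad}\to 0$ for a noncompact finite volume manifold of $K\le 0$. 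Both routes are sound; yours costs a little more work (including the standard but unproved fact that a root of an axial isometry preserving $\mathrm{Min}(g)=C_g\times\mathbb R$ with $C_g$ compact is itself axial — alternatively you could avoid this entirely via $d(x,\gamma_0^{\,j}x)\ge \tfrac1k\,d(x,g^{\,j}x)$) but has the virtue of using the finite volume hypothesis transparently and of not relying on the free-subgroup refinement of Ballmann's theorem. Your explicit exclusion of the degenerate case $\dim V=1$ is also a reasonable precaution that the paper leaves implicit.
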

\begin{proof}
The universal cover $\widetilde V$ has rank one, and
Ballmann~\cite{Bal-book} proved that a lattice in the isometry
group of a rank one Hadamard manifold contains a rank one element
that lies in a noncyclic free subgroup, 
so Sisto's Theorem~\ref{thm: sisto rk1} applies.
\end{proof}

\begin{quest}
Does every complete rank one manifold with $K\le 0$ and 
$\mathrm{Inj}\,\mathrm{Rad}\to 0$ have
acylindically hyperbolic fundamental group?
\end{quest}

Kapovitch-Wilking's version of the Margulis lemma 
only calls for $K\ge -1$ on a ball of radius $1$, 
and that curvature bound can always be achieved by rescaling.
Since acylindically hyperbolic groups are not virtually nilpotent, 
Theorem~\ref{thm: wilk-kap margulis} immediately implies: 

\begin{cor}
If $\e$ is the constant in \textup{Theorem~\ref{thm: wilk-kap margulis}},
then a finite volume complete rank one manifold $M$ contains no point $p$ such that 
$K\ge -1$ on $B_p(1)$ and the inclusion $B_p(\e)\hookrightarrow M$ is $\pi_1$-surjective.
\end{cor}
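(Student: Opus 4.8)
The plan is to argue by contradiction and feed the hypotheses directly into the Kapovitch--Wilking Margulis lemma (Theorem~\ref{thm: wilk-kap margulis}). Suppose, toward a contradiction, that $M$ is a finite volume complete rank one manifold and that $p\in M$ is a point with $K\ge -1$ on $B_p(1)$ and with the inclusion $B_p(\e)\hookrightarrow M$ inducing a surjection on $\pi_1$. Note first that metric balls are path-connected, so the fundamental groups involved are unambiguous with basepoint $p$. By Theorem~\ref{thm: wilk-kap margulis}, the image $H$ of $\pi_1(B_p(\e))\to\pi_1(B_p(1))$ contains a nilpotent subgroup of index $\le m$; in particular $H$ is virtually nilpotent.

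Next I would run the two inclusions $B_p(\e)\hookrightarrow B_p(1)\hookrightarrow M$ through $\pi_1$. The composite $\pi_1(B_p(\e))\to\pi_1(M)$ is surjective by hypothesis, and its image is the image of $H$ under $\pi_1(B_p(1))\to\pi_1(M)$; hence $\pi_1(M)$ is a homomorphic image of $H$. Since the class of virtually nilpotent groups is closed under passing to quotients, it follows that $\pi_1(M)$ is virtually nilpotent. Finally I would invoke Theorem~\ref{thm: rk1 acyl hyp}: because $M$ has finite volume and rank one, $\pi_1(M)$ is acylindrically hyperbolic, so by Theorem~\ref{thm: acy hyp list}(1) it contains a non-cyclic free subgroup and therefore cannot be virtually nilpotent. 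This contradiction establishes the corollary.

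There is no deep obstacle here: the entire content is carried by Theorems~\ref{thm: wilk-kap margulis} and~\ref{thm: rk1 acyl hyp}, both already available. The only points needing (routine) care are the basepoint bookkeeping that makes $\pi_1(B_p(\e))\to\pi_1(M)$ factor through $\pi_1(B_p(1))$, and the elementary facts that a quotient of a virtually nilpotent group is virtually nilpotent while an acylindrically hyperbolic group is not. A minor variant would be to observe directly that Theorem~\ref{thm: acy hyp list}(1) forbids $\pi_1(M)$ from being amenable, which again contradicts virtual nilpotency; I would present whichever phrasing reads most cleanly alongside the preceding paragraph of the section.
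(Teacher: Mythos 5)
Your proof is correct and follows exactly the route the paper intends: the paper states the corollary as an immediate consequence of Theorem~\ref{thm: wilk-kap margulis} together with the observation that acylindrically hyperbolic groups (here $\pi_1(M)$, by Theorem~\ref{thm: rk1 acyl hyp}) are not virtually nilpotent, and your write-up simply fills in the routine details of that deduction. No differences in approach worth noting.
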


Thus if $\pi_1(M)$ is ``concentrated'' on an $\e$-ball,
then $K$ blows up near that ball.

Known examples of finite volume complete manifolds of $K<0$ that admit
no finite volume metric of $-1\le K\le 0$ are based on 
Theorem~\ref{thm: Gromov simp vol}
that a compact boundary component of a manifold whose
interior has a complete metric of $\Ric\ge-(n-1)$
has zero simplicial volume. Indeed, Nguyen Phan~\cite{Pha-neg}
proved

\begin{thm}
\bf (Nguyen Phan)\ \it 
In each dimension $\ge 3$ there exists a
finite volume complete manifold of $K<0$ 
that is the interior of a compact manifold
whose boundary admits a real hyperbolic metric.
\end{thm}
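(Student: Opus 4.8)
The plan is to construct, in each dimension $n\ge 3$, a manifold $W^n$ with boundary $\partial W$ a closed real hyperbolic manifold, together with a complete finite-volume metric of $K<0$ on $\mathrm{Int}(W)$. The natural starting point is a finite-volume complete real hyperbolic manifold $H^n$ with cusps: by Gromov--Thurston, or already by elementary constructions, such $H^n$ exist in every dimension $n\ge 2$, and for $n\ge 3$ one can arrange that some cusp cross-section is a manifold on which we can do surgery. The idea is to replace a real hyperbolic cusp by a negatively curved ``collar'' whose near-infinity end is a warped product $dr^2+e^{2r}g_B$ over a real hyperbolic $(n-1)$-manifold $B$, while keeping $K<0$ strictly and the total volume finite. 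Concretely: take a real hyperbolic manifold $B^{n-1}$ that bounds; let $W_0$ be the compact manifold it bounds; one then wants to build a metric on $\mathrm{Int}(W_0)$ that is a genuine real-hyperbolic cusp metric $dr^2+e^{2r}g_B$ near $\partial W_0=B$ for $r$ large, and is negatively curved (but not locally symmetric, since $B$ need not be a horospherical cross-section) in the interior. The construction of such a metric is exactly what is packaged in condition (ii) of Ontaneda's Theorem~\ref{thm-intro: ont}, so the cleanest route is to verify that hypothesis.

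First I would recall that a cusp of a finite-volume hyperbolic $n$-manifold has a neighborhood isometric to $(N_\Lambda\times[0,\infty), dr^2+e^{2r}h)$ where $N_\Lambda$ is an infranilmanifold (a flat torus in the pinched-by-hyperbolic case it is just a flat torus, but we only need an infranilmanifold cross-section in general). By the class-$\mathcal B$ results collected in Theorem~\ref{thm: sol class B}(2) — in particular that every infranilmanifold lies in $\mathcal B$ and $\mathcal B$ is closed under products with closed manifolds of $K\le 0$ — many closed manifolds $B$ satisfy the hypothesis that $\mathbb R\times B$ admits a complete metric of $K$ within $[-1,0)$ with $g=dr^2+e^{2r}g_B$ near one end and finite volume on the other end. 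For the statement as written, though, the target boundary is required to admit a \emph{real hyperbolic} metric, which is more restrictive; so the step is to choose $B$ to be a closed real hyperbolic $(n-1)$-manifold \emph{that also bounds a compact manifold}. Every closed orientable hyperbolic manifold of dimension $n-1$ is null-cobordant when its Stiefel--Whitney and Pontryagin numbers vanish, and by passing to a finite cover (Selberg, Millson) one can kill the relevant characteristic numbers; so such $B$ exist in every dimension $n\ge 3$. For such a $B$ condition (ii) of Theorem~\ref{thm-intro: ont} must be checked: one builds $g$ on $\mathbb R\times B$ interpolating between the standard real-hyperbolic cusp model $dr^2+e^{2r}g_B$ for $r\ge c$ and, as $r\to-\infty$, a cone-like or warped piece engineered to have finite volume and strictly negative curvature; this is the kind of singly-warped-product computation of Bishop--O'Neill, and is precisely the content guaranteed by the ``class $\mathcal B$'' mechanism (a closed hyperbolic manifold has $K\le 0$ with no Euclidean factor, and one then warps). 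Granting (ii), Ontaneda's theorem supplies a compact smooth $W$ with $\partial W\cong B$ and $\mathrm{Int}(W)$ carrying a complete finite-volume metric of $K$ in $[-1,0)$; rescaling gives $K<0$, and since $n-1\ge 2$ we may also need (i), the triviality of h-cobordisms from $B$, which holds for $n-1\ge 5$ because $\pi_1(B)$ is a hyperbolic (hence torsion-free, Whitehead-trivial by Bartels--Lück) group, and in the low dimensions $n-1\le 4$ one argues by hand or cites the known cases.

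The main obstacle is dimension bookkeeping at the two ends of the range. In high dimensions everything runs through Theorem~\ref{thm-intro: ont} and Theorem~\ref{thm: sol class B}, provided one has produced a bounding closed hyperbolic $(n-1)$-manifold and has verified the warped-product hypothesis (ii); the delicate point is that (ii) asks for the curvature to stay in $[-1,0)$ \emph{including at the transition region}, which requires a careful choice of the warping function, exactly the technical heart of Ontaneda's construction and of the class-$\mathcal B$ verifications. In dimension $n=3$ the target $B$ is a closed hyperbolic surface, which always bounds a handlebody, and one simply performs the classical ``Heintze-type'' cusp-closing / modification directly: take a once-punctured hyperbolic surface group cross $\mathbb R$? — no, rather take a hyperbolic $3$-manifold with a torus cusp and do surgery replacing the cusp by a negatively curved collar limiting to $dr^2+e^{2r}g_B$ over a genus-$\ge 2$ surface $B$, using an explicit rotationally-modified warped product as in the proof of Proposition~\ref{prop: surface with injrad go to zero}; checking $K<0$ there is an elementary ODE computation. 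Finally, one records that $\mathrm{Int}(W)$ is an open finite-volume manifold of $K<0$ whose compactification $W$ has $\partial W$ real hyperbolic, which is the assertion.
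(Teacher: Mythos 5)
Your overall strategy coincides with the paper's ``alternative'' proof: feed a closed real hyperbolic $(n-1)$-manifold $B$ that bounds into Ontaneda's relative construction, with the cusp modelled on the warped product $dr^2+e^{2r}g_B$. But there is a genuine error in how you handle the curvature hypothesis. You propose to verify condition (ii) of Theorem~\ref{thm-intro: ont} literally, i.e.\ to build a metric on $\mathbb R\times B$ with sectional curvature in $[-1,0)$, by a ``careful choice of warping function at the transition region,'' and you invoke the class-$\mathcal B$ results of Theorem~\ref{thm: sol class B} to justify this. This cannot work: if condition (ii) held for a closed hyperbolic $B$, the conclusion of Theorem~\ref{thm-intro: ont} would produce a compact manifold with boundary $B$ whose interior carries a complete finite-volume metric with $-1\le K<0$, and that is forbidden by Theorem~\ref{thm: Gromov simp vol}, since closed hyperbolic manifolds have positive simplicial volume. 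This is precisely why closed hyperbolic manifolds are \emph{not} in the class $\mathcal B$ (its members are infranilmanifolds, manifolds with a Euclidean de Rham factor, circle bundles of type (K), and their products); your appeal to that machinery is misplaced. Concretely, for $g=dr^2+e^{2r}g_B$ the radial curvatures equal $-1$ and the tangential ones equal $K_B\,e^{-2r}-1$, which is $\le -1$ and tends to $-\infty$ as $r\to-\infty$ whenever $K_B<0$; no modification of the warping function can cure this while keeping the end at $+\infty$ of the prescribed form.

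The correct route --- and what the paper actually does --- is to drop the lower curvature bound: use the variant of Ontaneda's theorem (stated in the paper immediately after, with conclusion $K\le -1$) whose hypothesis only asks for $K<0$. Then the \emph{global} warped product $dr^2+e^{2r}g_B$ on all of $\mathbb R\times B$ already does the job with no interpolation: it is complete, all its sectional curvatures are $\le -1<0$, it has finite volume on $(-\infty,0]\times B$ (the density is $e^{(n-1)r}$), and it is literally of the required form near $+\infty$. The resulting manifold has $K<0$ and finite volume but admits no lower curvature bound --- indeed it cannot, by the simplicial volume obstruction, which is the whole point of the example. Two smaller issues: your $n=3$ discussion is garbled (one cannot replace a torus cusp by a collar over a genus-$\ge 2$ surface; in dimension $3$ one either cites Nguyen Phan's explicit construction or notes that surfaces bound handlebodies and runs the same warped-product argument), and the existence of bounding closed hyperbolic $(n-1)$-manifolds needs a cleaner justification than ``Selberg/Millson kill the characteristic numbers'' --- e.g.\ Pontryagin numbers vanish by conformal flatness and all Stiefel--Whitney numbers die in any even-degree cover.
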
 

Recall that a closed real hyperbolic manifold has a
positive simplicial volume. 
The proof of the above theorem is by explicit construction; 
alternatively, it follows from Ontaneda's Theorem~\ref{thm-intro: ont}
by inserting in the cusp the warped product $\mathbb R\times_{e^r} B$ where
$B$ is any closed manifold of $K\le 0$ with nonzero simplicial volume.
In fact, this argument proves:

\begin{thm}\bf (Ontaneda)\ \it 
If a closed manifold of $K\le 0$ is diffeomorphic to the boundary
of a compact manifold, then it is diffeomorphic to the boundary 
of a compact manifold whose interior admits a complete finite volume
metric of $K\le -1$.
\end{thm}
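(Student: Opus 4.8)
The plan is to derive the statement from Ontaneda's Theorem~\ref{thm-intro: ont}. Write $B$ for the given closed manifold of $K\le0$ and fix a compact manifold $W$ with $\partial W=B$; I will feed the pair $(W,B)$ into Ontaneda's relativized strict hyperbolization and smoothing construction, using the warped product $\mathbb R\times_{e^r}B$ as the model for the end. Since this model has sectional curvature $\le-1$ rather than in $[-1,0)$, the output will carry a metric of $K\le-1$, which is exactly what is claimed; this ``$K\le-1$'' variant of Theorem~\ref{thm-intro: ont}, with the curvature bounds in hypothesis (ii) and in the conclusion both read as ``$\le-1$'', is equally implicit in~\cite{Ont-smth-hyp}. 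It then remains only to verify hypotheses (i) and (ii) for this particular $B$.

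For (i): because $K\le0$, the manifold $B$ is aspherical, and $\pi_1(B)$ acts freely and geometrically on the Hadamard manifold $\widetilde B$, so it is torsion-free and is a CAT($0$) group. The Farrell--Jones Conjecture holds for CAT($0$) groups, so the Whitehead Torsion Conjecture holds for $\pi_1(B)$, i.e.\ $\mathrm{Wh}(\pi_1(B))=0$~\cite{BarLue-borelconj, BFL}. Every h-cobordism from $B$ therefore has vanishing Whitehead torsion, hence is an s-cobordism, hence (its dimension being $n\ge6$) is a product by the s-cobordism theorem; for $n\le5$ hypothesis (i) is vacuous.

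For (ii): choose any metric $g_0$ of $K\le0$ on $B$ and set $g=dr^2+e^{2r}g_0$ on $\mathbb R\times B$. This $g$ is complete, it already has the prescribed form on all of $[c,\infty)\times B$ for every $c>0$, and $(-\infty,0\,]\times B$ has finite $g$-volume since the slice at height $r$ has volume $e^{(n-1)r}\,\mathrm{vol}_{g_0}(B)$ and $\int_{-\infty}^{0}e^{(n-1)r}\,dr<\infty$. By the Bishop--O'Neill warped product formulas, radial $2$-planes have curvature $-f''/f=-1$, tangential ones have curvature $e^{-2r}K_{g_0}-1\le-1$, and an arbitrary $2$-plane has curvature a convex combination of these and so is $\le-1$; thus $K_g\le-1<0$. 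Feeding this data into the construction yields a compact smooth manifold $N$ with $\partial N$ diffeomorphic to $B$ whose interior admits a complete finite volume metric of $K\le-1$, as desired.

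Essentially all of the analytic difficulty is packaged inside Ontaneda's theorem, which I invoke as a black box; the only delicate point in the above is that the relativized hyperbolization must be arranged so that, after coning off $B$, strictly hyperbolizing, and deleting the cone point, the resulting end is diffeomorphic to $\mathbb R\times B$ and its singular metric is modelled there, up to reparametrization, on $dr^2+e^{2r}g_B$, so that the smooth warped product $\mathbb R\times_{e^r}B$ can be glued and smoothed in. That compatibility, together with the smoothing of the hyperbolized metric away from the end, is exactly Ontaneda's contribution; the two verifications above are the only inputs special to our $B$, and neither uses anything beyond asphericity of $B$, the structure of $\pi_1(B)$ as a torsion-free CAT($0$) group, and an elementary warped product curvature computation.
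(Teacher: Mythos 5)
Your proposal is correct and follows the same route the paper takes: the paper derives this statement precisely by feeding the warped product $\mathbb R\times_{e^r}B$ into Theorem~\ref{thm-intro: ont} (in the implicit ``$K\le-1$'' variant, since the conclusion here carries no lower curvature bound), with hypothesis (i) handled by the Farrell--Jones/Whitehead results for CAT($0$) groups and hypothesis (ii) by the Bishop--O'Neill curvature and volume computation you carried out. You have simply made explicit the verifications the paper leaves to the reader.
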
 

\begin{ex}
There are closed manifolds of $K\le 0$ that are not homeomorphic to
the boundaries of compact manifolds. One source of such examples is evenness
of the Euler characteristic of the boundary of a compact 
manifold~\cite[Corollary VIII.8.8]{Dol-book}. Examples of
closed manifold of $K\le 0$ with odd Euler characteristic include
suitable closed non-orientable hyperbolic surfaces, 
Mumford's complex hyperbolic surface of Euler characteristic 
$3$~\cite[Proposition2.2]{HerPau}, or their products.
\end{ex}

Iterated gluing along totally geodesic boundaries sometimes yields 
finite volume $M$ with infinitely generated fundamental group.
This phenomenon was discovered by Gromov~\cite[Chapter 11]{BGS}
who produced such $3$-dimensional graph manifolds with $K\in [-1, 0]$ and 
no local Euclidean de Rham factor.
Other examples with infinitely generated fundamental groups, due to 
Nguyen Phan~\cite{Pha-neg}, are 
finite volume manifolds with $K\le -1$ and infinitely many ends,
appearing in all dimensions $\ge 2$.

A related idea of Nguyen Phan~\cite{Pha-neg}
uses infinite cyclic covers
of closed manifolds of $K<0$ to produce two-ended
finite volume manifolds of $K<0$:

\begin{thm}
\label{thm: cyclic cover}
\bf (Nguyen Phan)\ \it 
If $\widehat L$ is an infinite cyclic cover of a closed manifold $L$ of $K<0$
of dimension $l\ge 3$, then $\widehat L$ admits a complete finite volume metric
metric of $K<0$.
\end{thm}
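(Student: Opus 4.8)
Fix a surjection $\phi\co\pi_1(L)\to\mathbb Z$ defining the infinite cyclic cover $p\co\widehat L\to L$, and equip $\widehat L$ with the pulled-back metric $g_0=p^{*}g_L$, which has $K<0$ bounded away from $0$ (because $L$ is closed) but infinite volume; the deck group $\mathbb Z=\langle\tau\rangle$ acts isometrically and cocompactly. Represent $\phi$ by a smooth map $L\to S^{1}$ transverse to a point, and let $B\subset L$ be the resulting Poincar\'e-dual hypersurface; after tubing components along arcs we may take $B$ connected, hence non-separating. Cutting $L$ open along $B$ gives a compact connected cobordism $W$ whose two boundary copies of $B$ carry the same induced metric $g_B$, and $\widehat L=\bigcup_{n\in\mathbb Z}W_n$ is a bi-infinite telescope of isometric copies $W_n\cong W$ glued successively along $B$. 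In particular $\widehat L$ has exactly two ends $E_{\pm}=\bigcup_{\pm n\ge 1}W_n$, each quasi-isometric to a ray and of bounded local geometry. The plan is to keep $g_0$ on a large central block $\bigcup_{|n|\le n_0}W_n$ and to replace it on each end by a metric that still has $K<0$ but shrinks so fast that the ends acquire finite volume. Observe in advance that $\pi_1(\widehat L)=\ker\phi$ may be infinitely generated; in that case $\widehat L$ is not the interior of a compact manifold, so by Schroeder's theorem (Theorem~\ref{thm: schroeder}) no complete finite-volume metric on $\widehat L$ can satisfy $K\ge -1$, and the metric we build will necessarily have $K\to-\infty$ along $E_{\pm}$.

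\textbf{The tapering mechanism.} The basic device is the warped product: for a smooth positive strictly convex function $\varphi$ on an interval and \emph{any} manifold $(Y,h)$ with $K_h<0$, the metric $dt^{2}+\varphi(t)^{2}h$ has $K<0$, since planes containing $\partial_t$ have curvature $-\varphi''/\varphi<0$ while planes tangent to a $Y$-slice have curvature $(K_h-(\varphi')^{2})/\varphi^{2}$, which is automatically negative because $K_h<0\le(\varphi')^{2}$ (Bishop--O'Neill, cf.~\cite{BisON-warp}); taking $\varphi(t)=e^{-t}$ one moreover gets $\int^{\infty}\varphi^{\dim Y}\,dt<\infty$. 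The plan is therefore to realize a neighbourhood of each end of $\widehat L$ as a region built from copies of $W$ over which such a warping, with an exponentially decaying factor, can be run. Concretely I would shrink the successive blocks $W_n$ (for $n\ge n_0$) by a geometrically decreasing sequence of scales $\rho_n\to0$ with $\sum_{n}\rho_n^{\dim L}<\infty$, which forces the total volume of the end to be finite, and then splice consecutive shrunk blocks $\rho_n^{2}g_{W}$ and $\rho_{n+1}^{2}g_{W}$ together: since their shared copy of $B$ now carries the two metrics $\rho_n^{2}g_B$ and $\rho_{n+1}^{2}g_B$, one inserts between them a collar $B\times[0,\ell_n]$ with metric $dt^{2}+\psi_n(t)^{2}g_B$, where $\psi_n$ is convex and interpolates from $\rho_n$ to $\rho_{n+1}$. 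Performing this at both ends and rounding off the junctions with the unchanged central block produces the candidate metric; completeness is then clear (each end becomes a horn of infinite length and finite volume) and the total volume is finite by the choice of the $\rho_n$.

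\textbf{The main obstacle.} The delicate part is keeping the sectional curvature strictly negative throughout the collars and all transition regions. In the collar $B\times[0,\ell_n]$ the planes tangent to a $B$-slice have curvature of the form $(K_{g_B}-(\psi_n')^{2})/\psi_n^{2}$ up to scaling convention, hence are negative only where $K_{g_B}<(\psi_n')^{2}$; since $B$ is an arbitrary hypersurface of $L$ it may be positively curved somewhere, so $\psi_n$ cannot be taken nearly constant, and one must balance the decay ratio $\rho_{n+1}/\rho_n$, the collar length $\ell_n$, and the profile of $\psi_n$ against the fixed geometry of $g_B$ so that strict negativity persists while still $\rho_n\to0$ fast enough for finite volume --- and, where $g_B$ genuinely has positive curvature, replace the naive warped collar by a more flexible interpolating family of metrics that collapses $B$ while keeping $K<0$. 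Carrying this out is a careful but essentially elementary exercise in curvature comparison and in choosing the warping profiles, and it is the technical heart of Nguyen Phan's construction~\cite{Pha-neg}; once it is in place, $K<0$ holds by construction and finiteness of the volume has been arranged, so $\widehat L$ carries the desired complete finite-volume metric of negative curvature.
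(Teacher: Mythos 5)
The survey does not actually reproduce Nguyen Phan's argument (it only cites \cite{Pha-neg}), so I will assess your construction on its own terms. The topological setup is fine: the telescope decomposition $\widehat L=\bigcup_n W_n$, the two ends, and the observation via Theorem~\ref{thm: schroeder} that the metric must have $K\to-\infty$ when $\ker\phi$ is infinitely generated are all correct. The metric part, however, has two gaps, and the second is not a technicality but a contradiction internal to your scheme. First, the splicing: the rescaled block $(W_n,\rho_n^2g_W)$ does not match the collar $dt^2+\psi_n(t)^2g_B$ even to first order along $B$. In Fermi coordinates $\rho^2g_W$ reads $ds^2+\rho^2g_s$ with $\partial_sg_s$ at $s=0$ given by the second fundamental form of $B$ in $L$, and $B$ is not totally geodesic, so each gluing locus is a crease whose jump in $\II$ has no reason to have the sign needed to smooth it while preserving $K<0$. ``Rounding off the junctions'' is exactly where positive curvature appears, and you have not addressed it.

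Second, the collars themselves. In $dt^2+\psi^2g_B$ the slice-tangent planes have curvature $(K_{g_B}-(\psi')^2)/\psi^2$ and the radial planes have curvature $-\psi''/\psi$, so $K<0$ forces $\psi''>0$ and $(\psi')^2>\sup K_{g_B}$. You cannot arrange $\sup K_{g_B}\le0$: by the Gauss equation $K_B=K_L+\det\II$ an arbitrary Poincar\'e-dual hypersurface may be positively curved somewhere, and your own tubing step (joining components by thin $1$-handles) manufactures regions of large positive curvature on $B$ once $l\ge4$. If $\sup K_{g_B}=\kappa>0$, then $|\psi'|>\sqrt{\kappa}$ together with $\psi''>0$ forces $\psi'<-\sqrt{\kappa}$ on the whole collar, hence $\ell_n<(\rho_n-\rho_{n+1})/\sqrt{\kappa}$ and $\sum_n\ell_n<\rho_{n_0}/\sqrt{\kappa}<\infty$; combined with $\sum_n\rho_n<\infty$ each end has finite diameter, i.e.\ the metric is incomplete. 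So within the warped-collar framework completeness and $K<0$ exclude each other. Your proposed escape --- a ``more flexible family collapsing $B$ with $K<0$'' --- is precisely the open problem of which closed manifolds admit negatively curved collapses (condition (ii) of Theorem~\ref{thm-intro: ont} and the question after Theorem~\ref{thm: sol class B}) and is unavailable for an arbitrary hypersurface. The theorem does not need any of this: one can leave the topology alone and deform the lifted metric $p^*g_L$ directly, e.g.\ conformally by $e^{2u(\hat f)}$ where $\hat f$ is a smooth proper $\tau$-equivariant lift of a map $L\to S^1$ inducing $\phi$ and $u(t)\sim-c\log(1+|t|)$ with $\frac{1}{l}<c<1$. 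Cocompactness of the deck group bounds $|\nabla\hat f|$ and $|\mathrm{Hess}\,\hat f|$, so the correction terms in the conformal curvature formula are of order $|u'|+|u''|+|u'|^2\to0$ and are absorbed by the uniform bound $\sup K<0$ coming from compactness of $L$, while $c<1$ gives completeness and $lc>1$ gives finite volume. No cutting, no gluing, no collapse of $B$ is required.
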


Here $\widehat L\to L$ 
is the covering that corresponds to the kernel of any epimorphism
$\pi_1(L)\to\mathbb Z$. A most famous example is when $\widehat L$ corresponds to
the fiber group in a closed hyperbolic $3$-manifolds that fibers over a circle.

\begin{ex} (of $\widehat L$ with infinitely generated fundamental group)
Suppose that $\pi_1(L)$ surjects onto a noncyclic free group $F_r$.
The kernel of any epimorphism $F_r\to\mathbb Z$ is infinitely generated,
so hence so is the kernel of the composite $\pi_1(L)\to F_r\to\mathbb Z$.
\end{ex}

\begin{quest}
Is $\pi_1(\widehat L)$ always infinitely generated when $\dim(\widehat L)>3$?  
\end{quest}

\begin{rmk}
If $\dim(\widehat L)\ge 6$ and $\widehat L$ is  homotopy equivalent to a finite
cell complex (or more generally is finitely dominated), 
then $L$ (smoothly) fibers over a circle, and the fiber is a closed 
aspherical manifold whose inclusion into $L$ corresponds homotopically
to the covering $\widehat L\to L$.
Indeed, Siebenmann's version~\cite{Sie-totwh} of Farrell's fibering obstruction 
lies in the group $\mathrm{Wh}(\pi_1(L))$, which is zero by~\cite{FJ-dynI}.
\end{rmk}

\begin{rmk} Another restriction on $\pi_1(\widehat L)$ is that its
outer automorphism group is infinite (as easily follows from
the fact that $\pi_1(\widehat L)$ has 
trivial centralizer in $\pi_1(L)$). Combining with the previous
remark we see that if $\dim(\widehat L)\ge 6$ and $\widehat L$ is finitely dominated,
then $\widehat L$ is homotopy equivalent to a closed aspherical manifold of dimension
$\ge 5$ whose fundamental group has infinite outer automorphism group
and embeds into the hyperbolic group $\pi_1(L)$;
it seems such a closed aspherical manifold cannot exist, so we ask:
\end{rmk}

\begin{quest}
Can $\widehat L$ ever be finitely dominated when $\dim(\widehat L)>3$?  
\end{quest}

\begin{ex}
\label{ex: phan surface cross R}
In dimension three Theorem~\ref{thm: cyclic cover} gives a finite volume metric of $K<0$
on the product of a closed hyperbolic surface with $\mathbb R$, and interestingly, 
the metric can be chosen so that the corresponding nonuniform lattice 
contains no parabolics, see~\cite{Pha-neg}.
\end{ex}

\begin{quest}
Is there a nonuniform lattice in the isometry group of a Hadamard manifold of dimension $>3$
that contains no parabolics?
\end{quest}

Borel-Serre~\cite{BorSer74} compute the $\mathbb Q$-rank of a finite volume complete locally symmetric
$n$-manifold $V$ as $n-\cd(\pi_1(V))$, where $\cd$ denotes the cohomological dimension;
alternatively, $\mathbb Q$-rank equals the dimension of an asymptotic cone of
$V$~\cite{Hat-asy-cone}, cf.~\cite{JiMcP}, and $\mathbb Q$-rank can also be defined
in terms of flats in $V$~\cite{Mor-book}. If $V$ has rank one (i.e. contains a rank one geodesic),
then the $\mathbb Q$-rank of $V$ equals $1$. One wonders whether any of these relations
between $\cd$, asymptotic cone, and absence of $2$-dimensional flats extend to 
finite volume complete manifolds of rank one.

\begin{quest}
Is the asymptotic cone of a complete 
finite volume rank one manifold of $K\le 0$ always a tree?
\end{quest}

\begin{quest}
\label{quest: values of cd}
What values does $\mathrm{cd}$ take on the fundamental groups of open
complete finite volume $n$-manifolds of $K\le -1$, rank one, or $K<0$? 
Is $\mathrm{cd}$ always equal $n-1$?
\end{quest}

To better understand the above question 
let us relate bounds on $\cd$ with the fundamental group at infinity
of an open aspherical $n$-manifold $M$. First note that
if $\cd(\pi_1(M))\le n-2$, then $M$ is one-ended~\cite[Proposition 1.2]{Sie-collar}
but the converse fails (think of the open M\"obius band).
In the simplest case when 
$M$ is the interior of a compact manifold, we get the following clean
statement: 

\begin{prop} Let $N$ be a compact aspherical $n$-manifold with boundary. 
Then $\mathrm{cd}(\pi_1(M))\le n-2$ if and only if $\d N$ is connected
and the inclusion $\d N\hookrightarrow N$ is $\pi_1$-surjective. 
\end{prop}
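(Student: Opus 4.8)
The plan is to compute $\mathrm{cd}(\pi_1(M))$ by group cohomology and match the answer with a connectivity property of the preimage of $\d N$ in the universal cover, via Poincar\'e--Lefschetz duality. Write $M=\Int(N)$ and $\pi=\pi_1(M)=\pi_1(N)$; then $M$ is aspherical because $N$ is. Since $N$ is a compact manifold it is homotopy equivalent to a finite CW complex, which here is aspherical, so $\pi$ has a finite $K(\pi,1)$ and is of type $\mathrm{FP}$. First I would pass from ``$H^{k}(\pi;F)=0$ for every module $F$'' to the single statement ``$H^{k}(\pi;\mathbb Z\pi)=0$'', using the standard fact (Bieri) that for a group of type $\mathrm{FP}$ of finite cohomological dimension, $\mathrm{cd}(\pi)$ is the largest $k$ with $H^{k}(\pi;\mathbb Z\pi)\ne0$. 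Writing $H^{*}(\pi;-)=H^{*}(N;-)$ by asphericity and using twisted Lefschetz duality $H^{k}(N;F)\cong H_{n-k}(N,\d N;F\otimes w)$, where $w=w_{1}(N)$, the top degree gives $H^{n}(N;\mathbb Z\pi)\cong H_{0}(N,\d N;\mathbb Z\pi\otimes w)=0$, since relative $H_{0}$ vanishes once $\d N\ne\varnothing$. Thus $\mathrm{cd}(\pi)\le n-1$, and the proposition reduces to the equivalence
\[
\mathrm{cd}(\pi)\le n-2\iff H^{n-1}(N;\mathbb Z\pi)=0.
\]
(Bieri can be sidestepped: a finite CW structure on $(N,\d N)$ lifts to a chain complex of free $\mathbb Z\pi$-modules computing $H_{*}(N,\d N;-)$, and a short splitting argument using right-exactness of $-\otimes_{\mathbb Z\pi}G$ and projectivity of its terms shows $H_{1}(N,\d N;G)=0$ for all $G$ provided it already holds with group-ring coefficients.)

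Next I would apply duality once more to degree $n-1$: $H^{n-1}(N;\mathbb Z\pi)\cong H_{1}(N,\d N;\mathbb Z\pi\otimes w)$, and here the one delicate point is that the orientation twist does nothing to the free module, because $g\mapsto w(g)g$ defines a $\mathbb Z\pi$-isomorphism $\mathbb Z\pi\otimes w\xrightarrow{\ \cong\ }\mathbb Z\pi$; so $H^{n-1}(N;\mathbb Z\pi)\cong H_{1}(N,\d N;\mathbb Z\pi)$. Identifying homology with group-ring coefficients with the integral homology of the universal cover $p\co\widetilde N\to N$, this is $H_{1}(\widetilde N,\widetilde{\d N};\mathbb Z)$, where $\widetilde{\d N}=p^{-1}(\d N)=\d\widetilde N$. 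Because $\widetilde N$ is contractible and $\widetilde{\d N}\ne\varnothing$, the long exact sequence of the pair collapses to $H_{1}(\widetilde N,\widetilde{\d N};\mathbb Z)\cong\widetilde H_{0}(\widetilde{\d N};\mathbb Z)$, which is free abelian of rank one less than the number of connected components of $\widetilde{\d N}$. Hence $\mathrm{cd}(\pi)\le n-2$ if and only if $\widetilde{\d N}$ is connected.

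Finally I would identify the connectivity of $\widetilde{\d N}$. Writing $\d N=\bigsqcup_{j}B_{j}$ into connected components, $\widetilde{\d N}=\bigsqcup_{j}p^{-1}(B_{j})$, and the restriction $p^{-1}(B_{j})\to B_{j}$ is a covering whose set of components is in bijection with the cosets $\mathrm{im}(\pi_{1}B_{j}\to\pi)\backslash\pi$. Therefore $\widetilde{\d N}$ is connected precisely when $\d N$ has a single component $B$ and $\pi_{1}B\to\pi$ is onto, i.e.\ when $\d N$ is connected and $\d N\hookrightarrow N$ is $\pi_{1}$-surjective. The argument is essentially formal once the two inputs are in place; the points requiring care are the orientation twist in Lefschetz duality (harmless for the free module $\mathbb Z\pi$) and the reduction of $\mathrm{cd}$ to $\mathbb Z\pi$-coefficients, which is where the single nontrivial theorem (Bieri, or its elementary chain-level substitute) is used.
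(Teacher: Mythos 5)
Your proof is correct and takes essentially the same route as the paper's: Poincar\'e--Lefschetz duality in the universal cover reduces $\mathrm{cd}(\pi_1(M))\le n-2$ to connectedness of $\d\widetilde N$, which is then translated into the stated condition by covering space theory. You have simply spelled out the details (the reduction to $\mathbb Z\pi$-coefficients, the harmlessness of the orientation twist, the coset count of boundary components) that the paper leaves implicit.
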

\begin{proof} Clearly $\cd(\pi_1(M))\le n-1$. Now Poincar\'e-Lefschetz 
duality in the universal cover~\cite[Corollary VIII.8.3]{Bro-book} implies
that $\cd(\pi_1(M))=n-1$ if and only if the boundary of the
universal cover of $N$ is not connected. The latter is equivalent
to ``either $\d N$ is not connected or $\d N\hookrightarrow N$ is not
$\pi_1$-surjective'' by elementary covering space considerations.
\end{proof}

Question~\ref{quest: values of cd} should be compared with
Nguyen Phan's Example~\ref{ex: phan surface cross R} of a finite volume
manifold of $K<0$ that is quite small homologically, and perhaps
there are even smaller examples. We finish with the following tantalizing

\begin{quest} Given $n>2$, does the interior of the 
$n$-dimensional handlebody admit a complete finite volume metric of $K\le 0$?
\end{quest}

\begin{ex}  
The interior of an odd-dimensional handlebody admits no
complete  finite volume metric of $-1\le K\le 0$. 
(The boundary of an odd-dimensional handlebody with $g$ handles
has Euler characteristic $2-2g$ while under our geometric assumptions
the Euler characteristic vanishes by Theorem~\ref{cor: cheeger-gromov}
and $g\neq 1$ because $\mathbb Z$ is not acylindrically hyperbolic).
\end{ex}

\small
\bibliographystyle{amsalpha}
\bibliography{wp-bangalore.bib}

\end{document}